%
%

\documentclass{memo-l}


\usepackage{geometry}                
\geometry{letterpaper}                   
\usepackage{graphicx}
\usepackage{amssymb}
\usepackage{epstopdf}
\usepackage{hyperref,cleveref}
\usepackage{color,ulem,textcomp}
\usepackage{xcolor}


\DeclareGraphicsRule{.tif}{png}{.png}{`convert #1 `dirname #1`/`basename #1 .tif`.png}


\def\eps{\varepsilon}
\def\R{{\mathbb R}}
\def\N{{\mathbb N}}
\def\Z{{\mathbb Z}}

\def\C{{\mathbb C}}
\def\R{{\mathbb R}}
\def\N{{\mathbb N}}
\def\Z{{\mathbb Z}}
\def\Sch{{\mathcal S}}

\def\O{\mathcal O}

\def\U{\mathcal U}
\def\wp{{\rm WP}}

\def\bdS{\boldsymbol{S}}
\def\op{{\rm op}}
\def\eps{\varepsilon}
\def\Im{\text{\rm Im}}

\def\e{{\rm e}}

\def\<{{\langle}}
\def\>{{\rangle}}
\def\({{\left(}}
\def\beq{\begin{equation}}   \def\eeq{\end{equation}}
\def\bea{\begin{eqnarray}}  \def\eea{\end{eqnarray}}

\def\di{\displaystyle}
\def\W{Y}
\newcommand{\1}{\mathbb I}




\newtheorem{theorem}{Theorem}[chapter]
\newtheorem{lemma}[theorem]{Lemma}

\theoremstyle{definition}
\newtheorem{definition}[theorem]{Definition}
\newtheorem{example}[theorem]{Example}
\newtheorem{assumption}[theorem]{Assumption}
\newtheorem{proposition}[theorem]{Proposition}
\newtheorem{corollary}[theorem]{Corollary}

\theoremstyle{remark}
\newtheorem{remark}[theorem]{Remark}

\numberwithin{section}{chapter}
\numberwithin{equation}{chapter}


\makeindex

\begin{document}

\frontmatter

\title{Asymptotic initial value representation  of the solutions of semi-classical systems presenting smooth codimension one crossings}


\author[C. Fermanian Kammerer]{Clotilde~Fermanian-Kammerer}
\address{Larema, UMR 6093 du CNRS,
Universit\'e d'Angers,
2 boulevard Lavoisier, 
49045 Angers Cedex 01\\ France}
\email{Clotilde.Fermanian@univ-angers.fr}

\author[C. Lasser]{Caroline Lasser}
\address{Department of Mathematics
Technische Universit\"at M\"unchen
85748 Garching bei M\"unchen, Germany}
\email{classer@ma.tum.de}

\author[D. Robert]{Didier Robert}
\address{Laboratoire de math\'ematiques Jean Leray
UMR 6629 du CNRS
Universit\'e de Nantes
2, rue de la Houssini\`ere  44322 Nantes Cedex 3, France}
\email{didier.robert@univ-nantes.fr}

\thanks{\textbf{Acknowledgements}. Clotilde Fermanian Kammerer acknowledges the support of the Region Pays de la Loire, Connect Talent Project {\it High Frequency Analysis of Schrödinger equations} (HiFrAn 2022 07750),
and from the France 2030 program, Centre Henri Lebesgue ANR-11-LABX-0020-01. Caroline Lasser was supported by the Centre for Advanced Study in Oslo, Norway,
research project Attosecond Quantum Dynamics Beyond the Born--Oppenheimer Approximation, and by the Deutsche Forschungsgemeinschaft (DFG, German Research Foundation) -- TRR 352 -- Project-ID 470903074.}



\keywords{Matrix-valued Hamiltonian, smooth crossings, codimension~1 crossings, wave-packet, Bargmann transform,  Herman-Kluk propagators, thawed and  frozen Gaussian approximations}

\dedicatory{This text is dedicated to the memory of George Hagedorn (1953--2023)
}

\begin{abstract}
This paper is devoted to the  construction of approximations of the propagator associated with a semi-classical matrix-valued   Schrödinger operator with  symbol presenting smooth eigenvalues  crossings. Inspired by 
 the approach of the theoretical chemists Herman and Kluk who propagated continuous superpositions of Gaussian wave-packets for scalar equations, we consider frozen and thawed Gaussian initial value representations that incorporate classical transport and branching processes along a hopping  hypersurface.
Based on the Gaussian wave-packet frame work, our result relies on an accurate analysis of the solutions of the associated Schrödinger equation for data that are vector-valued 
wave-packets. We prove that these solutions are asymptotic to wavepackets at any order in terms of the semi-classical parameter. 
\end{abstract}

\maketitle

\tableofcontents



\mainmatter

\chapter{Introduction}\label{chapter:introduction}

Since the early days of semi-classical analysis, operators {that approximate} the dynamics of a semi-classical propagator {have} been the object of major attention. The theory of Fourier integral operators answers to this question by proposing methods for constructing {approximative} propagators of a scalar semi-classical Schrödinger equation (see~\cite[Chapter 12]{Zwobook} or~\cite{disj}). 
{Pioneering} work about a semi-classical theory of FIOs is \cite{Ch}, extended in \cite{HeRo} and \cite{Rb}.
\smallskip

Few results exist for systems {except} for those that are called {\it adiabatic}, because the eigenvalues of the underlying Hamiltonian {matrix} are of constant multiplicity. The analysis of such systems can be reduced to those of scalar equations through a diagonalization process using the so-called {\it super-adiabatic projectors}. {The super-adiabatic approach} has been carried out by Martinez and Sordoni~\cite{MS} {as well as } Spohn and Teufel~\cite{ST}, {see also}~\cite{emwe,N1,N2,bi} {for earlier results or}~\cite{BGT,PST} for {more recent} results in a similar direction. 
\smallskip

The present study gives the first complete construction of an integral representation of the propagator associated to a Hamiltonian {generating} non-adiabatic {dynamics} in a very general situation. It focuses on those Hamiltonian {matrices} that have smooth eigenprojectors, with smooth eigenvalues, though of non constant multiplicity. The framework {applies} to generic situations where two eigenvalues cross along a hypersurface on points where  the Hamiltonian vector fields associated with these eigenvalues are transverse to the crossing hypersurface. {This set-up} has  already been the one of the work of Hagedorn~\cite[Section 5]{Hag94} and Jecko~\cite{J}. The Fourier integral operators approximating the propagator associated with these non-adiabatic Hamiltonians are based on Gaussian {wave-packets} and  the Bargmann transform, in the spirit of the {\it Herman--Kluk propagator}. 
\smallskip

The Herman--Kluk propagator has been  introduced in theoretical chemistry (see~\cite{Hel,Kay1,HK,Kay2}) for the analysis of molecular dynamics for scalar equations. The mathematical analysis has been performed later by Rousse and Swart~\cite{RS1} and Robert~\cite{R}, independently.  
The action of the Herman--Kluk propagator consists in the {continuous} decomposition of the initial data {into semi-classical Gaussian wave-packets} and the implementation of  the propagation of the wave-packets as studied in the 70s and 80s by {Heller \cite{Hel}}, Combescure and Robert~\cite{corobook}, and Hagedorn~\cite{Hagedorn}.
It involves time-dependent quantities that are called {\it classical quantities} because they can be interpreted in terms of Newtonian mechanics. Such an {approximative} description of the propagator {in terms of several Gaussian wave packets motivates numerical methods that naturally combine with  probabilistic sampling techniques, see \cite{KluHD86} or more recently \cite{LS,KLV}.}
\smallskip

We prove the convergence of two types of approximations, respectively called {\it thawed} and {\it frozen Gaussian approximations}, both {built of continuous superpositions of Gaussian 
wave-packets, the frozen one in the spirit of the original Herman--Kluk propagator}. Their difference mainly consists in the way the {width matrices} resulting from the propagation of the {individual} semi-classical Gaussian wave packets are treated. The presence of crossings requires to add to the {semi-classical Gaussian wave packet propagation} some transitions between the crossing hypersurfaces. Therefore, these Fourier integral operators  incorporate classical transport  along the Hamiltonian trajectories associated with the eigenvalues of the Hamiltonian and a branching process along the crossing hypersurface. Some of these ideas have been introduced in~\cite{FLR1,FLR2}, in particular in~\cite{FLR1} where the  propagation of wave-packets through smooth crossings {has been studied. Here, we revisit and extend these results}, by proving that a wave-packet {propagated} through a smooth generic crossing remains asymptotically a wave-packet to any order in the semi-classical parameter. We {then} prove uniform estimates for the {associated semi-classical} approximations of propagators when acting on families of initial data that are {\it frequency localized} in the sense that their $L^2$-mass does not escape in phase space to $\infty$ when the semi-classical parameter goes to~$0$, neither in position, nor in {momentum}. {This class of initial} data {is typically met for the numerical simulation of molecular quantum systems.}
\smallskip

{\it Codimension one crossing} is the simplest case of crossings. They are classified by the codimension of the set of dicontinuities of the multiplicity of the eigenvalues of the matrix-valued Hamiltonian. In his monograph~\cite{Hag94}, George Hagedorn has settled a first classification  of the crossings involved in molecular dynamiccs that have been improved and extended to general matrix-valued Hamiltonian in 2000 by the normal form results of Colin de Verdière~\cite{CdV1,CdV2}. Codimension~1 crossings have the advantage that, despite the discontinuities of the multiplicity of the eigenvalues,  the latter are smooth after adequate renumbering. In higher codimensions, the eigenvalues display conical singularities and the transitions generated by the crossing are at leading order~\cite{Hag94,HagJ98,HagJ99} (the last two references are treating avoided crossings, i.e. gaps that shrink with the semi-classical parameter, which generates conical singularities in position variables and this parameter). For codimension~1 crossing transitions are of the size of the square root of the semiclassical parameter. Various other questions have been addressed for crossings of codimension 2 and 3, such as resonances asymptotics~\cite{grigis_martinez,fujiie_martinez_watanabe} or propagation with nonlinear potentials~\cite{Hari1}. We do not discuss these aspects in this monograph and focus on the analysis of the  propagator through a (smooth) codimension one crossing. 
\smallskip

The analysis of the propagation through smooth eigenvalue crossings has been 
pioneered by Hagedorn in \cite[Chapter~5]{Hag94}. He considered Schr\"odinger operators with 
matrix-valued potentials and propagated initial data that are known as
semi-classical wave packets or generalized coherent 
states \cite[Chapter~4]{corobook}. The core of the wave-packet had to be chosen such that 
it classically propagates to the crossing. 
In the same framework adjusted to the context of solid states physics, Watson and Weinstein~\cite{WW} analyze the propagation of wave-packets through a smooth crossing of Bloch bands.
The results developed here extend \cite[Chapter~5]{Hag94} and \cite{WW} in two ways. 
The single wave-packet is turned into an initial value representation with uniform control for frequency localized initial data. The Schr\"odinger and Bloch operators are generalized to Weyl 
quantized operators with smooth time-dependent symbol.

\section{{First overview and notations}}

{
The remainder of the introduction specifies the mathematical setting (assumptions on the Hamiltonian 
operator and the initial data), discusses the classical quantities involved in the approximation, 
reviews the known results on the thawed and frozen initial value representations in the adiabatic setting, 
and then presents the main results of this paper: Theorem~\ref{thm:TGeps} on the thawed approximation 
with hopping trajectories, Theorem~\ref{thm:FGeps} and Theorem~\ref{thm:FGeps_av} on the frozen approximation with hopping trajectories, that are pointwise and averaged in time, respectively, 
and Theorem~\ref{th:WPmain} on wave-packet propagation through smooth crossings to arbitrary order.} 
\smallskip 

We prove Theorems \ref{thm:TGeps}, \ref{thm:FGeps} and \ref{thm:FGeps_av} in Chapters~\ref{chap:4} and~\ref{chap:5}. These proofs rely on Theorem~\ref{th:WPmain}, that is proved in 
Chapters~\ref{chap:2} and~\ref{sec:prop}.
\smallskip

{Chapter~\ref{chap:4} recalls elementary facts about the Bargmann transform. 
Then, it introduces the new notion of frequency localization, which will be crucial 
for controlling the remainder estimates for both the frozen and the thawed initial value representations 
in Chapter~\ref{chap:5}.}   
\smallskip 

{
The refined wave-packet analysis of Chapters~\ref{chap:2} and~\ref{sec:prop} 
does not depend on the theory of initial value representations and can be read 
independently from Chapters~\ref{chap:4} and~\ref{chap:5}. It propagates wave-packets through smooth crossings in two steps: using a rough diagonalisation of the Hamiltonian operator in the crossing region and  
super-adiabatic projectors for the outside. Both constructions rely on pseudo-differential calculus 
for matrix-valued symbols that is developed in Chapter~\ref{chap:2} and complemented by 
additional technical points in the appendices. The propagation of a wave packet through regions of small eigenvalue gap and the actual crossing are analyzed in \Cref{sec:prop} and  
\Cref{sec:through}, respectively. }
\bigskip 
 
\noindent{\bf Notations and conventions.}
 All the functional sets that we shall consider can have values in $\C$ (scalar-valued), $\C^m$ (vector-valued) or in $\C^{m,m}$ (matrix-valued). We denote by by $v\cdot w = v_1w_1 + \cdots + v_mw_m$ the bi-linear products of vectors and by 
 \[ \langle g, f\rangle=\int_{\R^d}  f(x) \cdot  \overline g(x) dx\]
 the inner product of $L^2(\R^d,\C^m)$.
 For $k\in\N$, we will work with the spaces 
\beq\label{eq:Sigmak}
\Sigma_\eps^k(\R^d)=\left\{ f\in L^2(\R^d),\;\;\forall \alpha,\beta\in\N^d,\;\; |\alpha|+|\beta| \leq k,\;\; x^\alpha (\eps \partial_x)^\beta f\in L^2(\R^d)\right\}
\eeq
endowed with the norm 
\[
\| f\|_{\Sigma^k_\eps} = \sup_{|\alpha|+|\beta| \leq k}\| x^\alpha (\eps \partial_x)^\beta f\|_{L^2}.
\]
We will use the abbreviation $\Sigma_\eps^k := \Sigma_\eps^k(\R^d)$. We denote by
\[
\{f,g\}=\nabla _\xi f\cdot \nabla_x g -\nabla_x f\cdot \nabla_\xi g
\]
the Poisson bracket of differentiable functions $f$ and $g$ defined on $\R^{2d} = \R^d_x\times\R^d_\xi$, and note that 
$\{f,g\} = -\nabla f\cdot J\nabla g$ with
\[
J = \begin{pmatrix}0 & \1_{d}\\ -\1_{d} & 0\end{pmatrix}.
\]
Let $0<\eps\ll 1$ be the semi-classical parameter. For $a\in{\mathcal C}^\infty(\R^{2d})$ being a smooth scalar-, vector- or matrix-valued function with adequate control on the growth of derivatives, the Weyl operator $\widehat a = \op_\eps(a)$ is defined by 
\beq\label{opweyl}\op^w_\eps(a)f(x):= \widehat a f(x) := (2\pi\eps)^{-d} \int_{\R^{2d}} a\!\left(\frac{x+y} 2, \xi\right) 
{\rm e}^{i\xi\cdot(x-y)/\eps} f(y) \,dy\, d\xi
\eeq
for all Schwartz functions $f\in{\mathcal S}(\R^d)$. In particular, $\op_1^w$ denotes the pseudo-differential quantization with $\eps=1$, that is, the standard non semi-classical one. We set $D_x= \op_1(\xi) = \frac 1 i\partial_x$. If $\pi$ is an orthogonal projector, then $\pi^\perp$ denotes the projector $\pi^\perp=\1 -\pi$.

 \section{The setting }
 
 \subsection{The Schr\"odinger equation} 
 For $\eps>0$, we consider the Schr\"odinger equation
\beq\label{eq:sch}
i\eps\partial_t\psi^\eps(t) =\widehat H^\eps(t)\psi^\eps(t),\;\; \psi^\eps_{|t=t_0}=\psi^\eps_0.
\eeq
in $L^2(\R^d, \C^m)$, $m\geq 2$, where $\widehat H^\eps(t)$ is the semi-classical quantization of a (time-dependent) Hermitian   matrix  symbol $H^\eps(t, z)\in\C^{m,m}$. Here, $t\in\R$ and $z=(x,\xi)\in\R^d\times\R^d$. We are interested in an asymptotic description of the solution $\psi^\eps(t)$ as $\eps\to0$, in particular in asymptotic expansions of $\psi^\eps(t)$ with precise error estimates of the order $\eps$. 
  
\smallskip
In full generality, we could assume that the map $(t,z)\mapsto H^\eps (t,z)$ is a semi-classical observable   in the sense that  the function $H^\eps(t,z)$ is an asymptotic sum of the form $\sum_{j\geq 0} \eps^j H_j(t,z)$. However, 
in this asymptotic sum, the important terms  are the principal symbol $H_0(t,z)$ and the sub-principal one $H_1(t,z)$;  the terms $H_j(t,z)$ for $j\geq 2$  only affect the solution beyond order $\eps$, which is the order of the approximation we are looking for. Therefore, we assume  that the self-adjoint matrix $H^\eps$ writes 
$$H^\eps(t,z):=H_0(t,z)+ \eps H_1(t,z).$$

\subsection{Assumptions on the Hamiltonian}

We work on a time interval of the form 
\[
I:=[t_0,t_0+T], \;\;t_0\in\R\;\;\mbox{and} \;\; T>0
\]
and consider subquadratic matrix-valued Hamiltonians. 

\begin{definition}[Subquadratic] \label{def:subquad}
The $\eps$-dependent Hamiltonian 
\[
H^\eps=H_0+\eps H_1 \in\mathcal C^\infty(I\times \R^{2d},\C^{m,m})
\]
 is {\it subquadratic on the time interval $I$}  if and only if it has the property:
 \beq\label{eq:decayHj}
\forall j\in\{0,1\},\;\;\forall \gamma\in\N^d,\;\;\exists C_{j,\gamma}>0,\;\; 
\sup_{(t,z)\in I\times \R^{2d}}  \vert\partial_z^\gamma H_j(t,z)\vert \leq C_{j,\gamma}\langle z\rangle^{(2-j-\vert\gamma\vert)_+}
\eeq
 \end{definition}

 Assuming that  $H^\eps$ is subquadratic on the time interval~$I$ ensures that the system~\eqref{eq:sch} is well-posed in $L^2(\R^d)$ for $t\in I$, and, more generally also in the functional spaces $\Sigma^k_\eps$ for $k\in\N$. We denote by ${\mathcal U}^\eps_H(t,t_0)$ the  unitary propagator defined by 
\begin{equation}\label{def:propagator}
i\eps\partial_t \mathcal U^\eps_H(t,t_0)= \widehat H^\eps (t) \mathcal U_H^\eps(t,t_0),\;\; \mathcal U_H^\eps(t_0,t_0)=
\1_{m}.
\end{equation}
It is a   bounded operator of the  $\Sigma_\eps^k$ spaces, uniformly in $\eps$ (see~\cite{MaRo}):  there exists $C_{T,k}>0$ such that 
\[
\sup_{t\in I}\|{\mathcal U}^\eps_H(t,t_0)\|_{{\mathcal L}(\Sigma^k_\eps)} \,\le\, C_{T,k}.
\]

\begin{example}\label{ex_referee}
     Take $m=2$, $d=1$,  $H_0=\frac {|\xi|^2}2 \1_{2} +{\rm Diag}(v_1(x),v_2(x))$ with $v_j\in\mathcal C^\infty(\R^d)$, and $H_1(x)= M$ a fixed Hermitian matrix. Assume the boundedness of the derivatives of the function $v_j$ with order higher than 2. then $H_0+\eps H_1$  is subquadratic.
 \end{example}

We assume that the principal symbol $H_0(t,z)$ of $H^\eps(t,z)$ has two distinct eigenvalues $h_1(t,z)$ and $h_2(t,z)$ that present a smooth crossing in the following sense, see also~\cite{FLR1}.  

\begin{definition}\label{def:smooth_cros}
\begin{enumerate}
\item {\rm (Smooth crossing)}.
The matrix $H_0\in\mathcal C^\infty(I\times \R^{2d} , \C^{m,m})$ has a smooth crossing on the set~$\Upsilon\subseteq I\times\R^{2d}$  if there exists  $h_1, h_2\in\mathcal C^\infty (I\times \R^{2d} ) $ and  two orthogonal projectors  $\pi_1,\pi_2\in\mathcal C^\infty(I\times \R^{2d} , \C^{m,m})$ such that $H_0=h_1\pi_1+h_2\pi_2$ and 
\[ 
h_1(t,z)=h_2(t,z)\;\Longleftrightarrow\;  (t,z)\in \Upsilon.
\]
\item
Set 
$\displaystyle{
f(t,z)= \frac 12\left( h_1(t,z) -h_2(t,z) \right) \;\;\mbox{and}\;\; v(t,z)=\frac 12 \left( h_1(t,z) + h_2(t,z) \right)
}$.
\begin{enumerate}
\item {\rm (Non-degenerate crossing)}.  The crossing is non-degenerate at $(t^\flat,\zeta^\flat)\in\Upsilon$  if
\[d_{t,z} \left( H_0-v\, \1_{m}\right)(t^\flat ,\zeta^\flat )  \not=0\]
where $d_{t,z}$ is the one  differential form in the variables $(t,z)$.
\item {\rm (Generic crossing points).}
The crossing is generic at  $(t^\flat, \zeta^\flat)\in\Upsilon$ if one has 
  \begin{equation}\label{hyp:codim1}
  (\partial_t f +\{v, f\})(t^\flat, \zeta^\flat)\neq 0.
  \end{equation}
  Note that there then exists an open set $\Omega\subset I\times \R^{2d}$ containing $(t^\flat, \zeta^\flat)$ such that the set $\Upsilon\cap \Omega$  is a manifold.
  \end{enumerate}
  \end{enumerate}
\end{definition}

\begin{example}\label{ex_referee2}
 Continuing with Example~\ref{ex_referee}, assume  $v_1(x)=xw(x)$ and $v_2(x)=-xw(x)$ with $w(0)\not=0$. We then have $\Upsilon=\{x=0\}$ and the non-degenerate generic crossing points are those included in $ \{\xi\not=0\}$.   
\end{example}

With these definitions at hand, we introduce one of the main assumptions on the crossing points of the Hamiltonian $H^\eps$.

\begin{assumption}[Crossing set]\label{hyp:smooth_cros}
The matrix $H_0$ has a smooth crossing set~$\Upsilon$ and all the  points of~$\Upsilon$ are non degenerate and generic crossing points. 
\end{assumption}

To have well-defined unitary propagators ${\mathcal U}^\eps_{h_1}(t,t_0)$ and $\mathcal U^\eps_{h_2}(t,t_0)$ and adequate control on the eigenprojectors $\pi_1(t,z)$ and $\pi_2(t,z)$, we shall make additional assumptions on the growth of the eigenvalues and of their gap function. Our setting will be the following: 

\begin{assumption}[Growth conditions for smooth crossings]
\label{hyp:growthH}
Let $H^\eps=H_0+\eps H_1\in\mathcal C^\infty(\R\times \R^{2d})$ be 
 subquadratic on the time interval $I$ and have a smooth crossing on the set~$\Upsilon$. 
 We consider the two following assumptions :
 \begin{enumerate}
 \item[(i)] The growth of $H_0(t,x)$  is driven by the function $v(t,z)$, 
 \begin{equation}\label{eq:growth_trace} 
 \forall \gamma\in \N^{2d} \; , \;\;\exists C_\gamma>0,\;\;\forall (t,z)\in I\times \R^{2d},\;\;
 | \partial_z^\gamma( H_0-v \, \1_{m} )(t,z)| +|f(t,z)|  \leq C_\gamma.
 \end{equation}
\item[(ii)] The gap is controlled at infinity, i.e. there exist $R>0$ and $n_0\in\N$ such that 
\begin{equation}\label{eq:growth_gap}
 \forall t\in I,\;\;\forall |z|>R,\;\; |f(t,z)|\geq C \langle z\rangle ^{-n_0},
 \end{equation}
 and in the case $n_0\not=0$, the functions $z\mapsto  \pi_1(t,z), \pi_2(t,z)$ are assumed to have bounded derivatives at infinity.
  \item[(iii)] The eigenvalues $h_1$ and $h_2$ are subquadratic, i.e. for $\ell\in\{1,2\}$ 
 \begin{equation}\label{hyp:hj}
\forall \gamma\in \N^{2d}, \; |\gamma|\geq 2, \;\;\exists C_\gamma>0,\;\;\forall (t,z)\in I\times \R^{2d},\;\;
\left|\partial_z^\gamma h_\ell(t,z)\right|\leq C_\gamma.
\end{equation}
 \end{enumerate}
\end{assumption}

In the context of Assumption~\ref{hyp:growthH}, we shall say that a matrix~$A$ is diagonal if $A=\pi_1 A \pi_1+\pi_2 A \pi_2$ and off-diagonal if  $A=\pi_1 A \pi_2+\pi_2 A \pi_1$.

 \begin{remark}
 \begin{enumerate}
 \item 
In (iii), the fact that the eigenvalues $h_1(t,z)$ and $h_2(t,z)$ are of subquadratic growth guarantees the existence  
of the unitary propagators ${\mathcal U}^\eps _{h_j}(t,t_0)$ for $ j\in\{1,2\}$ and of the classical quantities associated with  the Hamiltonians $h_1$ and $h_2$ that we will introduce below. 
\item The growth conditions of Assumption~\ref{hyp:growthH}
 imply that 
the eigenprojectors $\pi_j(t)$, $j=1,2$, and their derivatives have at most polynomial growth. However, when $n_0\not=0$, they may actually grow. This is proved in  Lemma~\ref{lem:growth_eigen_bis}. It is for this reason that we assume that the projectors have bounded derivatives when $n_0\not=0$ in (ii).
\end{enumerate}
\end{remark}

\begin{example}
\begin{enumerate}
\item The Hamiltonian of Example~\ref{ex_referee} and~\ref{ex_referee2}, satisfies Assumption~\ref{hyp:smooth_cros} in the set $\{\xi\not=0\}$. It also satisfies Assumption~\ref{hyp:growthH} (iii) in $\R^d$. However,  even it does not formally satisfy Assumption~\ref{hyp:growthH} (i) and (ii),  the eigenprojectors $\pi_1$ and $\pi_2$ are constant, and thus satisfy all the conditions required for our analysis. 
\item Examples of matrix-valued Hamiltonian are given in molecular dynamics (as defined in~\cite {Hag94}[Chapter~5]) by
Schr\"odinger operators with matrix-valued potential,
\begin{equation*}\label{ex:hag}
\widehat H_S = -\frac{\eps^2}{2}\Delta_x\, \1_{2} + V(x),\quad V\in
{\mathcal C}^\infty(\R^d,\C^{2,2}).
\end{equation*}
When $V$ presents a codimension 1 crossing (as defined in~\cite{Hag94}, then  the crossing points $(x,\xi)$ are non degenerate and generic when $\xi\not=0$.
\item Another class of  examples appears in
 solid state 
physics in the context of Bloch band decompositions (see~\cite{WW,CFM} for example) with  Hamiltonians of the form
\begin{equation*}\label{ex:WW}
\widehat H _A= A(-i\eps\nabla_x) + W (x) \1_{2},\quad A\in{\mathcal C}^\infty(\R^d,\C^{2,2}),\quad 
W\in{\mathcal C}^\infty(\R^d,\C).
\end{equation*}
\item Finally, in~\cite{FLR2}, the authors have considered the operator 
\[
\widehat H_{k,\theta}=\frac{\eps}{i}\frac{d}{dx}\1_{2} + kx\begin{pmatrix} 0&{\rm e}^{i\theta x}\\ {\rm e}^{-i\theta x}&0\end{pmatrix},
\]
with $d=1$, $m=2$, $\theta\in{\R}_+$, $k\in\R\backslash\{0\}$.
\end{enumerate}
\end{example}

\subsection{Assumptions on the data}
We consider vector-valued initial data $\psi^\eps_0\in L^2(\R^d,\C^m)$ of the form  
\[
\psi^\eps_0=\widehat{\vec V} \phi^\eps_0
\]
 where  $z\mapsto \vec V(z)$ is a smooth $\C^m$ vector-valued function, bounded together with its derivatives, and  
 $\phi^\eps_0\in L^2(\R^d,\C)$ is {\it frequency localized} in the sense of the next definition. For stating it, we denote the Gaussian of expectation~$q$, variance $\sqrt\eps$  that oscillates along~$p$ according to
\begin{equation}\label{def:gepsz}
g^\eps_z(x)=(\pi\eps)^{-d/4} {\rm e}^{-\frac {(x-q)^2}{\eps} +\frac i\eps p\cdot (x-q)},\;\;\forall x\in\R^d, \;{\rm with}\; z=(q,p). 
\end{equation}

 \begin{definition}[Frequency localized functions]\label{def:freq_loc_0}
 Let   
 $(\phi^\eps)_{\eps>0}$ be a  family of functions of $L^2(\R^d)$. The family~$(\phi^\eps)_{\eps>0}$ is frequency localized 
  if the family is bounded in $L^2(\R^d)$ and if there exist $R_0, C_0,\eps_0>0$ and $N_0 > d $ such that for all $\eps\in(0,\eps_0]$,
 \[
(2\pi\eps)^{-d/2} \left| \langle g^\eps_z, \phi^\eps \rangle \right|\leq C_0 \,\langle z\rangle^{-N_0} \;\;\ \text{for all}\ z\in\R^{2d}\ \text{with}\ |z| > R_0.
 \]
 One then says that $(\phi^\eps)_{\eps>0}$ is frequency localized.
 \end{definition}

We will introduce a more precise definition in Chapter~\ref{chap:4}.  We will also discuss in this Chpater the properties of frequency localized families. In particular, we will show that they can be written as continuous sum of Gaussian states with center $(q,p)$ localized in a compact set (see Lemma~\ref{lem:freq_loc}). The analysis of the examples given below is performed in Lemma~\ref{lem:ex_freq_loc}.
 \smallskip 
 
 \begin{example}
\begin{enumerate}
\item The Gaussian wave packets $(g^\eps_{z_0})_{\eps>0}$ are frequency localized functions. 
\item Define  $( {\rm WP}^\eps_{z_0} (u) )_{\eps>0}$ by
 \begin{equation}\label{def:WP}
 {\rm WP}^\eps_{z_0} (u) (x)= \eps^{-d/4} {\rm e}^{\frac i\eps p_0\cdot (x-q_0)} u \left(\frac{x-q_0}{\sqrt\eps}\right) ,\;\; 
x\in\R^d,
 \end{equation} 
for  $u\in\mathcal S(\R^d)$ and $z_0=(q_0,p_0)\in\R^{2d}$.  They are frequency localized functions.
\item Lagrangian (or WKB) states
$\varphi^\eps(x) = a(x){\rm e}^{\frac{i}{\eps}S(x)}$
with $a\in \mathcal C_0^\infty(\R^d,\C)$ and $S\in \mathcal C^\infty(\R^d, \R)$,  are  also frequency localized functions.
\end{enumerate} 
\end{example}

Our vector-valued initial data will have a scalar part consisting in a frequency localized family.

\begin{assumption}\label{hyp:data}
The initial data $\psi^\eps_0$ in~\eqref{eq:sch} satisfies
\begin{equation}\label{eq:psieps0}
\psi^\eps_0(x)= \widehat {\vec V}  \phi^\eps_0(x),\;\; x\in\R^d
\end{equation} 
where 
\begin{enumerate}
\item [(i)] The family $(\phi^\eps_0)_{\eps>0}$  is frequency localized  with  constants  $R_0, N_0,C_0,\eps_0$ in Definition~\ref{def:freq_loc_0}.
\item[(ii)] The function $z\mapsto \vec V(z)$ is a function of $\mathcal C^\infty(\R^{2d}, \C^m)$, bounded together with its derivatives, and valued in the set of normalized vectors.
\end{enumerate}
\end{assumption}

We point out that any vector-valued bounded family in $L^2(\R^d)$ writes as a sum of data of the form~$\widehat {\vec V}  \phi^\eps_0(x)$ for   bounded $(\phi^\eps_0)_{\eps>0}$. As a consequence, assuming the initial datum $\psi^\eps_0$ satisfies~\eqref{eq:psieps0} is not really restrictive.   Of course,  for $k\in \C\setminus\{0\}$,the vector valued function $\vec V$ can be turned into $k\vec V$
by changing~$ \phi^\eps_0$ into~$k^{-1} \phi^\eps_0$.

\section{Classical quantities}\label{subsec:classical}

In this section, we introduce classical quantities associated with the Hamiltonian $H^\eps$. These quantities will be used to construct the approximations of the propagator $\mathcal U^\eps_H(t,t_0)$ that are the subject of this text. They are called {\it classical} because they do not depend on the semi-classical parameter $\eps$ and are obtained by solving $\eps$-independent equations that mainly are ODEs instead of PDEs. Thus, the numerical realization of the resulting propagator's approximations avoids the difficulties induced by the  $\frac 1\eps$-oscillations and is applicable in a high-dimensional setting, see \cite{LL} for a recent review on this topic.  Besides their definition, we shall also recall well-known results about their role in the description of Schr\"odinger propagators. 
\smallskip 

In this section, we assume that $H^\eps=H_0+\eps H_1$ is subquadratic on the time interval $I$ (as defined in Definition~\ref{def:subquad}), with smooth eigenprojectors $\pi_1$ and $\pi_2$, and eigenvalues $h_1$ and $h_2$, the latter being subquadratic (as in (iii) of Assumption~\ref{hyp:growthH}).

\subsection{The flow map}
Let $\ell\in\{1,2\}$, we associate with   $h_\ell:I\times\R^{2d}\to\R$, $(t,z)\mapsto h_\ell(t,z)$ the functions
\[
z_\ell(t) = (q_\ell(t),p_\ell(t))
\]
which denote the {\it classical Hamiltonian trajectory} issued from a phase space point $z_0$ at time $t_0$, that is defined by the ordinary differential equation
$$\dot z_\ell(t) = J \partial_z h_\ell(t,z_\ell(t)),\;\; z_\ell(t_0)=z_0$$
with 
\begin{equation}\label{def:J}
J = \begin{pmatrix}0 & \1_{d}\\ -\1_{d} & 0\end{pmatrix}.
\end{equation}
We note that $J$ is the matrix associated with the symplectic form 
\[
\sigma(z,z')= \langle Jz,z'\rangle =p\cdot q'-p'\cdot q,\qquad z=(q,p),\, z'=(q',p')\in\R^{2d}.
\]
The trajectory $z_\ell(t) = z_\ell(t,t_0,z_0)$ depends on the initial datum and defines 
the associated {\it flow map} $\Phi_{h_\ell}^{t,t_0}$  of the Hamiltonian function $h_\ell$ via
\[ 
z_0\mapsto \Phi_{h_\ell}^{t,t_0}(z_0):= z_\ell(t,t_0,z_0),\qquad z_0\in\R^{2d}.
\]
We will also use the trajectory's {\it action integral}
\begin{equation}
\label{def:S} 
S_\ell(t,t_0,z_0) = \int_{t_0}^t \left(p_\ell(s)\cdot \dot q_\ell(s)-h_\ell(s,z_\ell(s)) \right) ds,
\end{equation}
and the {\it Jacobian matrix of the flow map}, also called {\it stability matrix}
\begin{equation}\label{def:F}
F_\ell(t,t_0,z_0) = \partial_z \Phi_{h_\ell}^{t,t_0}(z_0). 
\end{equation}
Note that $F_\ell(t,t_0,z_0)$ is a symplectic $2d\times 2d$ matrix, that satisfies the linearized flow equation
\begin{equation}\label{eq:lin}
\partial_t F_\ell(t,t_0,z_0) = J {\rm Hess}_zh_\ell(t,z_\ell(t)) \, F_\ell(t,t_0,z_0),\;\;\;\;F_\ell(t_0,t_0,z_0) = \1_{\R^{2d}}.
\end{equation}
We denote its blocks by 
\begin{equation}\label{eq:F}
F_\ell(t,t_0,z_0) = 
 \begin{pmatrix}  A_\ell(t,t_0,z_0) &B_\ell(t,t_0,z_0) \\ C_\ell(t,t_0,z_0) &D_\ell(t,t_0,z_0)\end{pmatrix}.
\end{equation}

\begin{example}
In the case of Example~\ref{ex_referee},  we have $\Phi^{t,t_0}_{h_\ell}(z)=(q_\ell(t-t_0),p_\ell(t-t_0))$ for $\ell\in \{1,2\}$, with 
\begin{equation}
    \dot q_\ell(t)=p_\ell(t),\;\;\dot p_\ell(t)=-\nabla v_\ell(q_\ell(t)),\;\;(q_\ell(0),p_\ell(0))=z.
\end{equation}
The action satisfies $S_\ell(t,t_0,z)=\underline S_\ell(t-t_0,z)$ with $\underline {\dot S}_\ell(t)=\frac{|p_\ell(t)|^2}2 - v_\ell(q_\ell(t))$ and the flow map also is a function of $t-t_0$. Moreover, the system~\eqref{eq:lin} simplifies since 
\[
{\rm Hess} \, h_\ell(t, z)=\begin{pmatrix} 
{\rm Hess}\, v_\ell(q) & 0 \\
0& \1_{\R^d}
\end{pmatrix}
\]
\end{example}

\subsection{The metaplectic transform and Gaussian states} 
It is standard to
associate with the time-dependent symplectic map $F_\ell(t,t_0,\cdot)$ a unitary evolution operator, the {\it metaplectic transformation} that acts on square integrable functions in $L^2(\R^d)$ 
as a unitary transformation.
 \[
  {\mathcal M}[F_\ell(t,t_0,z_0)] : \; u_0\mapsto u(t) 
\]
 and 
 associates with an initial datum $u_0$ the solution at time $t$ of the Cauchy problem 
$$
i\partial_t u(t)= \op_1\left({\rm Hess}_z h_\ell\left(t, \Phi^{t,t_0}_{h_\ell}(z_0)\right)z\cdot z\right) u(t),\;\; u(t_0)=u_0.
$$
This map is called the {\rm metaplectic transformation} associated with the matrix $F_\ell(t,t_0,z_0)$ (see~\cite{MaRo}). 
 It satisfies for all $\eps>0$ and for all symbol $a$ compactly supported or polynomial
 \begin{equation}\label{prop:metaplectic}
   {\mathcal M}[F_\ell(t,t_0,z_0)] ^{-1} {\rm op}_\eps(a)   {\mathcal M}[F_\ell(t,t_0,z_0)] ={\rm op}_\eps(a\circ F_\ell(t,t_0,z_0) ),
 \end{equation} 
 where $a\circ F_\ell(t,t_0,z_0)$ denotes the function $z\mapsto a\left(F_\ell(t,t_0,z_0)z\right)$.
\smallskip

All these classical quantities are involved in the description of the propagation of {\it Gaussian states} by~${\mathcal U}^\eps_{h_\ell}(t,t_0)$, that are a generalization of the Gaussian families $(g^\eps_z)_{\eps>0}$ that we have already seen.  
Gaussian states are wave packets ${\rm WP}^\eps_{z} (g^\Gamma)$ with complex-valued Gaussian profiles~$g^\Gamma$, whose  
covariance matrix~$\Gamma$ is taken in the Siegel half-space ${\mathfrak S}^ +(d)$ of  $d\times d$ complex-valued symmetric matrices with positive imaginary part,
\[
{\mathfrak S}^+(d) = \left\{\Gamma\in\C^{d\times d},\  \Gamma=\Gamma^\tau,\ \Im\Gamma >0\right\}.
\]
Here $\Gamma^\tau$ is the transposed matrix of $\Gamma$ and $\Im\Gamma >0$ means that the matrix 
  $\Im\Gamma$ is positive definite.
 The covariance matrices $\Gamma$ are sometimes called  {\it width} of the Gaussian state. 
 
More precisely, the Gaussian profile  $g^\Gamma$ depends on   $\Gamma\in{\mathfrak S}^+(d)$ according to
\begin{equation}\label{def:Gaussian}
g^\Gamma(x)
 := c_\Gamma\, {\rm e}^{\frac{i}{2}\Gamma x\cdot x},\quad x\in\R^d,
\end{equation}
where 
$c_\Gamma\in\C$ is a normalization constant such that $|c_\Gamma|=\pi^{-d/4} {\rm det}^{1/4}(\Im\Gamma)$. With these notations, the normal centered Gaussian is $g^{i\1_d}$ and the Gaussian wave packets defined in~\eqref{def:gepsz} writes 
$g^\eps_z={\rm WP}_z^\eps(g^{i\1_d})$, according to~\eqref{def:WP}.
\smallskip 
 
 The propagation of Gaussian states by a metaplectic transform  is well-known \cite{corobook}[Chapter~3]: 
for  $\Gamma_0\in{\mathfrak S}^+(d)$, we have 
\begin{equation}\label{eq:action_Gaussian}
 {\mathcal M}[F_\ell(t,t_0,z_0)] g^{\Gamma_0}= g^{\Gamma_\ell(t,t_0,z_0)},
\end{equation}
where the width  $\Gamma_\ell(t,t_0,z_0)\in\mathfrak S^+(d)$ and the corresponding normalization $c_{\Gamma_\ell(t,t_0,z_0)}$ are determined by the initial width $\Gamma_0$ and the Jacobian $F_\ell(t,t_0,z_0)$ according to
\begin{eqnarray}\label{def:Gamma}
\Gamma_\ell(t,t_0,z_0) 
&=& (C_\ell(t,t_0,z_0)+ D_\ell(t,t_0,z_0)\Gamma_0)(A_\ell(t,t_0,z_0) +B_\ell(t,t_0,z_0)\Gamma_0)^{-1}\\
\nonumber
c_{\Gamma_\ell(t,t_0,z_0)} 
&=& c_{\Gamma_0}\,{\rm det}^{-1/2}(A_\ell(t,t_0,z_0)+B_\ell(t,t_0,z_0)\Gamma_0).
\end{eqnarray}
The branch of the square root in ${\rm det}^{-1/2}$ is determined by continuity in time. 
\smallskip 

A quick calculation based on Taylor formula shows that the relation 
\begin{equation}\label{basic_metaplectic}
\mathcal U^\eps_{h_\ell}(t,t_0){\rm WP}^\eps_{z_0} (\varphi)=
{\rm e}^{\frac i\eps S_\ell(t,t_0,z_0)}
{\rm WP}^\eps_{\Phi^{t,t_0}_{h_\ell}(z_0)}(\varphi^\eps(t))
\end{equation}
if and only if $\varphi^\eps(t)$ solves
$$
i\partial_t \varphi^\eps (t)= \op_1\left(\int_0^1{\rm Hess}_z h_\ell\left(t, \Phi^{t,t_0}_{h_\ell}(z_0)+\tau z)\right)z\cdot z(1-\tau)d\tau\right) \varphi^\eps,\;\; \varphi^\eps (t_0)=\varphi.
$$ 
Therefore,  a remarkable  consequence of the fact that the metaplectic transform preserves the Gaussian structure according to~\eqref{eq:action_Gaussian} is that the Gaussian structure is preserved in the quantum evolution. Indeed, 
the action of $\mathcal U^\eps_{h_\ell}(t,t_0)$ on Gaussian wave packets~${\rm WP}^\eps_z (g^\Gamma)$ obeys 
 $$
 \mathcal U^\eps_{h_\ell}(t,t_0){\rm WP}^\eps_{z_0} (g^{\Gamma_0}) ={\rm e}^{\frac i\eps S_\ell(t,t_0,z_0)} {\rm WP}^\eps_{\Phi^{t,t_0}_{h_\ell}(z_0)} (g^{\Gamma_\ell(t,t_0,z_0)})+ \O(\sqrt \eps)
 $$
 in $\Sigma^k_\eps$ for any $k\in\N$.

\subsection{Parallel transport} \label{sec:parallel_trspt}
For systems, the wave function is valued in $L^2(\R^d,\C^m)$ and thus vector-valued. The propagation then involves transformation of the vector part of the eigenfunctions that is called {\it parallel transport}.
\smallskip

Denoting by $\pi_\ell ^\perp$ the projector $\pi_\ell^\perp=\1_{m}-\pi_\ell$, we define    self-adjoint matrices  $H^{{\rm adia}}_{\ell,1} $ by  
  \begin{align}\label{eq:H1adiab}
&   \pi_\ell ^\perp\, H_{\ell,1}^{\rm adia}\pi_\ell ^\perp=0,\;\;\;\;\;\pi_\ell\, H_{\ell,1}^{\rm adia}\pi_\ell= \pi_\ell \left( H_1  + \frac{1}{2i}\{H_0,\pi_\ell\} \right)\pi_\ell,\\
 &\pi_\ell ^\perp\, H_{\ell,1}^{\rm adia}\pi_\ell 
 = \pi_\ell ^\perp\left( i\partial_t \pi_\ell +i \{ h_\ell, \pi_\ell\}\right)\pi_\ell.
  \end{align}

One then  introduces the map ${\mathcal R}_\ell (t,t_0,z_0)$ defined for $\ell\in\{1,2\}$ by -
  \beq\label{def:Rell}
   i \partial_t{\mathcal R}_\ell (t,t_0,z_0) = H^{{\rm adia}}_{\ell,1} \left(t,\Phi_{h_\ell}^{t,t_0}(z_0)\right){\mathcal R}_\ell (t,t_0,z_0),\; \;{\mathcal R}_\ell (t_0,t_0,z_0)=\1_{m}.
\eeq
  The map $t\mapsto H^{{\rm adia}}_{\ell,1} \left(t,\Phi_{h_\ell}^{t,t_0}(z_0)\right)$ is a  locally Lipschitz  map valued in the set of   self adjoint matrices. Therefore, the existence of ${\mathcal R}_\ell (t,t_0,z_0) $ comes from solving  a  linear time dependent ODE by the Cauchy--Lipschitz Theorem.

  \begin{lemma}\label{lem:trsp_par}
  For all $(t_0,z_0)\in I\times \R^{2d}$ and $\ell\in\{1,2\}$, the matrices
${\mathcal R}_\ell (t,t_0,z_0)$ are unitary matrices. Besides, for all $\omega\in\R^{2d}$, they satisfy
\begin{align}\label{eq:trsp_para}
&{\mathcal R}_\ell (t,t_0,z_0)\pi_\ell(t_0,z_0) = \pi_{\ell}\left(t,\Phi^{t,t_0}_{h_\ell}(z_0)\right){\mathcal R}_\ell (t,t_0,z_0),\\
\label{eq:deriv_R}
&\omega\cdot \nabla_z \mathcal R_\ell (t,t_0,z)=\frac 1i \int_{t_0}^t \mathcal R_\ell (t,s,z) (F(s,t_0,z)\omega)\cdot \nabla_z H_{\ell,1}^{\rm adia} (s,\Phi^{s,t_0}_{h_\ell} (z))\mathcal R_\ell (s,t_0,z) ds. 
\end{align}
\end{lemma}

This Lemma is proved in Appendix~\ref{app:A}.
The relation~\eqref{eq:trsp_para} implies that 
 whenever a vector $\vec V_0$  is in the eigenspace of $H_0(t_0,z_0)$ for the eigenvalue $h_\ell(t_0,z_0)$, then  the vector ${\mathcal R}_\ell (t,t_0,z_0) \vec V_0$   is in the range of $\pi_\ell(t,\Phi^{t,t_0}_{h_\ell}(z_0))$.
  In other words, we have constructed a map that preserves the eigenspaces along  the flow:
  \[
  {\mathcal R}_\ell (t,t_0,z_0) : {\rm Ran} \left(\pi_\ell(t_0,z_0)\right) \mapsto {\rm Ran}\left(\pi_\ell(t,\Phi^{t,t_0}_{h_\ell}(z_0))\right).
  \]
  The matrices ${\mathcal R}_\ell (t,t_0,z_0)$ are sometimes referred to as Larmor precession (see~\cite[\S 14.2]{corobook}).
  \smallskip 
  
  For $(t,z)\in I\times \R^{2d}$, 
  the map $\vec V_0 \mapsto {\mathcal R}_\ell (t,t_0,z) \pi_\ell(t_0, z) \vec V_0$ is a parallel transport in the Hermitian vector  fiber bundle $(t,z)\mapsto {\rm Ran}(\pi_\ell(t, z))$ over the phase space $I\times \R^{2d}\subset \R^{1+2d}$,  
  associated with the curves $s\mapsto \gamma(s)=\left(s,\Phi^{s,t_0}_{h_\ell}(z) \right)_{s\in I}$ and the matrix $H^{\rm adia}_{\ell,1}$. Indeed, the covariant derivative along the curve $(\gamma(s))_{s\in I}$ is given by 
  \[
  \nabla_{\dot \gamma(s)}=\partial_t +J dh_\ell \cdot \nabla_z
  \]
and the relation 
 $\vec X\left(s, \Phi^{s,t_0}_{h_\ell}(z) \right) =  {\mathcal R}_\ell (t,t_0,z) \pi_\ell(t_0, z) \vec V_0$ defines a smooth section along the path $\gamma$ that satisfies 
 $ \nabla_{\dot \gamma(s)} \vec X(t,x)= -iH_{\ell,1}^{\rm adia}(t,x) \vec X(t,x)$.

\begin{example}
    In the situation of Example~\ref{ex_referee2}, for $\ell\in\{1,2\}$, the operators $\mathcal R_\ell$ are multiplications by $\e^{-itm_{\ell\ell}}$ where we have set $M=(m_{ij})_{1\leq i,j\leq 2}$.
\end{example}

\subsection{Classical quantities and wave-packets propagation}
The classical maps introduced before, in particular the parallel transport  $\mathcal R_{\ell}$, $\ell\in\{1,2\}$, play a  role on 
 the quantum side  in the  propagation of {\it vector-valued wave packets} by systems. By vector-valued wave packets, we mean initial data colinear to fixed direction $\vec V_0$ with scalar part that is a wave packet as in~\eqref{def:WP}: 
 \[
 \psi^\eps_0(x)= \vec V_0 {\rm WP}^\eps_{z_0}\varphi_0, \;\; x\in\R^d,\;\; \varphi_0\in\mathcal S(\R^d),\;\; z_0\in\R^{2d},\;\;\vec V_0\in\C^m.
 \]
 We will say that $\psi^\eps_0$ is {\it polarized} along the direction $\vec V_0$.
 \smallskip 
 
 Under the assumptions made in this text, the propagation of such data is described at leading order by the classical trajectories and the classical quantities associated with them, including the parallel transport. 

\begin{proposition}[Vector-valued wave packets]\label{prop:transport_1}
Let $k\in\N$ and assume that  $H^\eps=H_0+\eps H_1$ satisfies Assumptions~\ref{hyp:codim1} and~\ref{hyp:growthH}.
Let $\ell\in\{1,2\}$ and $(t_0,z_0)\in I\times \R^d$.
Then, 
for any $\varphi_0\in{\mathcal S}(\R^d,\C)$ and $\vec V_0\in  {\rm Ran}\, \pi_\ell(t_0,z_0)$, there exists a 
constant $C>0$ such that
$$
\sup_{t\in I}\left\|{\mathcal U}^\eps_{H}(t,t_0)\, \widehat{\vec V_0}\,  {{\rm WP}}^\eps_{z_0}\varphi_0 -  
{\rm e}^{\frac i\eps S_\ell(t,t_0,z_0)  } \,
\vec V_\ell(t,t_0,z_0)\, {{\rm WP}}^\eps_{\Phi_{h_\ell}^{t,t_0}(z_0)}\varphi(t) \right\|_{\Sigma^k_\eps} \le C\sqrt \eps,
$$
where the profile function $\varphi(t)$ and the vector of polarization $\vec V_\ell(t,t_0) $ are given by
\[
\varphi(t) = {\mathcal M}[F_\ell(t,t_0,z_0)]
\varphi_0,\;\;\mbox{and}\;\; 
\vec V_\ell(t,t_0,z_0) =  {\mathcal R}_\ell \left(t,t_0,\Phi^{t,t_0}_{h_\ell}(z_0)\right)\vec V_0 .
\]
\end{proposition}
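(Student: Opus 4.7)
My plan is to follow the standard ansatz--residual--Duhamel scheme. First I set
\[
\psi^\eps_{\rm app}(t) := {\rm e}^{\frac{i}{\eps}S_\ell(t,t_0,z_0)}\, \vec V_\ell(t,t_0,z_0)\, {\rm WP}^\eps_{\Phi^{t,t_0}_{h_\ell}(z_0)}\varphi(t),
\]
with $\vec V_\ell(t,t_0,z_0) = \mathcal R_\ell(t,t_0,\Phi^{t,t_0}_{h_\ell}(z_0))\vec V_0$ and $\varphi(t) = \mathcal M[F_\ell(t,t_0,z_0)]\varphi_0$. At $t=t_0$ all classical quantities reduce to their initial values, so $\psi^\eps_{\rm app}(t_0) = \vec V_0\, {\rm WP}^\eps_{z_0}\varphi_0 = \psi^\eps_0$. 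The goal is then to control the residual $r^\eps(t) := (i\eps\partial_t - \widehat H^\eps(t))\psi^\eps_{\rm app}(t)$ in $\Sigma^k_\eps$ and to conclude via Duhamel's formula together with the uniform bound on $\mathcal U^\eps_H$.

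The core computation is to expand both sides of the Schr\"odinger equation on $\psi^\eps_{\rm app}$. Writing $z_\ell(t):=\Phi^{t,t_0}_{h_\ell}(z_0)$, the conjugation identity $\widehat a\, {\rm WP}^\eps_z = {\rm WP}^\eps_z\, \op^w_1(a(z+\sqrt\eps\,\cdot))$ combined with the Taylor expansion
\[
H^\eps(t,z_\ell+\sqrt\eps w) = H_0(t,z_\ell) + \sqrt\eps\, dH_0(t,z_\ell)\cdot w + \frac{\eps}{2}\, d^2 H_0(t,z_\ell)(w,w) + \eps H_1(t,z_\ell) + O(\eps^{3/2})
\]
expands $\widehat{H^\eps}\psi^\eps_{\rm app}$ in powers of $\sqrt\eps$. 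I would compute $i\eps\partial_t\psi^\eps_{\rm app}$ by differentiating its four factors: the phase contributes $-\dot S_\ell = h_\ell - p_\ell\cdot\dot q_\ell$; the motion of the wave-packet centre produces at order $\sqrt\eps$ the symbol $-\dot p_\ell\cdot y+\dot q_\ell\cdot\eta$, which by Hamilton's equations equals $dh_\ell(t,z_\ell)\cdot w$; the metaplectic evolution of $\varphi$ yields $i\eps\,\partial_t\varphi = \frac{\eps}{2}\,\op^w_1({\rm Hess}_z h_\ell(t,z_\ell)(w,w))\varphi$; and the vector part satisfies $i\eps\,\dot{\vec V}_\ell = \eps\,H^{\rm adia}_{\ell,1}(t,z_\ell)\vec V_\ell$ by \eqref{def:Rell}.

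Matching the two expansions term by term uses the key structural identities. At order $1$ both sides give $h_\ell \vec V_\ell\, {\rm WP}^\eps_{z_\ell}\varphi$, on the right because $\vec V_\ell\in{\rm Ran}\,\pi_\ell(t,z_\ell)$ by Lemma~\ref{lem:trsp_par}, so $H_0\vec V_\ell = h_\ell\vec V_\ell$. At order $\sqrt\eps$, the diagonal piece $\pi_\ell\, dH_0\, \pi_\ell\cdot w = dh_\ell\cdot w$ is cancelled by the wave-packet-centre motion, while the off-diagonal piece $\pi_\ell^\perp\, dH_0\, \pi_\ell\cdot w\,\vec V_\ell = (h_\ell - h_{\ell'})\,d\pi_\ell\cdot w\,\vec V_\ell$ stays put as a residual in ${\rm Ran}\,\pi_\ell^\perp$. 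At order $\eps$, the Hessian term matches by the metaplectic choice of $\varphi(t)$, and the sub-principal identity \eqref{eq:H1adiab} is engineered so that $\pi_\ell H_{\ell,1}^{\rm adia}\pi_\ell = \pi_\ell(H_1+\frac{1}{2i}\{H_0,\pi_\ell\})\pi_\ell$ exactly absorbs both the $\eps H_1$ contribution and the $O(\eps)$ correction coming from the Moyal commutator of $H_0$ and $\pi_\ell$.

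The main obstacle is the order-$\sqrt\eps$ off-diagonal residual, which naively would be too large to survive Duhamel's division by $\eps$. The remedy, carried out in detail in Chapter~\ref{sec:prop}, is to refine the ansatz by a super-adiabatic correction $\sqrt\eps\,\psi^\eps_{\rm corr}(t)$ of the form $\psi^\eps_{\rm corr} = {\rm e}^{iS_\ell/\eps}\,\vec W_\perp(t)\,{\rm WP}^\eps_{z_\ell(t)}\tilde\varphi(t)$, where $\vec W_\perp\in{\rm Ran}\,\pi_\ell^\perp(t,z_\ell(t))$ is obtained algebraically by inverting $H_0(t,z_\ell)-h_\ell(t,z_\ell)$ on ${\rm Ran}\,\pi_\ell^\perp$; this inversion is legitimate because the gap $h_\ell-h_{\ell'}$ stays bounded away from zero along $z_\ell(t)$ in the adiabatic regime of this section. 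With this correction the residual becomes $O(\eps^{3/2})$ in $\Sigma^k_\eps$, and the uniform bound $\sup_t\|\mathcal U^\eps_H(t,t_0)\|_{\mathcal L(\Sigma^k_\eps)}\le C_{T,k}$ combined with Duhamel's formula yields $\|\psi^\eps(t) - \psi^\eps_{\rm app}(t) - \sqrt\eps\,\psi^\eps_{\rm corr}(t)\|_{\Sigma^k_\eps} = O(\eps)$. Since $\|\psi^\eps_{\rm corr}(t)\|_{\Sigma^k_\eps}$ is uniformly bounded, this gives the desired estimate $\|\psi^\eps(t)-\psi^\eps_{\rm app}(t)\|_{\Sigma^k_\eps} \le C\sqrt\eps$ uniformly on $J$.
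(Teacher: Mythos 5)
Your proposal follows a direct ansatz--residual--Duhamel route, essentially the Hagedorn/Combescure--Robert coherent-state approach, whereas this paper establishes the result through the machinery of super-adiabatic projectors (Theorems~\ref{quantpro} and~\ref{adia1}), the adiabatic decoupling of Proposition~\ref{far}, and the scalar-like wave-packet propagation of Proposition~\ref{evadia}. Both routes are valid; yours is more elementary and self-contained, while the paper's is more systematic and automatically produces the full asymptotic expansion to any order (which the paper then needs for Theorem~\ref{th:WPmain}). Your matching at orders $1$, $\sqrt\eps$ (diagonal) and the identification of the off-diagonal $O(\sqrt\eps)$ residual $(h_\ell-h_{\ell'})\pi_\ell^\perp d\pi_\ell\pi_\ell\cdot w$ are correct, and the super-adiabatic correction $\sqrt\eps\,\psi^\eps_{\rm corr}$ obtained by inverting $H_0-h_\ell$ on ${\rm Ran}\,\pi_\ell^\perp$ is exactly the wave-packet avatar of the $\pi^\perp\Pi_1\pi=(H_0-h)^{-1}\pi^\perp R_1\pi$ block of Theorem~\ref{quantpro}.

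The step you gloss over is the one that actually carries the proof: the claim that the corrected ansatz has residual $O(\eps^{3/2})$. At order $\eps$ the residual of $\psi^\eps_{\rm app}$ alone contains a nonzero quadratic-in-$w$ piece from the Hessian mismatch $\tfrac12\pi_\ell({\rm Hess}\,h_\ell - {\rm Hess}\,H_0)\pi_\ell = -\tfrac12(h_1-h_2)\pi_\ell\,{\rm Hess}\,\pi_1\,\pi_\ell$ (nonzero because $\pi_\ell\partial^2\pi_1\pi_\ell\neq 0$) together with the constant-in-$w$ term $\tfrac1{2i}\pi_\ell\{H_0,\pi_\ell\}\pi_\ell$, while $\sqrt\eps\,\psi^\eps_{\rm corr}$ contributes $-\eps(h_1-h_2)\op^w_1[\pi_\ell(d\pi_1\cdot w)\pi_\ell^\perp(d\pi_1\cdot w)\pi_\ell]\varphi$ plus Moyal lower-order corrections. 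The quadratic pieces do cancel via the identity $\pi_\ell\,d^2\pi_1\,\pi_\ell=-2\,\pi_\ell\,d\pi_1\,\pi_\ell^\perp\,d\pi_1\,\pi_\ell$, and the constant pieces can be made to cancel with the Moyal-bracket contributions, but this is precisely the delicate bookkeeping the super-adiabatic symbolic calculus is built to encode, and you merely assert it. Also, a small arithmetic slip: an $O(\eps^{3/2})$ residual gives, after dividing by $\eps$ in Duhamel, an $O(\sqrt\eps)$ error for $\psi^\eps-\psi^\eps_{\rm app}-\sqrt\eps\psi^\eps_{\rm corr}$, not $O(\eps)$ as written --- but since $\sqrt\eps\psi^\eps_{\rm corr}$ itself is $O(\sqrt\eps)$, the final $O(\sqrt\eps)$ conclusion still follows, so this does not affect the statement. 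To turn your sketch into a proof you would need either to verify the $O(\eps)$ cancellations explicitly, or to observe that the surviving $O(\eps)$ pieces are off-diagonal and exploit non-stationary phase (the gap condition along $z_\ell(t)$) in the Duhamel integral to gain the extra $\sqrt\eps$ --- which is, at bottom, the same use of the gap as in the super-adiabatic construction.
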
 

The proof of Proposition~\ref{prop:transport_1} can be found in    \cite[Chapter 14] {corobook} for Hamiltonians without crossings, and in \cite{FLR1} or in Appendix~\ref{app:C} of this memoir for codimension~1 crossings. Recall that the functional spaces $\Sigma^k_\eps$, $k\in\N$, are defined in~\eqref{eq:Sigmak}.

\section{Thawed and frozen Gaussian approximations}

Thawed and frozen Gaussian approximations have been introduced in the 80's in  theoretical chemistry~\cite{HK,Kay1,Kay2}. The frozen one has become popular as the so-called Herman--Kluk approximation.  They rely on the fact that the 
 family of wave packets $(g^\eps_z)_{z\in\R^{2d}}$ forms a continuous frame and 
provides for all square integrable functions $f\in L^2(\R^d)$ the reconstruction formula
\[
f(x)= (2\pi\eps)^{-d} \int_{z\in\R^{2d}}\langle g^\eps_z,f\rangle g^\eps_z (x) dz.
\]
The leading idea is then to write the unitary propagation of general, square integrable initial data $\psi^\eps_0\in L^2(\R^d)$ as 
\[
{\mathcal U}^\eps_H(t,t_0)\psi^\eps_0=(2\pi\eps)^{-d}  \int_{z\in\R^{2d}} \langle  g^\eps_z,\psi^\eps_0\rangle 
\,{\mathcal U}^\eps_H(t,t_0)g^\eps_z\, dz,
\]
and to take advantage of the specific properties of the propagation of Gaussian states to obtain an integral representation that allows in particular for an efficient numerical realization of the propagator.
\smallskip

Such a program has  completely been  accomplished in the scalar case and
the mathematical proof of the convergence of this approximation is more recent~\cite{RS1,R}. It can be easily extended to the adiabatic setting
 (see~\cite{FLR1}).
We recall in the first subsections the scalar  and  adiabatic results and then we explain how we extend this approach to systems presenting smooth crossings via a hopping process. Surface hopping has been popularized in theoretical chemistry by the algorithm 
of the fewest switches (see~\cite{TP}). The first mathematical result proving convergence of such a hopping algorithm has been performed in~\cite{FL1} in the context of conical intersections. Surface hopping has
        been combined with frozen Gaussian propagation 
in various instances, see for example \cite{WH,Lu}. Here, it is the first time that the combination of a Gaussian approximation and a hopping process are achieved in a fully rigorous manner.  All along this section, we use the Gaussian frame defined in~\eqref{def:gepsz}.

\subsection {The scalar case}
Let us assume for a while that $m=1$ and select one of the scalar Hamiltonians $h_\ell(t)$ that we assume to be subquadratic on $I\times \R^{2d}$. Define the operator 
\begin{equation}\label{def:J_thawed_scalar}
\mathcal J_{\ell,{\rm th}} ^{t,t_0}(f) = 
 (2\pi\eps)^{-d}\int_{\R^{2d}}{\rm e}^{\frac{i}{\eps}S_\ell(t,t_0,z)}\<g^\eps_z, f\>   g_{\Phi^{t,t_0}_{h_\ell}(t,z)}^{\Gamma_\ell(t,t_0,z),\eps}dz.
 \end{equation}

 As first proposed in~\cite{HK}, it is also possible to get rid of the time-dependent variance matrices~$\Gamma_\ell$ by introducing the  {\it Herman-Kluk prefactors  for the $\ell$-th modes}, $a_\ell$, defined by 
\begin{equation}\label{def:prefactor}
a_\ell(t,t_0,z) = 2^{-d/2}{\rm det}^{1/2}\left(A_\ell(t,t_0,z)+D_\ell(t,t_0,z)+i(C_\ell(t,t_0,z)-B_\ell(t,t_0,z)\right),
\end{equation}
where $A_\ell(t,t_0,z)$, $B_\ell(t,t_0,z)$, $C_\ell(t,t_0,z)$ and $D_\ell(t,t_0,z)$ are the $d\times d$ matrices associated with the differential of the flow map according to~\eqref{def:F}.
One then sets 
\begin{equation}\label{def:J_frozen_scalar}
 \mathcal J_{\ell,{\rm fr}}^{t,t_0} (f)= 
 (2\pi\eps)^{-d}\int_{\R^{2d}}{\rm e}^{\frac{i}{\eps}S_\ell(t,t_0,z)}\<g^\eps_z, f \>a_\ell (t,t_0,z)  g_{\Phi_{h_\ell}^{t,t_0}(z)}^\eps dz
 \end{equation}
 
It is proved in~\cite{R,RS1} that the operators $\mathcal J_{{\rm th}} ^{t,t_0}$ and $\mathcal J_{{\rm fr}} ^{t,t_0}$ approximate the propagator $\mathcal U_{h_\ell}^{t,t_0}$ according to 
 \[
\left\| \mathcal U^\eps_{h_\ell} (t,t_0)\left(\phi^\eps_0 \right) - \mathcal J_{\ell, {\rm th/fr}} ^{t,t_0}( \phi^\eps_0 ) \right\| _{L^2} \leq C_T \,  \eps\,  \| \phi^\eps_0 \|_{L^2},
 \]
 for all 
 $\phi^\eps_0 \in L^2 (\R^d)$ and for all $t\in[t_0,t_0+T]$. Here, $C_T>0$ is a constant depending on 
 $T>0$ where $t_0,T$ are chosen in $\R$ with $[t_0,T]\subset I$ and $t_0<T$.
 \smallskip 

The arguments we detail in Chapter~\ref{chap:4} give a proof of this result for frequency localized data. However, the result holds as soon as the initial data are uniformly bounded in $L^2(\R^d)$ (see~\cite{RS1,R}).  

\subsection{The  adiabatic situation}
Whenever the eigenvalues are of constant multiplicity, the classical quantities that we have introduced above are enough to construct an approximation of the propagator. For $\ell\in\{1,2\}$, we define the first order thawed Gaussian approximation for the $\ell$-th mode as the operator  $\mathcal J_{\ell, {\rm th}}^{t,t_0}$ defined on functions of the form $\psi=\widehat{\vec  V }f$ for $\vec V$  a smooth given vector-valued function and  $f$ any function in $L^2(\R^d)$,
\begin{equation}\label{def:J_thawed}
\mathcal J_{\ell,\vec V, {\rm th}} ^{t,t_0}(f) = 
 (2\pi\eps)^{-d}\int_{\R^{2d}}{\rm e}^{\frac{i}{\eps}S_\ell(t,t_0,z)}\<g^\eps_z, f\> \vec V_\ell (t,t_0,z)  g_{\Phi^{t,t_0}_{h_\ell}(t,z)}^{\Gamma_\ell(t,t_0,z),\eps}dz,
 \end{equation}
 with 
 \begin{equation}\label{def:Vl}
 \vec V_\ell (t,t_0,z)= \mathcal R_\ell (t,t_0,z) \pi_\ell (t_0,z) \vec V(z).
 \end{equation}
The family of operators $f\mapsto \mathcal J_{\ell,\vec V, {\rm th}} ^{t,t_0}(f)$ has a Schwartz distribution kernel and defines a Fourier integral operator with an explicit complex phase. It is proved in Corollary~\ref{cor:boundedness} that it is  a bounded family in $\mathcal L\left(L^2(\R^d),\Sigma^k_\eps(\R^d)\right)$.

\begin{theorem} [Thawed Gaussian approximation \cite{Kay1,R,RS1,FLR1}]\label{th:thawed_standard}
Assume $h_\ell$ is an eigenvalue of constant multiplicity of a matrix $H^\eps=H_0+\eps H_1$ of subquadratic growth on the time interval~$I$. Let $t_0,T\in\R$ with $[t_0,T]\subset I$. Then, there exists $C_T>0$ such that for all 
 $\phi^\eps_0 \in L^2 (\R^d)$, $\vec V\in\mathcal C^\infty(\R^{2d}, \C^m)$ bounded with bounded derivatives, for all $t\in[t_0,t_0+T]$
 \[
\left\| \mathcal U^\eps_{H} (t,t_0)\left( \widehat {\pi_\ell(t_0) {\vec V} }\phi^\eps_0 \right) - \mathcal J_{\ell, \vec V,{\rm th}} ^{t,t_0}( \phi^\eps_0 ) \right\| _{L^2} \leq C_T \,  \eps\,  \| \phi^\eps_0 \|_{L^2}.
 \]
\end{theorem}

\begin{remark}\label{rem:pluto8}
 \begin{enumerate}
 \item Note that in Theorem~\ref{th:thawed_standard}, the family $(\phi^\eps_0)_{\eps>0}$ is not supposed to be frequency localized. 
 \item 
 The approach  we develop in this text yields as a by-product the convergence of the thawed propagator in the spaces $\Sigma^\eps_k$ for initial data $(\phi^\eps_0)_{\eps>0} $ which are frequency localized  with $k$ satisfying  $N_0>k+d$  ($N_0$ being associated to $(\phi^\eps_0)_{\eps_0} $ by Definition~\ref{def:freq_loc_0}).
 \end{enumerate}
\end{remark}

\medskip

In a similar manner than in the scalar case, one 
 defines the first order frozen Gaussian approximation for the $\ell$-th mode as the operator $\mathcal J_{\ell,\vec V,{\rm fr}}^{t,t_0}$ defined by 
\begin{equation}\label{def:J_frozen}
 \mathcal J_{\ell,\vec V,{\rm fr}}^{t,t_0} (f)= 
 (2\pi\eps)^{-d}\int_{\R^{2d}}{\rm e}^{\frac{i}{\eps}S_\ell(t,t_0,z)}\<g^\eps_z, f \>a_\ell (t,t_0,z) \vec V_\ell (t,t_0,z)  g_{\Phi_{h_\ell}^{t,t_0}(z)}^\eps dz
 \end{equation}
where $\vec V_\ell (t,t_0,z)$ is defined in~\eqref{def:Vl} and $a_\ell(t,t_0,z)$ is the Hermann-Kluk prefactor associated with the $\ell$-th mode as in~\eqref{def:prefactor}.

Here again, this family of operators is bounded in the functional space $\mathcal L\left(L^2(\R^d),\Sigma^k_\eps(\R^d)\right)$ (see Corollary~\ref{cor:boundedness}).
The next result then is a consequence of Theorem~\ref{th:thawed_standard} and Remark~\ref{rem:pluto8} also holds for this statement.

 \begin{theorem}[Frozen Gaussians approximation \cite {Kay1,R,RS1,FLR1}]\label{th:frozen_standard}
 Assume $h_\ell$ is an eigenvalue of constant multiplicity of a matrix $H=H_0+\eps H_1$ of subquadratic growth on the time interval~$I$. Let $t_0,T\in\R$ with $[t_0,T]\subset I$. Then, there exists $C_T>0$ such that for all 
 $\phi^\eps_0\in L^2(\R^d)$, $\vec V\in\mathcal C^\infty(\R^{2d}, \C^m)$ bounded with bounded derivatives, and for all $t\in[t_0,t_0+T]$,
 \[
\left\| \mathcal U^\eps_{H}(t,t_0)  \left( \widehat {\pi_\ell(t_0) {\vec V}} \phi^\eps_0 \right) - \mathcal J_{\ell, \vec V, {\rm fr}} ^{t,t_0}(\phi^\eps_0) \right\| _{L^2} \leq C_T \,  \eps\,  \| \phi^\eps_0\|_{L^2}.
 \]
    \end{theorem}

The terminology {\it thawed/frozen} for these Gaussian approximations was introduced by Heller~\cite{Hel} to  put emphasis on the fact that, on the first case,  the covariance of the matrix was evolving ``naturally''   by following the classical motion, while, on the other one, the covariance is ``frozen'' (constant). The possibility of freezing the covariance matrix was realized by Herman and Kluk (see~\cite{HK})  by computing the kernel of the time dependent propagator.
\smallskip

In the next sections, we present our results and an extension of these statements to systems with crossings. 
The method we develop also allows  to prove the approximations of Theorems~\ref{th:frozen_standard} and~\ref{th:thawed_standard} in the spaces $\Sigma^k_\eps(\R^d)$, with additional assumptions 
on the initial data.

\subsection{Initial value representations for codimension $1$ crossing at order $\sqrt\eps$. }

Our first result consists in an extension of the range of validity of Theorems~\ref{th:thawed_standard} and~\ref{th:frozen_standard} to Hamiltonians presenting smooth crossing and satisfying~\eqref{hyp:codim1} at the prize of a loss in the accuracy of the approximation. 

\begin{theorem}[Leading order thawed/frozen Gaussian approximation]\label{th:sqrteps}
Let $k\in\N$. Assume 
  $H^\eps=H_0+\eps H_1$ satisfies Assumptions~\ref{hyp:smooth_cros} and~\ref{hyp:codim1}  on the interval $I$.
Then, there exist constants $C_{T,k}>0$,  such that for all initial data 
  $\psi^\eps_0=\vec V \phi^\eps_0$ that satisfies Assumption~\ref{hyp:data} with frequency localization index $N_0>k+d$ and constant $C_0$, there exists $\eps_0>0$ such that 
for all  $t\in I$ and  $\eps\in(0,\eps_0]$, we have
 \[
\left\| \mathcal U^\eps_{H}(t,t_0) \left( \widehat{\pi_\ell(t_0){\vec V}}\phi^\eps_0\right) - \mathcal J_{\ell, {\rm th}/{\rm fr}}^{t,t_0}\phi^\eps_0 \right\| _{\Sigma^k_\eps} \leq C_{T,k} \,  \sqrt \eps\,\left(  \| \phi^\eps_0\|_{L^2} + C_0\right).
 \]
\end{theorem}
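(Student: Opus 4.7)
The plan is to combine a continuous Gaussian-frame decomposition of the initial datum with the pointwise wave-packet propagation result of Theorem~\ref{th:WPmain}. Writing
$$
\widehat{\pi_\ell(t_0)\vec V}\,\phi^\eps_0 = (2\pi\eps)^{-d}\int_{\R^{2d}}\<g^\eps_z,\phi^\eps_0\>\,\widehat{\pi_\ell(t_0)\vec V}\,g^\eps_z\,dz,
$$
I would use symbolic calculus at the wave-packet center to replace $\widehat{\pi_\ell(t_0)\vec V}g^\eps_z$ by the polarized Gaussian $\pi_\ell(t_0,z)\vec V(z)\,g^\eps_z$ modulo an $O(\sqrt\eps)$ remainder in $\Sigma^k_\eps$. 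Frequency localization with index $N_0>k+d$ makes $(2\pi\eps)^{-d}|\<g^\eps_z,\phi^\eps_0\>|$ decay like $C_0\langle z\rangle^{-N_0}$ outside a ball, so one may truncate the $z$-integration to a large fixed compact $K\subset\R^{2d}$ up to a tail controlled by $\sqrt\eps\,(C_0+\|\phi^\eps_0\|_{L^2})$, a reduction made precise in Chapter~\ref{chap:4}.

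The core of the argument is to apply Theorem~\ref{th:WPmain} pointwise in $z\in K$: it provides
$$
\mathcal U^\eps_H(t,t_0)\bigl(\pi_\ell(t_0,z)\vec V(z)\,g^\eps_z\bigr) = {\rm e}^{\frac{i}{\eps}S_\ell(t,t_0,z)}\,\vec V_\ell(t,t_0,z)\,g^{\Gamma_\ell(t,t_0,z),\eps}_{\Phi^{t,t_0}_{h_\ell}(z)} + r^\eps(t,z),
$$
with $\|r^\eps(t,z)\|_{\Sigma^k_\eps}\le C_{T,k}\sqrt\eps\,\langle z\rangle^M$, the loss $\sqrt\eps$ reflecting the at-most-single hop along~$\Upsilon$ during the interval~$I$. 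Re-integrating this identity against the frame coefficients reconstitutes exactly $\mathcal J^{t,t_0}_{\ell,{\rm th}}(\widehat{\vec V}\phi^\eps_0)$, and the decay $\langle z\rangle^{-N_0}$ dominates the polynomial growth $\langle z\rangle^M$ because $N_0>k+d$, yielding the thawed bound. The frozen version then follows by comparing $\mathcal J^{t,t_0}_{\ell,{\rm th}}$ with $\mathcal J^{t,t_0}_{\ell,{\rm fr}}$ via a Gaussian stationary-phase identity which, on frequency-localized inputs, produces a difference of order~$\eps$ in $\Sigma^k_\eps$ --- exactly as in the scalar Herman--Kluk analysis of~\cite{R,RS1} --- and this is better than~$\sqrt\eps$.

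The main obstacle is Theorem~\ref{th:WPmain} itself, which is used here as a black box and absorbs all the non-adiabatic analysis at the crossing (branching and off-diagonal interaction terms); its proof occupies Chapters~\ref{chap:2} and~\ref{sec:prop}. Granting it, the delicate remaining point is the uniformity of the $\Sigma^k_\eps$-estimate with respect to the wave-packet center $z\in K$: one must control the semi-classical derivatives $(x^\alpha(\eps\partial_x)^\beta)$ of $r^\eps(t,z)$, which introduces powers of the classical flow $\Phi^{t,t_0}_{h_\ell}(z)$, of the stability matrix $F_\ell(t,t_0,z)$, and of the parallel-transport generator $H^{{\rm adia}}_{\ell,1}$. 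These all grow polynomially in $\langle z\rangle$ under the subquadratic hypothesis of Assumption~\ref{hyp:growthH} (together with Lemma~\ref{lem:growth_eigen_bis} for the eigenprojectors), and the condition $N_0>k+d$ supplies exactly the extra $k+d$ orders of decay in $z$ needed to balance them and yield the announced $\sqrt\eps$-remainder uniformly in $t\in I$ and $\eps\in(0,\eps_0]$.
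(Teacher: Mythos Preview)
Your outline has the right architecture (frame decomposition, apply Theorem~\ref{th:WPmain} fiberwise, re-integrate), and the thawed-to-frozen step is indeed exactly the $O(\eps)$ homotopy argument of~\cite{R} carried out in Section~\ref{sec:th_to_fr} via Corollary~\ref{cor:chemin}. But there is a genuine gap in the handling of the remainder. You use Theorem~\ref{th:WPmain} only at order $\sqrt\eps$, writing $\mathcal U^\eps_H(t,t_0)(\pi_\ell(t_0,z)\vec V(z)g^\eps_z)=\text{(leading Gaussian)}+r^\eps(t,z)$ with $\|r^\eps\|_{\Sigma^k_\eps}\lesssim\sqrt\eps\langle z\rangle^M$. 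The problem is that $r^\eps(t,z)$ has no wave-packet structure, so when you feed it back into the Bargmann integral you are left with the crude bound of Theorem~\ref{lem:Barg_ext}(1), which carries the prefactor $(2\pi\eps)^{-d}$. Over a region of volume $R^{2d}$ this yields an error of order $\eps^{-d}R^{2d}\sqrt\eps$, which diverges. The paper avoids this by invoking the \emph{full} asymptotic expansion of Theorem~\ref{th:WPmain} to order $N$: the non-wave-packet remainder is then $O(\eps^N)$ and absorbs the $\eps^{-d}$ loss once $N>d$, while the intermediate terms $\eps^{j/2}\psi^{\eps,N}_{\ell,j}$ for $j\ge1$ \emph{are} wave packets, so they fall under the uniformly bounded operators of Theorem~\ref{lem:Barg_ext}(2) and contribute the honest $O(\sqrt\eps)$.

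A second, related issue is your truncation to a ``large fixed compact $K$''. For frequency localization at scale $\beta=0$ (the only hypothesis here), Lemma~\ref{lem:freq_loc} gives a tail of size $C_0 R^{-(N_0-k-d)}$ with no factor of $\eps$; a fixed $R$ therefore cannot make this $O(\sqrt\eps)$. The paper instead takes $R=\eps^{-\gamma}$ with $\gamma\ge(N_0-k-d)^{-1}/2$ and then chooses $N\ge\tfrac12+d(1+\gamma)$ so that the two error terms $\eps^\beta C_0 R^{-(N_0-k-d)}$ and $\eps^{N-d}R^{2d}\|\phi^\eps_0\|_{L^2}$ are simultaneously $O(\sqrt\eps)$. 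Both issues are fixed by the same mechanism: exploit Theorem~\ref{th:WPmain} to arbitrarily high order and let the truncation radius depend on $\eps$.
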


The remarks below also hold for Theorems~\ref{thm:TGeps}, \ref{thm:FGeps} and~\ref{thm:FGeps_av}. 
\begin{remark}
\begin{enumerate}
\item Of course the result also holds for all initial data of the form
\[
\psi^\eps_0(x)= \widehat {\vec V}  \phi^\eps_0(x)+r^\eps_0(x),\;\; x\in\R^d
\]
when the family $(r^\eps_0)_{\eps>0}$ satisfies  $\| r^\eps_0\|_{L^2(\R^d)}=\O(\eps)$ in $\Sigma^k_\eps$ for the index $k$ considered in the statement.
\item The fact of being frequency localized  with $N_0>k+d+\frac 12$ implies that $(\phi^\eps_0)_{\eps>0}$ is bounded in $\Sigma^k_\eps$ (see Section~\ref{sec:freq_loc_Sigma_k}). Thus, $( \mathcal U^\eps_H(t,t_0) \psi^\eps_0)_{\eps>0}$ also is  bounded in $\Sigma^k_\eps$ and this space is the natural space where studying the  approximation. 
\item The control of the approximation in terms of the initial datum by  $ \| \phi^\eps_0\|_{L^2} + C_0$  instead of $\Vert \phi^\eps_0\Vert_{\Sigma^\eps_k}$ is due to the method of the proof, which has to account for the presence of the crossing. The constant $C_0$ (and the $L^2$-norm) controls the $\Sigma^k_\eps$-norm. 
\end{enumerate}
\end{remark}

The loss of accuracy of the approximation, in $\sqrt\eps$ instead of $\eps$,  is  due to  the presence of the crossing set~$\Upsilon$. Indeed, the crossing  induces transitions between the modes that are exactly of order~$\sqrt\eps$ and cannot be neglected. If the initial datum is frequency localized in  a domain such  that all the classical trajectories issued from its microlocal support at time $t_0$ do not reach the crossing set before the time $t_0+T$, then an estimate in $\eps$ will hold. However, if these trajectories pass through the crossing, some additional terms of order $\sqrt\eps$ have to be added to obtain an approximation at order~$\eps$.
 Let us now introduce the hopping trajectories that we will consider and the branching of classical quantities that we will use above the crossing set for treating these transitions. 
 \smallskip

\subsection{Hopping trajectories and branching process}

Assume $H^\eps=H_0+\eps H_1$ satisfies condition~\ref{hyp:codim1} and~\Cref{hyp:growthH} on the interval $I$.
For considering initial data $\psi^\eps_0=\widehat{\vec V} \phi^\eps_0$ with $(\phi^\eps_0)_{\eps>0}$ frequency localized in a compact set $K\subset B(0,R_0)$, we are going to make  assumptions on the set~$K$.
\smallskip

We consider sets $K$ that  are connected compact  subsets of  $\R^{2d}$  and that do not intersect the crossing set~$\Upsilon$. If one additionally assumes that the  trajectories $\Phi^{t,t_0}_{h_\ell}(z)$ issued from points $z\in K$ intersect $\Upsilon$ on generic crossing points, 
 then, because of their transversality to~$\Upsilon$,  a given
 trajectory~$\Phi_{h_\ell}^{t,t_0}(z)$ issued from $z\in K$  meets~$\Upsilon$ only a finite number of times.
 We then denote by $(t^\flat_\ell(t_0,z),\zeta^\flat_\ell(t_0,z))$ the first crossing  point in $\Upsilon$:
\begin{equation}\label{def:zflat}
\zeta_\ell^\flat(t_0,z)=  \Phi^{t^\flat_\ell(t_0,z), t_0}_{h_\ell} (z).
\end{equation}

 For the $\ell$-th mode and the compact~$K$, we define
\[
t_{\ell,{\rm max}}^\flat(t_0,K) = \max\{t_\ell^\flat(t_0,z),\; z\in K\}\;\;\mbox{and}\;\;  t_{\ell,{\rm min}}^\flat(t_0,K) = \min\{t_\ell^\flat(t_0,z),\; z\in K\}.
\] 
We shall assume that $K$ is well-prepared for the mode $\ell$  in the sense that all trajectories issued from $K$ for this mode~$\ell$ have passed through $\Upsilon$ (if they do) before the ones for the other mode start to reach $\Upsilon$. 

\begin{assumption}[Well-prepared frequency domain]\label{hyp:compact}
Let $K$ be a connected compact  subset $K$ of  $\R^{2d}$ which  does not intersect the crossing set~$\Upsilon$. We say that $K$ is a 
well-prepared frequency domain on the time-interval~$I$ if for $\ell\in\{1,2\}$, the  trajectories $(\Phi^{t,t_0}_{h_\ell} (z))_{t\in I}$ issued from points $z\in K$ intersect $\Upsilon$ on generic crossing points and one has 
\[
t_{1,{\rm max}}^\flat(t_0,K) < t_{2,{\rm min}}^\flat(t_0,K).
\]
\end{assumption}

A space-time crossing point $(t^\flat_\ell (t_0,z),\zeta^\flat _\ell(t_0,z))$ is  characterized by three parameters
\[
\mu^\flat\in\R, \qquad (\alpha^\flat,\beta^\flat)\in\R^{2d}
\]  given by 
\begin{align}\label{def:mu}
&\mu^\flat (t_0,z) =
\partial_t f+\{v,f\}
\left(t^\flat_\ell(t_0,z), \zeta^\flat_\ell(t_0,z)\right),\\
\label{def:alpha_beta}
&\left(\alpha^\flat (t_0,z),\beta^\flat (t_0,z)\right)  = 2\, J \nabla _z f\left(t^\flat_\ell(t_0,z),\zeta^\flat_\ell(t_0,z)\right).
\end{align}

Let us consider a trajectory for the mode $\ell=1$ that reaches the hypersurface~$\Upsilon$.
The hopping process is affected with a {\it transition coefficient }  $\tau_{1,2}(t,t_0, z)$ that restricts the space time variables $(t,z)$ to trajectories that have met the crossing set~$\Upsilon$
\begin{equation}\label{def:tau}
\tau_{1,2}(t,t_0, z)  = \1_{t\geq t^\flat_1(t_0,z)}\sqrt{\frac{\pi}{i\mu^\flat (t_0,z)}}.
\end{equation} 
Note that when $K$ satisfies  Assumption~\ref{hyp:compact}, then if $t<t^\flat_{1,{\rm min}}(K)$ and $z\in K$, then $\tau_{1,2}(t,t_0,z)=0$. Moreover,  if $t\in \left(t^\flat_{1,{\rm max}}(K),t^\flat_{2,{\rm min}}(K) \right)$, $z\mapsto \tau_{1,2} (t,t_0,z)$ is smooth. 
\smallskip

One also considers a {\it matrix of change of polarization}
\begin{equation}\label{def:W12}
W_{1\to 2}(t_0,z)= W_1\!\left(t_1^\flat(t_0,z), \zeta_1^\flat(t_0,z)\right)^*
\end{equation}
with 
 \begin{equation}
 \label{def:W1}
  W_{1} =  \pi_1 H_1 \pi_2 +i\pi_1 \left( \partial_t \pi_1 +\frac 12 \{ h_1+h_2,\pi_1\} \right)\pi_2
  \end{equation}
The matrix $  W_{1\to 2}$
maps ${\rm Ran}(\pi_1(\zeta_1^\flat(t_0,z))$ to   ${\rm Ran}(\pi_2(\zeta_1^\flat(t_0,z))$.
\smallskip

One then introduces {\it hopping trajectories} by setting
\begin{equation}\label{def:traj12}
 \Phi_{1,2}^{t,t_0}( z)= \Phi_{2}^{t,t^\flat_1(t_0,z)}\left(\zeta_1^\flat(t_0,z)\right), \;\;t>t^\flat_1(t_0,z).
 \end{equation}
This trajectory $\left(\Phi_{1,2}^{t,t_0}( z)\right)_{t>t^\flat(t_0,z)}$ is the branch of  a generalized trajectory that has hopped from the mode~$\ell=1$ to the mode~$\ell=2$
at the crossing point $(t^\flat_1(t_0,z),\zeta^\flat_1(t_0,z))$. 
One could define similarly trajectories hopping  from the mode~$\ell=2$ to~$\ell=1$ by exchanging the role of the indices $1$ and $2$. 
\smallskip

Along these trajectories, one defines classical quantites as follows: 
\begin{enumerate}
\item[(a)]  The function $S_{1,2} (t,t_0,z)$ is the action accumulated along the hopping trajectories, i.e. between times $t_0$ and $t^\flat_1(t_0,z)$ on the mode~$\ell=1$ and then on the mode~$\ell=2$
\begin{equation}\label{def:S12}
S_{1,2} (t,t_0,z)  =S_1(t^\flat_1(t_0,z),t_0,z)+ S_{2}(t,t^\flat_1(t_0,z),\zeta^\flat_1(t_0,z)) ,
\end{equation}
\item[(b)] 
The matrix $\Gamma_{1,2}(t,t_0,z)$ is generated according to~\eqref{def:Gamma} for the mode $\ell=2$ along the trajectory
$\left(\Phi_{1,2}(t,t_0, z)\right)_{t>t^\flat_1(t_0,z)}$   starting at time $t_1^\flat =  t^\flat_1(t_0,z)$  from the matrix 
\begin{equation}\label{def:Gammaflat}
\Gamma^\flat(t_0,z)= \Gamma_1(t^\flat_1, t_0,z) - \frac
{(\beta^\flat -\Gamma_1(t^\flat_1, t_0,z) \alpha^\flat)\otimes (\beta^\flat -\Gamma_1(t^\flat_1, t_0,z))}
{2\mu^\flat -\alpha^\flat \cdot \beta^\flat +\alpha^\flat \cdot \Gamma_1(t^\flat_1, t_0,z) \alpha^\flat},
\end{equation}
\item[(c)] The vector 
$\vec V_{1,2}(t,t_0,z)$ is obtained by propagating the vector $\vec V_1\left(t_1^\flat, t_0,z\right)$ for the mode $\ell=2$ along the trajectory
$\left(\Phi_{1,2}(t,t_0, z)\right)_{t>t^\flat(t_0,z)}$  starting at time $t_1^\flat$  from the vector $\pi_2(t_1^\flat,\zeta_1^\flat) \vec V_1\left(t_1^\flat, t_0,z\right)$ with $\zeta_1^\flat = \zeta_1^\flat(t_0,z)$. One has
\begin{equation}\label{def:vecV12}
\vec V_{1,2}(t,t_0,z) = \mathcal R_2 (t,t_1^\flat,\zeta_1^\flat)\pi_2\left(t_1^\flat,\zeta_1^\flat\right) W_{1\rightarrow 2}(t_0,z)\vec V_1\left(t_1^\flat, t_0,z\right).
\end{equation}
\item[(d)]  The matrices $F_{1,2}(t,t_0,z)$ are associated with the flow maps 
\begin{equation}
F_{1,2}(t,t_0,z) =\partial_z \Phi^{t,t_0}_{1,2}(z)= 
\begin{pmatrix}
A_{1,2}(t,t_0,z) & B_{1,2}(t,t_0,z)\\
C_{1,2}(t,t_0,z) & D_{1,2}(t,t_0,z)
\end{pmatrix}.
\end{equation}
\item[(e)]  The transitional Herman--Kluk prefactors
 depend on $\Gamma_{1,2}(t,t_0,z)$ and $ \tau _{1,2}(t,t_0,z)$ according to 
\begin{align}
\label{eq:prefactor2}
a_{1,2}&= \tau _{1,2} \frac
{{\rm det}^{1/2} (C_{1,2}-iD_{1,2} -i(A_{1,2}-iB_{1,2}))}
{ {\rm det}^{1/2} (C_{1,2}-iD_{1,2}-\Gamma_{1,2}(A_{1,2}-iB_{1,2}))}
\\
 \nonumber
&= \tau _{1,2} \frac
{{\rm det}^{1/2} (A_{1,2}+D_{1,2} +i(C_{1,2}-B_{1,2}))}
{ {\rm det}^{1/2} (D_{1,2}+iC_{1,2}-i\Gamma_{1,2}(A_{1,2}-iB_{1,2}))}
\end{align}
where we have omitted to mark the dependence on $(t,t_0,z)$ for readability. 
\end{enumerate}
\smallskip 

\begin{example}
    With the notations of Example~\ref{ex_referee2}, if $q_{1}(t^\flat)=0$ with $p^\flat:=p_1(t^\flat)\not=0$, we have a generic crossing points with transition parameters
    \[
    \mu^\flat=p^\flat,\;(\alpha^\flat,\beta^\flat)=(0,-w(0))
    ,\;\;\tau_{1,2}(t,0,z)=\1_{t\geq t_1^\flat(0,z)}\sqrt{\frac{\pi}{ip^\flat}},\;\; W_{1\to 2}(t_0,z)= \begin{pmatrix} 0 & 0\\ \overline m_{12}& 0\end{pmatrix}
    \]
    where we have set  $M=(m_{ij})_{1\leq i,j\leq 2}$ and used that  $\pi_1$ and $\pi_2$ are the orthogonal  projectors on the vectors  $(1,0)$ and $(0,1)$ respectively.
\end{example}

With these quantities in hands,  we can define the correction terms of order   $\sqrt\eps$ of the thawed and the frozen approximations and state our main results.

\subsection{Thawed Gaussian approximation at order $\eps$} \label{sec:notation_thaw}
With the notations of the preceding section, one defines the {\it thawed Gaussian correction term  for the mode $\ell=1$} as
\begin{equation}\label{def:J12th}
\mathcal J_{1\to 2,\vec V , {\rm th}}^{t,t_0} (f) =
(2\pi\eps)^{-d}\int_{z\in K}\tau _{1,2}(t,t_0,z)
{\rm e}^{\frac{i}{\eps}S_{1,2}(t,t_0,z)}  \langle g_z^\eps, f\rangle  \vec V_{1,2}(t,t_0,z)  g_{\Phi_{1,2}^{t,t_0}(z)}^{\Gamma_{1,2}(t,t_0,z),\eps}
dz
\end{equation}

The formula~\eqref{def:J12th}  defines a family of operators that is bounded in $\mathcal L(L^2(\R^d), \Sigma^k_\eps(\R^d))$ (see Corollary~\ref{cor:boundedness}). 
The restriction $t>t_1^\flat(t_0,z)$ introduces a localization of the domain of integration on one side of the hypersurface $\{t=t_1^\flat(t_0,z)\}$. 
 \smallskip 
 
The  thawed Gaussian correction term  for the mode $\ell=2$, denoted by $\mathcal J_{2\to 1, \vec V, {\rm th}}^{t,t_0}$ would be defined by exchanging the roles of the indices $1$ and $2$.  These correction terms allow to ameliorate the accuracy of the thawed Gaussian approximation and to obtain an approximation at order~$\eps$. 

\begin{theorem}[Thawed Gaussian approximation with hopping trajectories]\label{thm:TGeps}
Let $k\in\N$. Assume 
  $H^\eps=H_0+\eps H_1$  satisfies Assumption~\ref{hyp:smooth_cros} and s~\ref{hyp:codim1} 
  on the interval $I$.
    Let $K$  be a compact satisfying Assumption~\ref{hyp:compact}. Let $\varsigma\in(0,1)$.
Then, there exists constants $C_{T,k,K,\varsigma}>0$,  such that for all initial datum
  $\psi^\eps_0=\vec V \phi^\eps_0$ that satisfies Assumption~\ref{hyp:data} 
    with frequency localization index $N_0>k+d$ and constants $C_0$, $R_0
    $ with $K\subset B(0,R_0)$, there exists $\eps_0>0$ such that 
for all  $t\in I$
we have for $\eps\in(0,\eps_0]$,
 \begin{align*}
\Bigl\| \mathcal U^\eps_H(t,t_0) \psi^\eps_0 
&  -
 \mathcal J_{1, \vec V, {\rm th}}^{t,t_0} \left(\phi^\eps_0\right) -  \mathcal J_{2, \vec V, {\rm th}}^{t,t_0} \left(\phi^\eps_0\right)
 -\sqrt\eps \mathcal J_{1\to 2, \vec V, {\rm th}}^{t,t_0} \left( \phi^\eps_0\right) 
\Bigr\|_{\Sigma^k_\eps} 
\leq  \eps^{1-\varsigma} \, C_{T,k,K,\varsigma}
\left( C_0 +  \| \phi^\eps_0\|_{L^2} \right).
\end{align*}
\end{theorem}

This result emphasizes  that for systems with smooth crossings, a term of order $\sqrt\eps$ is generated by the crossing. 
\smallskip

\subsection{Frozen Gaussian approximation at order $\eps$}

In order to freeze the covariance of the Gaussians $\Gamma_{1,2}(t,t_0,z)$ that appear in the formula of the thawed Gaussian correction term~\eqref{def:J12th}, we use  the correction prefactor $a_{1,2}$  introduced in~\eqref{eq:prefactor2}
and define the frozen Gaussian correction term for the mode $\ell=1$  as
\begin{equation}\label{def:J12fr}
\mathcal J_{1, 2, \vec V, {\rm fr}}^{t,t_0} ( f) =
(2\pi\eps)^{-d}\int_{z\in K} a _{1,2}(t,t_0,z)
{\rm e}^{\frac{i}{\eps}S_{1,2}(t,t_0,z)}  \langle g_z^\eps, f \rangle  \vec V_{1,2}(t,t_0,z)  g^\eps_{\Phi_{1,2}^{t,t_0}(z)}
dz.
\end{equation}
The formula~\eqref{def:J12fr}  defines a Fourier-integral operator with a complex phase associated with the canonical transformations $\Phi_{1,2}^{t,t_0}$ associated with the hopping flow. It is a family of operators that is bounded in $\mathcal L(L^2(\R^d), \Sigma^k_\eps(\R^d))$ by Corollary~\ref{cor:boundedness} below. 
\smallskip 

We first state a point-wise approximation.

\begin{theorem}[Point-wise time frozen Gaussian approximation with hopping trajectories]
\label{thm:FGeps}
$\;$
Let $k\in\N$. Assume 
  $H^\eps=H_0+\eps H_1$  satisfies Assumptions~\ref{hyp:smooth_cros} and~\ref{hyp:codim1} 
  on the interval $I$.
  Let $K$ be a compact satisfying Assumption~\ref{hyp:compact}. Let $\varsigma\in (0,1)$.
Then, there exists constants $C_{T,k,K,\varsigma}>0$,  such that for all initial datum
  $\psi^\eps_0=\vec V \phi^\eps_0$ that satisfies Assumption~\ref{hyp:data} 
    with frequency localization index $N_0>k+d$ and constant $C_0$, $R_0$ with $K\subset B(0,R_0)$, there exists $\eps_0>0$ such that 
for all  $t\in I$ satisfying
\[
t<t_{1,\rm min}^\flat(t_0,K)\;\;\mbox{or}\;\; t^\flat_{1,{\rm max}}(t_0,K)\leq t < t^\flat_{2,{\rm min}}(t_0,K),
\]
we have for $\eps\in(0,\eps_0]$,
 \begin{align*}
\biggl\| \mathcal U^\eps_H(t,t_0) \psi^\eps_0 
  -\mathcal J_{1,\vec V, {\rm fr}}^{t,t_0}& \left( \phi^\eps_0\right) 
-  \mathcal J_{2, \vec V, {\rm fr}}^{t,t_0} \left( \phi^\eps_0\right)
-\sqrt\eps \,\mathcal J_{1\to 2,  \vec V, {\rm fr}}^{t,t_0} \left(\phi^\eps_0\right)
\Bigr\|_{\Sigma^k_\eps} 
\leq \eps^{1-\varsigma}\, C_{T,k,K,\varsigma}\,  
\left(  \| \phi^\eps_0\|_{L^2} +C_0\right) .
\end{align*}
\end{theorem}

The proof of Theorem~\ref{thm:FGeps} is based on integration by parts and requires differentiability. 
When $t<t^\flat_{1,{\rm min}}(K)$, then the transfer coefficient $\tau_{1,2}(t,t_0,z)=0$ for all $z\in K$. 
Moreover, on the interval $ [t^\flat_{1,{\rm max}}(K),t^\flat_{2,{\rm min}}(K))$, then $z\mapsto \tau_{1,2} (t,t_0,z)$ is smooth. It is for that reason, that we have to restrict the time validity of the approximation. 
\smallskip

Averaging in time allows to overcome this difficulty and to obtain an approximation result that holds almost everywhere on intervals of time such that  the classical trajectories issued from~$K$ and associated with the level $\ell=2$ have not yet reached $\Upsilon$.

\begin{theorem}[Time averaged frozen Gaussian approximation with hopping trajectories]\label{thm:FGeps_av}
Let $k\in\N$. Assume 
  $H^\eps=H_0+\eps H_1$ is of subquadratic growth  and satisfies Assumptions~\ref{hyp:smooth_cros} and~\ref{hyp:codim1} 
  on the interval $I$.
    Let $K$  be a compact satisfying Assumption~\ref{hyp:compact}. Let $\varsigma\in(0,1)$.
Then, there exists constants $C_{T,k,K,\varsigma}>0$,  such that for all initial datum 
  $\psi^\eps_0=\vec V \phi^\eps_0$ that satisfies Assumption~\ref{hyp:data}
    in~$K$ with frequency localization index $N_0>k+d$ and constant $C_0$, there exists $\eps_0>0$  such that 
for all $\chi\in\mathcal C^\infty_0\left(\left(t_0,  t^\flat_{2,{\rm min}}(t_0,K)\right)\right)$,    
 \begin{align*}
&\biggl\| \int_\R \chi(t) \biggl(\mathcal U^\eps_H(t,t_0) \psi^\eps_0 
  -\mathcal J_{1,\vec V, {\rm fr}}^{t,t_0} \left( \phi^\eps_0\right) 
-  \mathcal J_{2,  \vec V,{\rm fr}}^{t,t_0} \,\left( \phi^\eps_0\right)
 -\sqrt\eps \mathcal J_{1\to 2, \vec V, {\rm fr}}^{t,t_0} \left(\phi^\eps_0\right)\biggr) dt
\Bigr\|_{\Sigma^k_\eps} \\
&\qquad\qquad \qquad\qquad \leq  \eps^{1-\varsigma}\,  C_{T,k,K,\varsigma}\,   \|\chi\|_{L^\infty}\,
\left( \| \phi^\eps_0\|_{L^2} +C_0\right).
\end{align*}
\end{theorem}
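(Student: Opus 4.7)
The plan is to reduce Theorem~\ref{thm:FGeps_av} to Theorem~\ref{thm:TGeps} and then bound, after time averaging, the difference between the thawed and frozen building blocks. By Theorem~\ref{thm:TGeps}, the estimate is already known if we replace each $\mathcal{J}_{\cdot,\mathrm{fr}}^{t,t_0}$ by its thawed analog $\mathcal{J}_{\cdot,\mathrm{th}}^{t,t_0}$, uniformly in $t \in I$ (so in particular after integration against any bounded $\chi$). It therefore suffices to control
\[
\mathcal{E}^\eps(\chi) := \int_\R \chi(t)\Bigl( \mathcal{J}_{1,\mathrm{th}}^{t,t_0}-\mathcal{J}_{1,\mathrm{fr}}^{t,t_0} + \mathcal{J}_{2,\mathrm{th}}^{t,t_0}-\mathcal{J}_{2,\mathrm{fr}}^{t,t_0} + \sqrt\eps\bigl(\mathcal{J}_{1,2,\mathrm{th}}^{t,t_0}-\mathcal{J}_{1,2,\mathrm{fr}}^{t,t_0}\bigr)\Bigr)(\widehat{\vec V}\phi^\eps_0)\,dt
\]
in $\Sigma^k_\eps$ by $C\eps\|\chi\|_{L^\infty}(\|\phi^\eps_0\|_{L^2}+C_0)$.

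The two adiabatic differences $\mathcal{J}_{\ell,\mathrm{th}}^{t,t_0}-\mathcal{J}_{\ell,\mathrm{fr}}^{t,t_0}$ have smooth $z$-amplitudes, so the standard Gaussian stationary-phase (thawed-to-frozen) conversion used in the proof of Theorem~\ref{th:frozen_standard} and Theorem~\ref{thm:FGeps} applies pointwise in $t$, giving an $O(\eps)$ bound in $\Sigma^k_\eps$ under the frequency-localization assumption. The contribution of these terms to $\mathcal{E}^\eps(\chi)$ is then controlled by $\eps \|\chi\|_{L^1}\le \eps T \|\chi\|_{L^\infty}$ times the expected initial-data factor.

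The main obstacle, and the reason the pointwise Theorem~\ref{thm:FGeps} fails across the transition layer, lies in the hopping piece: the amplitude
\[
\tau_{1,2}(t,t_0,z)=\mathbb{1}_{t\ge t_1^\flat(t_0,z)}\sqrt{2i\pi/\mu^\flat(t_0,z)}
\]
is, for $t\in[t_{1,\min}^\flat,t_{1,\max}^\flat]$, only of bounded variation in $z$, which prevents the $z$-integration by parts that upgrades $\sqrt\eps$ to $\eps$. The key idea is to exchange the order of integration: Fubini turns
\[
\int_\R \chi(t)\,\tau_{1,2}(t,t_0,z)\,B^\eps(t,t_0,z)\,dt = \sqrt{\tfrac{2i\pi}{\mu^\flat(t_0,z)}}\int_{t_1^\flat(t_0,z)}^{+\infty}\chi(t)\,B^\eps(t,t_0,z)\,dt,
\]
where $B^\eps$ collects all the smooth factors (action exponential, Gaussian $g^\eps$ or $g^{\Gamma_{1,2},\eps}$, prefactor or variance, parallel-transported polarization). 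Since $\mathrm{supp}\,\chi\subset(t_0,t_{2,\min}^\flat(t_0,K))$, the upper limit is effectively $t_{2,\min}^\flat(t_0,K)$, on $K$ the map $z\mapsto t_1^\flat(t_0,z)$ is $\mathcal{C}^\infty$ by transversality of the Hamiltonian flow to $\Upsilon$ (Assumption~\ref{hyp:smooth_cros} and Assumption~\ref{hyp:compact}), so the resulting $z$-amplitude is now smooth, bounded, with all derivatives bounded uniformly in $\eps$.

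Once smoothness in $z$ is restored, the remainder of the argument is mechanical: apply the same Laplace-type expansion/integration by parts in $z$ that converts a thawed Gaussian superposition into its Herman--Kluk frozen analog, using the determinantal identity that produced the prefactor $a_{1,2}$ in~\eqref{eq:prefactor2}. The gain of one power of $\eps$ comes, as in the scalar proof of Rousse--Swart--Robert, from the Gaussian concentration on the diagonal $z=z'$ in the composition $\langle g^\eps_z, \mathcal{J}^{t,t_0}_{\cdot,\mathrm{th}}\widehat{\vec V}\phi^\eps_0\rangle$; the derivatives falling on the $z$-smoothened hopping amplitude produce only bounded factors, and the frequency-localization with index $N_0>k+d$ controls the $\Sigma^k_\eps$ norm by $\|\phi^\eps_0\|_{L^2}+C_0$ via the embedding reviewed in Chapter~\ref{chap:4}. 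Summing the $L^\infty(\chi)$-weighted contributions yields the announced bound. The delicate verification is that the derivatives of $(\mu^\flat)^{-1/2}$ and of $t_1^\flat$ are bounded uniformly on $K$, which follows from the genericity condition~\eqref{hyp:codim1} together with the compactness of $K$.
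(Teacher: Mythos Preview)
Your proposal is correct and follows the paper's route: reduce to the thawed estimate (Theorem~\ref{thm:TGeps}), then run the interpolation-path conversion from thawed to frozen, with the time averaging absorbing the $z$-discontinuity of $\tau_{1,2}$. Your Fubini/variable-lower-limit observation is precisely what the paper packages as Lemma~\ref{lem:cor_t_dep} and Corollary~\ref{cor_chemin_av} via the distributional identity $\mathfrak d\,\mathbb 1_{t>t^\flat(z)}=-\mathfrak d t^\flat\,\delta(t-t^\flat(z))$; the actual mechanism yielding the extra power of $\eps$ is the $\mathfrak d=\partial_q-i\partial_p$ integration by parts of Lemma~\ref{lem:mathfrak_d} (not a Laplace expansion or diagonal concentration), and the boundary term it produces is exactly the Leibniz boundary term you identify.
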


To go beyond the time~$ t^\flat_{2,{\rm min}}(t_0,K)$, one has to consider new transitions that would now go from the level~$\ell=2$ to the level~$\ell=1$, each time a trajectory for the level~$\ell=2$ hits~$\Upsilon$.  The process can be understood as a random walk: each time a trajectory passes through~$\Upsilon$ a new trajectory arises on the other mode with a transition rate of order $\sqrt\eps$.
\smallskip 

The averaging in time can be understood as the result of a non-pointwise observation, that takes place over some time interval, that might even be a short one. 
\smallskip

For proving Theorems \ref{thm:FGeps} and \ref{thm:FGeps_av} we use an accurate analysis for the propagation of individual wave-packets. We prove that a Gaussian wave-packet stays a generalized  Gaussian wave packet  modulo an error term of order $\eps^\mu$,  for any $\mu\in\N$, and in any space $\Sigma^k_\eps$, $k\in\N$, as well before, or after, hitting the crossing hypersurface $\Upsilon$.
\smallskip

The proof then consists first in proving the thawed approximations and in  deriving the frozen approximation from the thawed one. The arguments developed in  Section~\ref{sec:th_to_fr} will show that one can ``freeze'' the Gaussian on any state $g^{\Gamma_0,\eps}_z$. The choice of some $\Gamma_0$ instead of $i\1_{d}$ will imply a slight modification of the definition of the Herman--Kluk prefactors $a_\ell$, and the 
transitional ones $a_{\ell,\ell'}$, $\ell,\ell'\in\{1,2\}$.


\section{Wave packets propagation at any order through  generic smooth crossings}

Our results crucially rely on the analysis of the propagation of wave-packets (including the ones with Gaussian amplitude functions) through smooth crossings.
We consider  a Hamiltonian~$H^\eps=H_0+\eps H_1$ that satisfies Assumption~\ref{hyp:growthH} on the time interval $I$ and presents a smooth crossing on a set ~$\Upsilon$. We  fix a point $z_0=(q_0,p_0)\notin \Upsilon$ and  times $t_0, T$ such that 
\begin{equation}\label{eq:times}
t_0<t^ \flat_1(t_0,z_0)<t_0+T< t_2^\flat(t_0,z_0).
\end{equation}
We assume that the point $z^\flat_1:=\Phi^{t_1^\flat(t_0,z_0),t_0}_{h_1}(z_0)$ is a non degenerate generic crossing point (see Definition~\ref{def:smooth_cros}) that we denote by $\Phi^{t_1^\flat,t_0}_{h_1}(z_0)$ for simplicity. We use the notations introduced in Section~\ref{sec:notation_thaw} (namely equations~\eqref{def:zflat}, ~\eqref{def:mu}, ~\eqref{def:alpha_beta}, ~\eqref{def:traj12}  and \eqref{def:S12}).
For $\ell\in\{1,2\}$, we set
\[
z_\ell(t)= \Phi^{t,t_0}_{h_\ell}(z_0).
\]

  \begin{theorem}\label{th:WPmain}
Let $(t_0,z_0)$ satisfy~\eqref{eq:times}, let $\psi^\eps_0$ be a polarized wave packet
\begin{equation}\label{def:initial_data}
\psi^\eps_0= \vec V_0\wp^\eps_{z_0} (f_0)\;\;\mbox{with} \;\;f_0\in{\mathcal S}(\R^d)\;\;\mbox{and}\;\;\vec V_0\in\C^m.
\end{equation}
Let 
$\psi^\eps(t)$ be the solution of  \eqref{eq:sch} with initial datum $\psi^\eps_0$.
There exist $\kappa_0\in\N$ and  three  families of  differential operators   $\left(\Vec B_{\ell,j}(t) \right)_{j\in\N} $, $\ell\in\{1,2\}$  and $\left(\Vec B_{1\rightarrow 2,j}(t) \right)_{j\in\N} $ such that setting 
for $\delta>0$ and $t\in I_\delta =[t_0,t_1^\flat(t_0,z_0)-\delta]\cup[t_1^\flat(t_0,z_0) +\delta, t_0+T]$
 \begin{align*} 
&\psi_1^{\eps,N}(t) =   {\rm e}^{\frac i\eps S_{1}(t,t_0,z_0)}
\wp^\eps_{z_1(t)}\left(f^\eps_1(t)\right), \\
&\psi_2^{\eps,N}(t)  =   {\rm e}^{\frac i\eps S_{2}(t,t_0,z_0)}
\wp^\eps_{z_2(t)}\left(f^\eps_2(t)\right)
+{\1}_{t>t^\flat}  {\rm e}^{\frac i\eps S_{1,2}(t,t_0,z_0) }
\wp^\eps_{\Phi_{1,2}(t,t_0,z_0)}\left(f^\eps_{1\rightarrow 2}(t)\right),
\end{align*}
with 
\begin{align*}
f^\eps_\ell(t) & = 
{\mathcal R}_\ell (t,t_0) \, {\mathcal M}[F_\ell( t,t_0,z_0)]  \sum_{0 \leq j\leq N}\eps^{j/2}\,\Vec B_{\ell,j}(t) f_0 ,\;\;\ell\in\{1,2\},\\
f^\eps_{1\rightarrow 2}(t)&={\mathcal R}_2 (t,t^\flat_1) \, {\mathcal M}[F_2( t,t^\flat_1,z^\flat_1)]  \sum_{1 \leq j\leq N}\eps^{j/2}\,\Vec B_{1\rightarrow 2,j}(t) f_0,
\end{align*}
 one has the following property: 
 for all $k,N,M\in\N$, there exists   $C_{M,N,k}>0$ such  for all  $t\in I_\delta$
\[
\left\Vert\psi^\eps(t) - \left(\psi^{\eps,N}_1(t)+\psi^{\eps,N}_2(t) \right)\right \Vert_{\Sigma^k_\eps}
  \leq C_{M,N,k}\left(\left(\frac{\sqrt\eps}{\delta}\right)^{N+1}\delta^{-2\kappa_0-k} +\delta^M\right).
  \]
 Moreover, 
 the operators $\vec B_{\ell, j}(t)$   are  differential operators of degree $\leq 3j$  with time dependent smooth vector-valued coefficients and  satisfy for $\ell\in\{1,2\}$,
  \begin{align}
\label{B0} &  \Vec B_{\ell,0}(t) ={\pi_\ell(t_0,z_0)}\Vec V_0\;\;\mbox{ and}\;\; \Vec B_{\ell,j}(t_0) =0\;\;\forall j\geq 1,\\
\label{B1}& \Vec B_{\ell ,1}(t)= \biggl(\sum_{\vert\alpha\vert=3} \frac{1}{\alpha!} \frac{1}{i} \int_{t_0}^{t} \partial_z^\alpha h_\ell(s,z_\ell(s))\,
\op_1^w[(F_\ell (s,t_0,z_0)z)^{\alpha}] \,ds\,\1_{m}\\
\nonumber
&\qquad\qquad   + \frac 1i\int_{t_0}^t
\mathcal R(t_0,s)\, 
\nabla_z H^{\rm adia}_{\ell,1}(s,z_s)\cdot 
{\rm op}^w_1(F_\ell (s,t_0,z_0)z)\,
\mathcal R(s,t_0)\, ds \biggr) 
{\pi_\ell(t_0,z_0 )}\Vec V_0  \\
\nonumber
& \qquad\qquad +\nabla_z \pi_\ell(t_0,z_0)\cdot \widehat z\, \vec V_0, \\
\label{B12}& \Vec   B_{{1 \rightarrow 2},0} (t)=W_1(t^\flat_1, \zeta_1^\flat)^*\, \mathcal T^\flat_{1\rightarrow 2} \,   \mathcal M [F_1( t^\flat_1 ,t_0,z_1^\flat)] \pi_1(t_0,z_0) \Vec V_0 +\O(\delta)
\end{align}
as operator acting on the Schwartz class, and where the scalar transfer operator $\mathcal T^\flat_{1\rightarrow 2}$ is defined by
\beq\label{transf1}
 {\mathcal T}^\flat _{1\rightarrow 2}\varphi(y) =
  \int_{-\infty}^{+\infty}{\rm e}^{-i(\mu^\flat +\alpha^\flat\cdot\beta^\flat/2)s^2} {\rm e}^{is\beta^\flat\cdot y}\varphi(y-s\alpha^\flat)ds,\;\;\forall \varphi\in\mathcal S(\R^d) 
 \eeq 
 and the  transfer matrix $W_1(t^\flat, \zeta^\flat)$ is given by~\eqref{def:W1}.
   \end{theorem}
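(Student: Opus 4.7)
The proof proceeds by linearity: decompose $\vec V_0 = \pi_1(t_0,z_0)\vec V_0 + \pi_2(t_0,z_0)\vec V_0$ and treat each polarized component separately. By the time constraint $t_0 + T < t_2^\flat(t_0,z_0)$, the classical trajectory $z_2(t)$ never meets $\Upsilon$ in the time window, so the mode-$2$ piece reduces to the adiabatic wave-packet analysis already treated in \cite{FLR1} and Chapter~\ref{sec:prop}. The nontrivial work concerns the mode-$1$ piece, for which I would split $I_\delta$ into a pre-crossing segment $[t_0, t_1^\flat - \delta]$, a crossing buffer $[t_1^\flat - \delta, t_1^\flat + \delta]$, and a post-crossing segment $[t_1^\flat + \delta, t_0+T]$, and patch the analyses at the two interfaces.

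On each adiabatic segment, I would apply the super-adiabatic projector machinery of Chapter~\ref{chap:2} to decouple the system, modulo $O(\eps^\infty)$ errors weighted by inverse powers of the gap $h_1-h_2$, into two scalar equations with effective Hamiltonians $h_\ell$ and sub-principal correction $H_{\ell,1}^{\rm adia}$. On each mode, the standard semi-classical expansion around the classical trajectory $z_\ell(t)$ (Combescure--Robert/Hagedorn) yields an asymptotic series of wave packets $\mathrm{e}^{iS_\ell/\eps}\,\wp^\eps_{z_\ell(t)}\sum_j \eps^{j/2}\,\vec B_{\ell,j}(t)f_0$: the phase $S_\ell$ absorbs the order-zero Taylor terms of the symbol at $z_\ell$, the linear terms vanish along the trajectory, the quadratic terms generate the metaplectic propagator $\mathcal M[F_\ell]$, and the cubic and higher terms produce the Duhamel expansion defining $\vec B_{\ell,j}$, cf. \eqref{B1}. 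The vector structure evolves through $\mathcal R_\ell$, implementing parallel transport under $H_{\ell,1}^{\rm adia}$. Estimating the remainder in $\Sigma^k_\eps$ by a semi-classical Duhamel argument, together with the $L^2 \to \Sigma^k_\eps$ bounds for wave-packet propagation used in Chapter~\ref{chap:4}, produces the factor $(\sqrt\eps/\delta)^{N+1}\delta^{-\kappa_0}$, where $\delta^{-\kappa_0}$ collects inverse gap powers from the super-adiabatic correctors along trajectories approaching $\Upsilon$ to distance $\sim \delta$.

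Across the crossing I would use the rough diagonalisation of $H_0$ in a neighbourhood of $\zeta_1^\flat$, valid because the crossing is non-degenerate and generic. Rescaling time $s=(t-t_1^\flat)/\sqrt\eps$ reduces the coupled $2\times 2$ system to a model equation in the rescaled variable whose solution is exactly the transfer operator $\mathcal T^\flat_{1\to 2}$ of~\eqref{transf1}. The emerging mode-$2$ wave packet, initialised at $t_1^\flat$ with profile $W_1^*\,\mathcal T^\flat_{1\to 2}\,\mathcal M[F_1(t_1^\flat,t_0)]\pi_1(t_0,z_0)\vec V_0$, then propagates adiabatically on mode~$2$ along $\Phi_{1,2}^{t,t_0}(z_0)$ with phase $S_{1,2}$, metaplectic evolution $\mathcal M[F_2(t,t_1^\flat)]$, and parallel transport $\mathcal R_2(t,t_1^\flat)$, generating the operators $\vec B_{1\to2,j}$. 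The prefactor $\sqrt\eps$ reflects the $O(\sqrt\eps)$ duration of the non-adiabatic transition. The contribution of the $2\delta$-buffer to the $\Sigma^k_\eps$ error is controlled by $O(\delta^M)$ using Duhamel integration over this short interval combined with the uniform boundedness of the propagator on $\Sigma^k_\eps$ and the rapid decay of wave packets outside an $O(\sqrt\eps)$-neighbourhood of their centre.

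The main obstacle is the uniform two-parameter matching: one must verify that the super-adiabatic expansion produced on $[t_0,t_1^\flat-\delta]$, transferred across the crossing via $\mathcal T^\flat_{1\to2}$, agrees to arbitrary order with the super-adiabatic expansion starting from the post-crossing buffer on both the surviving mode-$1$ branch and the newly born mode-$2$ branch. Once this matching is established, superposing the three phase errors yields the stated estimate in $\eps$ and $\delta$; choosing $\delta = \eps^\theta$ with an appropriate $\theta \in (0,1/2)$ optimizes the balance between $(\sqrt\eps/\delta)^{N+1}\delta^{-\kappa_0}$ and $\delta^M$ and produces convergence at any polynomial order in $\eps$, as required.
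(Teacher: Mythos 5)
Your proposal follows the same three-phase strategy as the paper's proof in Chapters~\ref{chap:2} and~\ref{sec:prop}: super-adiabatic decoupling on the two adiabatic segments of $I_\delta$ (Theorem~\ref{WPexp2}), rough diagonalization plus a Dyson/Duhamel analysis across the $2\delta$-buffer that after the rescaling $s=t^\flat+\sigma\sqrt\eps$ exhibits the transfer operator $\mathcal T^\flat_{1\rightarrow 2}$ (Theorem~\ref{thm:through_cros}), and matching at the interfaces with $\delta=\eps^\theta$, $\theta\in(0,1/2)$. One minor inaccuracy: the $\delta^M$ error does not come from rapid decay of wave packets but from the Dyson-series truncation, which is $O(\delta^{M+1})$ because the off-diagonal block $W^{\eps,N}$ is $O(\eps)$ in $\mathcal L(\Sigma^{k+1}_\eps,\Sigma^k_\eps)$ (equation~\eqref{eq:growth_adiag}), so that each Duhamel iterate over the interval of length $2\delta$ contributes a factor $O(\delta)$ after cancelling the $1/\eps$ from $i\eps\partial_t$.
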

   
In other words, Theorem~\ref{th:WPmain} says that if $\psi^\eps_0$ is a polarized wave packet, then, for $t\in I$, $t\not=t^\flat_1(t_0,z_0)$,  the solution~$\psi^\eps(t)$ of~\eqref{eq:sch} is asymptotic at any order to a sum of wave packets. Indeed, if $n\in\N$ is fixed, choosing $\delta=\eps^{1/2-c}$, $0<c<1/2$ and $M, N$ large enough will give an approximation in $\O(\eps^n)$. This fact is crucial for the proofs of Theorems~\ref{thm:TGeps}, \ref{thm:FGeps} and~\ref{thm:FGeps_av}.
\smallskip

The polarization of the wave packets $\psi_\ell^{\eps,N}(t)$ is first described by the vectors $ \Vec B_{\ell,j}(t_0)$ that evolves through ${\mathcal R}_\ell(t,t_0) \, {\mathcal M}[F_\ell( t,t_0)] $. Such evolution preserves the eigenmode. Secondly, in $\psi_2^{\eps,N}(t)$, one sees a $\sqrt\eps$ contribution that comes from  a transfer from the mode~$1$ to the mode~$2$ with a change of polarization  performed by the matrix $W_1^*$. This operator $\vec B_{1\to 2,0}(t)$ coincides with a simple transfer operator given in~\eqref{B12} up to a term of order~$\delta$.  Higher order terms in $\epsilon$ due to the transfer are more intricate.
  \medskip
   
 Theorem~\ref{th:WPmain} was proved in~\cite{FLR1} up to order $o(\eps)$. The notations are compatible. However, 
 in~\cite{FLR1}, a  coefficient~$\gamma^\flat $ appears in the definition of the transfer operator. It corresponds to a normalization process that we avoid here by using the projector $\pi_2(t^\flat_1,\zeta_1^\flat)$ instead of taking the scalar product  with a normalized eigenvector.  
\smallskip

For proving the initial value representations of $\mathcal U^\eps_{H}$ of Theorems~\ref{th:sqrteps} to~\ref{thm:FGeps_av}, we shall use two consequences of Theorem~\ref{th:WPmain}:
   \begin{itemize}
   \item[(i)] the wave packet structure up to any order in $\eps$ of $\mathcal U^\eps_{H} \vec V_0\wp^\eps_{z_0} (g^{i\1_d})$,
   \item[(ii)] the exact value of the action of $\Vec B_{1, 0}$,  $\Vec B_{2, 0}$, and  of $\Vec   B_{1 \rightarrow 2} (t)$ when $f_0$ is the Gaussian~$g^{i\1_d}$. 
   \end{itemize}
We recall that the action of  the operators ${\mathcal R}_\ell(t,s) \, {\mathcal M}[F_\ell( t,s)]   $ on focalized Gaussians preserves the Gaussian structure and the focalization: in view of~\eqref{eq:action_Gaussian} and~\eqref{def:Vl},
\[
{\mathcal R}_\ell(t,s) \,  {\mathcal M}[F_\ell(t,s,z)] \pi_\ell(s,z) \vec V_0 g^{i\1_{\R^d}}= \vec V_\ell(t,s,z) g^{\Gamma_\ell(t,s,z)}
\]
where $\vec V_\ell\in {\rm Ran}(\pi_\ell(t,\Phi^{t,s}_\ell(z)))$ and the matrix $\Gamma_\ell (t,s,z)$ is given by~\eqref{def:Gamma} with $\Gamma_0=i\1_{d}$. 
Besides, regarding the transfer term, 
  with the notations of  Corollary~3.9 of~\cite{FLR1} and those of~\eqref{def:mu}, ~\eqref{def:alpha_beta},  \eqref{def:Gammaflat} and~\eqref{def:vecV12}
   \begin{align*}\label{B12_Gaussian}
 & \Vec   B_{1 \rightarrow 2,0} (t^\flat_1) g^{i\1_d} = \sqrt{ \frac{\pi}{ i\mu^\flat(t_0,z_0)}}\, 
\vec V_{1,2}(t_1^\flat,t_0,z_0) g^{\Gamma^\flat(t_1^\flat,t_0,z_0)}.
   \end{align*}

   These elements may enlighten the construction of the operator 
   $\mathcal J_{1,2,\vec V, {\rm fr/th}}^{t,t_0} $.

\section{Wave packet propagation, an example}

In this section, we give a direct proof of Theorem~\ref{th:WPmain} in a simple special case. It shows how the transfer terms of order $\sqrt\eps$ arise.
We assume that $d=1$ and we consider the Hamiltonian
\[
H(x,\xi)= \xi \1_{\C^2} + x R_{\theta( x)}
\]
where the $2$ by $2$ matrix $R_{\theta(x)}$ is the $x$-dependent matrix 
\[
R_{\theta(x)}=\begin{pmatrix} \cos \theta (x) & \sin\theta(x) \\ \sin\theta(x) & -\cos\theta (x)\end{pmatrix},\;\;\theta\in\mathcal C^\infty(\R,\R),
\]
and $x\mapsto \theta(x)$ has bounded derivatives. 
The Hamiltonian $H$ presents a smooth crossing along $\Upsilon=\{x=0\}$ according to Definition~\ref{def:smooth_cros} and satisfies Assumptions~\ref{hyp:growthH}. Moreover, we have 
\[
v=\xi,\;\; f=x,\;\; h_1=\xi+x,\;\; h_2=\xi-x.
\]
The projector $\pi_1(x)$ is the orthogonal projection on the vector $
\vec e(x)=\Bigl(\cos \left({\theta(x)}/2 \right), \,\sin \left( {\theta( x)}/2\right)\Bigr)$,
and the projector $\pi_2(x)$ on the vector $\vec e(x)^\perp = \left(-\sin \left({\theta(x)}/2 \right), \,\cos \left( {\theta( x)}/2\right)\right)$. We write
\[
\pi_1(x)=|\vec e(x)\rangle \,\langle \vec e(x)| \;\;\mbox{and}\;\;\pi_2(x)=\pi_1(x)^\perp=|\vec e(x)^\perp\rangle\, \langle \vec e(x)^\perp|.
\]

We start by a rough diagonalisation of the system, which shows the role of the matrix~$W_1$ defined in~\eqref{def:W1}.
Then, we will fix the function $\theta$ and  prove Theorem~\ref{th:WPmain} with $N=1$ in this case. The case $N=1$ gives a good understanding of the mechanism of the transfer, even though
the approximation of the unitary propagator by an initial value representation  
 requires the result for any $N\in \N$. 

\subsection{Rough diagonalisation and the role of the matrix $W_1$}

This matrix $W_1$ appears as soon as one aims at decoupling the equations according to the modes.
\begin{lemma} The functions
\[
\psi^\eps_\ell(t,x)=\pi_\ell(x)\psi^\eps(t,x),\;\; (t,x)\in\R\times\R,\;\; \ell \in\{1,2\},
\]
satisfy the system 
 \begin{equation}\label{syst_diag}
 \left\{
 \begin{array}l
i\eps \partial_t\psi^\eps_1(t,x) = ((\eps D_x + x)\1_{\C^2}+\eps \Omega(x)) \psi^\eps_1(t,x) + \eps W_1(x) \psi^\eps_2(t,x) ,\\
i\eps \partial_t\psi^\eps_2(t,x) = ((\eps D_x - x)\1_{\C^2}+\eps \Omega(x) ) \psi^\eps_2(t,x) + \eps W_1(x)^* \psi^\eps_1(t,x).
\end{array}
\right.
\end{equation}
with 
\[
\Omega(x):= i \pi_2(x) \pi'_1(x)\pi_1(x) -i  \pi_1(x) \pi'_1(x)\pi_2(x),\;\; W_1(x):= i\pi_1(x) \pi_1'(x)\pi_2(x),\;\;x\in\R.
\]
\end{lemma}

The arguments developed in Part~2 for proving Theorem~\ref{th:WPmain} in full generality are based on such a `rough' diagonalisation explained in Chapter~\ref{chap:2} (see Section~\ref{sec:rough_diag}). 
The matrix-valued term $\Omega(x)$ is also  the first term obtained far from the crossing when one uses superadiabatic projectors as  in Section~\ref{sec:superadiab}. The matrix $W_1$ is a key element of the transfer between the modes. In particular, if the function $\theta$ is constant, $W_1=0$, the equations are decoupled and there is no transfer. The transfer is mainly due  to the 
variations of the eigenprojectors.

\begin{proof}
By projecting the equations, we obtain 
\begin{align*}
\left\{
\begin{array}l
i\eps \psi^\eps_1(t,x) = (\eps D_x + x+i\eps \pi_2(x) \pi'_1(x)\pi_1(x) ) \psi^\eps_1(t,x) + i\eps \pi_1(x) \pi'_1(x) \pi_2(x)  \psi^\eps_2(t,x) ,\\
i\eps \psi^\eps_2(t,x) = (\eps D_x - x+i\eps \pi_1(x) \pi'_2(x)\pi_2(x) ) \psi^\eps_2(t,x) + i\eps \pi_2(x) \pi'_2(x) \pi_1(x)  \psi^\eps_1(t,x).\end{array}
\right.
\end{align*}
In the computation above, we have used 
\[
\pi'_1(x)= \pi_1(x) \pi'_1(x) \pi_2(x) + \pi_2(x) \pi'_1(x) \pi_1(x) ,\;\; x\in\R.
\]
In order to view this system as governed by a self adjoint operator,  we take advantage of the fact that $\pi_2(x)\pi_1(x)=0$ to replace the matrix 
$i \pi_2(x) \pi'_1(x)\pi_1(x) $ by the self-adjoint matrix $\Omega(x)$. 
Similarly, on the other mode, we replace  $i \pi_1(x) \pi'_2(x)\pi_2(x) $ by $ i \pi_1(x) \pi'_2(x)\pi_2(x) -i\pi_2(x) \pi'_2(x)\pi_1(x)$.  
In view of $\pi_2(x)=1-\pi_1(x)$, we have 
\begin{align*}
& i \pi_1(x) \pi'_2(x)\pi_2(x) -i\pi_2(x) \pi'_2(x)\pi_1(x) = -i \pi_1(x) \pi'_1(x)\pi_2(x) +i\pi_2(x)\pi'_1(x)\pi_1(x)=\Omega(x),\\
& i\pi_2(x) \pi'_2(x) \pi_1(x)= - i\pi_2(x) \pi'_1(x) \pi_1(x)= W_1(x)^*,
\end{align*}
which gives the result.
\end{proof}

\subsection{Propagation of wave packets}

We now assume  $\theta(x)=x$ for all $x\in\R$.
 We fix $\tau>0$ and choose an interval of time $I=[t_0,t_0+T]$ given by 
\[
t_0=-\tau, \;\;t_0+ T=\tau.\]
We are going to use 
the 
 classical trajectories 
 \[
 z_1(t):=\Phi_{h_1}^{t,-\tau}(-\tau,\tau )= \Phi_{h_1}^{t,0}(0,0)=(t,-t)\;\;\mbox{ and}\;\; z_2(t):=\Phi_{h_2}^{t,-\tau}(-\tau ,-\tau)=\Phi_{h_2}^{t,0}(0,0)=(t,t).
 \]
 These trajectories meet on $\Upsilon$ at $(t^\flat,z^\flat)$ with 
\[
t^\flat=0,\;\; z^\flat=(0,0).
\]
As a consequence, the initial data is chosen as  
\[
\psi^\eps_1(0)=
\wp_{z_1(-\tau)} (f_1) \vec e(-\tau)
,\;\;\mbox{and}\;\;
\psi^\eps_2(0)=
\wp_{z_2(-\tau)} (f_2) \vec e(-\tau)^\perp,
\]
with $f_1,f_2\in \mathcal S(\R)$.
 The actions  associated  with $z_1(t)$ and $z_2(t)$ respectively and given for $t\in [-\tau,\tau]$ by 
\[
S_1(t)= \frac {t^2}2 -\frac{\tau^2}2 \;\;\mbox{and}\;\; S_2(t)=-\frac{t^2}2+\frac{\tau^2}2.
\]
In the statement of Theorem~\ref{th:WPmain}, several  quantities are of importance and have simple form for this example:
\begin{enumerate}
    \item 
The {\it metaplectic operator} associated with the classical trajectories are identity:
\[
\mathcal M[F_\ell(t,0, z^\flat)]=\1_{L^2(\R)},\;\; \ell=\{1,2\},\;\; t\in [-\tau,\tau].
\]
\item 
The {\it parallel transports} of the vectors $\vec e(t)$ (resp. $\vec e(t)^\perp$) along $z_1(t)$ (resp. $z_2(t)$) are simple:
\[ 
\mathcal R_1 (t,0, z^\flat)\, \vec e(0)= \vec e(t) \;\;\mbox{and}\;\; 
\mathcal R_2 (t,0, z^\flat) \,\vec e(0)^\perp= \vec e(t) ^\perp.
\]
Indeed, we have for all $t\in[-\tau,\tau]$
\[
i\partial_t  \vec e(t)= \Omega(t) \vec e(t)=\vec  e(t)^\perp\;\;
\mbox{and}\;\;
i\partial_t \vec  e(t)^\perp= \Omega(t) \vec e(t)^\perp= -\vec e(t).
\]
\item The {\it correction terms} inside each mode is given by vector-valued multiplication operators:
\[
\vec B_{1,1}(t,y):= \frac yi \int_{-\tau}^t \Omega'(s) \vec  e(s) \, ds\;\;\mbox{and}\;\; \vec B_{2,1}(t,y):= \frac yi \int_{-\tau}^t \Omega'(s) \vec e(s) ^\perp\, ds ,\;\; \ell\in\{1,2\}\;\; y\in\R.
\]
\item The {\it transfer operators} $\mathcal T_{1\to 2}$ and $\mathcal T_{2\to 1}$ are  given by multiplication operators (see the analysis of Section~\ref{sec:trsf_Gauss}): 
\[
\mathcal T_{1\to 2} =
\int_{-\infty}^{+\infty} \e^{is^2-2isy}  ds = \sqrt{i\pi}\; \e^{-iy^2}\;\;\mbox{and}\;\; \mathcal T_{2\to 1} =
\int_{-\infty}^{+\infty} \e^{-is^2+2isy}ds= \sqrt{-i\pi} \;\e^{iy^2}.
\]
In particular,
\[
\mu^\flat=2, \;\;\alpha^\flat=0 \;\;\mbox{and}\;\; \beta^\flat = -2,
\]
according to~\eqref{def:mu} and~\eqref{def:alpha_beta}.
Moreover, 
\[
W_1(t)^* \vec e(t)=-i \vec e(t)^\perp\;\;\mbox{and}\;\; W_1(t) \vec e(t)^\perp=i\vec  e(t),\;\;
t\in [-\tau,\tau].
\]
\end{enumerate}
As a consequence, 
Theorem~\ref{th:WPmain} with $N=1$ is equivalent to the next Lemma~\ref{lem:prop_ex} that we will prove directly.

\begin{lemma}\label{lem:prop_ex}
With the notation of points (3) and (4) above, define 
\begin{equation}\label{expansion}
\left\{
\begin{array}l
\psi^\eps_{1,\rm app}(t,x)= {\rm e}^{\frac i\eps S_1(t) }\wp_{z_1(t)}\left(f_1 \vec e(t) + \sqrt\eps \,\vec B_{1,1}(t) f_1 +\sqrt\eps\, \e^{\frac i\eps\tau^2} \,\1_{t>0} \mathcal T^\flat _{2\to 1} \, f_{2} \,\vec e(t)\right)  ,
\\
\psi^\eps_{2,\rm app}(t,x)= {\rm e}^{\frac i\eps S_2(t) }\wp_{z_2(t)}\left(f_2 \, \vec e(t)^\perp + \sqrt\eps\, \vec B_{2,1}(t) f_2- \sqrt\eps \, \e^{-\frac i\eps\tau^2} \,\1_{t>0} \,\mathcal T^\flat _{1\to 2} \, f_1 \, \vec e(t)^\perp\right).
\end{array}
\right.
\end{equation}
Then, for all $\delta\in (0,\sqrt\eps)$,  there exists $C_\delta>0$ such that 
\[
\sup_{\delta<|t|\leq \tau}\| \psi^\eps(t) - ( \psi^\eps_{1,\rm app}(t)+ \psi^\eps_{2,\rm app}(t))\|_{L^2(\R)} 
\leq C_\delta \left(\left(\frac{\sqrt\eps} \delta\right)^2+\sqrt\eps \delta\right).
\]
\end{lemma}

\begin{proof}
Let us investigate the construction of the approximate solution. We look for $\psi^\eps_1(t)$ and $\psi^\eps_2(t)$, satisfying~\eqref{syst_diag}, and of the form 
\[
\psi^\eps_\ell(t,x)=  {\rm e}^{\frac i\eps S_\ell(t) }\wp_{z_\ell(t)}(\vec f_\ell^\eps(t)),\;\;\ell\in\{1,2\}
\]
with $\vec f_\ell^\eps(t)\in L^2(\R,\C^2)$, 
\[
\vec f^\eps_\ell(t,y)= \vec f_{\ell,0}(t,y)+\sqrt\eps \, \vec f _{\ell,1}(t,y)+o(\sqrt\eps),\;\; t\in[-\tau,\tau],\;\;y\in\R.
\] 
Since  $z_1(t), z_2(t), S_1(t), S_2(t)$ are fixed, the map $\psi^\eps_\ell(t) \mapsto \vec f_\ell^\eps(t) $ is one to one on $L^2(\R,\C^2)$ for each $\ell\in\{1,2\}$. 
The idea is to plug the ansatz in the equation~\eqref{syst_diag} and to choose the functions $\vec f_{1,0}$, $\vec f_{2,0}$, $ \vec f _{1,1}$ and $\vec f_{2,1}$ so that there only remains terms of order $\eps^2$ in the equation (up to $\delta$-depending coefficients). 

\smallskip 

We start by considering the diagonal part of the system~\eqref{syst_diag}.
Injecting the ansatz in the equation leads to
\begin{align}\label{eq_decomp}
&\left(i\eps\partial_t -\eps D_x -(-1)^\ell x-\eps\,  \Omega\right) \left( \e^{\frac i\eps S_\ell(t)}\wp_{z_\ell(t)}(\vec f^\eps_\ell(t))\right)\\
&\qquad 
\nonumber 
=\eps\,  \e^{\frac i\eps S_\ell(t)}\wp_{z_1(t)}\left(i\partial_t \vec f_\ell^\eps(t) - \Omega(t+\sqrt\eps y)\vec f^\eps_\ell(t)\right).
\end{align}
This equation comes from the matching of the actions and trajectories with the Hamiltonians $h_1$ and~$h_2$. Such a discussion is developed in full generality in Sections~\eqref{prop:scalar} and~\eqref{prop_subprinc}.
\smallskip 

Let us now use~\eqref{eq_decomp} to fix the first elements of the ansatz. We consider the mode $\ell=1$. The
function
\[ 
\vec f _{1,0}(t,y)=f_1(y) \vec e(t),
\]
satisfies 
\[
i\partial_t \vec f_{1,0} (t) - \Omega(t) \vec f_{1,0}(t)=0
\]
and thus 
 contributes to suppressing the terms of degree $\eps$ in~\eqref{eq_decomp}. For getting rid of the terms of order $ \eps^{\frac 32}$, one asks for a first term in $\vec f_{1,1}$, that we denote by $f_{1,1}^{\rm cor}$, and that will   satisfy
\[
i\partial_t  \vec f_{1,1}^{\rm cor}(t,y)=\Omega'(t)y \vec f_{1,0}(t)=  f_1(y) \Omega'(t) \vec e(t).
\]
This equation implies $\vec f _{1,1}^{\rm cor}(t)= \vec B_{1,1}(t) f_1$ and gives the first term  in $\sqrt\eps$  in the expansion~\eqref{expansion}. One argues similarly for the mode $\ell=2$ and obtains  the term $ \vec B_{2,1}(t) f_2$  in the expansion~\eqref{expansion}. 
\smallskip 

We now have to understand  how to use a contribution in $\vec f_{1,1}(t)$ and $\vec f_{2,1}(t) $ for compensating  the terms generated by the off-diagonal terms of the Hamiltonian in~\eqref{syst_diag}. 
We look for a term of the form $\vec f_{1,1}^{\rm tr}(t)= f_{1}^{\rm tr} (t) \vec e(t) $ so that up to $\mathcal O(\sqrt\eps)$,
\[
\wp_{z_1(t)}(i\partial_t f_{1}^{\rm tr}(t)) \, \vec e(t) =\frac 1{\sqrt\eps} \e^{\frac i\eps (S_2(t)-S_1(t))} \wp _{z_2(t)}(W_1(t+\sqrt \eps y) \vec f^\eps_2(t,y)).
\]
Or, equivalently 
\begin{align*}
&i\partial_t f_{1}^{\rm tr}(t,y) \, \vec e(t)  = \frac 1{\sqrt\eps}  \e^{-\frac i\eps( t^2-\tau^2) +\frac{2i}{\sqrt\eps} ty }W_1(t+\sqrt \eps y)  \vec f^\eps_2(t,y))\\
 &\;= \frac 1{\sqrt\eps}  \e^{-\frac i\eps (t^2-\tau^2) +\frac{2i}{\sqrt\eps} ty }\left(f_2(y)  W_1(t) \vec e(t)^\perp + \sqrt\eps (yf_2(y)  W_1(t)' \vec e(t)^\perp + \vec f_{2,1}(t,y))\right) +O(\sqrt\eps)\\
 &\;=  \frac {\e^{\frac i\eps\tau^2}}{\sqrt\eps}  \e^{-\frac i\eps t^2 +\frac{2i}{\sqrt\eps} ty }\left(i \vec e(t) f_2(y) + \sqrt\eps ( W_1(t)' \vec e(t)^\perp yf_2(y) + \vec f_{2,1}(t,y))\right) +O(\sqrt\eps)
\end{align*}
in the Schwartz class. 
It is thus important to understand the action of the operator defined on $\mathcal C^\infty([-\tau,\tau], \mathcal S(\R))$ by 
\begin{align*}
\mathcal T^\eps(t) : \varphi\mapsto 
&\frac 1{\sqrt\eps}  
 \int_{-\tau}^t\e^{-\frac i\eps s^2+\frac {2 i}{\sqrt\eps} ys } 
\varphi(s,y)ds.
\end{align*}
Let $\delta\in (0,\sqrt\eps)\subset (0,\tau)$.
If $t\in[-\tau,-\delta] $, an integration by parts  shows that ~$\mathcal T^\eps\varphi(t,\cdot)$ is of order $\sqrt\eps \delta^{-1} $ in the Schwartz class. 
 This corresponds to the adiabatic situation, when the classical trajectories have not yet reached  the crossing point. However, if $t\in[\delta, \tau] $, this operator is no longer negligible. 
Indeed, 
   performing again integration by parts, we obtain that in the Schwartz space 
  \begin{align*}
\mathcal T ^\eps \varphi (t,y)&=\frac 1{\sqrt\eps} \int_{-\infty}^{+\infty} \e^{-\frac i\eps s^2+\frac {2 i}{\sqrt\eps} ys } 
\varphi(s,y)ds +\mathcal O(\sqrt \eps\delta^{-1}) \\
&=  \int_{-\infty}^{+\infty} \e^{-i\eps \sigma^2+ {2 i}{y\sigma }} 
\varphi(0,y)d\sigma +\mathcal O(\sqrt \eps\delta^{-1}).
\end{align*}
A detailed argument for analyzing these asymptotics is given in Section~\ref{sec:mtom+1} for the general case. 
We deduce 
\begin{align*}
f_{1}^{\rm tr}(t,y) &=\e^{\frac i\eps\tau^2}( \mathcal T^\eps f_2) \vec e(t) + \mathcal O(\sqrt\eps)+ \mathcal O(\eps \delta^{-1}) .
\end{align*} 
We can now conclude the proof. 
Setting $\vec f_{\ell,1}(t)= \vec f_{\ell,1}^{\rm cor}(t)+ \vec f_{\ell,1}^{\rm tr}(t)$, and arguing similarly for the mode $\ell:=2$, yields to 
\[
 \left\{
 \begin{array}l
i\eps \psi^\eps_{1,{\rm app}}(t,x) = ((\eps D_x + x)\1_{\C^2}+\eps \Omega(x)) \psi^\eps_{1,{\rm app}}(t,x) + \eps W_1(x) \psi^\eps_{2,{\rm app}}(t,x)  +\mathcal O(\eps^2\delta^{-1}+\eps^{\frac 32}\delta),\\
i\eps \psi^\eps_{2,{\rm app}}(t,x) = ((\eps D_x - x)\1_{\C^2}+\eps \Omega(x) ) \psi^\eps_{2,{\rm app}}(t,x) + \eps W_1(x)^* \psi^\eps_{1,{\rm app}}(t,x) +\mathcal O(\eps^2\delta^{-1}+\eps^{\frac 32}\delta).
\end{array}
\right.
\]
in $L^2(\R,\C^2)$, which concludes the proof. 
\end{proof}

\section{Detailed overview}

The main results of this paper are Theorems \ref{thm:TGeps}, \ref{thm:FGeps}, \ref{thm:FGeps_av}, and \ref{th:WPmain}. 
\smallskip

We prove Theorems \ref{thm:TGeps}, \ref{thm:FGeps} and \ref{thm:FGeps_av}  in Chapters~\ref{chap:4} and~\ref{chap:5}. These proofs rely on Theorem~\ref{th:WPmain} that is proved later.
 Chapter~\ref{chap:4} starts with Section~\ref{sec:Bargmann} that recalls elementary facts about the Bargmann transform. Then, in Section~\ref{sec:freq_loc}, we analyze the notion of frequency localization that we have first introduced in this text and that is crucial in the setting of frozen and thawed initial value representations.  Indeed, these approximations rely on a class of operators that is studied in Section~\ref{subsec:Barg_ext}. Endowed with these results, we are able to prove the approximations of Theorems \ref{thm:TGeps}, \ref{thm:FGeps}, \ref{thm:FGeps_av} in Chapter~\ref{chap:5}. We describe the general proof strategy in Section~\ref{sec:strategy} and develop the proof of Theorem~\ref{thm:TGeps} in Section~\ref{sec:thawed}. We explain in Section~\ref{sec:th_to_fr} how to pass from a thawed to a frozen approximation, and thus obtain Theorems~\ref{thm:FGeps} and~\ref{thm:FGeps_av}.
\smallskip 

Chapters~\ref{chap:2} and~\ref{sec:prop} are devoted to the proof of Theorem~ \ref{th:WPmain}.  These two chapters are independent of the preceding ones, and one can start reading them, skipping Chapters~\ref{chap:4} and~\ref{chap:5}. 
In Chapter~\ref{chap:2}, we construct  the different diagonalisations of the Hamiltonian $H^\eps$ that we are going to use. In the crossing region, we use a rough diagonalisation (see Section~\ref{sec:rough_diag}), and outside this region we use super-adiabatic projectors as proposed  in~\cite{bi,MS,Te} (see~Section~\ref{sec:superadiab}). These constructions rely on a symbolic calculus that we present in Section~\ref{sec:asympt_series}. The analysis of  the propagation of wave-packets in region of small gap  is then performed in Chapter~\ref{sec:prop}. We carefully analyze the dependence of the approximation in term of the gap's parameter. The propagation through the crossing region itself is performed in Chapter~\ref{sec:through}, it is based on Duhamel formula and the use of Dyson series. 
\smallskip 

The Appendices are devoted to the proof of some technical points used in the proofs of 
Chapters~\ref{chap:2} and~\ref{sec:prop}.

\part{Initial value representations}
  
\chapter{Frequency localized families}\label{chap:4}

  In this chapter, we study frequency localized families. We shall use a more precise definition than the one of the Introduction. We use tha Gaussian wave packets $g^\eps_z$ introduced in~\eqref{def:gepsz},
   $g^\eps_z={\rm WP}^\eps_z(g^{i \1_{\R^d}})$ with the notation~\eqref{def:Gaussian}.
   \smallskip
  
   \begin{definition}[Frequency localized functions of order $\beta$]\label{def:freq_loc}
 Let $\beta\geq 0$. Let 
 $(\phi^\eps)_{\eps>0}$ be a family of functions of $L^2(\R^d)$. The family~$(\phi^\eps)_{\eps>0}$ is frequency localized of order $\beta$
  if the family is bounded in $L^2(\R^d)$ and if there exist $R_\beta, C_\beta,\eps_\beta>0$ and $N_\beta > d $ such that for all $\eps\in(0,\eps_\beta]$,
 \[
(2\pi\eps)^{-d/2} \left| \langle g^\eps_z, \phi^\eps \rangle \right|\leq C_\beta \,\eps ^\beta \,\langle z\rangle^{-N_\beta} \;\;\ \text{for all}\ z\in\R^d\ \text{with}\ |z| > R_\beta.
 \]
 One then says that $(\phi^\eps)_{\eps>0}$ is frequency localized of order $\beta$ 
 on the ball $B(0,R_\beta)$.
 \end{definition}

 With this terminology, the notion introduced in Definition~\ref{def:freq_loc_0} in the Introduction consists in frequency localized functions at order~$0$. 
 \smallskip 
 
  In other words, being  frequency localized of order $\beta$ expresses in terms of the Bargmann transform 
  \begin{equation}\label{def:bargmann}
 \mathcal B[f](z):= (2\pi\eps)^{-\frac d2} \langle g^\eps_z, f \rangle, \;\; z\in\R^{2d}
 \end{equation}
 of the family. More precisely, if
  $(\phi^\eps)_{\eps>0}$ satisfies Definition~\ref{def:freq_loc}, its Bargmann transform  has polynomial decay at infinity with control in  $\eps^\beta$ outside a ball~$B(0,R_\beta)$. 
 \smallskip

In Section~\ref{sec:Bargmann} below,
 we recall some facts about the Bargmann transform, i.e. the map 
 \[
 z\mapsto \mathcal B[f](z)
 .
 \]
 Then, in Section~\ref{sec:freq_loc},  we study some properties of frequency localized families and give some examples. 
 Indeed, the thawed/frozen approximations that we aim at studying are constructed thanks to the Bargmann transform. They belong to a class of 
operators obtained by integrating the Bargmann transform against adapted  families of the form 
\[
\left(z\mapsto \theta^\eps_z\right)\in \mathcal C^\infty(\R^{2d}_z, L^2 (\R^d) ).
\] 
We then denote by~$\mathcal J[\theta^\eps_z]$  the operator acting on  $\phi\in L^2(\R^d)$ according to 
 \begin{equation}\label{def:barg_op}
 \mathcal J[\theta^\eps_z](\phi)(x)=(2\pi\eps)^{-\frac d2}\int_{\R^{2d}} \mathcal B[\phi](z)  \theta^\eps_z (x) dz=  (2\pi\eps)^{-d}\int_{\R^{2d}} \langle g^\eps_z, \phi\rangle \theta^\eps_z (x) dz,\;\; x\in\R^d.
 \end{equation}
 The thawed/frozen operators of equations~\eqref{def:J_thawed}, \eqref{def:J_frozen}, \eqref{def:J12th} and \eqref{def:J12fr}  are of that form.
 We study the properties of these operators in Section~\ref{subsec:Barg_ext}.

 \section{The Bargmann transform}\label{sec:Bargmann}

Recall that the Bargman transform is  the map $\mathcal B$ 
 \[
  \mathcal B: \;\; L^2(\R^d)\ni  f\mapsto  \mathcal B[f] \in L^2(\R^{2d}),
  \]
 defined by~\eqref{def:bargmann}
This map is  an isometry and one has 
 \[
 \int_{\R^{2d} }|  \mathcal B[f](z)|^2 dz =\| f\|^2_{L^2}.
 \]
Indeed,  the Gaussian frame  identity writes
\begin{equation}\label{eq:gaussian_frame}
f(x)= (2\pi\eps)^{-d} \int_{\R^{2d}} \langle g^\eps_z, f\rangle g^\eps_z (x) dz
=\mathcal J[g^\eps _z] f,
\end{equation}
and equation~\eqref{eq:gaussian_frame} is equivalent to 
\begin{equation*}
f(x)= (2\pi\eps)^{-\frac d2} \int_{\R^{2d}}  \mathcal B[f](z) g^\eps_z (x) dz,\;\;\forall f\in L^2(\R^d).
\end{equation*}

More generally, the Bargmann transform characterizes the $\Sigma^k_\eps$ spaces according to the next result that we prove in Section~\ref{sec:freq_loc_Sigma_k} below.

\begin{lemma}\label{lem:Barg_Sobolev}
Let $k\in\N$, there exists a constant $c_k$ such that for all $f\in\mathcal S(\R^d)$,
\[
\| f\|_{\Sigma^k_\eps }\leq  c_k \| \langle z\rangle ^k \mathcal B[f] \|_{L^2(\R^{2d})}.
\]
\end{lemma}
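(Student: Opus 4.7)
The plan is to transfer the estimate to the Bargmann side via the isometry of $\mathcal{B}$ together with explicit intertwining relations, reducing the inequality to a weighted Bernstein-type estimate on the range of $\mathcal{B}$, which is the main technical point.

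\smallskip

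\noindent\textbf{Step 1 (intertwining and expansion).} A direct computation of $\eps\partial_{p_j}g^\eps_z = i(x_j-q_j)g^\eps_z$ combined with integration by parts yields
\[
\mathcal{B}[x_j f] = (q_j + i\eps\partial_{p_j})\mathcal{B}[f], \qquad \mathcal{B}[\eps\partial_{x_j}f] = (ip_j + 2i\eps\partial_{p_j})\mathcal{B}[f],
\]
so both operations act on $\mathcal{B}[f]$ by multiplication by $(q,p)$ and the single derivative $\eps\partial_p$. Iterating and putting everything in normal form via the canonical commutation $[p_j,\eps\partial_{p_k}]=-\eps\delta_{jk}$, we obtain
\[
\mathcal{B}[x^\alpha(\eps\partial_x)^\beta f] = \sum_{|\mu|\leq k} c_{\alpha,\beta,\mu}(q,p,\eps)\,(\eps\partial_p)^\mu\mathcal{B}[f],
\]
where each $c_{\alpha,\beta,\mu}$ is a polynomial in $(q,p)$ of degree at most $k-|\mu|$ and is uniformly bounded by $C_k\langle z\rangle^{k-|\mu|}$ for $\eps\in(0,1]$.

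\smallskip

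\noindent\textbf{Step 2 (reduction via isometry).} Since $\mathcal{B}$ is an isometry, the triangle inequality gives
\[
\|x^\alpha(\eps\partial_x)^\beta f\|_{L^2(\R^d)} \leq \sum_{|\mu|\leq k} \|c_{\alpha,\beta,\mu}(q,p,\eps)(\eps\partial_p)^\mu \mathcal{B}[f]\|_{L^2(\R^{2d})},
\]
so the lemma reduces to the weighted Bernstein-type estimate
\[
\|\langle z\rangle^{a}(\eps\partial_{p})^\mu F\|_{L^2(\R^{2d})} \leq C_{a,\mu}\,\|\langle z\rangle^{a+|\mu|}F\|_{L^2(\R^{2d})}, \qquad F=\mathcal{B}[f],
\]
applied with $a=k-|\mu|$ for each term of the sum.

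\smallskip

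\noindent\textbf{Step 3 (the Bernstein estimate, main obstacle).} Combining the reconstruction formula with the definition of $\mathcal{B}$, every $F=\mathcal{B}[f]$ obeys the reproducing identity
\[
F(z) = \int_{\R^{2d}} K(z,z')F(z')\,dz',\qquad K(z,z')=(2\pi\eps)^{-d}\langle g^\eps_z,g^\eps_{z'}\rangle,
\]
and an explicit Gaussian computation gives $|K(z,z')|\leq C\eps^{-d}e^{-c|z-z'|^2/\eps}$ together with $|(\eps\partial_p)^\mu K(z,z')|\leq C_\mu(1+|z-z'|)^{|\mu|}|K(z,z')|$. Differentiating under the integral and inserting the weights yields
\[
\langle z\rangle^a(\eps\partial_p)^\mu F(z) = \int M(z,z')\,\langle z'\rangle^{a+|\mu|}F(z')\,dz', \qquad M(z,z'):=\frac{\langle z\rangle^a(\eps\partial_p)^\mu K(z,z')}{\langle z'\rangle^{a+|\mu|}}.
\]
Using $\langle z\rangle\leq \langle z'\rangle + |z-z'|$, one dominates $|M(z,z')|\leq C\eps^{-d}(1+|z-z'|^{a+|\mu|})e^{-c|z-z'|^2/\eps}$; the rescaling $w=(z-z')/\sqrt\eps$ then shows that $\sup_z\int|M(z,z')|\,dz'$ and $\sup_{z'}\int|M(z,z')|\,dz$ are bounded uniformly for $\eps\in(0,1]$, so Schur's lemma concludes. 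The essential point, and the reason the estimate is not immediate, is that the Gaussian off-diagonal decay of $K$ at scale $\sqrt\eps$---a rigidity consequence of $F$ belonging to the Bargmann--Fock range---exactly compensates each derivative $\eps\partial_p$ by one power of $\langle z\rangle$.
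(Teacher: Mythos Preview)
Your proof is correct, but it takes a genuinely different route from the paper. The paper argues in one stroke: it considers the operator
\[
T_{\alpha,\gamma}=\mathcal B\circ \big(x^\alpha(\eps D_x)^\gamma\big)\circ \mathcal B^{*}\circ \langle z\rangle^{-k}
\]
on $L^2(\R^{2d})$, writes its kernel explicitly as $(2\pi\eps)^{-d}\langle g^\eps_X, x^\alpha(\eps D_x)^\gamma g^\eps_Y\rangle\langle Y\rangle^{-k}$, and bounds it directly via the wave-packet inner product formula and the Schur test. No intertwining relations and no reproducing-kernel differentiation are invoked; the weight $\langle z\rangle^{-k}$ is absorbed in a single Peetre-type estimate. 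Your approach is more structural: you first identify $\mathcal B\circ x^\alpha(\eps\partial_x)^\beta$ algebraically as a differential operator in $p$ with polynomial coefficients acting on $\mathcal B[f]$, and then prove a separate ``Bernstein on the Fock range'' lemma that trades each $\eps\partial_p$ for a power of $\langle z\rangle$ using the reproducing kernel. Both arguments ultimately rely on the Gaussian off-diagonal decay of $\langle g^\eps_z,g^\eps_{z'}\rangle$ at scale~$\sqrt\eps$ together with Schur; the paper's is shorter and more self-contained, while yours isolates a reusable Bernstein estimate and makes the mechanism (derivatives in $p$ $\leftrightarrow$ weights in $z$) transparent.
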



 \smallskip

For proving Lemma~\ref{lem:Barg_Sobolev} and along the next sections of this chapter, we shall use properties of wave packets that we sum-up here.

\begin{lemma}
\begin{enumerate}
    \item 
 Let $f,g\in\mathcal S(\R^{d})$ and $z,z'\in\R^{2d}$, then 
\begin{equation}\label{def:W(f,g)}
\langle {\rm WP}_z^\eps(f), {\rm WP}^\eps_{z'} (g)\rangle
={\rm e}^{\frac i\eps p'\cdot (q-q') }W[f,g]\left( \frac {z'-z}{\sqrt\eps}\right)
\end{equation}
where the function $W[f,g]$ is the Schwartz function on $\R^{2d}$ defined by 
\[
W[f,g](\zeta)=\int_{\R^d} \overline f(x) g(x-q) {\rm e}^{ip\cdot x} dx,\;\;\zeta=(q,p).
\]
Moreover, for all $n\in\N$, there exists a constant $C_n>0$ such that 
\begin{equation}\label{estimate_wigner}
\forall \zeta\in\R^{2d},\;\; \langle \zeta\rangle ^n \left| W[f,g](\zeta)\right| \leq C_n  \| f\|_{\Sigma_1^{n}} \| g\|_{\Sigma_1^{n}}.
\end{equation}
\item Let $z=(q,p)\in\R^{2d}$ and $\alpha,\gamma\in\N^d$, then  
\begin{equation}\label{def:g_alpha_gamma}
g_{\eps,z}^{\alpha,\gamma}:= x^\alpha (\eps D_x)^\gamma g^\eps_z={\rm WP}^\eps _z\left((q+\sqrt\eps y)^\alpha (p+\sqrt\eps D_y)^\gamma g^{i\1_{\R^d}})\right).
\end{equation}
Moreover, for all $k\in\N$, there exists $c_{k}>0$ such that for all $z\in\R^{2d}$
and for all $\alpha,\gamma\in\N^d$,
\begin{equation}\label{norm:g_alpha_gamma}
\| g_{\eps,z}^{\alpha,\gamma}\|_{\Sigma^k_\eps} \leq c_{k} \langle z\rangle^{k+|\alpha|+|\beta|}.
\end{equation}
\end{enumerate}
\end{lemma}

\begin{proof}
(1) The formula for $\langle {\rm WP}_z^\eps(f), {\rm WP}^\eps_{z'} (g)\rangle$ comes from a simple computation. Then, 
for $\alpha,\gamma \in\N^d$ and $z=(q,p)\in\R^{2d}$, we observe 
\begin{align*}
\left| q ^\gamma  p^\alpha  W[f,g] (z)  \right|& =\left| q^\gamma  \int _{\R^d}  D_x^\alpha (\overline f(x) g(x-q) ){\rm e}^{ix\cdot p} dx\right|\\
&\leq \langle q\rangle ^{|\gamma|}  \int _{\R^d} \left| D_x^\alpha (\overline f(x) g(x-q) )\right| dx\\
&\leq 2^{\frac{ |\gamma|} 2}   \int _{\R^d}\langle x\rangle ^{|\gamma|}\langle x-q\rangle ^{|\gamma|} \left| D_x^\alpha (\overline f(x) g(x-q) )\right| dx
 \end{align*}
 where we have used Peetre inequality
 \begin{equation}\label{eq:Peetre}
\forall t\in\R,\;\; \forall \ell\in\Z,\;\; \frac{ \langle t\rangle ^\ell} {\langle t'\rangle^\ell }\leq 2^{\frac{|\ell|}{ 2}} \langle t-t'\rangle^{|\ell|}.
\end{equation}
The conclusion then follows.
\smallskip 

(2) Equation~\eqref{def:g_alpha_gamma} comes from the relation 
\[
{\rm op}_\eps(a){\rm WP}_z^\eps(\varphi)=
{\rm WP}_z^\eps(\op_1(a(z+\sqrt\eps\cdot))\varphi)
\]
and~\eqref{norm:g_alpha_gamma} follows.
\end{proof}

We are now in position of proving Lemma~\ref{lem:Barg_Sobolev}.

\begin{proof}[Proof of Lemma~\ref{lem:Barg_Sobolev}]
Let $k\in\N$ and $\alpha,\gamma\in\N^d$ such that $|\alpha|+|\gamma|=k$. 
we consider the operator 
\[
T_{\alpha,\gamma}=\mathcal B\circ (x ^\alpha ( \eps D_x)^\gamma ) \circ \mathcal B^{-1}\circ \langle z\rangle ^{-k}.
\]
The kernel of this operator is the function 
\[
\R^{4d}\ni (X,Y)\mapsto  k^\eps (X,Y)=(2\pi\eps)^{-d} \langle g^\eps_X, x ^\alpha ( \eps D_x)^\gamma g^\eps_Y \rangle \langle Y\rangle^{-k}.
\]
Therefore, with the notation of~\eqref{def:g_alpha_gamma} and  by~\eqref{def:W(f,g)},
\begin{align*}
\int_{Y\in \R^{2d} }\sup_{X\in\R^{2d} } |k^\eps(X,Y)| dY & = (2\pi\eps)^{-d}
\int_{Y\in \R^{2d} }\sup_{X\in\R^{2d} } \left|W[g^{i{\1_{\R^d}} },g^{\alpha,\gamma}_{\eps,Y}]\left(\frac{X-Y}{\sqrt\eps}\right)\right| \langle Y\rangle ^{-k} dY\\
&=\int_{\zeta\in \R^{2d} }\sup_{X\in\R^{2d} } \left|W[g^{i{\1_{\R^d }} },g^{\alpha,\gamma}_{\eps,Y}]\left(\zeta\right)\right| \langle X-\sqrt\eps \zeta \rangle ^{-k} d\zeta.
\end{align*}
We deduce from~\eqref{estimate_wigner}  and~\eqref{norm:g_alpha_gamma} the existence of a constant $c_k>0$ such that 
\[
\int_{Y\in \R^{2d} }\sup_{X\in\R^{2d} } |k^\eps(X,Y)| dY
\leq c_k.
\]
Similarly, we have 
\begin{align*}
\int_{X\in \R^{2d} }\sup_{Y\in\R^{2d} } |k^\eps(X,Y)| dX &= (2\pi\eps)^{-d}
\int_{X\in \R^{2d} }\sup_{Y\in\R^{2d} } \left|W[g^{i{\1_{\R^d}} },g^{\alpha,\gamma}_{\eps,Y}]\left(\frac{X-Y}{\sqrt\eps}\right)\right| \langle Y\rangle ^{-k} dX\leq c_k.
\end{align*}
Therefore, the Schur test yields the boundedness of $T_{\alpha,\gamma}$. One then deduces that for $f\in\mathcal S(\R^d)$, one has 
\begin{align*}
\| x^\alpha( \eps D_x)^\gamma f\|_{L^2(\R^d)}& 
=  \| \mathcal B[x^\alpha( \eps D_x)^\gamma f]\|_{L^2(\R^{2d})}\\
&=  \left\|T^{\alpha,\gamma} \left( \langle z\rangle ^k \mathcal B[ f] \right) \right\|_{L^2(\R^{2d})}
\leq c_k   \|  \langle z\rangle ^k \mathcal B[ f] \|_{L^2(\R^{2d})},
\end{align*}
which concludes the proof. 
\end{proof}

 \section{Frequency localized families}\label{sec:freq_loc}
 
 We investigate here the properties of families that are frequency localized in the sense of Definition~\ref{def:freq_loc}.
 

   \subsection{First properties of  frequency localized functions}
   The first properties are straightforward.
   
   \begin{proposition}
   The set of frequency localized function is a  subspace of $L^2(\R^d)$. Moreover, we have the following properties: 
   \begin{enumerate}
   \item If $(\phi^\eps_1)_{\eps>0}$ and  $(\phi^\eps_2)_{\eps>0}$ are   two frequency localized families at the scales $\beta_1$ and $\beta_2$ respectively, then for all $a,b\in\C$, the family  $(a\phi^\eps_1+b\phi^\eps_2)_{\eps>0}$ is frequency localized at the scale ${\rm min}(\beta_1,\beta_2)$. 
    \item If  $(\phi^\eps)_{\eps>0}$ is frequency localized at the scale $\beta\geq 0$, then
     it is also frequency localized at the scale $\beta'$ for all $\beta'\in[0,\beta]$. 
      \end{enumerate}
   \end{proposition}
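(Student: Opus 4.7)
The plan is to exploit the linearity of the Bargmann transform $\mathcal{B}$ and deduce both properties directly from Definition~\ref{def:freq_loc}. Since $f \mapsto \mathcal{B}[f] = (2\pi\eps)^{-d/2}\langle g^\eps_\cdot, f\rangle$ is linear, both the $L^2$-boundedness and the pointwise decay of $\mathcal{B}[a\phi^\eps_1 + b\phi^\eps_2]$ will reduce to the bounds known for each family. For the subspace statement, the zero function is trivially frequency localized at every scale (since $\mathcal{B}[0]=0$), and closure under linear combinations will follow from property (1) applied with $\beta_1=\beta_2$.

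For property (1), I would set $R := \max(R_{\beta_1}, R_{\beta_2})$, $N := \min(N_{\beta_1}, N_{\beta_2}) > d$, and $\eps_0 := \min(\eps_{\beta_1}, \eps_{\beta_2}, 1)$. For $|z| > R$ and $\eps \in (0, \eps_0]$, the triangle inequality applied to $\mathcal{B}[a\phi^\eps_1 + b\phi^\eps_2](z) = a\mathcal{B}[\phi^\eps_1](z) + b\mathcal{B}[\phi^\eps_2](z)$ yields
\[
|\mathcal{B}[a\phi^\eps_1 + b\phi^\eps_2](z)| \leq |a|\,C_{\beta_1}\,\eps^{\beta_1} \langle z\rangle^{-N_{\beta_1}} + |b|\,C_{\beta_2}\,\eps^{\beta_2} \langle z\rangle^{-N_{\beta_2}}.
\]
Since $\eps\leq 1$, one has $\eps^{\beta_j} \leq \eps^{\min(\beta_1,\beta_2)}$, and $\langle z\rangle^{-N_{\beta_j}} \leq \langle z\rangle^{-N}$, giving the required bound with constant $|a|C_{\beta_1}+|b|C_{\beta_2}$ at scale $\min(\beta_1,\beta_2)$. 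The $L^2$-boundedness of $(a\phi^\eps_1+b\phi^\eps_2)_{\eps>0}$ is immediate from the triangle inequality in $L^2$.

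For property (2), if $\beta' \in [0,\beta]$ and $\eps\leq 1$, then $\eps^{\beta} \leq \eps^{\beta'}$, so the bound at scale $\beta$ directly implies the bound at scale $\beta'$ with the same constants $R_\beta$, $N_\beta$, $C_\beta$, and $\eps_{\beta'} := \min(\eps_\beta,1)$. The $L^2$-boundedness hypothesis is preserved since it does not depend on the scale.

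The only point requiring attention is to enforce $\eps\leq 1$ in the chosen threshold so as to preserve the monotonicity of $\eps\mapsto \eps^\beta$ in $\beta$; this is harmless since the notion is an asymptotic one as $\eps\to 0$. There is no substantive obstacle in this proof — it is a routine verification that the definition behaves well under the linear structure of $L^2(\R^d)$.
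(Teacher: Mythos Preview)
Your proof is correct. The paper does not actually supply a proof for this proposition; it simply remarks that ``the first properties are straightforward'' and moves on, so your routine verification via linearity of the Bargmann transform and the triangle inequality is exactly the argument the paper leaves to the reader.
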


Moreover, the notion of frequency localized functions is microlocal. Indeed, defining the 
    $\eps$-Fourier transform  by 
   \[
   \mathcal F^\eps f(\xi)=(2\pi\eps)^{-\frac d2} \int_{\R^d} \e^{\frac i\eps \xi\cdot x} \phi^\eps(x) dx =(2\pi\eps)^{-\frac d2} \widehat f \left(\frac \xi\eps\right) ,\;\; f\in \mathcal S^2(\R^d),
   \]
   we have the following result.
   
   \begin{proposition}
   Let $(\phi^\eps)_{\eps>0}$ be a  bounded family in $L^2(\R^d)$. Then, $(\phi^\eps)_{\eps>0}$ is frequency localized at the scale $\beta\geq 0$ if and only if 
    $(\mathcal F^\eps\phi^\eps)_{\eps>0}$ is frequency localized  at the scale $\beta\geq 0$. 
   \end{proposition}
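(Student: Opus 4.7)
Since $\mathcal F^\eps$ is unitary on $L^2(\R^d)$, the family $(\mathcal F^\eps\phi^\eps)_{\eps>0}$ is automatically bounded in $L^2$ whenever $(\phi^\eps)_{\eps>0}$ is, so only the decay of the Bargmann transform remains to be controlled. By Parseval and the Gaussian frame identity~\eqref{eq:gaussian_frame} applied to $\phi^\eps$, one obtains the integral representation
$$\mathcal B[\mathcal F^\eps\phi^\eps](z) = \int_{\R^{2d}} K_\eps(z,w)\,\mathcal B[\phi^\eps](w)\,dw, \qquad K_\eps(z,w) := (2\pi\eps)^{-d}\langle (\mathcal F^\eps)^{-1}g^\eps_z,\,g^\eps_w\rangle,$$
so the proof reduces to a pointwise estimate on the kernel followed by a splitting argument.

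A direct computation gives
$$(\mathcal F^\eps)^{-1}g^\eps_{(q,p)}(x) = 2^{-d/2}(\pi\eps)^{-d/4}\,e^{-(x-p)^2/(4\eps)}\,e^{-iq\cdot x/\eps},$$
which is a Gaussian wave packet concentrated in phase space near the point $\Phi(z):=(p,-q)$. The map $\Phi$ is a linear isometry of $\R^{2d}$, so in particular $|\Phi(z)|=|z|$. Evaluating the resulting Gaussian--Gaussian scalar product (two Gaussians with centres $\Phi(z)$ and $w$ in the position variable, but with different widths) yields the explicit bound
$$|K_\eps(z,w)| \leq C\,\eps^{-d}\,\exp\!\left(-c\,|w-\Phi(z)|^2/\eps\right)$$
for constants $c,C>0$ independent of $\eps$, $z$, $w$.

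To conclude, fix $|z|>2R_\beta$ and split the $w$-integral at $|w|=R_\beta$. On $\{|w|>R_\beta\}$ the hypothesis provides $|\mathcal B[\phi^\eps](w)|\leq C_\beta\eps^\beta\langle w\rangle^{-N_\beta}$; combined with Peetre's inequality~\eqref{eq:Peetre} in the form $\langle w\rangle^{-N_\beta}\leq 2^{N_\beta/2}\langle z\rangle^{-N_\beta}\langle w-\Phi(z)\rangle^{N_\beta}$ (using $|\Phi(z)|=|z|$), the translation $w'=w-\Phi(z)$ followed by the rescaling $w'=\sqrt\eps\,u$ absorbs the prefactor $\eps^{-d}$ and leaves a finite Gaussian moment, producing the desired bound $C_\beta'\eps^\beta\langle z\rangle^{-N_\beta}$. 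On $\{|w|\leq R_\beta\}$, one has $|w-\Phi(z)|\geq |z|/2$, so the kernel is exponentially small and, together with the uniform estimate $|\mathcal B[\phi^\eps](w)|\leq C\,\eps^{-d/2}\|\phi^\eps\|_{L^2}$, contributes a remainder of order $\eps^{-3d/2}e^{-c|z|^2/(4\eps)}$, which is absorbed into $\eps^\beta\langle z\rangle^{-N_\beta}$ for $\eps$ sufficiently small and $|z|$ sufficiently large. The converse direction is immediate because $(\mathcal F^\eps)^{-1}$ has exactly the same structure as $\mathcal F^\eps$ (only the sign in the Fourier phase changes), so the same argument applied to $\mathcal F^\eps\phi^\eps$ yields the frequency localization of $\phi^\eps$. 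The main technical point is the Gaussian kernel computation: $(\mathcal F^\eps)^{-1}g^\eps_z$ does not belong to the family $\{g^\eps_w\}_w$ because its semi-classical widths in position and momentum are rescaled relative to those of $g^\eps_w$, but this asymmetry only affects the constants $c, C$ in the pointwise bound on $K_\eps$ and not its essential concentration near $w=\Phi(z)$.
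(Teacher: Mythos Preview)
Your argument is correct, but it is considerably more work than needed, and the final remark about $(\mathcal F^\eps)^{-1}g^\eps_z$ not belonging to the family $\{g^\eps_w\}_w$ is the opposite of the key fact. With the intended normalisation $g^\eps_z={\rm WP}^\eps_z(g^{i\1})$, i.e.\ $g^\eps_z(x)=(\pi\eps)^{-d/4}\exp\!\bigl(-\tfrac{|x-q|^2}{2\eps}+\tfrac{i}{\eps}p\cdot(x-q)\bigr)$, the profile $g^{i\1}$ is a fixed point of the Fourier transform, so $\mathcal F^\eps g^\eps_z$ is, up to a unimodular constant, exactly $g^\eps_{Jz}$ with $J(q,p)=(-p,q)$ (or the analogous symplectic rotation depending on sign conventions). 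Hence
\[
\bigl|\langle g^\eps_z,\mathcal F^\eps\phi^\eps\rangle\bigr|
=\bigl|\langle (\mathcal F^\eps)^*g^\eps_z,\phi^\eps\rangle\bigr|
=\bigl|\langle g^\eps_{Jz},\phi^\eps\rangle\bigr|,
\]
and since $|Jz|=|z|$ the decay bound transfers instantly in both directions. This is the paper's one-line proof.

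Your computation giving width $e^{-(x-p)^2/(4\eps)}$ presumably comes from reading the display~\eqref{def:gepsz} literally with $e^{-(x-q)^2/\eps}$; that exponent is missing a factor~$1/2$ (compare with $g^\eps_z={\rm WP}^\eps_z(g^{i\1})$ stated just below~\eqref{def:bargmann}). Once the width is corrected, the kernel $K_\eps(z,w)$ you introduce becomes a pure phase times a Dirac-like concentration at $w=Jz$, and your integral splitting collapses to the identity above. Your longer route---Gaussian kernel bound plus Peetre and a near/far splitting---is essentially the mechanism behind the later Proposition~\ref{prop:charac_freq_loc} (testing with arbitrary Schwartz profiles), so it is not wrong, just unnecessary here.
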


   \begin{proof}
This comes from the observation that for all $z\in\R^{2d}$,
   \[
 \left|  \langle g^\eps_z,\phi^\eps\rangle\right| =  \left|  \langle g^\eps_{Jz},\mathcal F^\eps \phi^\eps\rangle\right| 
 \]
 where $J$ is the matrix defined in~\eqref{def:J}. Thus it is equivalent to state the fact of being frequency localized for a family or for the family of its $\eps$-Fourier transform.
   \end{proof}

 \subsection{Frequency localized families and Bargmann transform}

The Gaussian frame identity~\eqref{eq:gaussian_frame} allows to decompose a function of $L^2(\R^d)$ into a (continuous) sum of Gaussians. After discretization of the integral, this sum may be turned into a finite one, which opens the possibility of approximation's strategies (see~\cite{LS} where this observation is used for numerical purposes).
It is thus important to identify assumptions  that allow to compactify the set of integration in $z$. The notion of frequency localized families plays this role according to the next result. 

\begin{lemma}\label{lem:freq_loc}
Let $(\phi^\eps)_{\eps>0}$  be a frequency localized family at the scale~$\beta\geq 0$. Let  $R_\beta, C_\beta$ and~$N_\beta$ be  the constants associated to $(\phi^\eps)_{\eps>0}$ according to Definition~\ref{def:freq_loc}.  Let $k\in\N$ with $N_\beta>d +k$. Then,  for all $\chi\in L^\infty(\R)$ supported in $[0,2]$ and equal to $1$ on $[0,1]$, there exists $C>0$ such that for $R>R_\beta$ and $\eps\in (0,1]$,
 \begin{equation*}
 \left\|
 \phi^\eps -  \phi^\eps_{<,R}
  \right\|_{\Sigma^k_\eps(\R^d)} 
  \leq C \, C_\beta\, \eps^\beta  \left(\int_{|z|>R} \langle z\rangle ^{-2(N_\beta-k)}{dz}\right)^{1/2},
\end{equation*}
where
\begin{equation}\label{def:phi_R_<}
\phi^\eps_{R,<} := \mathcal J\left[g^\eps_z \chi\left(\frac{|z|} R\right) \right](\phi^\eps)= \mathcal B^{-1} \left( \1 _{|z|<R} \mathcal B[\phi^\eps](z)\right).
\end{equation}
\end{lemma}

\begin{remark}
Lemma~\ref{lem:freq_loc} can be interpreted in different manners. 
\begin{enumerate}
\item If $\beta>0$, then $ \phi^\eps_{R,<}$ approximates $\phi^\eps$ in $L^2(\R^d)$ as $\eps$ goes to $0$  in any space  $\Sigma^k_\eps(\R^d)$ with $k\in\N$ such that $   N_\beta >k+ d$, and uniformly with respect to  $R>R_\beta$.
\item If  $\beta\geq 0$ (which includes $\beta=0$), then the same approximation holds by letting $R$ go to~$+\infty$, and it is uniform with respect to $\eps$. In particular, when $\beta=0$ we have 
\[
 \limsup_{\eps\rightarrow 0} 
 \|
 \phi^\eps-  \phi^\eps_{R,<}
  \|_{\Sigma^k_\eps(\R^d)}
  \leq C  R^{-(N_\beta-k-d) }.
\]
\end{enumerate}
\end{remark}

\begin{proof} We   set
 \[
 r^\eps(x)= (2\pi\eps)^{-d}  \int_{|z|\geq R} \langle g^\eps_z, \varphi^\eps \rangle g^\eps_z (x) dz
 \]
 and consider $k\in\N$. 
For $R>R_\beta$, $\alpha,\gamma\in\N^d$ with $|\alpha|+|\gamma|=k$, we have
\begin{align*}
&\| x^\alpha (\eps D_x)^\gamma r^\eps\|^2_{L^2(\R^d)}\\
\nonumber
&\;  \leq
 (2\pi\eps)^{-2d}\int_{\R^d}  \int _{|z|>R} \int _{|z'|>R} 
 \langle g^\eps_z, \varphi^\eps \rangle \, \overline{ \langle g^\eps_{z'}, \varphi^\eps \rangle }
g_{\eps,z}^{\alpha,\gamma}(x) \, \overline{ g_{\eps,z'}^{\alpha,\gamma}(x)} \,dx\, dz\, dz'
\end{align*}
with the notation~\eqref{def:g_alpha_gamma}.
%
By~\eqref{def:W(f,g)} and the frequency localization of order $_beta$ of $(\phi^\eps)_{\eps>0}$, we obtain 
\begin{align*}
& \| x^\alpha (\eps D_x)^\gamma r^\eps\|^2_{L^2(\R^d)} \\
 &\qquad   \leq
  C_\beta^2\, \eps^{2\beta}  \, (2\pi\eps)^{-d}  \int _{|z|>R} \int _{|z'|>R} 
\langle z\rangle ^{-N_\beta} \langle z'\rangle ^{-N_\beta}\, W[g_{\eps,z}^{\alpha,\gamma},g_{\eps,z'}^{\alpha,\gamma}] \left(\frac{z-z'}{\sqrt\eps}\right) dz\, dz'.
\end{align*}
Besides, by~\eqref{estimate_wigner} and~\eqref{norm:g_alpha_gamma}, there exists a constant $C'_{n,k}$ such that 
\[
\left| W[g_{\eps,z}^{\alpha,\gamma},g_{\eps,z'}^{\alpha,\gamma}] (\zeta )\right|\leq   C'_{n,k}  \, \langle\zeta \rangle ^{-n}  \langle z\rangle^k\langle z'\rangle^k.
\]
We deduce the   existence of $c>0$ such that 
\begin{align*}
 \|x^\alpha (\eps D_x)^\gamma  r^\eps\|^2_{L^2(\R^d)} &  \leq c\, C_\beta^2 \,
\, \eps^{2\beta}  \,  \eps^{-d}  \int _{|z|>R} \int _{|z'|>R} 
\langle z\rangle ^{-N_\beta+k} \langle z'\rangle ^{-N_\beta+k} \left\langle \frac{z-z'}{\sqrt\eps}\right\rangle^{-n} dzdz'\\
&  \leq
c\, C_\beta^2\, \eps^{2\beta}  \,  \int _{|z|>R} \int  
\langle z\rangle ^{-N_\beta+k} \langle z+\sqrt\eps  \zeta \rangle ^{-N_\beta+k} \langle \zeta \rangle^{-n}  dzd\zeta.
\end{align*}
Since $-N_\beta+k\leq 0$, Peetre inequality~\eqref{eq:Peetre} gives
\[
\langle z+\sqrt\eps  \zeta\rangle ^{-N_\beta+k} 
\leq  2^{\frac{N_\beta-k}2} \langle \sqrt\eps  \zeta\rangle ^{ N_\beta-k} \langle z\rangle ^{-N_\beta+k}
\leq  2^{\frac{N_\beta-k}2}\langle  \zeta\rangle ^{ N_\beta-k} \langle z\rangle ^{-N_\beta+k},
\]
by restricting ourselves to $\eps\leq 1$. 
Therefore, there exists a constant $c>0$ such that
\begin{align*}
 \|x^\alpha (\eps D_x)^\gamma  r^\eps\|^2_{L^2(\R^d)} 
&  \leq c\, C_\beta^2\,
\eps^{2\beta}    \left( \int _{\R^{2d}} \langle \zeta\rangle^{-n+N_\beta -k } d\zeta\right)
\left(\int _{|z|>R}\langle z\rangle ^{-2(N_\beta -k)}  dz\right)
\end{align*}
and we   conclude the proof by choosing $n=N_\beta+k+2d+1$. 
 \end{proof}

 \subsection{Examples}
In this section, we  analyze the examples given in the Introduction.

  \begin{lemma}\label{lem:ex_freq_loc}
  \begin{enumerate}
  \item 
  Let $u\in\mathcal S(\R^d)$ and $z_0=(q_0,p_0)\in\R^{2d}$. Then,  the family $({\rm WP}^\eps_{z_0}(u))_{\eps>0}$ is frequency localized at the scale $\beta$ for any $\beta\geq 0$.
  \item 
Let $a\in\mathcal C_0^\infty(\R^d)$ and $S\in\mathcal C^\infty(\R^d)$. Then,  the family $({\rm e}^{\frac i\eps S(x)} a)_{\eps>0}$ is frequency localized at the scale $\beta$ for any $\beta\geq 0$. 
  \end{enumerate}
  \end{lemma}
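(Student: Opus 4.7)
The plan is to bound the Bargmann coefficient $(2\pi\eps)^{-d/2}|\langle g^\eps_z,\phi^\eps\rangle|$ directly in each case, producing arbitrary decay in both $\eps$ and $\langle z\rangle$ once $|z|$ leaves a fixed ball. Since Definition~\ref{def:freq_loc} requires only polynomial decay together with a fixed power of $\eps$, such arbitrary decay furnishes frequency localization at every scale $\beta \geq 0$.

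For (i), since $g^\eps_z = \wp^\eps_z(g^{i\1})$, the two wave packet identity~\eqref{def:W(f,g)} gives
\[
\langle g^\eps_z, \wp^\eps_{z_0}(u)\rangle = e^{\frac{i}{\eps}p_0\cdot(q-q_0)}\,W[g^{i\1},u]\!\left(\frac{z_0-z}{\sqrt\eps}\right).
\]
The Schwartz estimate~\eqref{estimate_wigner} yields $|W[g^{i\1},u](\zeta)| \leq C_n \langle\zeta\rangle^{-n}$ for every $n \in \N$. Fix $R_\beta := 2|z_0|+1$; for $|z| > R_\beta$ one has $|z-z_0| \geq |z|/2$, hence $\langle (z_0-z)/\sqrt\eps\rangle^{-n} \leq 2^n \eps^{n/2} \langle z\rangle^{-n}$. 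Multiplying by $(2\pi\eps)^{-d/2}$ and choosing $n$ with $(n-d)/2 \geq \beta$ and $n \geq N_\beta$ gives the required bound.

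For (ii), a direct computation shows
\[
\langle g^\eps_z, \varphi^\eps\rangle = (\pi\eps)^{-d/4}\int a(x)\,e^{i\psi_z(x)/\eps}\,dx, \qquad \psi_z(x) := S(x)-p\cdot(x-q) + i|x-q|^2,
\]
so that $|\nabla_x\psi_z(x)|^2 = |\nabla S(x)-p|^2 + 4|x-q|^2$. Let $K := \mathrm{supp}\,a$, $R_K := 1+\sup_{x\in K}(|x|+|\nabla S(x)|)$, and $R_\beta := 4R_K+1$. Whenever $|z| > R_\beta$, at least one of $|q|, |p|$ exceeds $2R_K$: in the former case $|x-q| \geq |q|/2$ on $K$, so $|\nabla_x\psi_z| \geq |q| \geq |z|/3$; in the latter case $|\nabla S(x)-p| \geq |p|/2$ on $K$, so $|\nabla_x\psi_z| \geq |p|/2 \geq |z|/4$. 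The first-order operator
\[
L := \frac{-i\eps\,\overline{\nabla_x\psi_z(x)}}{|\nabla_x\psi_z(x)|^2}\cdot\nabla_x
\]
satisfies $L(e^{i\psi_z/\eps}) = e^{i\psi_z/\eps}$, and a straightforward induction, using $a \in \mathcal C^\infty_0$ and the uniform boundedness of the $x$-derivatives of $\nabla_x\psi_z$ on $K$, gives $|(L^t)^N a(x)| \leq C_N \eps^N |\nabla_x\psi_z|^{-N} \leq C_N' \eps^N |z|^{-N}$ on $K$. Integrating by parts $N$ times and then multiplying by $(2\pi\eps)^{-d/2}$ yields a bound of order $\eps^{N-3d/4}|z|^{-N}$; taking $N$ large enough furnishes any $\beta \geq 0$ and any $N_\beta > d$.

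The main subtlety is the inductive bound on $(L^t)^N a$: each application of $L^t$ multiplies by $\eps$ and differentiates either $a$ or $\overline{\nabla\psi_z}/|\nabla\psi_z|^2$, whose derivatives are controlled by $|\nabla\psi_z|^{-1}$ times constants depending on the $\mathcal C^N$-seminorms of $S$ on a neighborhood of $K$. Consequently $N$ iterations deliver a prefactor of order $\eps^N |\nabla\psi_z|^{-N}$, which in the two regimes above is $(\eps/|z|)^N$. All other steps are routine.
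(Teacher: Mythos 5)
Your proof of part~(1) is essentially identical to the paper's argument: the same reduction via the cross term $W[g^{i\1},u]$ of~\eqref{def:W(f,g)}, the same use of~\eqref{estimate_wigner}, the same conversion of $\langle (z_0-z)/\sqrt\eps\rangle^{-n}$ into a factor $\eps^{n/2}\langle z\rangle^{-n}$ for $|z|>R_\beta$.

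For part~(2) you take a genuinely different and arguably cleaner route. The paper rescales $x=q+\sqrt\eps y$, isolates a Gaussian weight $e^{-|y|^2/2}$, splits the integral at $|y|\approx\eps^{-1/4}$, uses Gaussian decay on the far region, and then integrates by parts with the \emph{real} vector field attached to $y\mapsto\nabla S(q+\sqrt\eps y)-p$; moreover the paper needs a separate structural argument (the functions $A^\eps_j, B^\eps_j$) to recover the $\langle z\rangle^{-N_\beta}$ decay, because the rescaled phase encodes only the momentum part of $z$. You avoid all of that by keeping the Gaussian inside a \emph{complex} phase $\psi_z(x)=S(x)-p\cdot(x-q)+i|x-q|^2$, so that
\[
|\nabla_x\psi_z(x)|^2=|\nabla S(x)-p|^2+4|x-q|^2
\]
is uniformly $\gtrsim|z|^2$ on $\mathrm{supp}\,a$ once $|z|$ is large. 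A single non-stationary-phase integration by parts with the complex vector field $L$ then delivers both the $\eps$ power and the $\langle z\rangle$ decay simultaneously, with no splitting of the domain and no separate treatment of $q$- and $p$-directions. This is a real simplification: the crucial trick $L(e^{i\psi_z/\eps})=e^{i\psi_z/\eps}$ works because $\overline{\nabla\psi_z}\cdot\nabla\psi_z=|\nabla\psi_z|^2$, and the Gaussian factor $|e^{i\psi_z/\eps}|\le 1$ keeps the integral under control after the integration by parts.

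One small imprecision in your write-up of the dichotomy: from ``at least one of $|q|,|p|$ exceeds $2R_K$'' you cannot directly conclude $|q|\ge|z|/3$ or $|p|/2\ge|z|/4$ in the corresponding case, since the other component could dominate $|z|$. The fix is immediate: split instead on whichever of $|q|,|p|$ is the larger; that one satisfies both $\ge|z|/\sqrt2$ and (for $|z|>R_\beta$) $>2R_K$, so the relevant term of $|\nabla_x\psi_z|^2$ already gives the lower bound $\gtrsim|z|$. With that repair the argument is complete.
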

  
  \begin{proof}
1-   By~\eqref{def:W(f,g)}, we have for $z\in\R^{2d}$
\[  
\mathcal B[ {\rm WP}^\eps_{z_0}(u)] (z) 
=(2\pi\eps)^{-\frac d2} \langle {\rm WP}_z^\eps(g^{i\1_{\R^d}}),{\rm WP}^\eps_{z_0}(u)\rangle 
=(2\pi\eps)^{-\frac d2} \, {\rm e}^{\frac i\eps p_0\cdot (q-q_0)} \, W[g^{i\1_{\R^d}},u] \left(\frac {z_0-z}{\sqrt\eps}\right).
\]
Let $N\in\N$, the estimate~\eqref{estimate_wigner} implies the existence of a constant $C_N'$ such that 
\[
| \mathcal B[ {\rm WP}^\eps_{z_0}(u)] (z) | \leq C_N' \, \eps^{-\frac d2}\, \left\langle \frac {z_0-z}{\sqrt\eps} \right\rangle^{-N}.
\]
Choosing $|z|>\max(2|z_0|,1)$, we have $2|z_0-z|\geq 2(|z|-|z_0|)\geq |z|$ and we deduce 
\[
\left\langle \frac {z_0-z}{\sqrt\eps} \right\rangle^{-N}
= \left( \frac\eps{\eps+|z-z_0|^2}\right)^{\frac N2} 
\leq  \left(\frac{4\eps}{4\eps+|z|^2}\right)^{\frac N2} \leq   {(4\eps)^{\frac {N}2}} \, {| z|^{-N}},
\]
 whence
 the existence of a constant $c_N>0$ such that for $|z|>\max(2|z_0|,1)$ and $N\in\N$,
\[
| \mathcal B[ {\rm WP}^\eps_{z_0}(u)] (z) | \leq c_N \, \eps^{\frac {N-d} 2}  \, \langle z\rangle^{-N}.
\]
    \medskip 
    
    2- One has for $z\in\R^{2n}$
 \begin{align*} \nonumber
 \mathcal B[{\rm e}^{\frac i\eps S(x)} a] (z)
 & \nonumber= (2\pi)^{-d/2} (\pi\eps) ^{-d/4} 
 \int_{\R^d} a(q+\sqrt\eps y)
  {\rm Exp}\left(  - \frac i{\sqrt\eps}  p\cdot y + \frac i{\eps} S(q+y\sqrt \eps) 
\right)  {\rm e}^{-\frac{ |y|^2}2} dy.
 \end{align*}
 This term has a very specific structure involving the symbol $y\mapsto a(y)$, an exponentially decaying function  $y\mapsto  {\rm e}^{-\frac{ |y|^2}2} $ and an oscillating phase 
  \[
  y\mapsto  \Lambda^\eps(y):= - \frac 1{\sqrt\eps}  p\cdot y + \frac 1{\eps} S(q+y\sqrt \eps) .
  \] 
We are going to show that the terms defined for $j\in\{1,\cdots,d\}$ by 
  \[
A_j^\eps:=q_j   \int_{\R^d } a(q+\sqrt\eps y)
 {\rm e}^{-\frac{ |y|^2}2} \e^{ i\Lambda^\eps(y)}  dy\;\;\;\mbox{and}\;\;\;
 B_j^\eps:=p_j \int_{\R^d } a(q+\sqrt\eps y)
 {\rm e}^{-\frac{ |y|^2}2} \e^{ i\Lambda^\eps(y)}  dy,
 \]
 have the same structure. Then, it will be enough to consider only one of these terms and to prove that they are controlled by a power of $\eps$, this will imply an adequate control on $| \mathcal B[{\rm e}^{\frac i\eps S(x)} a] (z)|$ by a recursive argument.
 \smallskip 
 
 Let us first transform  $A_j^\eps$  and $B_j^\eps$. 
 Indeed, we have 
 \begin{align*}
 A_j ^\eps&= \int_{\R^d } (q_j+\sqrt\eps y_j) a(q+\sqrt\eps y)
 {\rm e}^{-\frac{ |y|^2}2} \e^{ i\Lambda^\eps(y)}  dy
 -\sqrt\eps \int_{\R^d } a(q+\sqrt\eps y)
\left(y_j  {\rm e}^{-\frac{ |y|^2}2} \right) \e^{ i\Lambda^\eps(y)}  dy.
 \end{align*}
 The first integral of the right hand side has the same structure with the symbol $y\mapsto a(y)$ and the second one with the exponentially  decaying function $y\mapsto y_j {\rm e}^{-\frac{ |y|^2}2} $.
 Besides, observing 
\[
p_j \e^{i\Lambda^\eps(y)}=-i\sqrt\eps \partial_{y_j} (\e^{i\Lambda^\eps(y)})-\partial_{y_j} S(q+\sqrt\eps y) \e^{i\Lambda^\eps(y)},
\]
we obtain with an integration by parts
\begin{align*}
 B_j^\eps &= -\int_{\R^d } \partial_{y_j} S(q+\sqrt\eps y)  a(q+\sqrt\eps y)
 {\rm e}^{-\frac{ |y|^2}2} \e^{ i\Lambda^\eps(y)}  dy\\
&\;\;\;\; +i \sqrt\eps \int_{\R^d } \partial_{y_j} \left(a(q+\sqrt\eps y)
  {\rm e}^{-\frac{ |y|^2}2} \right) \e^{ i\Lambda^\eps(y)}  dy.
 \end{align*}
Here again the right hand side has the same structure with different symbols and rapidly decaying term. 
\smallskip

We now focus in proving that one typical term   written for  $\varphi$ a polynomial function  as 
\begin{equation}\label{mickey}
L^\eps:=  \int_{\R^d } a(q+\sqrt\eps y)
 \varphi(y)\e^{-\frac{|y|^2}2}\e^{ i\Lambda^\eps(y)}  dy
  \end{equation}
 is of order $\eps^N$ for all $N\in\N$.
  The decay of $y\mapsto \e^{-\frac{|y|^2}2  } $  allows to reduce the set of integration. Indeed, we have 
  \[
\left| 
 \int_{|y| >\eps^{-\frac 14} } a(q+\sqrt\eps y)
 {\rm e}^{-\frac{ |y|^2}2} \e^{ i\Lambda^\eps(y)} \varphi(y) dy\right| 
\leq \e^{-\frac {\sqrt\eps } 4} \| a\|_{L^\infty} \int_{\R^d}   {\rm e}^{-\frac{ |y|^2}4} |\varphi(y)|dy.
\]
Therefore, there exists a constant $c>0$ such that 
\[
|L^\eps|
 \leq c \left(  \left|
 \int_{|y|\leq \eps^{-\frac 14} } a(q+\sqrt\eps y)
  {\rm e}^{-\frac{ |y|^2}2} \e^{ i\Lambda^\eps(y)} \varphi(y)dy\right|
+   \e^{-\frac 1 {4\sqrt\eps } }\right).
\]
 We now use the oscillations of the phase for treating the integral with domain $\{|y|\leq \eps^{-\frac 14}\}$. We observe that there exists $R_0>0$ such that if $|z|>R_0$, then 
  \[
z\notin\{ |p- \nabla S(q)|\leq 1,\;{\rm dist}( q,  {\rm supp}(a))\leq 1 \}.
 \]
We
choose  $|z|>R_0$ and  we have the following alternative: 
\[
\mbox{either}\;  {\rm dist}( q,  {\rm supp}(a))>1,\;\mbox{  or}\;\left(   {\rm dist}( q,  {\rm supp}(a))\leq 1\;\mbox{ and}\;  |p- \nabla S(q)|> 1\right).
\]
 If 
 ${\rm dist}( q,  {\rm supp}(a))>1$, there exists $\eps_0>0$ such that if $\eps\in(0,\eps_0]$ and $|y|\leq \eps^{1/4} $, then $q+\sqrt\eps y\notin {\rm supp}(a)$. The integral thus is zero and we are reduced to the case where  ${\rm dist}( q,  {\rm supp}(a))\leq 1$ and $ |p- \nabla S(q)|> 1$. One can find $\eps_1>0$ such that for $\eps\in (0,\eps_1]$ and $|y|\leq \eps^{\frac 14}$, 
 \[
\left| \nabla S(q+y\sqrt\eps)-p\right|>\frac 1{2}.
\]
We then consider the differential operator 
\[
L^\eps= \sqrt\eps \frac{\nabla S(q+y\sqrt\eps)-p}{| \nabla S(q+y\sqrt\eps)-p|^2} \cdot \nabla_y
\]
and we  write 
\begin{align*}
& \int_{|y| \leq \eps^{-\frac 14} } a(q+\sqrt\eps y)
  {\rm e}^{-\frac{ |y|^2}2} \e^{ i\Lambda^\eps(y)}\varphi(y)  dy \\
&= \int_{|y| \leq \eps^{-\frac 14} } a(q+\sqrt\eps y) 
  {\rm e}^{-\frac{ |y|^2}2}\varphi(y)(L^\eps)^N\left( \e^{ i\Lambda^\eps(y)} \right) dy\\  
&= \int_{|y| \leq \eps^{-\frac 14} } {(L^\eps)^N}^*\left( a(q+\sqrt\eps y)|\nabla S(q+y\sqrt\eps)-p|^{-2N} 
  {\rm e}^{-\frac{ |y|^2}2} \varphi(y)\right)\e^{ i\Lambda^\eps(y)}  dy.
\end{align*}
There exists a constant $C>0$ independent of $z$ such that for all $\eps\in(0,\eps_1]$ and $|y|\leq \eps^{\frac 14}$,
\[
\left|  {(L^\eps)^N}^*\left( a(q+\sqrt\eps y)|\nabla S(q+y\sqrt\eps)-p|^{-2N} 
  {\rm e}^{-\frac{ |y|^2}2} \varphi(y)\right)\right|\leq C \eps^{\frac N2} {\rm e}^{-\frac{ |y|^2}2}.
  \]
  We deduce 
  \begin{align*}
 \left|\int_{|y| \leq \eps^{-\frac 14} } a(q+\sqrt\eps y)
  {\rm e}^{-\frac{ |y|^2}2} \e^{ i\Lambda^\eps(y)} \varphi(y) dy \right|
  \leq C \eps^{\frac N 2}
  \end{align*}
  and~\eqref{mickey} writes 
  \[
  |L^\eps|\leq c\left(\eps^{\frac N 2} +\e^{-\frac1{4\sqrt\eps}}\right)
  \]
  for some constant $c>0$. 
  This terminates the proof. 
   \end{proof}

   \subsection{Characterization of frequency localized families}
   
The characterization of frequency localized families  can be done by using other families of wave packets than Gaussian ones and the cores $z$ can be  distributed in different manners. 
   
   \begin{proposition}\label{prop:charac_freq_loc}
    The family $(\phi^\eps)_{\eps>0}$ is frequency localized at the scale $\beta\geq 0$ if and only if  for all $\mathcal C^1$-diffeomorphism $\Phi$ satisfying 
   \[
 \exists a,b>0,\;\;  \forall z\in\R^{2d},\;\; a|z|\leq \Phi(z)\leq b|z|,
   \]
for all   $\theta\in\mathcal S(\R^d)$,
 there exists  $C_\beta$, $N_\beta$, $R_\beta$ and $\eps_\beta$ such that for all $\eps\in(0,\eps_\beta]$ and  for $|z|>R_\beta$
    \[
    (2\pi\eps)^{-\frac d2} |\langle{\rm WP}^\eps_{\Phi(z)}(\theta), \phi^\eps\rangle |\leq C_\beta \,\eps^\beta\, \langle z\rangle ^{-N_\beta}\,\max\left(1, \frac 1a\right)^{N_\beta}  \,\|\theta\|_{\Sigma^{2d+1+N_\beta}}.
    \]
    Moreover, for all family $(\lambda^\eps)_{\eps>0}$ bounded in $L^\infty(\R^{2d})$, $\eps\in(0,\eps_\beta]$ and $R>R_\beta$,
    \[
    \left\| \mathcal J\left[{\rm e}^{\frac i\eps \lambda^\eps(z)}{\rm WP}^\eps_{\Phi(z)}(\theta)\1_{|z|>R}\right](\phi^\eps)\right\|_{\Sigma^k_\eps} 
    \leq C\, C_\beta\, \eps^\beta \left(\int_{|z|>R} \langle z\rangle^{-2(N_\beta-k)} dz\right)^{\frac 12}.
    \]
      \end{proposition}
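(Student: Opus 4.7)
I would start by observing that the reverse direction is immediate: specializing to $\Phi=\mathrm{id}$ (so $a=b=1$) and $\theta=g^{i\1}$ gives ${\rm WP}^\eps_{\Phi(z)}(\theta)=g^\eps_z$, and the stated bound collapses exactly to Definition~\ref{def:freq_loc}. All the work lies in the direct implication, and the main vehicle will again be the combination of the Gaussian frame identity \eqref{eq:gaussian_frame}, the Wigner representation \eqref{def:W(f,g)}, and the decay estimate \eqref{estimate_wigner}, exactly as in the proofs of Lemmas~\ref{lem:freq_loc} and~\ref{lem:ex_freq_loc}.

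For the pointwise overlap bound, I would insert the Gaussian frame identity into $\phi^\eps$ and write
\[ \langle {\rm WP}^\eps_{\Phi(z)}(\theta),\phi^\eps\rangle = (2\pi\eps)^{-d}\int_{\R^{2d}} \langle g^\eps_{z'},\phi^\eps\rangle\,\langle {\rm WP}^\eps_{\Phi(z)}(\theta),g^\eps_{z'}\rangle\, dz', \]
bound the inner wave-packet overlap via \eqref{def:W(f,g)}--\eqref{estimate_wigner} by $C_n\langle(z'-\Phi(z))/\sqrt\eps\rangle^{-n}\|\theta\|_{\Sigma^n}$, and split the $z'$-integration at $|z'|=R_\beta$. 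On $|z'|>R_\beta$ Definition~\ref{def:freq_loc} applies; after the change of variable $w=(z'-\Phi(z))/\sqrt\eps$, Peetre's inequality \eqref{eq:Peetre}, and the lower bound $\langle\Phi(z)\rangle\geq\min(1,a)\langle z\rangle$ (source of the $\max(1,1/a)^{N_\beta}$ factor), the $w$-integral converges for $n$ large and yields the required $\eps^\beta\langle z\rangle^{-N_\beta}$ decay. On $|z'|\leq R_\beta$, where only $|\langle g^\eps_{z'},\phi^\eps\rangle|\leq (2\pi\eps)^{d/2}\|\phi^\eps\|_{L^2}$ is available, I would use that $|\Phi(z)|\geq a|z|$ forces $|z'-\Phi(z)|\geq a|z|/2$ for $|z|$ large, so that the Wigner estimate produces a contribution of size $(\sqrt\eps/|z|)^n$ beating any power of $\eps$ and any polynomial decay in $\langle z\rangle$ once $n$ is chosen large; the index $2d+1+N_\beta$ in the statement is exactly what is needed for both pieces to balance.

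The operator bound in $\Sigma^k_\eps$ follows the same template as Lemma~\ref{lem:freq_loc}. Applying $x^\alpha(\eps D_x)^\gamma$ turns each ${\rm WP}^\eps_{\Phi(z)}(\theta)$ into ${\rm WP}^\eps_{\Phi(z)}(\theta^{\alpha,\gamma}_z)$ with a profile whose $\Sigma^n$-norm is controlled by $\langle\Phi(z)\rangle^k\lesssim\langle z\rangle^k$, mimicking the computation leading to \eqref{norm:g_alpha_gamma}; the uniformly bounded phase $e^{i\lambda^\eps(z)/\eps}$ drops out of every modulus estimate. Expanding $\|x^\alpha(\eps D_x)^\gamma r^\eps\|_{L^2}^2$ as a double integral over $\{|z|,|z'|>R\}$, using \eqref{def:W(f,g)}--\eqref{estimate_wigner} for the wave-packet inner product and the frequency localization for each $|\langle g^\eps_z,\phi^\eps\rangle|$, the task reduces to controlling
\[ \eps^{2\beta-d}\iint_{|z|,|z'|>R}\langle z\rangle^{-N_\beta+k}\langle z'\rangle^{-N_\beta+k}\left\langle\tfrac{\Phi(z)-\Phi(z')}{\sqrt\eps}\right\rangle^{-n}\!dz\, dz'. \]

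The step I expect to be most delicate is handling this last kernel, because the Wigner decay is in terms of $\Phi(z)-\Phi(z')$ rather than $z-z'$. My plan is to perform the change of variable $\zeta'=\Phi(z')$, absorbing the Jacobian using the $\mathcal C^1$-regularity of $\Phi^{-1}$ (which exists as $\Phi$ is a $\mathcal C^1$-diffeomorphism of $\R^{2d}$), followed by $w=(\zeta'-\Phi(z))/\sqrt\eps$. A Peetre inequality then converts $\langle z'\rangle^{-N_\beta+k}$, viewed as a function of $\zeta'=\Phi(z)+\sqrt\eps w$, into $\langle\Phi(z)\rangle^{-N_\beta+k}\langle w\rangle^{N_\beta-k}$. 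For $n$ sufficiently large relative to $N_\beta$ and $k$, the $w$-integral is finite, the remaining $\langle\Phi(z)\rangle^{-N_\beta+k}$ is bounded by $\max(1,1/a)^{N_\beta-k}\langle z\rangle^{-N_\beta+k}$, and taking the square root reproduces the announced $(\int_{|z|>R}\langle z\rangle^{-2(N_\beta-k)}dz)^{1/2}$ factor.
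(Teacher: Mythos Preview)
Your proposal is correct and follows essentially the same route as the paper: insert the Gaussian frame identity, split the $z'$-integral at $|z'|=R_\beta$, control the far piece via Definition~\ref{def:freq_loc} together with \eqref{estimate_wigner} and Peetre (choosing $n=2d+1+N_\beta$), control the near piece by the lower bound $|\Phi(z)|\ge a|z|$; for the operator estimate the paper simply refers to the proof of Lemma~\ref{lem:freq_loc} ``combined with adapted change of variables'', which is exactly the change of variable $\zeta'=\Phi(z')$ you describe.
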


    \begin{proof}
   We only have to prove that if $(\phi^\eps)_{\eps>0}$ is frequency localized at the scale $\beta\geq 0$, then the property holds for some given profile $\theta$ and diffeomorphism $\Phi$. Then, the equivalence will  follow. 
   We consider the constants $C_\beta$, $N_\beta$, $R_\beta$ and $\eps_\beta$ given by Definition~\ref{def:freq_loc} and we take  $\eps\in(0,\eps_\beta]$.
   We observe 
   \begin{align*}
     (2\pi\eps)^{-\frac d2} \langle{\rm WP}^\eps_{\Phi(z)}(\theta), \phi^\eps\rangle & = 
       (2\pi\eps)^{-\frac {3d}2} \int_{\R^d} \langle{\rm WP}^\eps_{\Phi(z)}(\theta), g^\eps_{z'}\rangle \langle g^\eps_{z'} , \phi^\eps\rangle dz'
       = I_1+I_2
       \end{align*}
       with 
       \[
       I_1=    (2\pi\eps)^{-\frac {3d}2} \int_{|z'|>R_\beta} \langle{\rm WP}^\eps_{\Phi(z)}(\theta), g^\eps_{z'}\rangle \langle g^\eps_{z'} , \phi^\eps\rangle dz'.
       \]
       
       Let us study $I_1$. Using~\eqref{def:W(f,g)},~\eqref{estimate_wigner}   and  that  $(\phi^\eps)_{\eps>0}$ is frequency localized at the scale $\beta\geq 0$, we deduce the  existence of $c_\beta, N_\beta >0$ such that we have 
          \begin{align*}
     |I_1| & 
     \leq c_\beta \,\eps^\beta\,  (2\pi\eps)^{-d} \int_{|z'|>R_\beta} \left| W[\theta, g^{i\1_{\R^d}}] \left(\frac{z'-\Phi(z)}{\sqrt\eps}\right)\right| \langle z'\rangle^{-N_\beta} dz'\\
           & \leq c_\beta  \,\|\theta\|_{\Sigma^{n}} \,\eps^\beta\,   (2\pi\eps)^{-d} \int_{\R^d} \left\langle \frac{z'-\Phi(z)}{\sqrt\eps}\right\rangle^{-n}  \langle z'\rangle^{-N_\beta} dz'\\
          & \leq c_\beta \,\|\theta\|_{\Sigma^{n}}  \,\eps^\beta\,   \int_{\R^d} \left\langle \zeta\right\rangle^{-n}  \langle \Phi(z)+\sqrt\eps \zeta \rangle^{-N_\beta} d\zeta,
       \end{align*} 
       where the constant $c_\beta$ may have changed between two successive lines.
       We observe that Peetre's inequality~\eqref{eq:Peetre} yields, assuming $\eps\in (0,1]$,
       \[
        \langle \Phi(z)+\sqrt\eps \zeta \rangle^{-N_\beta}\leq 2^{\frac{N_\beta}2}\langle \Phi(z)\rangle ^{-N_\beta} \langle \sqrt \eps \zeta\rangle^{N_\beta} \leq 
        2^{\frac{N_\beta}2}\langle \Phi(z)\rangle ^{-N_\beta} \langle \zeta\rangle^{N_\beta} ,
        \]
        whence by choosing $n=2d+1+N_\beta$,
          \begin{align*}
     |I_1|
  & \leq c_\beta  \,\|\theta\|_{\Sigma^{2d+1+N_\beta}} \,\eps^\beta\,  \langle \Phi(z) \rangle^{-N_\beta}  \int_{\R^d}\left\langle \zeta\right\rangle^{-(2d+1)}  d\zeta,
  \end{align*}
  for some new constant $c_\beta>0$. We conclude by observing that 
  \[
  \langle \Phi(z)\rangle^{-N_\beta} \leq \max\left(1, \frac 1a\right)^{N_\beta} \langle z\rangle^{-N_\beta},
\]
whence, by modifying $c_\beta$, 
\[
     |I_1|\leq c_\beta  \,\|\theta\|_{\Sigma^{2d+1+N_\beta}} \,\eps^\beta\, \max\left(1, \frac 1a\right)^{N_\beta} \langle z\rangle^{-N_\beta}.
     \]

  We now study $I_2$. Using~\eqref{def:W(f,g)},~\eqref{estimate_wigner}, we write for $n\in\N$
\[
  |I_2| \leq \|\phi^\eps\|_{L^2} (2\pi\eps)^{-\frac {3d}2}
  \int_{|z'|\leq R_\beta}  \left\langle\frac{z'-\Phi(z)}{\sqrt\eps}\right\rangle^{-n} dz'.
  \]
  We observe that if $|z|>2 aR_\beta$, then for $|z'|\leq R_\beta\leq \frac 1{2a} |z|$, we have 
  \[
  |z'-\Phi(z)|\geq |\Phi(z)|-|z'| \geq \frac 1{2a} |z|.
  \]
  Therefore
  \[
   \left\langle\frac{z'-\Phi(z)}{\sqrt\eps}\right\rangle^{-n} = \left(\frac{\eps}{\eps+|z'-\Phi(z)|^2}\right)^{\frac n2}\leq { (2a)^n} {\eps^{\frac n2}} |z|^{-n}.
   \]
   Using that $(\phi^\eps)_{\eps>0}$ is a bounded family in $L^2$, we obtain that there exists a constant $c'$ such that for $|z|>2a R_\beta$ and any $n\in\N$,
   \begin{align*}
  |I_2|& \leq c' \eps^{\frac{n-3d}2}|z|^{-n}.
  \end{align*}  
  
  The proof of the last property follows the line of the proof of Lemma~\ref{lem:freq_loc} combined with  adapted change of variables.  
  This terminates the proof. 
      \end{proof}

    \subsection{Frequency localized families and semi-classical pseudodifferential calculus}
  Frequency localized families also enjoy properties with respect to pseudodifferential calculus. 
    
      \begin{proposition}\label{prop:freq_loc_pseudo}
      Let $( \phi^\eps)_{\eps>0}$ be a frequency localized family at the scale $\beta\geq 0$.
           \begin{enumerate}
  \item For all  semi-classical symbol $a\in\mathcal C^\infty_c(\R^{2d})$, the family $\bigl(\widehat a \, \phi^\eps \bigr)_{\eps>0}$ is frequency localized at the scale $\beta\geq 0$. 
  \item For all  subquadratic Hamiltonian $h\in\mathcal C^\infty(\R\times \R^{2d},\R)$,  for all $t,t_0\in\R$,    the vector-valued family $\bigl(\mathcal U^\eps_h(t,t_0) \phi^\eps \bigr)_{\eps>0}$ is frequency localized at the scale $\beta\geq 0$. 
 \end{enumerate}
   \end{proposition}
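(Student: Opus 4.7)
Let $a\in\mathcal C^\infty_c(\R^{2d})$ with $\mathrm{supp}(a)\subset B(0,R_a)$, and write
\[
\mathcal B[\widehat a\,\phi^\eps](z')=(2\pi\eps)^{-d/2}\langle\widehat{\bar a}\,g^\eps_{z'},\phi^\eps\rangle.
\]
The strategy is to show that $\widehat{\bar a}\,g^\eps_{z'}$ is negligible in $L^2$-norm when $|z'|$ is large, so that a Cauchy-Schwarz estimate gives the desired decay without even needing the frequency localization of $(\phi^\eps)$. Using the standard intertwining identity between Weyl calculus and wave packets,
\[
\widehat{\bar a}\,g^\eps_{z'}={\rm WP}^\eps_{z'}\!\left({\rm op}_1^w\bigl(\bar a(z'+\sqrt\eps\,\cdot)\bigr)g^{i\1}\right).
\]
For $|z'|>2R_a$ the symbol $\bar a(z'+\sqrt\eps\,\cdot)$ is supported in $\{|y|\geq|z'|/(2\sqrt\eps)\}$, while $g^{i\1}$ is Schwartz. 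A standard non-stationary argument (or a direct computation on Weyl symbols) then yields, for every $N\in\N$, a constant $C_N>0$ such that
\[
\|\widehat{\bar a}\,g^\eps_{z'}\|_{L^2(\R^d)}\leq C_N\,\eps^N\langle z'\rangle^{-N}\qquad (|z'|>2R_a).
\]
Cauchy-Schwarz against $\phi^\eps$ then gives $|\mathcal B[\widehat a\,\phi^\eps](z')|\leq C'_N\eps^{N-d/2}\langle z'\rangle^{-N}\|\phi^\eps\|_{L^2}$, which is much stronger than what is needed for frequency localization at scale $\beta$.

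\textbf{Part 2.} We transfer the propagator to the left by duality:
\[
\mathcal B[\mathcal U^\eps_h(t,t_0)\phi^\eps](z')=(2\pi\eps)^{-d/2}\langle\mathcal U^\eps_h(t_0,t)g^\eps_{z'},\phi^\eps\rangle.
\]
By the Gaussian-state propagation recalled around~\eqref{eq:action_gaussian}, and the classical fact that a Gaussian wave packet remains a Gaussian wave packet modulo $O(\sqrt\eps)$ in every $\Sigma^k_\eps$ (Hagedorn, Combescure--Robert, sharpened to any order in the scalar case by Theorem~\ref{th:WPmain}),
\[
\mathcal U^\eps_h(t_0,t)g^\eps_{z'}={\rm e}^{\frac{i}{\eps}S_h(t_0,t,z')}\,{\rm WP}^\eps_{\Phi^{t_0,t}_h(z')}(\theta^\eps_{z'})+r^\eps_{z'},
\]
with $\theta^\eps_{z'}=g^{\Gamma(t_0,t,z')}$ (corrected by higher order terms in $\sqrt\eps$) and $\|r^\eps_{z'}\|_{\Sigma^k_\eps}=O(\eps^M)$ for any $k,M$. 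Since $h$ is subquadratic, $\mathrm{Hess}_z h$ is globally bounded, so Gronwall applied to~\eqref{eq:lin} shows that the stability matrix $F(t_0,t,z')$ is bounded uniformly in $z'$; hence $\Gamma(t_0,t,z')$ remains in a compact subset of $\mathfrak S^+(d)$, and the profile $\theta^\eps_{z'}$ has $\Sigma^n$-seminorms bounded uniformly in $\eps$ and $z'$. The same Gronwall argument applied to the nonlinear flow shows that $\Phi=\Phi^{t_0,t}_h$ is a $\mathcal C^1$-diffeomorphism of $\R^{2d}$ satisfying $a|z|\leq|\Phi(z)|\leq b|z|$ for some $a,b>0$ and $|z|$ large. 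Applying Proposition~\ref{prop:charac_freq_loc} with this $\Phi$ yields, for $|z'|$ large,
\[
(2\pi\eps)^{-d/2}|\langle {\rm WP}^\eps_{\Phi(z')}(\theta^\eps_{z'}),\phi^\eps\rangle|\leq C_\beta\,\eps^\beta\langle z'\rangle^{-N_\beta},
\]
and the contribution from $r^\eps_{z'}$ is $O(\eps^M)$ with $M$ arbitrarily large, giving frequency localization of $\mathcal U^\eps_h(t,t_0)\phi^\eps$ at scale $\beta$.

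\textbf{Main obstacle.} Proposition~\ref{prop:charac_freq_loc} is stated for a \emph{fixed} Schwartz profile, whereas in Part~2 we must test $(\phi^\eps)$ against the family of profiles $\theta^\eps_{z'}$ depending on $\eps$ and $z'$. Inspecting its proof, the only property of $\theta$ actually used is a bound on a single $\Sigma^n$-seminorm; the subquadratic assumption on $h$ provides exactly this uniformity through the Gronwall bound on $F(t_0,t,z')$, so the generalization of the Proposition to $\eps$- and $z'$-dependent profiles with uniformly bounded seminorms is cosmetic. The more delicate point is controlling the remainder $r^\eps_{z'}$ with both $\eps$ and $\langle z'\rangle$ decay: for $\beta\leq 1/2$ the leading Hagedorn/Combescure--Robert wave-packet approximation is enough, but for larger $\beta$ one must invoke the full asymptotic expansion of $\mathcal U^\eps_h(t_0,t)g^\eps_{z'}$ up to order $\lceil 2\beta\rceil$, together with the uniform control in $z'$ of each coefficient provided by the subquadratic bounds on $h$.
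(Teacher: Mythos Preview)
Your argument is correct but takes a different route from the paper. You exploit the compact support of $a$ to show that $\widehat{\bar a}\,g^\eps_{z'}$ itself is $O((\eps/|z'|^2)^\infty)$ in $L^2$ for $|z'|>2R_a$, and then conclude by Cauchy--Schwarz; this uses only the $L^2$-boundedness of $(\phi^\eps)$ and in fact proves frequency localisation at \emph{every} scale~$\beta$. The paper instead writes $\widehat a\,g^\eps_z={\rm WP}^\eps_z(\mathfrak g^\eps_a)$ with the same profile $\mathfrak g^\eps_a={\rm op}_1^w(a(z+\sqrt\eps\,\cdot))g^{i\1}$, observes that its $\Sigma^n$-seminorms are bounded uniformly in $(z,\eps)$, and applies Proposition~\ref{prop:charac_freq_loc} directly. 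Your route is more elementary here but does not survive the extension to symbols of polynomial growth noted in Remark~\ref{rem:robin2}(1), where one genuinely needs the decay of $\mathcal B[\phi^\eps]$.

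\textbf{Part 2.} There is a real gap, and your ``Main obstacle'' paragraph does not close it. You split $\mathcal U^\eps_h(t_0,t)g^\eps_{z'}$ into a wave packet plus an \emph{external} remainder $r^\eps_{z'}$ with $\|r^\eps_{z'}\|_{\Sigma^k_\eps}=O(\eps^M)$. Pairing this against $\phi^\eps$ gives $(2\pi\eps)^{-d/2}|\langle r^\eps_{z'},\phi^\eps\rangle|=O(\eps^{M-d/2})$, which carries \emph{no} decay in $\langle z'\rangle$ and therefore cannot be dominated by $C_\beta\eps^\beta\langle z'\rangle^{-N_\beta}$ for all large $z'$. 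Pushing the expansion to higher order in $\sqrt\eps$ does not help: at every finite order the external remainder is still only $O(\eps^M)$ uniformly in~$z'$. The paper's way out is to avoid an external remainder altogether. For scalar subquadratic~$h$ one has the \emph{exact} identity~\eqref{robin1},
\[
\mathcal U^\eps_h(t_0,t)g^\eps_{z'}={\rm e}^{\frac{i}{\eps}S}\,{\rm WP}^\eps_{\Phi(z')}\!\bigl(g^{\Gamma}+\sqrt\eps\,\rho^\eps_{z'}\bigr),
\]
where the correction $\rho^\eps_{z'}$ lives in the \emph{profile} and has $\Sigma^n$-seminorms bounded uniformly in $(\eps,z')$. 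Proposition~\ref{prop:charac_freq_loc} then applies to the full profile $g^\Gamma+\sqrt\eps\rho^\eps_{z'}$ and delivers the required $\langle z'\rangle^{-N_\beta}$ decay in one stroke. This is precisely the point stressed in Remark~\ref{rem:robin2}(2): the wave-packet structure of the remainder is what makes the argument go through in the scalar case, and is exactly what is unclear for Hamiltonians with crossings.
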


\begin{proof}
(1) We  can   assume without loss of generality that $a$ is real-valued. We write 
\[
\mathcal B[ \widehat a \phi^\eps] = (2\pi\eps)^{-d/2} \langle \widehat a g^\eps_z,\phi^\eps\rangle.
\]
Since $g^\eps_z$ is a wave packet, we have 
\[
\widehat a g^\eps_z= \widehat a\,  {\rm WP}^\eps_z (g^{i\1_{\R^d}})= {\rm WP}^\eps _z (\mathfrak g^\eps_a),\;\; \mathfrak g^\eps_a= \op_1(a(z+\sqrt\eps \cdot) ) g^{i\1_{\R^d}}.
\]
The function  $\mathfrak g^\eps_a$ is of Schwartz class on $\R^d$ and its Schwartz semi-norms are uniformly bounded in~$\eps$ because $a$ is compactly supported. We deduce from Proposition~\ref{prop:charac_freq_loc}, 
\[
 \left| \mathcal B[ \widehat a \phi^\eps] \right|\leq C_\beta \,\eps^\beta\, \langle z\rangle^{-N_\beta} \| \mathfrak g^\eps_a\|_{2d+1+N_\beta},
 \]
 which concludes the proof.
 \medskip 
 
 (2) We write 
 \[
 \mathcal B\left[ \mathcal U^\eps_h(t,t_0)\phi^\eps\right]= (2\pi\eps)^{-d/2} \langle \mathcal U^\eps_h(t,t_0) g^\eps_z,\phi^\eps\rangle.
\]
Since $g^\eps_z$ is a wave packet, we have 
\begin{equation}\label{robin1}
\mathcal U^\eps_h(t,t_0) g^\eps_z={\rm e}^{\frac i\eps S(-t,z)}  {\rm WP}^\eps_{\Phi^{-t,0}_h(z)}  ( g^{\Gamma(-t,z) }+ \sqrt\eps \, r^\eps_z(t))
\end{equation}
with the notations of the introduction. Besides, for all  $n\in\N$, there exists a constant $C=C(n,t)$ such that
$\| r^\eps_z(t)\|_{\Sigma^n} \leq C(n,t) $.
We deduce  from Proposition~\ref{prop:charac_freq_loc}, 
\[
 \left|  \mathcal B\left[  \mathcal U^\eps_h(t,t_0) \phi^\eps\right]\right|\leq C_\beta \,\eps^\beta\, \langle z\rangle^{-N_\beta} \| g^{\Gamma(t,z)} + \sqrt\eps r^\eps_z(t)\|_{2d+1+N_\beta},
 \]
 which concludes the proof.
\end{proof}
  
  \begin{remark}\label{rem:robin2}
  \begin{enumerate}
  \item 
  The proof of Proposition~\ref{prop:freq_loc_pseudo} (1) extends to smooth functions $a$  with polynomial  growth 
  \[\exists N_0\in\N,\;\;
  \forall \gamma\in\N^d,\;\;\forall z\in\R^{2d},\;\; \left|\partial^\gamma a(z)\right| \leq \langle z\rangle^{N_0-|\gamma|}
  \]
   provided the integer $N_\beta $ associated with the frequency localisation at the scale $\beta\geq 0$ of the family $(\phi^\eps_0)_{\eps>0}$ verifies $N_\beta>d+N_0$. 
   
   \item The proof of Proposition~\ref{prop:freq_loc_pseudo} (2) also extends to adiabatic smooth matrix-valued Hamiltonian~$H$ that are  subquadratic according to Definition~\ref{def:subquad}.
However, it is not clear whether the same result holds   for Hamiltonians with crossings, either they are smooth as in this article or conical  as in the Appendix of~\cite{FGH}. Indeed, even though 
 one knows that $\mathcal U^\eps_H(t,t_0) (g^\eps_z \vec V)$ is asymptotic to a 
wave packet, it is not clear that the remainder of the approximation has a wave-packet structure as in~\eqref{robin1}. 
  \end{enumerate}
   \end{remark}

  \subsection{Frequency localized families and  $\Sigma^k_\eps$-regularity}\label{sec:freq_loc_Sigma_k}

  The size of $N_\beta$ in Definition~\ref{def:freq_loc} gives an information about the regularity of the family. 
  
  \begin{lemma}\label{lem:Nbeta_sobolev}
  Let $(\phi^\eps)_{\eps>0}$ be  a frequency localized family at the scale $\beta\geq 0$, let $C_\beta, N_\beta,\eps_\beta $  are the constants associated by Definition~\ref{def:freq_loc}. Assume   that $k\in\N$ is such that $N_\beta> d+k$, then $(\phi^\eps )_{0<\eps<\eps_\beta}$ is uniformly bounded in $\Sigma^k_\eps$ and there exists $c>0$ independent of $\eps$
  \[
  \| \phi^\eps\|_{\Sigma^k_\eps} \leq c ( C_\beta+ \|\phi^\eps_0\|_{L^2}).
  \]
  \end{lemma}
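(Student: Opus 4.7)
The plan is to combine the Bargmann characterization of $\Sigma^k_\eps$ stated in Lemma~\ref{lem:Barg_Sobolev} with the pointwise decay of $\mathcal B[\phi^\eps]$ provided by the frequency localization hypothesis. By Lemma~\ref{lem:Barg_Sobolev} it will suffice to estimate the weighted $L^2$-norm $\|\langle z\rangle^k\mathcal B[\phi^\eps]\|_{L^2(\R^{2d})}^2$, and I will do so by splitting the integration domain according to the dichotomy $\{|z|\le R_\beta\}\cup\{|z|>R_\beta\}$ naturally provided by Definition~\ref{def:freq_loc}.

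On the inner region $\{|z|\le R_\beta\}$, I simply use the trivial bound $\langle z\rangle^{2k}\le \langle R_\beta\rangle^{2k}$ together with the isometry property $\|\mathcal B[\phi^\eps]\|_{L^2(\R^{2d})}=\|\phi^\eps\|_{L^2(\R^d)}$ of the Bargmann transform, which gives
$$\int_{|z|\le R_\beta}\langle z\rangle^{2k}\,|\mathcal B[\phi^\eps](z)|^2\,dz\;\le\;\langle R_\beta\rangle^{2k}\,\|\phi^\eps\|_{L^2}^2.$$
On the outer region, the frequency localization estimate reads $|\mathcal B[\phi^\eps](z)|\le C_\beta\,\eps^\beta\,\langle z\rangle^{-N_\beta}$, so
$$\int_{|z|>R_\beta}\langle z\rangle^{2k}\,|\mathcal B[\phi^\eps](z)|^2\,dz\;\le\; C_\beta^2\,\eps^{2\beta}\int_{|z|>R_\beta}\langle z\rangle^{2(k-N_\beta)}\,dz,$$
and the integral on the right converges precisely because the hypothesis $N_\beta>d+k$ yields $2(N_\beta-k)>2d$. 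After shrinking $\eps_\beta$ if necessary so that $\eps_\beta\le 1$ (which may be assumed without loss of generality), we have $\eps^{2\beta}\le 1$ for all $\eps\in(0,\eps_\beta]$, and this term is bounded by a constant multiple of $C_\beta^2$ depending only on $d,k,N_\beta,R_\beta$.

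Adding the two contributions and taking the square root produces a bound of the form $c_0(\|\phi^\eps\|_{L^2}+C_\beta)$ for $\|\langle z\rangle^k\mathcal B[\phi^\eps]\|_{L^2}$, and Lemma~\ref{lem:Barg_Sobolev} then delivers the desired estimate. I do not anticipate any significant obstacle: once Lemma~\ref{lem:Barg_Sobolev} is granted, the proof is a direct computation, and the threshold $N_\beta>d+k$ in the hypothesis is exactly what is needed so that the polynomial weight $\langle z\rangle^{2k}$ is absorbed by the decay $\langle z\rangle^{-2N_\beta}$ against $2d$-dimensional Lebesgue measure.
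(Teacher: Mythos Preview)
Your proposal is correct and follows essentially the same approach as the paper: both reduce to bounding $\|\langle z\rangle^k\mathcal B[\phi^\eps]\|_{L^2(\R^{2d})}$ via Lemma~\ref{lem:Barg_Sobolev}, split the integral at $|z|=R_\beta$, use the isometry of $\mathcal B$ on the inner region, and use the pointwise decay together with $2(N_\beta-k)>2d$ on the outer region.
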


 \begin{proof}
   This Lemma is a simple consequence of 
   Lemma~\ref{lem:Barg_Sobolev}.
 Since $N_\beta>d+k$, we have for $|z|>R_\beta$, 
 \[
 \langle z\rangle ^k| \mathcal B[\phi^\eps](z)|\leq \eps^\beta C_\beta \langle z\rangle ^{-N_\beta+k} \in L^{2}(\R^{2d}).
 \]
Moreover, $z\mapsto  \langle z\rangle ^{2k}| \mathcal B[\phi^\eps](z)|^2$ is locally integrable and we can write 
\begin{align*}
\|  \langle z\rangle ^k \mathcal B[\phi^\eps](z)\|_{L^2(\R^{2d})} ^2& \leq 
\langle R_\beta\rangle^{2k} \int_{|z|\leq R_\beta}  | \mathcal B[\phi^\eps](z)|^2 dz + \eps^{2\beta} C_\beta ^2 \int_{\R^{2d}}\langle z\rangle ^{-2(N_\beta-k)} dz\\
& \leq \langle R_\beta\rangle^{2k}\|  \mathcal B[\phi^\eps](z)\|^2 _{L^2(\R^{2d}} + \eps^{2\beta} C_\beta ^2 \int_{\R^{2d}}\langle z\rangle ^{-2(N_\beta-k)} dz\\
&\leq \langle R_\beta\rangle^{2k} \| \phi^\eps\|^2_{L^2(\R^d)}+ \eps^{2\beta} C_\beta ^2 \int_{\R^{2d}}\langle z\rangle ^{-2(N_\beta-k)} dz,
\end{align*}
whence the conclusion since the right hand side is bounded for $2(N_\beta-k)>2d$. 
 \end{proof}

   \section{Operators built on Bargmann transform}\label{subsec:Barg_ext}
   
We investigate here the properties of the operators defined in~\eqref{def:barg_op}. 
We shall investigate two cases :
\begin{enumerate}
\item[(a)] The case where the family $(\theta^\eps_z)_{\eps>0}$ is only uniformly bounded in $L^2(\R^d)$, which is a light assumption, but with uniform bounds in $z$ on adequate semi-norms or norms.
\item[(b)] The case where the family $(\theta^\eps_z)_{\eps>0}$ is a wave packet (up to a phase), which is a stronger assumption on the family. 
\end{enumerate}
The thawed/frozen approximation operators belong to the  type (b). We will consider operators of type (a) in the proofs of 
Theorems~\ref{th:sqrteps} and~\ref{thm:TGeps}, when taking for the family $(\theta^\eps_z)_{\eps>0}$ a term of rest appearing in the expansion of the action of the propagator on a Gaussian wave packet. 
\smallskip 

In Section~\ref{sec:loulou1}, we analyze the action of these operators on $\Sigma^k_\eps$ spaces. In Section~\ref{sec:loulou2}, we prove special properties of the operators corresponding to families of the  type 
 (b) involving classical quantities linked with the propagation of Gaussian wave packets by a  Schrödinger evolution.

\subsection{Action in $\Sigma^k_\eps$ of operators  built on Bargmann transform}\label{sec:loulou1}

  This section is devoted to the proof of the following result. 
 
\begin{theorem}\label{lem:Barg_ext}
Let $\eps_0>0$.
\begin{enumerate}
\item Let $R>0$. 
There exists $c_0>0$ such that for all measurable $z$-dependent family $(\theta^\eps_z)_{\eps>0}$, for all $k\in\N$, $\eps\in (0,\eps_0]$, for all $\phi\in L^2(\R^d)$ 
\[
\|\mathcal J[\theta^\eps_z \1 _{|z|<R}](\phi )\|_{\Sigma^k_\eps}\leq    (2\pi  \eps)^{-d} \, c_0
   \| \phi \| _{L^2} \, R ^{2d} \sup_{|z|\leq R} \| \theta^\eps_z\|_{\Sigma^k_\eps}.
\] 
\item Assume $\theta^\eps_z=\lambda^\eps(z){\rm WP}^\eps_{\Phi(z)} (\theta)$ with $\theta\in\mathcal S(\R^d)$,  $(\lambda^\eps)_{\eps>0}$  a bounded family in~$L^\infty(\R^{2d},\C)$ and $\Phi$ a smooth diffeomorphism of $\R^{2d}$ such that 
\[
\exists c>0,\; \exists \ell\in\N,\;\;\forall z\in\R^{2d},\;\;|J_\Phi(z) |+|J_\Phi(z)^{-1} |\leq c\langle z\rangle ^{\ell}.
\]
Then, there exists $c'_0>0$ such that for all  $\phi\in L^2(\R^d)$, $k\in\N$, $\eps\in (0,\eps_0]$,
\[
\|\mathcal J[ \theta^\eps_z](\phi)\|_{\Sigma^k_\eps}\leq  c'_0\, \| \lambda_\eps\|_{L^\infty}  \| \phi\| _{L^2} 
\| \theta\|_{\Sigma^{k+\ell+2d+1}}.
\] 
\end{enumerate}
\end{theorem}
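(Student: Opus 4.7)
The differential operators $x^\alpha (\eps D_x)^\beta$ commute with the $z$-integration. Moving them inside and applying Minkowski's inequality in $L^2(\R^d_x)$ yields
\[
\|x^\alpha(\eps D_x)^\beta\mathcal J[\theta^\eps_z \1_{|z|<R}](\phi)\|_{L^2} \leq (2\pi\eps)^{-d}\sup_{|z|\leq R}\|\theta^\eps_z\|_{\Sigma^k_\eps}\int_{|z|<R}|\langle g^\eps_z,\phi\rangle|\,dz.
\]
Since $\|g^\eps_z\|_{L^2}$ is a fixed constant independent of $(\eps,z)$, pointwise Cauchy--Schwarz gives $|\langle g^\eps_z,\phi\rangle|\leq C\|\phi\|_{L^2}$, and integration over $B(0,R)\subset\R^{2d}$ produces the factor $R^{2d}$.

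\textbf{Part (2).} The key wave-packet identity
\[
x^\alpha(\eps D_x)^\beta {\rm WP}^\eps_{\Phi(z)}(\theta) = {\rm WP}^\eps_{\Phi(z)}\bigl((Q+\sqrt\eps y)^\alpha(P+\sqrt\eps D_y)^\beta\theta\bigr),\qquad (Q,P)=\Phi(z),
\]
shows that $x^\alpha(\eps D_x)^\beta \mathcal J[\theta^\eps_z]$ has the same form as $\mathcal J[\theta^\eps_z]$ with a new $z$-dependent profile $\theta_{\alpha,\beta,z}$ satisfying $\|\theta_{\alpha,\beta,z}\|_{\Sigma^n} \leq C_k \langle\Phi(z)\rangle^k\|\theta\|_{\Sigma^{n+k}}$ when $|\alpha|+|\beta|\leq k$. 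The problem thus reduces to an $L^2\to L^2$ estimate. I would expand
\[
\|x^\alpha(\eps D_x)^\beta T\phi\|^2_{L^2} = (2\pi\eps)^{-d}\iint \lambda^\eps(z)\overline{\lambda^\eps(z')}\mathcal B[\phi](z)\overline{\mathcal B[\phi](z')}\,\langle {\rm WP}^\eps_{\Phi(z')}(\theta_{\alpha,\beta,z'}),{\rm WP}^\eps_{\Phi(z)}(\theta_{\alpha,\beta,z})\rangle\,dz\,dz',
\]
use~\eqref{def:W(f,g)} to rewrite the wave-packet scalar product as a Wigner-type function, and then~\eqref{estimate_wigner} to bound its modulus by $C_n\langle\Phi(z)\rangle^k\langle\Phi(z')\rangle^k\|\theta\|^2_{\Sigma^{n+k}}\langle (\Phi(z)-\Phi(z'))/\sqrt\eps\rangle^{-n}$. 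Schur's test on the resulting symmetric weighted kernel on $L^2(\R^{2d})$, combined with the change of variables $u=(\Phi(z')-\Phi(z))/\sqrt\eps$ (which produces the Jacobian $|J_{\Phi^{-1}}|\leq c\langle z\rangle^\ell$) and Peetre's inequality, makes the Schur integral uniformly convergent as soon as $n$ exceeds $k+\ell+2d$, yielding the index $N=k+\ell+2d+1$. Finally, the isometry $\|\mathcal B[\phi]\|_{L^2(\R^{2d})}=\|\phi\|_{L^2(\R^d)}$ closes the estimate.

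\textbf{Main obstacle.} The delicate technical point is the simultaneous control, under the change of variables $z'=\Phi(z)$, of the polynomial growth $\langle\Phi(z)\rangle^k$ coming from the profile $\theta_{\alpha,\beta,z}$ and of the Jacobian weight $\langle z\rangle^\ell$ produced by the map $\Phi^{-1}$: both must be absorbed by the single polynomial decay $\langle(\Phi(z)-\Phi(z'))/\sqrt\eps\rangle^{-n}$ extracted from~\eqref{estimate_wigner}. This coupled loss is precisely what dictates the threshold $N=k+\ell+2d+1$ on the Schwartz regularity of~$\theta$, and care is required to ensure that Peetre's inequality together with the Jacobian bound still yields a kernel that is Schur-integrable uniformly in $\eps$.
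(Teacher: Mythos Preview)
Your argument is correct and runs very close to the paper's. Part~(1) is essentially identical: the paper expands the $L^2$-norm squared into a double $(z,z')$-integral and bounds $|\langle g^\eps_z,\phi\rangle|\leq\|\phi\|_{L^2}$ by Cauchy--Schwarz, which is the same information your Minkowski step extracts.

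For Part~(2), the strategy also matches --- reduce to $L^2$ by pushing $x^\alpha(\eps D_x)^\beta$ onto the wave-packet profile, then Schur-test on $L^2(\R^{2d})$ --- but the implementation differs slightly. The paper conjugates the operator by the Bargmann isometry and applies Schur to the kernel $k^\eps_{\mathcal B}(X,Y)=(2\pi\eps)^{-2d}\int_z\langle g^\eps_z,g^\eps_Y\rangle\langle g^\eps_X,\theta^\eps_z\rangle\,dz$ of $\mathcal B\circ\mathcal J[\theta^\eps_z]\circ\mathcal B^{-1}$. This kernel factorizes as a product of two Wigner-type functions, $W[g^{i\1},g^{i\1}]\bigl((Y-z)/\sqrt\eps\bigr)$ and $W[g^{i\1},\theta]\bigl((\Phi(z)-X)/\sqrt\eps\bigr)$, so the two Schur integrals decouple: one becomes $\int|W[g,g]|$ (a fixed finite number), the other picks up the Jacobian of $\Phi^{-1}$ exactly once. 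Your version works with the symmetric kernel $K(z,z')$ of the bilinear form $\|\mathcal J[\theta^\eps_z]\phi\|^2$, which carries the weight $\langle\Phi(z)\rangle^k\langle\Phi(z')\rangle^k$ symmetrically in both variables. Both routes lead to the same threshold $k+\ell+2d+1$; the paper's factorization just makes the bookkeeping a bit lighter, since only one of the two decay factors has to absorb the $\Phi$-dependent growth rather than both. Your identification of the ``main obstacle'' is exactly right and is the point the paper handles (somewhat tersely) in its step~(3).
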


These results also hold for the adjoint of ${\mathcal J}[ \theta^\eps_z]$:
\begin{equation}\label{eq:tildeJ}
{\mathcal J}[ \theta^\eps_z]^*: \phi\mapsto (2\pi\eps)^{-d} \int_{\R^{2d} }\langle \theta^\eps_z, \phi \rangle g^\eps_z dz.
\end{equation}

\begin{corollary}\label{cor:tildeJ}
Under the assumptions of Theorem~\ref{lem:Barg_ext} (2), the family of operators ${\mathcal J}[ \theta^\eps_z]^*$ given by in~\eqref{eq:tildeJ}
satisfies the same kind of estimates than the family ${\mathcal J}[ \theta^\eps_z]$.
\end{corollary}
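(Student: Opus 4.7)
The plan is to derive the estimate for $\mathcal{J}[\theta^\eps_z]^*$ by reducing it to an expression with the same structural form as $\mathcal{J}[\theta^\eps_z]$ itself, and then adapting the proof of Theorem~\ref{lem:Barg_ext} (2). Under the assumption $\theta^\eps_z = \lambda^\eps(z)\,{\rm WP}^\eps_{\Phi(z)}(\theta)$, the formula~\eqref{eq:tildeJ} for the adjoint becomes
\[
\mathcal{J}[\theta^\eps_z]^*(\phi) = (2\pi\eps)^{-d} \int_{\R^{2d}} \overline{\lambda^\eps(z)}\, \langle {\rm WP}^\eps_{\Phi(z)}(\theta),\phi\rangle\, g^\eps_z\, dz.
\]
Performing the change of variable $w = \Phi(z)$, with $\Psi := \Phi^{-1}$, yields
\[
\mathcal{J}[\theta^\eps_z]^*(\phi) = (2\pi\eps)^{-d} \int_{\R^{2d}} \tilde\lambda^\eps(w)\, \langle {\rm WP}^\eps_w(\theta),\phi\rangle\, {\rm WP}^\eps_{\Psi(w)}(g^{i\1})\, dw,
\]
with $\tilde\lambda^\eps(w) := \overline{\lambda^\eps(\Psi(w))}\,|\det J_\Psi(w)|$. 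This has exactly the structural form of a $\mathcal{J}[\cdot]$-operator as in Theorem~\ref{lem:Barg_ext} (2), but with the roles of the Gaussian profile $g^{i\1}$ and the Schwartz profile $\theta$ interchanged between the analysis and synthesis sides, and with $\Psi$ in place of $\Phi$; the Jacobian hypothesis on $\Phi$ directly transfers to $\Psi$ with the same exponent $\ell$.

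Since both $\theta$ and $g^{i\1}$ belong to $\mathcal{S}(\R^d)$, the arguments of the proof of Theorem~\ref{lem:Barg_ext} (2) are symmetric in the analysis and synthesis profiles and carry over. Concretely, I would compute
\[
\bigl\|x^\alpha (\eps D_x)^\gamma \mathcal{J}[\theta^\eps_z]^*(\phi)\bigr\|_{L^2}^2 = (2\pi\eps)^{-2d} \iint \langle \theta^\eps_z,\phi\rangle\,\overline{\langle \theta^\eps_{z'},\phi\rangle}\, \langle g_{\eps,z'}^{\alpha,\gamma},g_{\eps,z}^{\alpha,\gamma}\rangle\, dz\, dz',
\]
estimate $|\langle g_{\eps,z'}^{\alpha,\gamma},g_{\eps,z}^{\alpha,\gamma}\rangle|$ by $C_N\,\langle z\rangle^k\langle z'\rangle^k \langle (z-z')/\sqrt\eps\rangle^{-N}$ for any $N$ via~\eqref{def:W(f,g)}--\eqref{estimate_wigner}, and then exploit the Plancherel identity for the $\theta$-windowed short-time Fourier transform
\[
\int_{\R^{2d}} |\langle {\rm WP}^\eps_w(\theta),\phi\rangle|^2\,dw = (2\pi\eps)^d\,\|\theta\|_{L^2}^2\,\|\phi\|_{L^2}^2
\]
after passing to the variable $w = \Phi(z)$. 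Peetre's inequality~\eqref{eq:Peetre} moves the polynomial weights $\langle z\rangle^k\langle z'\rangle^k$ across the translations, and a Schur-type estimate then closes the computation.

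The main obstacle is that the reduced scalar factor $\tilde\lambda^\eps$ is only polynomially bounded, not in $L^\infty$, so Theorem~\ref{lem:Barg_ext} (2) cannot be invoked as a black box. The polynomial growth of $|\det J_\Psi(w)|$ must be absorbed at the analysis step: the Schwartz decay of $\theta$, exploited through~\eqref{estimate_wigner}, yields arbitrary polynomial decay of $|\langle {\rm WP}^\eps_w(\theta), g^\eps_u\rangle|$ in $|w-u|/\sqrt\eps$, and this decay, transferred through~\eqref{eq:Peetre}, controls every polynomial factor arising from $|\det J_\Psi|$ as well as from the derivatives $g_{\eps,z}^{\alpha,\gamma}$. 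Taking the Schwartz semi-norm index of $\theta$ equal to $k + \ell + 2d + 1$, as in Theorem~\ref{lem:Barg_ext} (2), suffices to make all integrals absolutely convergent and delivers the announced estimate.
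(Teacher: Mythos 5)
Your strategy works, and you correctly identify the key difficulty: the change of variable $w=\Phi(z)$ injects $|\det J_\Psi(w)|$ into the amplitude, so the $L^\infty$ hypothesis of Theorem~\ref{lem:Barg_ext}~(2) fails and the theorem cannot be cited as a black box; instead you re-run the Schur estimate with a polynomially weighted amplitude and absorb the weight by the Schwartz decay in~\eqref{estimate_wigner} and Peetre's inequality~\eqref{eq:Peetre}. This is correct, but it re-derives the theorem rather than invoking it, and the change of variable is an avoidable detour. The shorter route observes that the Bargmann-conjugated kernel of $\mathcal{J}[\theta^\eps_z]^*$ is $\overline{k^\eps_{\mathcal{B}}(Y,X)}$, where $k^\eps_{\mathcal{B}}$ is the kernel computed in step~(2) of the theorem's proof, so that the two Schur integrals for $\mathcal{J}[\theta^\eps_z]^*$ are exactly the two already bounded for $\mathcal{J}[\theta^\eps_z]$ with the roles of $X$ and $Y$ exchanged: the $\mathcal{L}(L^2)$ bound is then immediate, and the Jacobian factor lands automatically in the integral where it was already controlled, without having to be re-absorbed. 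For $k\geq 1$, one commutes $x^\alpha(\eps\partial_x)^\gamma$ through $\mathcal{J}[\theta^\eps_z]^*$, which replaces the synthesizing $g^\eps_z$ by $g^{\alpha,\gamma}_{\eps,z}$ with $\|g^{\alpha,\gamma}_{\eps,z}\|_{\Sigma^n}\leq c_{n,k}\langle z\rangle^k$ as in~\eqref{norm:g_alpha_gamma}, and reuses step~(3) of the theorem's proof verbatim; the estimate~\eqref{estimate_wigner} is symmetric in its two arguments, so moving $\theta$ from the synthesis side to the analysis side costs nothing. One inaccuracy to flag: the hypothesis on $J_\Phi$ does not transfer to $J_\Psi$ with the same exponent $\ell$ in general (only with some polynomial exponent controlled by the growth of $\Psi$ itself), though this does not affect the conclusion.
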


A straightforward consequence of Theorem~\ref{lem:Barg_ext} and of Lemme~\ref{lem:freq_loc} is given in the next statement. 

\begin{corollary}\label{cor:J_R}
Assume $(\theta^\eps_z)_{\eps>0}$ satisfies the assumptions of Theorem~\ref{lem:Barg_ext}. Let $(\Phi^\eps)_{\eps>0}$ be a frequency localized family at the scale $\beta\geq 0$ and $C_\beta>0$, $N_\beta\in\N$ be the constants associated with Definition~\ref{def:freq_loc}.   Then, for all $k>0$ such that $N_\beta >k+d$, there exists a constant $c_k$ such that for all  $R>0$, 
\[
\| \mathcal J[\theta^\eps_z] (\phi^\eps -\phi^\eps_{R,<})\|_{\Sigma^\eps_k} \leq c_k \, C_\beta\,\eps^\beta 
R^{-(N_\beta -k-d)} .
\]
where the family $(\phi^\eps_{R,<})_{\eps>0}$ is introduced in~\eqref{def:phi_R_<}.
\end{corollary}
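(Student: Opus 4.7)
The plan is to reduce the estimate to two ingredients already established in this chapter: the uniform $L^2(\R^d) \to \Sigma^k_\eps(\R^d)$ boundedness of the operator family $\mathcal J[\theta^\eps_z]$ provided by Theorem~\ref{lem:Barg_ext}(2), and the pointwise decay of the Bargmann transform of a frequency localized family from Definition~\ref{def:freq_loc}. Since $\phi^\eps_{R,<}$ is defined in~\eqref{def:phi_R_<} as $\mathcal B^{-1}(\1_{|z|<R}\mathcal B[\phi^\eps])$, it is most economical to handle the tail $\phi^\eps - \phi^\eps_{R,<}$ through its $L^2$ norm only, and then to use the $L^2 \to \Sigma^k_\eps$ mapping property of $\mathcal J[\theta^\eps_z]$ to promote the resulting estimate to $\Sigma^k_\eps$.

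First I would estimate the $L^2$ residue using the Bargmann isometry:
\[
\|\phi^\eps - \phi^\eps_{R,<}\|_{L^2(\R^d)}^2 = \int_{|z|>R} |\mathcal B[\phi^\eps](z)|^2\, dz.
\]
Setting $R_0 := \max(1, R_\beta)$, I split the argument. For $R > R_0$, the frequency localization assumption of Definition~\ref{def:freq_loc} together with $2N_\beta > 2d$ (which follows from the hypothesis $N_\beta > k + d$) yields, by polar integration,
\[
\|\phi^\eps - \phi^\eps_{R,<}\|_{L^2}^2 \leq C_\beta^2\, \eps^{2\beta} \int_{|z|>R} \langle z\rangle^{-2N_\beta}\, dz \leq c\, C_\beta^2\, \eps^{2\beta}\, R^{-2(N_\beta - d)}.
\]
For $R \leq R_0$, the target right-hand side $c_k C_\beta \eps^\beta R^{-(N_\beta - k - d)}$ is bounded below, so the desired inequality reduces to the uniform $L^2$ boundedness of $(\phi^\eps)_{\eps>0}$ (absorbed into $c_k$).

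Next I apply Theorem~\ref{lem:Barg_ext}(2) to $\psi = \phi^\eps - \phi^\eps_{R,<} \in L^2(\R^d)$, which provides a constant $C$ independent of $\eps$ such that
\[
\|\mathcal J[\theta^\eps_z](\phi^\eps - \phi^\eps_{R,<})\|_{\Sigma^k_\eps} \leq C\, \|\phi^\eps - \phi^\eps_{R,<}\|_{L^2} \leq C'\, C_\beta\, \eps^\beta\, R^{-(N_\beta - d)}
\]
uniformly in $\eps \in (0, \eps_\beta]$. Since $R \geq 1$ in this regime implies $R^{-(N_\beta - d)} \leq R^{-(N_\beta - k - d)}$, the stated bound follows in the large-$R$ range, and the patching with the small-$R$ range completes the proof.

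No serious obstacle is anticipated: the real work is packaged in Theorem~\ref{lem:Barg_ext} and in the definition of frequency localization, and the corollary is essentially a bookkeeping combination of the two. The only mild care lies in reconciling the small-$R$ regime, where no tail decay is available, with the large-$R$ regime through the choice of constants and the monotonicity inequality $R^{-(N_\beta - d)} \leq R^{-(N_\beta - k - d)}$ for $R \geq 1$.
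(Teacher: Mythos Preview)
Your approach matches the paper's (combine Theorem~\ref{lem:Barg_ext}(2) with the $L^2$ tail bound, which is the $k=0$ case of Lemma~\ref{lem:freq_loc}), but two points deserve correction.

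First, your displayed equality should be an inequality. The Bargmann transform is an isometry into but not onto $L^2(\R^{2d})$, so $\mathcal B[\phi^\eps_{R,<}]$ is not the sharp truncation $\1_{|z|<R}\mathcal B[\phi^\eps]$. What one has is $\phi^\eps - \phi^\eps_{R,<} = \mathcal B^*\bigl((1-\chi(|\cdot|/R))\,\mathcal B[\phi^\eps]\bigr)$, and since $\|\mathcal B^*\|\le 1$,
\[
\|\phi^\eps - \phi^\eps_{R,<}\|_{L^2(\R^d)} \le \bigl\|(1-\chi(|\cdot|/R))\,\mathcal B[\phi^\eps]\bigr\|_{L^2(\R^{2d})} \le \left(\int_{|z|>R}|\mathcal B[\phi^\eps](z)|^2\,dz\right)^{1/2},
\]
which is all you need.

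Second, your small-$R$ patch fails when $\beta>0$: the right-hand side $c_k C_\beta \eps^\beta R^{-(N_\beta-k-d)}$ carries the factor $\eps^\beta\to 0$, so it is \emph{not} bounded below uniformly in $\eps$, and the uniform $L^2$ bound on $(\phi^\eps)$ cannot be absorbed into an $\eps$-independent constant $c_k$. The resolution is simply that the statement should inherit the restriction $R>R_\beta$ from Lemma~\ref{lem:freq_loc}; in every application later in the paper one takes $R=\eps^{-\gamma}$ large, so the small-$R$ range is never invoked.
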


\begin{proof} [Proof of Theorem \ref{lem:Barg_ext}]

 Let us first prove the $L^2$-estimate ($k=0$) in both cases. 

\noindent (1)  By the Cauchy-Schwarz inequality, for $x\in\R^d$, we have  
 \begin{align*} 
\| \mathcal J[\theta^\eps_z\1 _{|z|\leq R}](\phi^\eps) \|_{L^2}^2
 & \leq (2\pi \eps)^{-2d} \| \phi^\eps\|_{L^2} ^2  \int_{|z|,|z'|\leq R } \left| \int_{x\in\R^d} \theta^\eps_z(x) \overline{ \theta^\eps _{z'}}(x) dx\right| \, dz\, dz' \\
&  \leq (2\pi \eps)^{-2d} \| \phi^\eps \|_{L^2} ^2  \int_{|z|,|z'|\leq R }\| \theta^\eps_z\|_{L ^2} \|   \theta^\eps _{z'}\|_{L^2}   \, dz\, dz'\\
&  \leq c_1 R^{4d} (2\pi \eps)^{-2d} \| \phi^\eps\|_{L^2} ^2 \left(\sup_{|z| \leq 2R} \| \theta^\eps_z\|_{L ^2} \right)^2
\end{align*}
where $c_1>0$ is a universal constant.
\smallskip

\noindent (2) Let $(x,y)\mapsto k^\eps(x,y)$ be the integral kernel of the operator~$\mathcal J[\theta^\eps_z]$.
Since the Bargmann transform is an isometry, it is equivalent to consider the operator $\mathcal B\circ \mathcal J[\theta^\eps_z]\circ  \mathcal B^{-1}$, the kernel of which is the function
 $(\R^{2d})^2\ni (X,Y)\mapsto  k^\eps_{\mathcal B}(X,Y)$ defined by 
\begin{align*}
 k^\eps_{\mathcal B}(X,Y)&  =(2\pi\eps)^{-d} \int _{\R^{2d}} \overline g^\eps_X(x) g^\eps_Y(y) k^\eps(x,y) dxdy = (2\pi\eps)^{-2d} \int_{z\in\R^{2d} }  \langle g^\eps_z, g^\eps_Y\rangle \langle g^\eps_X,\theta^\eps_z\rangle dz.
 \end{align*}
Therefore, by~\eqref{def:W(f,g)}, $ k^\eps_{\mathcal B}(X,Y)$ satisfies
\[
| k^\eps_{\mathcal B}(X,Y)|\leq (2\pi\eps)^{-2d}\int_{z\in\R^{2d} } \left| \lambda^\eps(z) W[g^{i\1_{\R^d}},g^{i\1_{\R^d}}]\left(\frac{Y-z}{\sqrt\eps}\right) W[g^{i\1_{\R^d}},\theta]\left(\frac{\Phi(z)-X}{\sqrt\eps}\right)\right|dz.
 \]
 We deduce 
 \begin{align*}
 \int_{\R^{2d}} |k^\eps_{\mathcal B}(X,Y)| dX & \leq   M \| \lambda^\eps\|_{L^\infty} \left( \int_{\R^{2d}} |W[g^{i\1_{\R^d}},g^{i\1_{\R^d}}](z)|dz\right) \left( \int_{\R^{2d}} |W[g^{i\1_{\R^d}},\theta](X)|dX\right),\\
  \int_{\R^{2d}} |k^\eps_{\mathcal B}(X,Y)| dY & \leq M  \| \lambda^\eps\|_{L^\infty} \left( \int_{\R^{2d}} |W[g^{i\1_{\R^d}},g^{i\1_{\R^d}}](Y)|dY\right) \left( \int_{\R^{2d}} |W[g^{i\1_{\R^d}},\theta](z)J_\Phi^{-1} (z) | dz\right) ,
 \end{align*}
   with $M=\sup_{\eps\in(0,1]}\|\lambda^\eps\|_{L^\infty}$, and, by equations \eqref{estimate_wigner},  we deduce the existence of $C>0$ such that   
 \begin{align*}
 \int_{\R^{2d}} |k^\eps_{\mathcal B}(X,Y)| dX +   \int_{\R^{2d}} |k^\eps_{\mathcal B}(X,Y)| dY  &
\leq  C M  \| \theta \|_{\Sigma^{2d+\ell+1}}.
 \end{align*}
 We  then conclude by Schur Lemma
and obtain
 \[
 \|\mathcal B\circ \mathcal J[\theta^\eps_z]\circ  \mathcal B^{-1}\|_{\mathcal L( L^2(\R^{2d}))} \leq C M\  \| \lambda^\eps\|_{L^\infty} \, \| \theta \|_{\Sigma^{2d+\ell+1}},
 \]
 and so it is for $\mathcal J[\theta^\eps_z]$. 
 \smallskip

\noindent (3) For concluding the proof when $k\not=0$, we   use  that for $\alpha,\gamma\in\N^d$  and $\phi\in\mathcal S(\R^d)$,
 \[
 x^\alpha (\eps\partial_x)^\gamma \mathcal J[\theta^\eps_z]= \mathcal J[  x^\alpha (\eps\partial_x)^\gamma\theta^\eps_z],
\]
and the additional  observation
\[ 
x^\alpha(\eps\partial_x)^\gamma{\rm WP}^\eps_z(\theta)= {\rm WP}^\eps \left( (q+\sqrt\eps x)^\alpha (p+\sqrt\eps D_x)^\gamma \theta\right).
\]
We then conclude by observing that, as in the estimate~\eqref{norm:g_alpha_gamma}, we have for all $n\in\N$,
\[
\| (q+\sqrt\eps x)^\alpha (p+\sqrt\eps D_x)^\gamma \theta\|_{\Sigma^n}\leq \langle z\rangle ^k\| \theta\|_{\Sigma^{n+k}}.
\]
This finishes the proof. 
\end{proof}

Theorem~\ref{lem:Barg_ext} has consequences for the thawed/frozen approximation operators introduced in Chapter~\ref{chapter:introduction}.
We extend the notation $\mathcal J[\vec \theta^\eps_z] $ to vector valued functions $\vec \theta^\eps_z$ by considering the operator coordinate by coordinate.

 \begin{corollary}\label{cor:boundedness}
 Assume the Hamiltonian $H^\eps=H_0+\eps H_1$ satisfies Assumptions~\ref{hyp:codim1} and~\ref{hyp:growthH}.  Let $k\in\N$ and $t\in I$.
 \begin{enumerate}
\item 
 The families of operators $\left(\mathcal J_{\ell, \vec V, {\rm th/fr}}^{t,t_0}\right)_{\eps>0}$ 
  defined in~\eqref{def:J_thawed} and \eqref{def:J_frozen} 
  are bounded families in 
 $\mathcal L(L^2(\R^d,\C^m), \Sigma^k_\eps(\R^d,\C^m))$. 
 \item  Assume moreover that the compact $K$ satisfies Assumptions~\ref{hyp:compact}. Then,  the family of operators  $\left(\mathcal J_{1,2, \vec V, {\rm th/fr}}^{t,t_0}\right)_{\eps>0}$ defined in~\eqref{def:J12th}  and~\eqref{def:J12fr} are  bounded families  in  the space 
 $\mathcal L(L^2(\R^d,\C^m),\Sigma^k_\eps(\R^d,\C^m))$. 
 \end{enumerate}
\end{corollary}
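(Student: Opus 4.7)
\textbf{Proof proposal for Corollary \ref{cor:boundedness}.}  The plan is to recognize each of the four families of operators as a family $\mathcal J[\theta^\eps_z]$ in the sense of \eqref{def:barg_op}, with $\theta^\eps_z$ fitting a mild $z$-dependent extension of the setting of Theorem \ref{lem:Barg_ext}(2). In each case I write
\[
\theta^\eps_z \,=\, \lambda^\eps(z)\,\vec w(t,z)\,\wp^\eps_{\Phi(z)}(\theta_z),
\]
where $|\lambda^\eps(z)|=1$ is the oscillating action phase $\mathrm e^{iS_\bullet/\eps}$, $\vec w$ is a bounded vector-valued coefficient, $\Phi$ is the relevant classical (or hopping) flow, and $\theta_z$ is a Schwartz profile with Schwartz seminorms bounded uniformly in $z$. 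The Schur kernel argument proving Theorem \ref{lem:Barg_ext}(2) then applies almost verbatim, with the Wigner function $W[g^{i\1},\theta]$ replaced by $W[g^{i\1},\theta_z]$; uniform $\Sigma^n$-control on $\theta_z$ feeds into \eqref{estimate_wigner} and yields the desired $\mathcal L(L^2,\Sigma^k_\eps)$ bound.

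Concretely, for $\mathcal J_{\ell,\mathrm{fr}}^{t,t_0}$ I take $\lambda^\eps=\mathrm e^{iS_\ell/\eps}$, $\vec w=a_\ell\,\vec V_\ell$, $\Phi=\Phi^{t,t_0}_{h_\ell}$, and $\theta_z=g^{i\1}$ (constant in $z$). For the thawed version $\mathcal J_{\ell,\mathrm{th}}^{t,t_0}$, same data with $\vec w=\vec V_\ell$ and $\theta_z=g^{\Gamma_\ell(t,t_0,z)}$. For the hopping operators, the transition coefficient $\tau_{1,2}$ and the indicator $\1_K$ are absorbed into $\vec w$, $\Phi$ becomes the hopping flow $\Phi^{t,t_0}_{1,2}$, and $\theta_z$ is $g^{i\1}$ or $g^{\Gamma_{1,2}(t,t_0,z)}$. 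What remains is to verify, uniformly in $(\eps,t,z)\in(0,\eps_0]\times I\times \R^{2d}$ (or $\times I\times K$ for the hopping versions): \emph{(i)} boundedness of $\vec w$; \emph{(ii)} polynomial control of the Jacobian of $\Phi$; \emph{(iii)} uniform Schwartz-seminorm control on $\theta_z$; and in the hopping case, \emph{(iv)} boundedness of $\tau_{1,2}$ on $K$.

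Items (i) and (ii) rest on the subquadratic assumption on $h_\ell$ (Assumption \ref{hyp:growthH}(ii)): since $\mathrm{Hess}_z h_\ell$ is bounded, Gronwall applied to the linearized flow equation \eqref{eq:lin} gives uniform exponential control of $F_\ell(t,t_0,z)$ over $t\in I$, which in turn bounds the Herman--Kluk prefactor $a_\ell$ via \eqref{def:prefactor}. The parallel-transport matrix $\mathcal R_\ell$ is unitary by Lemma \ref{lem:trsp_par}, and $\vec V$ is bounded by Assumption \ref{hyp:data}. For item (iii) in the thawed case, the explicit relation \eqref{def:Gamma} together with the uniform control on $F_\ell$ and the preservation of the Siegel half-space $\mathfrak S^+(d)$ by the symplectic flow show that $\Gamma_\ell(t,t_0,z)$ stays in a compact subset of $\mathfrak S^+(d)$; hence $g^{\Gamma_\ell(t,t_0,z)}$ has uniformly bounded seminorms in every $\Sigma^n$. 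Item (iv) on the compact $K$ follows from continuity and non-vanishing of $\mu^\flat$ on $K$ (the generic crossing condition), and the analogous bounds for $\Gamma^\flat$, $F_{1,2}$, and $a_{1,2}$ follow from the non-degeneracy \eqref{def:Gammaflat} by compactness of $K$.

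The main technical hurdle is item (iii) in the thawed non-hopping case, where the bound must hold \emph{uniformly in $z\in\R^{2d}$}: both an upper bound on $|\Gamma_\ell|$ and a lower bound on $\Im\Gamma_\ell$ are required, independently of $z$. This combines the global subquadratic control of $h_\ell$ (which gives uniform-in-$z$ control of $F_\ell$ on $I$) with the algebraic preservation of $\mathfrak S^+(d)$ under symplectic propagation, which pins down $\det(A_\ell+B_\ell\Gamma_0)$ away from zero in \eqref{def:Gamma}. For the hopping thawed operator one additionally needs that the jump \eqref{def:Gammaflat} preserves $\mathfrak S^+(d)$ and that $\Gamma^\flat$ remains in a compact subset of $\mathfrak S^+(d)$ as $z$ ranges over $K$; this is again a compactness argument using the non-degeneracy of the generic crossing condition on~$K$.
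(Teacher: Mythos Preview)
Your proposal is correct and follows essentially the same route as the paper's proof: write each operator as $\mathcal J[\theta^\eps_z]$ with a wave-packet family $\theta^\eps_z=\lambda^\eps(z)\,\vec w(z)\,\wp^\eps_{\Phi(z)}(\theta_z)$, check that $\lambda^\eps$ is bounded, that the flow Jacobian is polynomially controlled via the subquadratic assumption on $h_\ell$, and that the Gaussian profiles have uniform Schwartz seminorms (from the uniform-in-$z$ control of $\Gamma_\ell$ via the linearized flow), then invoke Theorem~\ref{lem:Barg_ext}(2); for the hopping operators both you and the paper rely on compactness of $K$ and the generic crossing condition to bound $\tau_{1,2}$, $\mu^\flat$, $\Gamma^\flat$. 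Your explicit flagging of the need for a $z$-dependent extension of Theorem~\ref{lem:Barg_ext}(2) (with uniform $\Sigma^n$ control replacing a fixed $\theta$) is a point the paper glosses over, but the Schur argument indeed goes through unchanged.
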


\begin{remark}
If one assumes that $(t,z)\mapsto \partial_t f+\{v,f\}$ is bounded from below and $\partial_t f$ is bounded, then one can replace the compact $K$ by $\R^{2d}$ in the definition of $\mathcal J_{1,2,\vec V, {\rm th}}^{t,t_0}$ and one obtains  a  bounded family in 
 $\mathcal L(L^2(\R^d,\C^m),\Sigma^k_\eps(\R^d,\C^m))$. 
\end{remark}

 \begin{proof} Let $\ell\in\{1,2\}$. Let us first discuss $\mathcal J_{\ell,\vec V, {\rm th}}^{t,t_0}$.
 We write $\mathcal J^{t,t_0} _{\ell, \vec V, {\rm th}}= \mathcal J[\theta^\eps_z]$ with 
 \begin{align*}
 \theta^\eps_z & = \lambda^\eps(z) {\rm WP}^\eps_{\Phi_\ell^{t,t_0}(z) } (g^\Gamma_\ell (t,t_0,z))\;\;\mbox{and}\;\;
 \lambda^\eps (z)  ={\rm e}^{\frac i\eps S_\ell(t,t_0,z)} \vec V_\ell (t,t_0,z).
 \end{align*}
 We observe that for all $t\in I$ and $z\in\R^{2d}$,
 \[
 \|\vec  V_\ell(t,t_0,z)\|_{\C^m} = \| \vec V_\ell(t_0,t_0,z)\|_{\C^m} = \| \pi_\ell(t_0)\vec V\|_{\C^m} \leq \| \vec V\|_{\C^{m,m}} 
 \]
 which is independent of $z$.
 Therefore,   the family $(\lambda^\eps)_{\eps>0}$ is bounded in $L^\infty(\R^{2d})$.
 
 Besides, by Proposition~\ref{lem:growth_eigen_bis}, the flow map $(t,z)\mapsto \Phi^{t,t_0}_{h_\ell}(z)$ satisfies the assumptions of (2) of Theorem~\ref{lem:Barg_ext}. Similarly, the map $(t,z) \mapsto \Gamma_\ell (t,t_0,z)$ is bounded on $ I \times \R^{2d}$. Therefore, for any $N\in\N$, there exists $c_{N,t_0,T}>0$ such that 
\[
\forall t\in I, \;\;  \| x^\alpha\partial_x^\beta g^\Gamma_\ell (t,t_0,\cdot )\|_{\Sigma_1^N}\leq c_{N,t_0,T}.
\]
 We then conclude by  
  (2) of Theorem~\ref{lem:Barg_ext}.
    The proof for $\mathcal J_{\ell, {\rm fr}}^{t,t_0}$ follows exactly the same lines.  
  \smallskip 
  
The proof for $\mathcal J_{1,2,\vec V, {\rm th/fr}}^{t,t_0}$ requires additional observations. 
  We need to consider  the transition coefficient map $(t,z)\mapsto \tau_{1,2}(t,t_0,z)$ (see~\eqref{def:tau}) and the matrix-valued maps  $z\mapsto \Gamma^\flat(t_0,z)$ (see~\eqref{def:Gammaflat}), which requires the analysis of the function parametrizing the crossing (see~\eqref{def:mu} and~\eqref{def:alpha_beta}),
   \begin{equation}\label{minnie1}
  z\mapsto\left( \alpha^\flat(t_0,z), \beta^\flat(t_0,z), \mu^\flat(t_0,z)\right).
  \end{equation}
By the condition~\eqref{eq:growth_trace} of Assumption~\ref{hyp:codim1}, with $n_0=0$, the derivatives  of $(t,z)\mapsto f(t,z)$ are uniformly bounded in $z$. Moreover,   if one takes $z$ in a compact $K$ that satisfies Assumptions~\ref{hyp:compact}, one has the additional properties that $\partial_t f$ and $\mu^\flat$  are bounded below. 
As a consequence $z\mapsto \alpha^\flat(t_0,z)$, $z\mapsto\beta^\flat(t_0,z)$ and $z\mapsto\mu^\flat(t_0,z)$ are bounded functions on $\R^{2d}$ for all $t\in I$, 
the map defined in~\eqref{minnie1} is smooth. One then argues  as before by including the  coefficient $\tau_{1,2}(t,t_0,z)$ in the definition of $\lambda^\eps$
and the result follows from   Theorem~\ref{lem:Barg_ext}~(2).
 \end{proof}

 \subsection{Some properties of operators built on Bargmann transform via families with wave packet structure}\label{sec:loulou2}

In this section we analyze the properties of the  operators $\mathcal J[\theta^\eps_z]$ when $(\theta^\eps_z)_{\eps>0}$ is of the form 
\begin{equation}\label{eq:formula1}
  \theta^\eps_z= {\rm e}^{\frac i\eps S(z)} u(z) {\rm WP} ^\eps_{\Phi(z)}(\theta(z,\cdot)),
  \end{equation}
     with $\theta\in\mathcal C^\infty (\R^{2d}, \mathcal S(\R^d))$, $S\in\mathcal C^\infty(\R^{2d},\R)$,  $u\in\mathcal C^\infty(\R^{2d},\C)$ and $\Phi$ a smooth diffeomorphism satisfying the assumptions of Theorem~\ref{lem:Barg_ext}. We are interested  in the case where~$S$ and $\Phi$ are linked in the same manner as when they are the flow map and the action associated with classical trajectories.
    Therefore,  we consider the following set of Assumptions. 
    
    \begin{assumption} \label{assumption:SPhi}
    Let $S\in\mathcal C^\infty(\R^{2d}_z,\R)$,  $u\in\mathcal C^\infty(\R^{2d}_z,\C)$ and $\Phi$ a smooth diffeomorphism. We assume the following properties:
    \begin{enumerate}
    \item[{\rm (i)}] There  exists $c>0$ and $\ell\in\N$ such that 
    \[
     \forall z\in\R^{2d},\;\;|J_\Phi(z)|+|J_\Phi(z)^{-1}|\leq c\langle z\rangle ^\ell.
    \]
  \item[{\rm (ii)}]  Setting $\Phi(z)=( \Phi_q(z), \Phi_p(z))$ and 
 \[ \partial_z\Phi =  \begin{pmatrix}  A(z) &B(z)\\ C(z) &D(z)\end{pmatrix},\]
 we have
 \begin{equation*}\label{eq:diff_S}
 \nabla_q S(z)= -p + A (z) \Phi_p(z) \;\;\mbox{and}\;\; \nabla_p S(z) = B(z) \Phi_p(z),\;\; z=(q,p).
 \end{equation*}
 \item[{\rm (iii)}] For all $k\in\N$, the quantities $ \sup_{z\in\R^{2d}}\|\theta(z,\cdot)\|_{\Sigma^k} $,  
$\sup_{|\alpha|\leq k}\|\partial_z^\alpha S\|_{L^\infty} $ and $\sup_{|\alpha|\leq k}\|\partial_z^\alpha u\|_{L^\infty} $,
 are uniformly bounded in $z$. 
 \end{enumerate}
 \end{assumption}

The next technical lemma will be useful for proving our main results. It contains all the information needed to pass from the thawed approximation to the frozen one.

 \begin{lemma}\label{lem:mathfrak_d}
 Let  $\mathfrak d=\partial_q-i\partial_p$. 
Let  $\theta\in\mathcal C^\infty (\R^{2d}, \mathcal S(\R^d))$,  $S\in\mathcal C^\infty(\R^{2d},\R)$,  $u\in\mathcal C^\infty(\R^{2d},\C)$ and~$\Phi$  a smooth diffeomorphism satisfying Assumptions~\ref{assumption:SPhi}.
 Then, the following equality between operators in $\mathcal L(L^2(\R^d), \Sigma^k_\eps)$ holds for $k\in\N$:
\[
 \mathcal J \left[u\, {\rm e}^{\frac i\eps S}\,  {\rm WP}^\eps_{\Phi}\left((\mathfrak d \Phi_p x-\mathfrak  d \Phi_q D_x)\theta\right)\right]
 =- i\sqrt\eps\, \mathcal J \left[ \mathfrak d u\,  {\rm e}^{\frac i\eps S} \,{\rm WP}^\eps_{\Phi}(\theta)\right]
 -i\sqrt\eps\, \mathcal J \left[ u \, {\rm e}^{\frac i\eps S} \,{\rm WP}^\eps_{\Phi}( \mathfrak d \theta)\right].
\]
 \end{lemma}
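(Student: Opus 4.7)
The plan is to test against an arbitrary $\phi\in L^2(\R^d)$ and compare the two sides as functions of $x$; the argument hinges on integrating by parts in the phase-space variable $z$ inside the defining integral of $\mathcal J$, combining two complementary computations of the operator $\mathfrak d_j:=\partial_{q_j}-i\partial_{p_j}$ applied to the relevant integrand. The first, intrinsic computation takes advantage of the Gaussian structure of the Bargmann kernel: a direct calculation shows that $\mathfrak d_j\overline{g^\eps_z}(x)$ equals a function of $(q,p)$ alone (proportional to $p_j/\eps$) times $\overline{g^\eps_z}(x)$. Consequently $\mathfrak d_j\mathcal B[\phi](z)$ is proportional to $\mathcal B[\phi](z)$, so that integrating $\int\mathcal B[\phi](z)\,\mathfrak d_j F(z,x)\,dz$ by parts in $z$ produces no $x$-factor on the other side, only a $p_j/\eps$ multiplication.

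The second, extrinsic computation is the chain-rule expansion of $\mathfrak d_j F$ for $F(z,x):=u(z)\,e^{iS(z)/\eps}\,\Psi(z,x)$ with $\Psi(z,x):={\rm WP}^\eps_{\Phi(z)}(\theta(z,\cdot))(x)$. Exchanging the inner variables $y,D_y$ inside the wave packet for the outer ones $x,D_x$ via $x_k\Psi={\rm WP}^\eps_\Phi((\Phi_{q,k}+\sqrt\eps y_k)\theta)$ and $\eps D_{x_k}\Psi={\rm WP}^\eps_\Phi((\Phi_{p,k}+\sqrt\eps D_{y_k})\theta)$, one obtains
\[\mathfrak d_j\Psi=\frac{i}{\sqrt\eps}\,{\rm WP}^\eps_\Phi(B_j\theta)-\frac{i}{\eps}(\mathfrak d_j\Phi_q)\cdot\Phi_p\,\Psi+{\rm WP}^\eps_\Phi(\mathfrak d_j\theta),\]
where $B_j:=(\mathfrak d_j\Phi_p)\cdot y-(\mathfrak d_j\Phi_q)\cdot D_y$ is precisely the operator appearing on the left-hand side of the lemma. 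Multiplying by $u\,e^{iS/\eps}$ and invoking Assumption~\ref{assumption:SPhi}(ii) in the form $\mathfrak d_j S=-p_j+\Phi_p\cdot\mathfrak d_j\Phi_q$, the singular $\frac{1}{\eps}$ term produced by $\mathfrak d_j\Psi$ is cancelled exactly by the one coming from differentiation of the oscillating phase, leaving $\mathfrak d_j F$ written as the sum of $(\mathfrak d_j u)e^{iS/\eps}\Psi$, $\frac{iu}{\sqrt\eps}e^{iS/\eps}{\rm WP}^\eps_\Phi(B_j\theta)$, $u\,e^{iS/\eps}{\rm WP}^\eps_\Phi(\mathfrak d_j\theta)$, and a single remaining $-\frac{ip_j}{\eps}F$ term.

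Equating the two computations of $\int\mathcal B[\phi](z)\,\mathfrak d_j F(z,x)\,dz$, the $p_j/\eps$-contributions cancel on both sides exactly, and after multiplication by the appropriate power of $\sqrt\eps$ one recovers the stated identity of $\mathcal J$-operators. The principal technical obstacle is justifying the integration by parts in $z$ without surviving boundary contribution: this is ensured by Assumption~\ref{assumption:SPhi}(iii), which provides uniform Schwartz control of $\theta(z,\cdot)$ and polynomial bounds on $u$, $S$ and $\Phi$, together with the Gaussian decay of $\overline{g^\eps_z(x')}g^\eps_z(x)$ in $(q,p)$. Continuity of each summand as an operator from $L^2(\R^d)$ into $\Sigma^k_\eps$ is then inherited from Theorem~\ref{lem:Barg_ext}(2), since each term has the form~\eqref{eq:formula1}.
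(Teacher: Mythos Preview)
Your proposal is correct and is essentially the same argument as the paper's: both compute $\mathfrak d$ of the full $z$-integrand, use the identity $\mathfrak d\,\overline{g^\eps_z}=\tfrac{i}{\eps}p\,\overline{g^\eps_z}$ together with Assumption~\ref{assumption:SPhi}(ii) so that all $1/\eps$ contributions cancel, and then integrate in $z$ to conclude. The only cosmetic difference is that the paper bundles $\overline{g^\eps_z(y)}$ into a single kernel $k(z,x,y)$ and differentiates once, whereas you keep $\mathcal B[\phi]$ separate and phrase the same cancellation as an integration-by-parts identity; your explicit remarks on the absence of boundary terms and on boundedness via Theorem~\ref{lem:Barg_ext}(2) are in fact more careful than the paper's terse ``the result then follows from the integration in $z$.''
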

 
 Note that with the notation of Lemma~\ref{lem:mathfrak_d} and Assumptions~\ref{assumption:SPhi}, we have 
 \begin{equation}\label{eq:mathfrak_Phi}
 \mathfrak d \Phi_p(z)= C(z) -i D(z)\;\;\mbox{and}\;\;  \mathfrak d \Phi_q(z)= A(z) -i B(z).
 \end{equation}
Besides, condition (ii) of Assumption~\ref{assumption:SPhi} implies that the equality of Lemma~\ref{lem:mathfrak_d} holds formally. The conditions (i) and (iii) ensure the boundedness of the operators involved in the estimates.

 \begin{proof}
 The integral kernel of the operator $\mathcal J\left[ u \, {\rm e}^{\frac i\eps S}\, {\rm WP}^\eps_{\Phi}( \theta)\right]$ is the 
 function 
 \[(x,y)\mapsto \int_{z\in\R^{2d}} k(z,x,y) dz\]
 defined by 
 \[
 k(z,x,y)= u(z,x) {\rm e}^{\frac i\eps S(z)} \overline{g_z^\eps(y)} {\rm WP}^\eps_{\Phi(z)} (\theta(z,\cdot))(x),\;\;(x,y)\in\R^d,\;\;z\in\R^{2d}. 
 \]
 We aim at calculating $\mathfrak d k$. We  observe for $z=(q,p)\in\R^{2d}$, $y\in\R^d$ and 
 \begin{align*}
 \mathfrak d S(z) & =-p+(A(z)-iB(z)) \mathfrak d  \Phi_p(z),\\
 \mathfrak d\left(\overline{ g^\eps_z(y)}\right) & =\frac i\eps [ (\mathfrak  d p +i\mathfrak d q) (y-q)+p\mathfrak d q ]g^\eps_z(y)  =\frac i\eps p\,  g^\eps_z(y),\\
 \mathfrak d \left({\rm WP}^\eps_{\Phi(z)}(\theta(z,\cdot)) \right) &= {\rm WP}^\eps_{\Phi(z)}(\mathfrak d \theta(z,\cdot)) 
 +\frac i{\sqrt\eps} {\rm WP}^\eps_{\Phi(z)}\left((\mathfrak d \Phi_p (z)x-\mathfrak  d\Phi_q(z) D_x)\theta(z,\cdot)\right)\\
 & \;\; -\frac i\eps {\rm WP}^\eps_{\Phi(z)}\left( (A(z)-iB(z))\mathfrak d\Phi_p(z )\theta(z,\cdot)\right).
 \end{align*}
 We obtain
 \begin{align*}
 \mathfrak d k(z,x,y) &= {\rm e}^{\frac i\eps S(z)}\Bigl(\mathfrak d u(z,x) \, \, \overline{g_z^\eps(y)} \, {\rm WP}^\eps_{\Phi(z)}(\theta(z,\cdot))(x)
 +  u (z,x) \,  \overline{g_z^\eps(y)} \, {\rm WP}^\eps_{\Phi(z)} ( \mathfrak d \theta(z,\cdot))(x)\\
&  \qquad \qquad +\frac {i}{\sqrt\eps} u(z,x)\, \overline{g_z^\eps(y)} \,  {\rm WP}^\eps_{\Phi(z)}\left((\mathfrak  d\Phi_p(z) x-\mathfrak d\Phi_q (z) D_x)\theta(z,\cdot)\right)(x)\Bigr)
 \end{align*}
  The result then follows from the integration in $z\in\R^{2d}$. 
 \end{proof} 
 
 The case of Gaussian functions $\theta$ is of particular interest. Indeed, if $\theta(z,\cdot)=g^{\Theta(z)}$ with  $\Theta\in\mathcal C^\infty(\R^{2d},\mathfrak S^+(d))$, we have for $x\in\R^d$ and $z\in\R^{2d}$,
  \begin{equation}\label{eq:formule2}
 (\mathfrak d \Phi_p (z) x-\mathfrak  d \Phi_q (z) D_x) g^{\Theta(z)}(x) =  (\mathfrak d \Phi_p (z) -\mathfrak  d \Phi_q(z) \Theta (z) ) x \, g^{\Theta(z)}(x).
\end{equation}
We set 
\[
M_\Theta(z):= \mathfrak d \Phi_p (z)- \mathfrak  d \Phi_q(z)  \Theta (z) .
\]
By~\eqref{eq:mathfrak_Phi}, we have the equality between matrix-valued functions 
\begin{equation}\label{def:M}
M_\Theta= (C-iD) - (A-iB) \Theta =(A-iB) \left[(A-iB)^{-1}(C-iD)-\Theta\right] .
\end{equation}
Note that  this matrix $M_\Theta$ is invertible because $(A+iB)^{-1}(C+iD)-\overline\Theta\in\mathfrak S^+(d)$ (as the sum of two elements of $\mathfrak S^+(d)$). 
These observations are in the core of the proof of the next  result which is a corollary of Lemma~\ref{lem:mathfrak_d}, when applied to Gaussian profiles.

\begin{corollary}\label{cor:chemin}
 Let $k\in\N$. Let  $\Theta\in\mathcal C^\infty(\R^{2d},\mathfrak S^+(d))$ such that $M_\Theta$ is bounded together with its inverse,
 let  $S\in\mathcal C^\infty(\R^{2d},\R)$,  $u\in\mathcal C^\infty(\R^{2d},\C)$ and $\Phi$ a smooth diffeomorphism satisfying Assumptions~\ref{assumption:SPhi}. Then, 
in $\mathcal L(L^2(\R^d),\Sigma^k_\eps(\R^d))$, we have
\begin{equation}
\label{formule de base1}
\mathcal J\left[ u\, {\rm e}^{\frac i\eps S}\,  {\rm WP}^\eps_{\Phi}\left(x g^\Theta\right)\right] = \O(\sqrt\eps).
\end{equation}
Besides, for all $L \in\mathcal C^\infty(\R^{2d}, \C^{d,d})$, in  $\mathcal L(L^2(\R^d),\Sigma^k_\eps(\R^d))$, we have
\begin{equation}
\label{formule de base2}
\mathcal J\left[ u\, {\rm e}^{\frac i\eps S}\,  {\rm WP}^\eps_{\Phi}\left(Lx\cdot x g^\Theta\right) \right]
= \frac 1 i \, \mathcal J \left[ u\, {\rm Tr} \left( L \, M_\Theta^{-1}\,\mathfrak d \Phi_q \right)\, {\rm e}^{\frac i\eps S}\,  {\rm WP}^\eps_{\Phi}\left(g^\Theta\right)\right]+\O(\eps).
\end{equation}
with 
\[
L M_\Theta^{-1}\mathfrak d \Phi_q = L \left[(A-iB)^{-1}(C-iD)-\Theta\right] ^{-1}.
\] 
\end{corollary}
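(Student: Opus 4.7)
Both identities follow by combining Lemma~\ref{lem:mathfrak_d} with the observation~\eqref{eq:formule2}, which rewrites $M_\Theta(z)\,x\,g^{\Theta(z)}(x)$ as the first-order differential operator $(\mathfrak d\Phi_p x-\mathfrak d\Phi_q D_x)$ applied to $g^{\Theta(z)}$. Since $M_\Theta=(A-iB)[(A-iB)^{-1}(C-iD)-\Theta]$ is invertible (the bracket in square brackets being a sum of two elements of $\mathfrak S^+(d)$), this relation may be inverted componentwise to get, for each index $l$,
\[
x_l\, g^\Theta \;=\; \sum_i (M_\Theta^{-1})_{li}\,[(\mathfrak d\Phi_p x-\mathfrak d\Phi_q D_x)g^\Theta]_i.
\]
Inserting this into the left-hand side of~\eqref{formule de base1} and applying Lemma~\ref{lem:mathfrak_d} componentwise (with scalar profile $\theta=g^\Theta$ and scalar coefficient $u(M_\Theta^{-1})_{li}$ for each pair $(l,i)$, then summing over $i$) produces two remainder operators prefactored by $\sqrt\eps$, respectively of the form $\mathcal J[\ldots\,{\rm WP}^\eps_\Phi(g^\Theta)]$ and $\mathcal J[\ldots\,{\rm WP}^\eps_\Phi(\mathfrak d g^\Theta)]$. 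Both are bounded in $\mathcal L(L^2,\Sigma^k_\eps)$ by Theorem~\ref{lem:Barg_ext}(2), because $g^\Theta$ and $\mathfrak d g^\Theta$ (the latter being a polynomial of degree at most two in $x$ times $g^\Theta$) lie in $\mathcal S(\R^d)$ with seminorms uniform in $z$, while the coefficients $M_\Theta^{-1}$ and $\mathfrak d(uM_\Theta^{-1})$ are uniformly bounded on $\R^{2d}$ by Assumption~\ref{assumption:SPhi} and the invertibility hypothesis on $M_\Theta$. This proves~\eqref{formule de base1}.

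For~\eqref{formule de base2}, the plan is to again apply Lemma~\ref{lem:mathfrak_d} componentwise, this time with scalar profile $\theta=x_k g^\Theta$ and scalar coefficient $u(LM_\Theta^{-1})_{ki}$. A direct Leibniz computation using $D_{x_j}g^\Theta=(\Theta x)_j g^\Theta$ gives
\[
(\mathfrak d\Phi_p x-\mathfrak d\Phi_q D_x)_i(x_k g^\Theta)\;=\;x_k(M_\Theta x)_i\, g^\Theta \,-\,\tfrac1i(\mathfrak d\Phi_q)_{ik}g^\Theta,
\]
so that summing over $i$ and $k$ after weighting by $(LM_\Theta^{-1})_{ki}$ telescopes via $LM_\Theta^{-1}M_\Theta=L$ to yield
\[
\sum_{i,k}(LM_\Theta^{-1})_{ki}(\mathfrak d\Phi_p x-\mathfrak d\Phi_q D_x)_i(x_k g^\Theta) \;=\; Lx\cdot x\,g^\Theta \,-\,\tfrac1i\,{\rm Tr}(LM_\Theta^{-1}\mathfrak d\Phi_q)\,g^\Theta.
\]
Feeding this identity into $\mathcal J[u{\rm e}^{\frac i\eps S}{\rm WP}^\eps_\Phi(\cdot)]$ and applying Lemma~\ref{lem:mathfrak_d} to the left-hand side yields the target formula, modulo $\sqrt\eps$ times two families of $\mathcal J$-operators: one containing ${\rm WP}^\eps_\Phi(x_k g^\Theta)$ and one containing ${\rm WP}^\eps_\Phi(\mathfrak d(x_k g^\Theta))$. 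The algebraic rewrite $LM_\Theta^{-1}\mathfrak d\Phi_q=L[(A-iB)^{-1}(C-iD)-\Theta]^{-1}$ announced in the statement is an immediate consequence of $\mathfrak d\Phi_q=A-iB$ and the factorisation of $M_\Theta$ recalled above.

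The main obstacle is then to control this remainder at order $\eps$. The first family is already $O(\sqrt\eps)$ by~\eqref{formule de base1}, and so contributes $O(\eps)$ overall. For the second family, the computation $\mathfrak d(x_k g^\Theta)=x_k\mathfrak d g^\Theta$ splits into a linear polynomial in $x$ times $g^\Theta$ (from $\mathfrak d c_\Theta/c_\Theta$) plus a cubic polynomial in $x$ times $g^\Theta$ (from the $(\mathfrak d\Theta)x\cdot x$ contribution). The linear piece is again $O(\sqrt\eps)$ by~\eqref{formule de base1}; for the cubic piece, the same "invert $M_\Theta$, then apply Lemma~\ref{lem:mathfrak_d}" strategy can be iterated, producing an extra $\sqrt\eps$ factor at each step. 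Formally, one proves by induction on the degree $n$ that $\mathcal J[u\,{\rm e}^{\frac i\eps S}\,{\rm WP}^\eps_\Phi(P_n(x)g^\Theta)]=O(\sqrt\eps)$ in $\mathcal L(L^2,\Sigma^k_\eps)$ for every polynomial $P_n$ of degree $n\geq1$; the boundedness conditions in Assumption~\ref{assumption:SPhi} together with the uniform invertibility of $M_\Theta$ guarantee that every auxiliary coefficient generated along the induction remains admissible for Theorem~\ref{lem:Barg_ext}(2), closing the argument.
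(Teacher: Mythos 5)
Your treatment of \eqref{formule de base1} and the algebraic reduction for \eqref{formule de base2} follow the same route as the paper (the paper uses the substitution $L={}^tL'M_\Theta$ rather than your direct weighting by $(LM_\Theta^{-1})_{ki}$, but these are the same thing). In fact your intermediate identity
\[
\sum_{i,k}(LM_\Theta^{-1})_{ki}(\mathfrak d\Phi_p x-\mathfrak d\Phi_q D_x)_i(x_k g^\Theta)
\;=\; Lx\cdot x\,g^\Theta - \tfrac1i\,{\rm Tr}(LM_\Theta^{-1}\mathfrak d\Phi_q)\,g^\Theta
\]
is correctly stated, including the factor $\tfrac1i$ which the paper's analogous displayed formula silently drops; so up to the point where the remainder is split into a linear piece and a cubic piece, your argument is sound and closely parallel to the original.

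The gap is in the closing paragraph. You justify the bound for the cubic piece by a claimed induction: ``$\mathcal J\bigl[u\,{\rm e}^{\frac i\eps S}\,{\rm WP}^\eps_\Phi(P_n(x)g^\Theta)\bigr]=O(\sqrt\eps)$ for every polynomial $P_n$ of degree $n\geq1$.'' This statement is false, and the proposed corollary itself shows why: for the pure quadratic $Lx\cdot x$ the inversion of $M_\Theta$ produces a degree-zero commutator correction $\tfrac1i(M_\Theta^{-1}\mathfrak d\Phi_q)_{kl}g^\Theta$, so that
\[
\mathcal J\bigl[u\,{\rm e}^{\frac i\eps S}\,{\rm WP}^\eps_\Phi(Lx\cdot x\,g^\Theta)\bigr]
\]
carries the non-negligible $O(1)$ trace contribution appearing in \eqref{formule de base2}. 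More generally, each application of the inversion strips two degrees off (one $x$ is absorbed into $(\mathfrak d\Phi_p x-\mathfrak d\Phi_q D_x)$ and the commutator with the remaining monomial lowers the degree by one more), so the induction terminates at degree $1$ for odd starting degree and at degree $0$ for even starting degree. The $O(\sqrt\eps)$ conclusion therefore holds only for odd-degree polynomials, not for all $n\geq 1$.

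For your purpose this is repairable, because the cubic piece $x_k(\mathfrak d\Theta\,x\cdot x)g^\Theta$ has odd degree: one application of the inversion gives
\[
x_k(\mathfrak d\Theta\,x\cdot x)g^\Theta
= \sum_i (M_\Theta^{-1})_{ki}(\mathfrak d\Phi_p x-\mathfrak d\Phi_q D_x)_i\bigl((\mathfrak d\Theta\,x\cdot x)g^\Theta\bigr)
\;+\;(\text{linear in }x)\,g^\Theta,
\]
and then Lemma~\ref{lem:mathfrak_d} applied to $\tilde\theta=(\mathfrak d\Theta\,x\cdot x)g^\Theta$ handles the first term as $O(\sqrt\eps)$ while \eqref{formule de base1} handles the second; no multi-step induction is needed. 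This is exactly the paper's treatment (introduce $L_1$, $L_2$ and $\tilde\theta$ so that $L'x\,\mathfrak d g^\Theta=L_1x\,g^\Theta+L_2(\mathfrak d\Phi_p x-\mathfrak d\Phi_q D_x)\tilde\theta$, then invoke Lemma~\ref{lem:mathfrak_d} once more and \eqref{formule de base1}). You should replace the appeal to the false induction with this one-step argument.
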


\begin{remark}\label{rem:aetJ}
\begin{enumerate}
\item This result has an interesting consequence concerning the pseudodifferential calculus. Indeed, for $a$ real-valued and with bounded derivatives, in view of~\eqref{eq:tildeJ}
\[
\op_\eps(a)  = {\mathcal J}(\op_\eps(a) g^\eps_z)=  {\mathcal J} [a(z) g^\eps_z]+ \sqrt\eps {\mathcal J} \left({\rm WP}^\eps_z\left(\nabla a(z)\cdot 
\begin{pmatrix} x \\ D_x \end{pmatrix} g^{i\1_{\R^d}}\right)\right) +\O(\eps),
\]
where we have used the properties of wave packets.
The equality $\nabla g^{i\1_{\R^d}}=xg^{i\1_{\R^d}}$ allows to conclude by Corollary~\ref{cor:chemin} 
\[\op_\eps(a) =  {\mathcal J} [a(z) g^\eps_z] +\O(\eps)\] in any space $\Sigma^k_\eps$. This was already proved in~\cite{RS1}.
\item 
{
More can be said about the $\sqrt\eps$-order term on the right-hand side of~\eqref{formule de base1}. By revisiting the proof below, one sees that there exists a real-valued smooth function $z\mapsto c(z)$ such that 
\[
\mathcal J\left[ u\, {\rm e}^{\frac i\eps S}\,  {\rm WP}^\eps_{\Phi}\left(x g^\Theta\right)\right] = -i \sqrt\eps \mathcal J\left[ u\, {\rm e}^{\frac i\eps S}\,  {\rm WP}^\eps_{\Phi}\left( c(z)g^\Theta\right)\right] +\O(\eps).
\]
}
\end{enumerate}
\end{remark}


\begin{proof}[Proof of Corollary~\ref{cor:chemin}]
 One uses~\eqref{eq:formule2} and the first relation of Lemma~\ref{lem:mathfrak_d} that we apply to $\theta=g^\Theta$. It gives that in  $\mathcal L(L^2(\R^d),\Sigma^k_\eps(\R^d))$, 
\begin{align*}
 \mathcal J\left[u\, {\rm e}^{\frac i\eps S}\,  {\rm WP}^\eps_{\Phi}\left(xg^\Theta\right)\right] &= 
 \mathcal J\left[u\, {\rm e}^{\frac i\eps S}\,  {\rm WP}^\eps_{\Phi}\left( M_\Theta^{-1}(\mathfrak d \Phi_p x-\mathfrak  d \Phi_q D_x)g^\Theta\right)\right]= \O(\sqrt\eps), 
\end{align*}
whence~\eqref{formule de base1}.
\smallskip 

Secondly, with $L \in\mathcal C^\infty(\R^{2d}, \C^{d,d})$, we associate the matrix  $L'$ such that 
 $L= \,^tL' M_\Theta$. We 
have
\begin{align*}
(\mathfrak d \Phi_p x -\mathfrak d \Phi_q D_x )\cdot  (L' x g^\Theta)
&= \left( (\,^tL' (\mathfrak d \Phi_p -  \mathfrak d \Phi_q\Theta))x \cdot x - {\rm Tr} (\,^tL' \mathfrak d \Phi_q ) \right) g^\Theta.
\end{align*}
It remains to prove that in $\mathcal L(L^2(\R^d),\Sigma^k_\eps(\R^d))$, we have
\begin{equation}\label{aim}
 \mathcal J\left[u\, {\rm e}^{\frac i\eps S}\,  {\rm WP}^\eps_{\Phi}\left(
 (\mathfrak d \Phi_p x -\mathfrak d \Phi_q D_x )\cdot  (L' x g^\Theta)
 \right)\right] 
 =\O(\eps).
 \end{equation}
 We first apply
Lemma~\ref{lem:mathfrak_d}  to the function  $\theta=L'x g^\Theta$ and we write 
\begin{align*}
\nonumber
& \mathcal J\left[u\, {\rm e}^{\frac i\eps S}\,  {\rm WP}^\eps_{\Phi}\left(
 (\mathfrak d \Phi_p x -\mathfrak d \Phi_q D_x )\cdot  (L' x g^\Theta)
 \right)\right] \\
&\;\;=- i\sqrt\eps \mathcal J\left[ 
\mathfrak du\, {\rm e}^{\frac i\eps S}\,  {\rm WP}^\eps_{\Phi}\left(L'x g^\Theta\right)+ u {\rm e}^{\frac i\eps S}\,  {\rm WP}^\eps_{\Phi}\left(L'x\,\mathfrak d (g^\Theta)\right) 
\right] .
\end{align*}
We use the relation~\eqref{formule de base1} and obtain in  $\mathcal L(L^2(\R^d),\Sigma^k_\eps(\R^d))$, 
\begin{equation} \label{dodo}
\mathcal J\left[u\, {\rm e}^{\frac i\eps S}\,  {\rm WP}^\eps_{\Phi}\left(
 (\mathfrak d \Phi_p x -\mathfrak d \Phi_q D_x )\cdot  (L' x g^\Theta)
 \right)\right] =
 - i\sqrt\eps \mathcal J\left[ u {\rm e}^{\frac i\eps S}\,  {\rm WP}^\eps_{\Phi}\left(L'x\,\mathfrak d (g^\Theta)\right)\right]
+\O(\eps).
\end{equation}
We calculate
\[
\mathfrak d (g^\Theta) = \frac {\mathfrak d c_\Theta}{c_\Theta} g^\Theta + ( \mathfrak d \Theta x\cdot x ) g^\Theta,
\]
with 
\begin{align*}
(\mathfrak d \Theta x\cdot x)\, x\,  g^\Theta &= (\mathfrak d \Theta x\cdot x)  M_\Theta^{-1} (\mathfrak d\Phi_p x-\mathfrak d\Phi_q D_x)g^\Theta\\
&= M_\Theta^{-1} (\mathfrak d\Phi_p x-\mathfrak d\Phi_q D_x)\left( (\mathfrak d \Theta x\cdot x)  g^\Theta\right) - 2 M_\Theta^{-1}\mathfrak d\Phi_q \mathfrak d \Theta\, x\, g^\Theta.
\end{align*}
Therefore, there exists matrices $L_1$ and $L_2$ such that, setting $\tilde\theta =  (\mathfrak d \Theta x\cdot x)  g^\Theta$, we have 
\[
L' x\,\mathfrak d (g^\Theta)=L_1x g^\Theta + L_2 (\mathfrak d \Phi_p x -\mathfrak d \Phi_q D_x )\tilde \theta.
\]
We deduce
\[
 \mathcal J\left[u\, {\rm e}^{\frac i\eps S}\,  {\rm WP}^\eps_{\Phi}\left(
 (\mathfrak d \Phi_p x -\mathfrak d \Phi_q D_x )\cdot  (L' x g^\Theta)
 \right)\right] =-i\sqrt\eps  \mathcal J[L_1 x g^\Theta] -i\sqrt\eps  \mathcal J[ L_2(\mathfrak d \Phi_p x -\mathfrak d \Phi_q D_x )\tilde \theta]
\]
and we obtain~\eqref{aim} by 
 Lemma~\ref{lem:mathfrak_d}  applied to the function  $\tilde \theta$, and by the relation~\eqref{formule de base1},
 which concludes the proof
\end{proof}

 \subsection{Operators built on Bargmann transform via classical quantities}
 
 We now apply the results of the preceding section to the diffeomorphism $\Phi$ given by a flow map associated to a Hamiltonian $h$.  We are going to derive the results induced by Lemma~\ref{lem:mathfrak_d} and Corollary~\ref{cor:chemin} for time dependent quantities after  integration in time. We will use the resulting formula for the Hamiltonians $h_1$ and $h_2$ associated with the matrix-valued Hamiltonian~$H^\eps$.
 
 \begin{lemma}\label{lem:cor_t_dep}
 Let $k\in\N$. Let $h$ be a subquadratic Hamiltonian on $I\times \R^{2d}$, $I=[t_0,t_0+T]$. We consider  
 \begin{enumerate}
 \item   the classical quantities associated to $h$ as in Section~\ref{subsec:classical} on the interval $I$:
 \[
 z\mapsto S(t,z), \Phi^{t,t_0}(z), F(t,t_0,z),
 \]
\item a smooth  function defined on $I\times \R^{2d}$, bounded and with bounded derivatives,
$(t,z)\mapsto u(t,z)$,
\item a smooth map from $I\times \R^{2d}$ into $\mathcal S(\R^d)$,  $(t,z)\mapsto\theta(t,z)$,
\item a smooth function from $\R^{2d}$ into $I$,
 $z\mapsto t^\flat(z)$.
 \end{enumerate}
  Then, for all   $\chi\in\mathcal C^\infty_0(I)$, we have the following equality between operators  in  $\mathcal L(L^2(\R^d),\Sigma^k_\eps(\R^d))$, 
  \begin{align*}
\int_{\R} \chi(t)&  \mathcal J\left[  \1_{t>t^\flat(z)}  u(t)\, {\rm e}^{\frac i\eps S(t)}\,  {\rm WP}^\eps_{\Phi^{t,t_0}}\left((\mathfrak d \Phi_p^t x-\mathfrak  d \Phi^{t,t_0}_q D_x)\theta(t)\right)\right]dt\\
& = i\sqrt\eps \int_{\R} \mathcal J \left[  \1_{t>t^\flat(z)}  \mathfrak d u(t)\,  {\rm e}^{\frac i\eps S(t)} \,{\rm WP}^\eps_{\Phi^{t,t_0}}(\theta(t))\right]dt\\
 &+i\sqrt\eps \int_{\R} \chi(t)  \mathcal J \left[ \1_{t>t^\flat(z)}  u(t) \, {\rm e}^{\frac i\eps S(t)} \,{\rm WP}^\eps_{\Phi^{t,t_0}}( \mathfrak d \theta(t))\right]dt \\
 & -i\sqrt\eps \mathcal J\left[ \chi(t^\flat)\, \mathfrak dt^\flat \, u(t^\flat)\, \, {\rm e}^{\frac i\eps S( t^\flat)} \,{\rm WP}^\eps_{\Phi^{t^\flat,t_0}}( \theta(t^\flat))\right]. 
 \end{align*}
 \end{lemma}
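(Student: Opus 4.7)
The strategy is to integrate the pointwise differential identity from the proof of Lemma~\ref{lem:mathfrak_d} against the time-dependent cutoff $\chi(t)\,\1_{t>t^\flat(z)}$, and exploit the fact that $\mathfrak{d}_z$ applied to the indicator $\1_{t>t^\flat(z)}$ produces a boundary distribution supported on the hypersurface $\{t=t^\flat(z)\}$. The three terms on the right-hand side of the claim are, respectively, the bulk contributions coming from the application of $\mathfrak{d}_z$ to $u$ and to $\theta$ under the integral sign, and a boundary contribution arising from the derivative of $\1_{t>t^\flat(z)}$.

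Concretely, I would first revisit the proof of Lemma~\ref{lem:mathfrak_d}: under Assumption~\ref{assumption:SPhi}, the $\tfrac{1}{\eps}$-order contributions coming from $\mathfrak{d}_z\bigl(e^{\frac i\eps S(t,z)}\overline{g^\eps_z}(y)\bigr)$ cancel against those produced by the translation parameter of $\wp^\eps_{\Phi^{t,t_0}(z)}$, and one obtains pointwise in $(t,z)$
\[
\mathfrak d_z\Bigl(u(t)\,e^{\frac i\eps S(t)}\,\overline{g^\eps_z}(y)\,\wp^\eps_{\Phi^{t,t_0}}(\theta(t))(x)\Bigr)=\mathcal K_1(t,z)+\mathcal K_2(t,z)+\tfrac{i}{\sqrt\eps}\,\mathcal K_3(t,z),
\]
where $\mathcal K_1,\mathcal K_2,\mathcal K_3$ are the integral kernels of the three operators $\mathcal J[\cdot\mathfrak d u\cdots]$, $\mathcal J[\cdot u\,\wp(\mathfrak d\theta)]$ and $\mathcal J[\cdot u\,\wp((\mathfrak d\Phi_p x-\mathfrak d\Phi_q D_x)\theta)]$ that appear in Lemma~\ref{lem:mathfrak_d}. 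Multiplying this identity by $\chi(t)\,\1_{t>t^\flat(z)}$ and using the distributional equality $\mathfrak d_z\,\1_{t>t^\flat(z)}=-\mathfrak d_z t^\flat(z)\,\delta(t-t^\flat(z))$, I would integrate over $z\in\R^{2d}$ and $t\in\R$. Assumption~\ref{assumption:SPhi}(i),(iii) and the Gaussian/Schwartz decay of the wave-packet kernels ensure $\int_{\R^{2d}}\mathfrak d_z(\cdots)\,dz=0$ (divergence theorem in $z$), so that the bulk contribution of $\mathcal K_1+\mathcal K_2+\tfrac{i}{\sqrt\eps}\mathcal K_3$ is exactly balanced by the delta contribution supported on $\{t=t^\flat(z)\}$. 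Collapsing the latter by the $t$-integration yields a $z$-integral evaluated at $t=t^\flat(z)$, which is precisely the boundary operator $\mathcal J\bigl[\chi(t^\flat)\mathfrak d t^\flat\cdot u(t^\flat)\,e^{\frac i\eps S(t^\flat)}\wp^\eps_{\Phi^{t^\flat,t_0}}(\theta(t^\flat))\bigr]$. Multiplying the resulting relation by $-i\sqrt\eps$ and rearranging produces the three right-hand side terms of the statement with the correct signs.

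The main obstacle is to justify that the preceding formal manipulation holds as an identity of operators in $\mathcal L(L^2(\R^d),\Sigma^k_\eps(\R^d))$ rather than merely between kernels. Each of the four candidate operators is well defined by Theorem~\ref{lem:Barg_ext}(2), thanks to the uniform Schwartz bounds on $\theta$, $\mathfrak d\theta$, the $L^\infty$ bounds on $u$ and $\mathfrak d u$ from Assumption~\ref{assumption:SPhi}(iii), and the polynomial control $|J_\Phi|+|J_\Phi^{-1}|\lesssim\langle z\rangle^\ell$ from Assumption~\ref{assumption:SPhi}(i); the boundary term, restricted to the smooth hypersurface $\{t=t^\flat(z)\}$ and cut off by $\chi$, is handled by the same theorem after parametrising by $z$. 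The delicate point is checking that the integrations by parts in $z$ which absorb the $\mathfrak{d}_z$ derivative produce no contribution at infinity; smoothness of $t^\flat$ with compact-in-$t$ support of $\chi$ and the Schwartz decay of the inner integrand $\wp^\eps_{\Phi^{t,t_0}(z)}(\theta(t,z,\cdot))$ against $\overline{g^\eps_z}(y)$, uniformly in the parameters, suffices. Once this functional-analytic framework is in place, the identity follows directly from the kernel computation above.
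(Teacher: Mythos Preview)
Your approach is essentially identical to the paper's: the paper's proof consists of a single sentence saying that one follows the proof of Lemma~\ref{lem:mathfrak_d} while using the distributional identity $\mathfrak d\bigl(\1_{t>t^\flat(z)}\bigr)=\mathfrak d t^\flat(z)\,\delta(t-t^\flat(z))$, which produces the additional boundary term. Your write-up is in fact more careful than the paper's, both in spelling out the kernel identity and in addressing the functional-analytic justification (boundedness via Theorem~\ref{lem:Barg_ext}(2), vanishing of the $z$-boundary terms); note also that your sign $\mathfrak d_z\1_{t>t^\flat(z)}=-\mathfrak d t^\flat\,\delta(t-t^\flat)$ is the correct one and is consistent with the signs in the statement of the lemma.
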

 
 Note that the result of this lemma is an equality. Thus, we have not emphasized assumptions that make these operators bounded. One could for example assume global boundedness of all the quantities involve and of their derivatives, or, what would be enough, that  $\theta$ is compactly supported in $z$. 
 \smallskip 
 
 We also emphasize that the functions denoted by $(\chi\circ t^\flat)\, u(t^\flat)\, \, {\rm e}^{\frac i\eps S( t^\flat)} \,{\rm WP}^\eps_{\Phi^{t^\flat}}( \theta(t^\flat))$ is the map 
 \[
 z\mapsto \chi\left( t^\flat(z)\right)\, u\left(t^\flat(z),z\right)\, \, {\rm e}^{\frac i\eps S\left( t^\flat(z),z\right)} \,{\rm WP}^\eps_{\Phi^{t^\flat(z),t_0}(z)}\left( \theta\left(t^\flat(t),z\right)\right).
 \]
 Note also that, by construction, the flow map $\Phi^{t,t_0}$ and the action $S$ satisfy Assumptions~\ref{assumption:SPhi} (see~\cite{corobook,R,LL}).
 
 \begin{proof}
The proof follows the lines of the one of Lemma~\ref{lem:mathfrak_d}, using the relation
 \begin{equation}\label{dirac}
 \mathfrak d\left( \1_{t>t^\flat(z)}\right) = \mathfrak d t^\flat(z)  \delta (t-t^\flat(z))
 \end{equation}
that produces an additional term.
\end{proof}

As a Corollary, for Gaussian profiles, we have the following Corollary.

\begin{corollary}\label{cor_chemin_av}
 With the same assumptions as in Lemma~\ref{lem:cor_t_dep}, we additionally assume  $\theta(t)=g^{\Theta(t)}$, with $\Theta\in\mathcal C^\infty(I\times \R^{2d}, \mathcal S^+(d))$. Then,  for all $L \in\mathcal C^\infty(\R^{2d}, \C^{d,d})$, we have the following equality  in  $\mathcal L(L^2(\R^d),\Sigma^k_\eps(\R^d))$, 
 \begin{align*}
&\int \chi(t)  \mathcal J\left[ \1_{t>t^\flat(z)}  u\, {\rm e}^{\frac i\eps S(t,t_0)}\,  {\rm WP}^\eps_{\Phi^{t,t_0}}\left(Lx\cdot x g^{\Theta(t,t_0)}\right) \right] dt\\
&\;\;= \frac 1 i \, \int \chi(t)\mathcal J \left[ \1_{t>t^\flat(z)}  \tilde  u(t,t_0)\, {\rm e}^{\frac i\eps S(t,t_0)}\,  {\rm WP}^\eps_{\Phi^{t,t_0}}\left(g^{\Theta(t,t_0)}\right)\right]dt+\O(\eps)
\end{align*}
with 
$\widetilde u(t,t_0) ={\rm Tr} \left(  L \left[(A(t,t_0)-iB(t,t_0))^{-1}(C(t,t_0)-iD(t,t_0))-\Theta(t,t_0)\right] ^{-1}\right)$. 
\end{corollary}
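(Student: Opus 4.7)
The plan is to reproduce the pointwise proof of formula \eqref{formule de base2} in Corollary \ref{cor:chemin}, but to handle the time integration and the cutoff $\1_{t>t^\flat(z)}$ via Lemma \ref{lem:cor_t_dep} (which absorbs the Dirac mass produced by identity \eqref{dirac}). Since $M_{\Theta(t)} = (C(t,t_0)-iD(t,t_0)) - (A(t,t_0)-iB(t,t_0))\Theta(t,t_0)$ is invertible by \eqref{def:M}, there is a unique smooth $L'\in\mathcal C^\infty(I\times\R^{2d},\C^{d,d})$ with ${}^tL'(t,z)\,M_{\Theta(t)}(z) = L(z)$, and a direct index-level computation using $D_x g^{\Theta(t)} = \Theta(t)x\,g^{\Theta(t)}$ yields the pointwise identity
\[
Lx\cdot x\, g^{\Theta(t,t_0)} = \left(\mathfrak d\Phi^{t,t_0}_p x - \mathfrak d\Phi^{t,t_0}_q D_x\right)\!\left(L'(t)x\, g^{\Theta(t,t_0)}\right) + \frac{1}{i}\,{\rm Tr}\!\left({}^tL'(t)\,\mathfrak d\Phi^{t,t_0}_q\right) g^{\Theta(t,t_0)},
\]
whose trace term coincides with $\widetilde u(t,t_0)\,g^{\Theta(t,t_0)}$, since ${\rm Tr}({}^tL'(t)\,\mathfrak d\Phi^{t,t_0}_q) = {\rm Tr}(L\,M_{\Theta(t)}^{-1}\,\mathfrak d\Phi^{t,t_0}_q)$.

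Substituting this identity under the integral on the left-hand side of the claim splits it into the desired main term $\tfrac{1}{i}\int\chi(t)\,\mathcal J[\1_{t>t^\flat(z)}\,u\,\widetilde u\,e^{iS/\eps}\,{\rm WP}^\eps_{\Phi^{t,t_0}}(g^{\Theta(t,t_0)})]\,dt$, plus an auxiliary integral whose profile is the divergence-like expression $(\mathfrak d\Phi^{t,t_0}_p x - \mathfrak d\Phi^{t,t_0}_q D_x)(L'(t)x\,g^{\Theta(t,t_0)})$. Applying Lemma \ref{lem:cor_t_dep} to this auxiliary integral with $\theta(t,z)=L'(t,z)xg^{\Theta(t,z)}$ produces three $\sqrt\eps$-contributions: two interior ones whose profiles are $L'(t)xg^{\Theta(t)}$ or $\mathfrak d(L'(t)xg^{\Theta(t)})$ (a Gaussian polynomial of degree at most two in $x$), and one boundary contribution concentrated on $\{t=t^\flat(z)\}$ whose profile is $L'(t^\flat)xg^{\Theta(t^\flat)}$ and whose diffeomorphism is $z\mapsto\Phi^{t^\flat(z),t_0}(z)$.

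Each of these three $\sqrt\eps$-contributions is itself of order $\sqrt\eps$ in $\mathcal L(L^2(\R^d),\Sigma^k_\eps(\R^d))$, so after multiplication by the external $\sqrt\eps$ prefactor they all contribute $O(\eps)$. For the interior term with profile $L'(t)xg^{\Theta(t)}$ and for the linear-$x$ piece of $\mathfrak d(L'(t)xg^{\Theta(t)})$, this follows from another application of \eqref{formule de base1}; the quadratic-$x$ piece arising in $\mathfrak d(L'(t)xg^{\Theta(t)})$ is controlled by a further application of the pointwise \eqref{formule de base2}, and the purely Gaussian piece needs no treatment. For the boundary term, one applies \eqref{formule de base1} at the frozen time $t=t^\flat(z)$, treating $z\mapsto\Phi^{t^\flat(z),t_0}(z)$ as a new smooth diffeomorphism of $\R^{2d}$. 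The main technical obstacle is verifying the hypotheses of Lemma \ref{lem:cor_t_dep} and Theorem \ref{lem:Barg_ext} for these auxiliary data, namely uniform boundedness on $I\times\R^{2d}$ of $M_{\Theta(t)}^{-1}$ and of all relevant $z$-derivatives of $\Phi^{t,t_0}$, $\Theta(t,t_0)$ and $L'(t,\cdot)$, as well as polynomial control of the Jacobian of $z\mapsto\Phi^{t^\flat(z),t_0}(z)$; these all reduce to the subquadraticity of $h$ on $I$ and the smoothness of $t^\flat$ on the support of $\chi\circ t^\flat$.
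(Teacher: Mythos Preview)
Your overall strategy is exactly what the paper indicates: rerun the proof of Corollary~\ref{cor:chemin} with Lemma~\ref{lem:cor_t_dep} in place of Lemma~\ref{lem:mathfrak_d}, and absorb the extra contributions produced by relation~\eqref{dirac}. The paper's own proof is a single sentence, and you have correctly unpacked the interior part of the argument.

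The gap is in your treatment of the boundary term. You invoke \eqref{formule de base1} for the frozen data $\tilde S(z)=S(t^\flat(z),t_0,z)$, $\tilde\Phi(z)=\Phi^{t^\flat(z),t_0}(z)$, but Corollary~\ref{cor:chemin} (and the underlying Lemma~\ref{lem:mathfrak_d}) requires condition~(ii) of Assumption~\ref{assumption:SPhi}, and that condition \emph{fails} for the frozen pair: since $\partial_tS=\Phi_p\cdot\partial_t\Phi_q-h(t,\Phi)$, one finds
\[
\nabla_q\tilde S \;=\; -p+\tilde A^{T}\tilde\Phi_p \;-\; h\!\left(t^\flat,\tilde\Phi\right)\nabla_q t^\flat,
\]
with an analogous extra term in $\nabla_p\tilde S$. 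This $-h\,\mathfrak d t^\flat$ discrepancy destroys the $O(1/\eps)$ cancellation in the integration-by-parts behind Lemma~\ref{lem:mathfrak_d}, so \eqref{formule de base1} is not available at the frozen time. As written, your argument therefore only yields a boundary contribution of size $\sqrt\eps\times O(1)=O(\sqrt\eps)$, one order short of the claimed $O(\eps)$. (A smaller slip: $\mathfrak d(L'xg^{\Theta})$ carries a \emph{cubic} piece in $x$ coming from $\mathfrak d\Theta$, not a quadratic one, and in the paper's pointwise proof that piece is reduced via a further application of Lemma~\ref{lem:mathfrak_d} to $\tilde\theta=(\mathfrak d\Theta\,x\cdot x)g^{\Theta}$, not via \eqref{formule de base2}, which would be circular.)
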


\begin{proof}
The proof follows the lines of the one of Corollary~\ref{cor:chemin}, using the relation
\eqref{dirac}.
\end{proof}

\chapter{Convergence of the thawed and the frozen Gaussian approximations}\label{chap:5}

Our aim in this chapter  is to prove the initial value representations of  Theorems~\ref{th:sqrteps},~\ref{thm:TGeps}, \ref{thm:FGeps} and~\ref{thm:FGeps_av}.
In Section~\ref{sec:strategy}, we explain the strategy in the context of Theorem~\ref{th:sqrteps}, and prove  the thawed approximation at order $\sqrt\eps$. Then, in Section~\ref{sec:thawed}, we prove the thawed Gaussian approximation with transfer's term of  Theorem~\ref{thm:TGeps}. Finally Section~\ref{sec:th_to_fr} is devoted to the passage from thawed approximations to frozen ones, which allow to terminate the proof of Theorem~\ref{th:sqrteps}, and to obtain Theorems~\ref{thm:FGeps} and~\ref{thm:FGeps_av}.

\section{Strategy of the proofs
} \label{sec:strategy}

Our aim in this section is to prove the initial value representation of  Theorems~\ref{th:sqrteps} for the thawed Gaussian approximation. It is also the opportunity  to explain the overall strategy that is also used for proving Theorems~\ref{thm:TGeps}, \ref{thm:FGeps} and~\ref{thm:FGeps_av}.
\smallskip

Let $k\in\N$.
Let $\psi^\eps_0=\widehat{\vec V} \phi^\eps_0$ be as in Assumption~\ref{hyp:data} with $(\phi^\eps_0)_{\eps>0}$ a frequency localized family of order $\beta\geq 0$ with  $N_\beta>d+k$. By Lemma~\ref{lem:Nbeta_sobolev}, the latter condition which implies the boundedness of the family  $(\phi^\eps_0)_{\eps>0}$  in $\Sigma^k_\eps$.
Without loss of generality, we assume 
\[
\vec V(z)=\pi_\ell(t_0,z) \vec V(z),\;\;
\forall z\in\R^{2d},
\]
for some $\ell\in\{1,2\}$ that is now fixed.
\smallskip

We start with the Gaussian frame equality~\eqref{eq:gaussian_frame}
\[
 \psi^\eps_0  = (2\pi\eps)^{-d}\int_{z\in\R^{2d}}\< g^\eps_z, \widehat{\vec{V}}\phi^\eps_0\>  g^\eps_z \, dz.
 \]
 Writing 
 $\< g^\eps_z, \widehat{\vec{V}}\phi^\eps_0\> =\<\widehat{\vec{\overline V}}
 g^\eps_z, \phi^\eps_0\> $ and formula~\eqref{eq:tildeJ}, we have in $\Sigma^k_\eps$,
 \begin{align*}
 \psi^\eps_0 & = \mathcal J[ \widehat{\vec{\overline V}}
 g^\eps_z]^*(\phi^\eps_0) + \O(\eps \|\phi^\eps_0\|_{L^2}).
 \end{align*}
By  Remark~\ref{rem:aetJ} (1), we obtain in $\Sigma^k_\eps$,
 \begin{align*}
 \psi^\eps_0 &=
  \mathcal J[ \vec{\overline V}(z)
 g^\eps_z]^*(\phi^\eps_0) + \O(\eps\|\phi^\eps_0\|_{L^2})
 \end{align*}
 In view of the formula~\eqref{eq:tildeJ}, we also have 
  \begin{align*}
 \psi^\eps_0 
&= \mathcal J[ \vec{V}(z)
 g^\eps_z](\phi^\eps_0) + \O(\eps\|\phi^\eps_0\|_{L^2}).
 \end{align*}
 Corollary~\ref{cor:J_R} yields that, in  $\Sigma^k_\eps$, we have 
  \begin{align*}
 \psi^\eps_0 & =
 \mathcal J[ \vec{ V}(z)
 g^\eps_z]((\phi^\eps_0)_{R,<}) + \O(\eps\|\phi^\eps_0\|_{L^2}) +\O(\eps^\beta C_\beta R^{-n_\beta})
 \end{align*}
 with the notations of  Corollary~\ref{cor:J_R} and setting $n_\beta=N_\beta-k-d>0 $.
\smallskip

Now that the data has been written in a convenient form, we apply the propagator $\U^\eps_H(t,t_0)$ and we take advantage of its boundedness in $\mathcal L (\Sigma^k_\eps)$ to write 
\begin{align*}
\U^\eps_H(t,t_0) \psi^\eps_0 & = (2\pi\eps)^{-d}\int_{|z|\leq R }\< g^\eps_z,\phi^\eps_0\> \left(\U^\eps_H(t,t_0) \vec{V}(z) g^\eps_z\right)dz +\O(\eps\|\phi^\eps_0\|_{L^2})+\O(\eps^\beta\,  C_\beta \,R^{-n_\beta}  )  \\
&= \mathcal J\left[\U^\eps_H(t,t_0) {\vec{V}}(z) g^\eps_z\right]((\phi^\eps_0)_{R,<})  +\O(\eps\|\phi^\eps_0\|_{L^2})+\O(\eps^\beta\,  C_\beta \,R^{-n_\beta}  ) .
\end{align*} 
We then use the 
description of the propagation of wave packets by $\U^\eps_H(t,t_0)$, as stated in Theorem~\ref{th:WPmain}. We choose $\varsigma\in (0,1)$, $\delta= \eps^{\frac 12-\varsigma}$, $M$  and $N$ such that 
\begin{equation}\label{choice:delta}
\left(\frac {\sqrt\eps}{\delta} \right)^{N+1}\delta^{-2\kappa_0-k} + \delta^M\leq \eps ^{N'}
\end{equation}
where $N'\in N$ with $N'\geq d+1$. 
Then,  in $\Sigma^k_\eps(\R^d)$, we have
\[
\U^\eps_H(t,t_0){\vec{V}}(z) g^\eps_z=\psi^{\eps,N}_\ell (t) +\O(\eps^{N'}).
\]
Therefore, by (1) of Theorem~\ref{lem:Barg_ext} in  $\mathcal L(L^2(\R^d),\Sigma^k_\eps(\R^d,\C^m))$, 
\begin{equation}\label{eq:minnie5}
\U^\eps_H(t,t_0) \psi^\eps_0=
  \mathcal J\left[\1_{|z|\leq R}\, \, \psi^{\eps,N}_\ell (t)\right] + \O(\eps^{N-d}\,R^d\,\|\phi^\eps_0\|_{L^2}) +\O(\eps^\beta\,  C_\beta \,R^{-n_\beta}  ) 
  \end{equation}
  For simplifying notation, we write $N'=N$.
  \smallskip 

Using that  $\psi^{\eps,N}_\ell (t)$  is a linear combination of wave packets and considering the explicit formula of Theorem~\ref{th:WPmain}, Point (2)  of Theorem~\ref{lem:Barg_ext} implies that  in $\mathcal L(\Sigma^k_\eps(\R^d,\C^m))$, 
\begin{align*}
\U^\eps_H(t,t_0) \psi^\eps_0& =
  \mathcal J_{\ell,\vec V, {\rm th}}^{t,t_0} \left((\phi^\eps_0)_{R,<}\right) + \O(\sqrt \eps \|\phi^\eps_0\|_{L^2}) +\O(\eps^{N-d}\,R^d\,\|\phi^\eps_0\|_{L^2}) +\O(\eps^\beta\,  C_\beta \,R^{-n_\beta}  ) \\
  & =
  \mathcal J_{\ell,\vec V, {\rm th}}^{t,t_0} \left(\phi^\eps_0\right) + \O(\sqrt \eps \|\phi^\eps_0\|_{L^2}) +\O(\eps^{N-d}\,R^d\,\|\phi^\eps_0\|_{L^2}) +\O(\eps^\beta\,  C_\beta \,R^{-n_\beta}  ) 
  \end{align*}
  where we have used again Corollary~\ref{cor:J_R}. 
  If $\beta<\frac 12$, we perform
 an appropriate choice of $R$ and $N$: we choose $R=\eps^{-\gamma}$ with $\gamma \geq \frac  1 {n_\beta}(\frac 12-\beta)$, and $N\geq \frac 12 +d(1+\gamma)$. 
  \smallskip
  
    At this stage of the description, the thawed Gaussian approximation of   Theorem~\eqref{th:sqrteps} is proved. For obtaining the frozen one, we shall  argue as  in the scalar case considered in \cite{R} (Lemma 3.2 and Lemma 3.4). We will detail this argument later  in Section~\ref{sec:th_to_fr} below. 
  \smallskip 
  
  The proofs of the order $\eps$ approximations of Theorems~\ref{thm:TGeps}, \ref{thm:FGeps} and \ref{thm:FGeps_av} start with the same lines. However, one includes in the  approximation the two first terms of the asymptotic expansion of  $\psi^{\eps,N}_\ell (t)$: the one of order~$\eps^0$ and the one of order~$\eps^{\frac 12}$. The terms of order $\sqrt\eps$ are twofold:
  \begin{enumerate}
\item[(i)]  The one along the same mode as the initial data, here denoted by $\ell$. This term will be proved to be of lower order because its structure  allows to use the first part of Corollary~\ref{cor:chemin}. 
\item[(ii)]The one generated by the crossing along the other mode. This one is not negligible and involves the transfer's term.
\end{enumerate}
At that stage of the proofs, one will be left with the thawed approximation. 
\smallskip

The derivation of the frozen approximation from the thawed one involves the second part of Corollary~\ref{cor:chemin}.   However, complications are induced in the treatment of the transfer's term described in (ii) above because of the singularity in time that it contains. This difficulty is overcome by averaging in time and using Corollary~\ref{cor_chemin_av}.
We implement this strategy in the next sections.

  \section{Thawed Gaussian approximations with transfer terms }\label{sec:thawed}

  We prove here the higher order approximation of Theorem~\ref{thm:TGeps} for initial data $\psi^\eps_0=\widehat{\vec V }\phi^\eps_0$ with $(\phi^\eps_0)_{\eps>0}$ frequency localized at the scale $\beta\geq 0$ in a compact set $K$. As in the preceding section, we assume $\vec V=\pi_\ell (t_0) \vec V$ and, without loss of generality, we suppose $\ell=1$. 
  \smallskip
  
  We start as in the preceding section and transform equation~\eqref{eq:minnie5} by taking the terms of order~$\eps^0$ and~$\eps^{\frac 12}$ in the expansion of $\psi_\ell^{\eps, N}$ (see Theorem~\ref{th:WPmain}).
  We obtain  
\begin{align*}
  \U^\eps_H(t,t_0) \psi^\eps_0(x) =&  \mathcal J \left[  \psi^{\eps,1}_1(t)+ \psi^{\eps,1}_2(t)\right] ((\phi^\eps_0)_{R,<})
   +  \O\left(\eps^\beta\,  C_\beta \,R^{-n_\beta}  \right) \\
   &+\O(\eps^{N-d} R^d \|\psi^\eps_0 \|_{L^2}) 
   +  \O(\eps \|\psi^\eps_0 \|_{L^2}) .
\end{align*}
  The rest in $\O(\eps^{N-d} R^d \|\psi^\eps_0 \|_{L^2})$ comes from the remainder of the approximation of  $\U^\eps_H(t,t_0) g^\eps_z $ (case~(1) of Theorem~\ref{lem:Barg_ext}), while the term $\O(\eps\|\psi^\eps_0\|_{L^2})$ is generated by the terms of order $\eps^j$ for $j\geq 1$ of the approximation, these terms having a wave packet structure (case (2) of Theorem~\ref{lem:Barg_ext}).
  \smallskip 

{  
We write for $\ell\in\{1,2\}$, taking $N=1$ in Theorem~\ref{th:WPmain},
\[
\psi^{\eps,N=1}_\ell(t)=\sum_{j=0}^1 \eps^{\frac j2}   \psi^{\eps,N=1}_{\ell,j}(t).
\]
We make the following observations:
\begin{enumerate}
\item[(i)]
Because the assumptions on $K$ induce that there is only one passage through the crossing, and because $\vec V=\pi_1(t_0) \vec V$, Theorem~\ref{th:WPmain} implies 
that  $\psi^{\eps,N=1}_{2,0}(t)=0$ and $\psi^{\eps,N=1}_{2,1}(t)$ only depends on the transfer profile $f^\eps_{1\rightarrow 2}$.
\item [(ii)]
For the mode 
$\ell=1$, the term of order $\sqrt\eps$
involves 
\[
  \psi^{\eps,N=1}_{\ell=1,j=1}(t)= 
 {\rm e}^{\frac i\eps S_\ell(t,t_0,z_0)}
\wp^\eps_{z_\ell (t)}\left({\mathcal R}_\ell (t,t_0,z_0) \, {\mathcal M}[F_\ell( t,t_0,z_0)] \, \Vec B_{\ell,j}(t) g^{i\1_{\R^d}}\right).
\]
\end{enumerate}

We first prove that the operator $\mathcal J \left[   \psi^{\eps,1}_{1,1}(t)\right]$ has a contribution of lower order, i.e. of order $\O(\sqrt\eps)$. 
In that purpose, 
we analyze the  form $\psi^{\eps,1}_{1,1}(t)$ and we show that we can  use Corollary~\ref{cor:chemin}. These ideas are inspired from ~\cite{Rcimpa}[section 3] and the reader can also refer to the book~\cite{corobook}. 
We  consider the  term  
\[
{\mathcal R}_\ell (t,t_0) \, {\mathcal M}[F_\ell( t,t_0,z_0)] \,  \Vec B_{\ell,1}(t) g^{i\1_{\R^d}}.
\]
The idea is to analyze the effect of each of the operation performed on the Gaussian $g^{i\1_{\R^d}}$.
The operator $\vec B_{\ell,1}$ is given by~\eqref{B1}. It is a differential operators which is a linear combination of terms of the form $x^\alpha \partial_x^\beta$ with $|\alpha|+|\beta|=3$, and of operators of multiplication by $x_j$ or of derivation $\partial_{x_j}$, $1\leq j\leq d$. When applied to $g^{i\1_{\R^{d}}}$,  we obtain a linear combination of terms of the form $x_jg^{i\1_{\R^{d}}}(x)$ for $1\leq j\leq d$, or  $x^\alpha g^{i\1_{\R^{d}}}(x)$ for $|\alpha|=3$. One then applies the metaplectic operator ${\mathcal M}[F_\ell( t,t_0,z_0)]$ to this function. By~\eqref{prop:metaplectic}, if $P$ is a polynomial function,  
\[
{\mathcal M}[F_\ell( t,t_0,z_0)] (P g^{i\1_{\R^{d}}})(x)= P(A_\ell(t,t_0,z_0)x){\mathcal M}[F_\ell( t,t_0,z_0)]g^{i\1_{\R^{d}}}(x)
\]
where $A_\ell(t,t_0,z_0)$ is the upper block-diagonal coefficient of $F_\ell(t,t_0,z_0)$ (see~\eqref{eq:F}).
We deduce that 
\[ {\mathcal R}_\ell (t,t_0,z_0) \, {\mathcal M}[F_\ell( t,t_0,z_0)] \,  \Vec B_{\ell,1}(t) g^{i\1_{\R^d}}(x)= \vec a(t) x g^{\Gamma_\ell(t,t_0,z_0)}(x)\]
for some smooth and bounded vector-valued map $(t,z)\mapsto \vec a(t,z)$. 
Therefore, we can apply Corollary~\ref{cor:chemin}. It yields 
\[
 \mathcal J \left[\psi^{\eps,1}_{1,1}(t)
 \right]  ( (\phi^\eps_0)_{R,<})= \O(\sqrt\eps \|\phi^\eps_0\|_{L^2}).
\]
\smallskip 

We can now conclude the proof. Indeed, we
have now only two terms in the expansion and we are left with 
\begin{align*}
  \U^\eps_H(t,t_0) \psi^\eps_0(x)
   &= \mathcal J \left[  \psi^{\eps,1}_{1,0}(t)+\sqrt\eps\, \psi^{\eps,1}_{2,1}(t)\right] ((\phi^\eps_0)_{R,<})
   +  \O\left(\eps^\beta\,  C_\beta \,R^{-n_\beta}  \right) +\O(\eps^{N-d} R^d \|\psi^\eps_0 \|_{L^2})\\
   &= 
    \mathcal J \left[  \psi^{\eps,1}_{1,0}(t)+\sqrt\eps\, \psi^{\eps,1}_{2,1}(t)\right] (\phi^\eps_0)
   +  \O\left(\eps^\beta\,  C_\beta \,R^{-n_\beta}  \right) +\O(\eps^{N-d} R^d \|\psi^\eps_0 \|_{L^2}).
\end{align*} 
We can now identify the terms, using in particular the formula for the transfer terms given in~\eqref{B12}. Since we have chosen $\delta=\eps^{\frac 12-\varsigma}$ at the beginning of the proof (see around~\eqref{choice:delta}), we obtain
  \begin{align}\label{first_app}
 \U^\eps_H(t,t_0) \psi^\eps_0(x) & =
   \mathcal J_{1,\vec V, {\rm th}}^{t,t_0} \left(  \phi^\eps_0\right) 
   +\sqrt\eps\, \mathcal J_{1,2,\vec V, {\rm th}}^{t,t_0} \left(\phi^\eps_0\right) \\
   \nonumber
   &\qquad +\O(\eps^{1-\varsigma}\|\psi^\eps_0 \|_{L^2})
+ \O\left(\eps^\beta\,  C_\beta \,R^{-n_\beta}  \right)+ \O(\eps^{N-d} R^d \|\psi^\eps_0 \|_{L^2}) .
\end{align}
If $\beta<1$, we  choose   $R=\eps^{-\gamma}$, $N=1+d(\gamma+1)$ with 
$\gamma\geq \frac 1{n_\beta} (1-\beta).$
 This gives 
Theorem~\ref{thm:TGeps}.
\smallskip

\begin{remark}
More generally, if we had started with $\vec V=\pi_1(t_0)\vec V+\pi_2(t_0)\vec V$, we would have obtained 
\begin{align*}
 \U^\eps_H(t,t_0) \psi^\eps_0(x) & =
   \mathcal J_{1,\vec V, {\rm th}}^{t,t_0} \left( \phi^\eps_0\right) +  \mathcal J_{2, \vec V, {\rm th}}^{t,t_0} \left( \phi^\eps_0\right)
   +\sqrt\eps\, \mathcal J_{1,2,\vec V,  {\rm th}}^{t,t_0} \left(\phi^\eps_0\right) \\
\nonumber
&\qquad +\O(\eps^{1-\varsigma}\|\psi^\eps_0 \|_{L^2})
+ \O\left(\eps^\beta\,  C_\beta \,R^{-n_\beta}  \right)+ \O(\eps^{N-d} R^d \|\psi^\eps_0 \|_{L^2}) .
\end{align*}
\end{remark}

\section{Frozen Gaussian approximations with transfer terms }\label{sec:th_to_fr}
  
It remains to  pass from the thawed to the frozen approximation.   As we have already mentioned,  we follow  the arguments developed in Lemma 3.2 and~3.4 of ~\cite{R} that we detail explicitly, in the same spirit than in~\cite{FLR2}. It is based on  an evolution argument which crucially uses Corollary~\ref{cor:chemin}. We now explain that step.
  
    \begin{proof}[End of the proof of Theorem~\ref{th:sqrteps}]
We start from the approximation given by the first part of Theorem~\ref{th:sqrteps}: in $\Sigma^k_\eps(\R^d)$, we have 
\[
 \U^\eps_H(t,t_0) \psi^\eps_0(x)  =
   \mathcal J_{1, \vec V, {\rm th}}^{t,t_0} \left(\phi^\eps_0\right) +\O(\sqrt \eps (C_\beta + \|\phi^\eps_0\|_{L^2}))
   \]
 and our aim is to prove that  in $\Sigma^k_\eps(\R^d)$
 \[
   \mathcal J_{1, \vec V, {\rm th}}^{t,t_0} \left(\phi^\eps_0\right)=   \mathcal J_{1,\vec V, {\rm fr}}^{t,t_0} \left(\phi^\eps_0\right)+\O(\eps (C_\beta + \|\phi^\eps_0\|_{L^2}))).
   \]
   Of course, a remainder of size $\O(\sqrt\eps)$ would be enough for proving  Theorem~\ref{th:sqrteps}; however, it will be usefull to have a reaminder of size $\O(\eps)$  in order to prove Theorems~\ref{thm:FGeps} and~\ref{thm:FGeps_av}.
   \smallskip 
    
  We set for $s\in[0,1]$
  \[
  \Theta(s,z)= (1-s) \Gamma_\ell (t,t_0,z) + is \1_{\R^d}
  \]
  where $\Gamma_\ell$ is given by~\eqref{def:Gamma}.
  We consider the partially normalised Gaussian function 
  \[
    \widetilde g(t,s)=
  (\pi)^{-d/4}{\rm e}^{\frac i{2} \Theta(s,z) x\cdot x},\;x\in\R^d
  \]
 and we set
   \[
  \widetilde g^{\,\Theta(s),\eps}_{\Phi^{t,t_0}_{h_\ell}(z)}(x)=
  {\rm WP}^\eps_{\Phi^{t,t_0}_{h_\ell}(z)} \left( \widetilde g(t,s)\right),\\
  \]
  The aim 
  is to  construct a map 
 $s\mapsto a(s,z)$  such that  for all $s\in [0,1]$ in $\mathcal L(L^2(\R^d), \Sigma^k_\eps)$,
 \[
 \frac d{ds} \mathcal J\left[a(s,z) \vec V_\ell (t,t_0,z)  \widetilde g^{\,\Theta(s),\eps}_{\Phi^{t,t_0}_{h_\ell}(z)}\right] = \O(\eps).
 \]
 Choosing  $a(0,z)=1$,  
 we have 
 \[
  \mathcal J^{t,t_0}_{\ell,\vec V, {\rm th}}   = \mathcal J \left[ a(0,z) \vec V_\ell (t,t_0,z)  \widetilde g^{\,\Theta(0),\eps}_{\Phi^{t,t_0}_{h_\ell}(z)}\right],
  \]
 and we will obtain that 
 for any $f\in L^2(\R^d)$, we have   in $\Sigma^k_\eps(\R^d)$
 \begin{align*}
 \mathcal J^{t,t_0}_{\ell,\vec V, {\rm th}} (f) 
 & = \mathcal J \left[ a(1,z) \vec V_\ell (t,t_0,z)  \widetilde g^{\,\Theta(1),\eps}_{\Phi^{t,t_0}_{h_\ell}(z)}\right]  (f)+\O(\eps)\\
 &=  \mathcal J^{t,t_0}_{\ell,\vec V, {\rm fr}} (f) +\O(\eps) 
 \end{align*}
 provided $a(1,z)=a_\ell(t,t_0,z)$ as defined in~\eqref{def:prefactor}.
 \smallskip 
 
 For constructing the map $s\mapsto a(s,z)$, we compute 
 \begin{align*}
 \frac d{ds} \mathcal J\left[a(s,z) \vec V_\ell (t,t_0,z)  \widetilde g^{\,\Theta(s),\eps}_{\Phi^{t,t_0}_{h_\ell}(z)}\right] & = 
 \mathcal J\left[\partial_s a(s,z) \vec V_\ell (t,t_0,z)  \widetilde g^{\,\Theta(s),\eps}_{\Phi^{t,t_0}_{h_\ell}(z)}\right] \\
& +\frac i2  \mathcal J\left[a(s,z) \vec V_\ell (t, t_0,z) {\rm WP}^\eps_{\Phi^{t,t_0}_{h_\ell}(z)} \left( \partial_s \Theta(s) x\cdot x \widetilde  g^{\,\Theta(s),\eps}\right)\right] .
 \end{align*}
We use equation~\eqref{formule de base2} of Corollary~\ref{cor:chemin} to transform the second term of the right-hand side and obtain 
  \begin{align*}
 &  \mathcal J\left[a(s,z) \vec V_\ell (t, t_0,z) {\rm WP}^\eps_{\Phi^{t,t_0}_{h_\ell}(z)} ( \partial_s \Theta(s) x\cdot x g^{\,\Theta(s)})\right] \\
& \qquad  = \frac 1 i
    \mathcal J\left[a(s,z) \vec V_\ell (t, t_0,z) {\rm WP}^\eps_{\Phi^{t,t_0}_{h_\ell}(z)} ( {\rm Tr}( \Theta_1(s) )g^{\,\Theta(s)})\right] +\O(\eps)
   \end{align*}
   in $\mathcal L(L^2(\R^d), \Sigma^k_\eps)$ and 
   with
   \[
   \Theta_1(s)=\partial_s \Theta(s) \left[ (A_\ell -iB_\ell)^{-1} (C_\ell -iD_\ell) - \Theta\right]^{-1}
   \]
     where $M_\ell(s,z)$ is associated to $\Theta(s,z)$ according to~\eqref{def:M}. In particular, we have 
     \[
     \partial_s M(s)=-(A_\ell-iB_\ell)\partial_s \Theta(s).
     \]
   We deduce 
    \[
   \Theta_1(s)
   =- (A_\ell -iB_\ell )^{-1}\partial_s M(s) M(s)^{-1} (A_\ell-iB_\ell)
   \]
   and 
   \[
   {\rm Tr} (  \Theta_1(s))=-{\rm Tr} (\partial_s M(s) M(s)^{-1} )=- {\rm det} M(s) ^{-1}\, \partial_s \left( {\rm det} M(s)\right).
   \]
   Therefore, the condition
  \[
  \partial_s a(s,z) -\frac 12 {\rm Tr}(\partial_s M(s,z)M(s)^{-1} ) a(s,z)=0
  \]
that we have to fulfilled, is realized by
  \[
  a(s,z) = \frac{{\rm det}  M(s)}{{\rm det} M(0)} a(0,z)=a_\ell (t,t_0,z).
  \]
    \end{proof}

\begin{proof}[Proof of Theorem~\ref{thm:FGeps}]
We now start from the result of Theorem~\ref{thm:TGeps}, that is equation~\eqref{first_app}. In view of what has been done in the end of the proof of Theorem~\ref{th:sqrteps}, we only have to prove
\[
 \mathcal J_{1,2, \vec V,{\rm th}}^{t,t_0} \left(\phi^\eps_0\right) =
  \mathcal J_{1,2, \vec V, {\rm fr}}^{t,t_0} \left(\phi^\eps_0\right)  +\O\left(\sqrt\eps(C_\beta+\|\phi^\eps_0\|_{L^2}\right).
\]
As noticed in the introduction, when $t<t_{1,{\rm min}}(K)$, then  $\tau_{1,2}(t,t_0,z)=0$ for all $z\in K$ and   when   $t\in [t_{1,{\rm max}}(K),t_{2,{\rm min}}(K))$, $z\mapsto \tau_{1,2} (t,t_0,z)$ is smooth. Therefore, one can use the perturbative argument allowing to froze the covariances of the Gaussian terms as in the proof of  Theorem~\ref{th:sqrteps} and one obtains the formula~\eqref{eq:prefactor2}.
\end{proof}

\begin{proof}[Proof of Theorem~\ref{thm:FGeps_av}]
We now have to deal with
the discontinuity of  the transfer coefficient $\tau_{1,2}(t,t_0,z)$, we  use Lemma~\ref{lem:cor_t_dep}. 
\end{proof}

\part{Wave-packet propagation through smooth crossings}

\chapter{Symbolic calculus and diagonalization of  Hamiltonians with smooth crossings}\label{chap:2}

In this section, we revisit the diagonalization of Hamiltonians in the case of the smooth crossings in which we are interested.  We
settle the algebraic setting that we will use in Section~\ref{sec:prop} for the propagation of wave packets.
\smallskip

 \smallskip
 
 We will use the {\it Moyal product} about which we  recall some facts:
 if $A^\eps, B^\eps$  are semi-classical series,  their Moyal product is the formal series 
 \begin{align}
 \nonumber
& C^\eps:=A^\eps\circledast B^\eps\;\;  \mbox{where}\;\;
 \di{C^\eps =\sum_{j\geq 0}\eps^jC_j}\\
 \label{fifi2}
& C_j(x,\xi) = \frac{1}{2^j}\sum_{\vert\alpha+\beta\vert=j}
\frac{(-1)^{\vert \beta\vert}}{\alpha!\beta!}
(D^\beta_x\partial^\alpha_\xi A).( D^\alpha_x\partial^\beta_\xi B)(x,\xi), \;\;j\in\N.
\end{align}
We also introduce the {\it Moyal bracket}
\begin{equation}\label{def:moyal_bracket}
\{A^\eps, B^\eps\}_{\circledast} := A^\eps\circledast B^\eps - B^\eps\circledast A^\eps.
\end{equation}

 Let us now consider a smooth matrix-valued symbol $H^\eps=H_0+\eps H_1$, where the principal symbol $H_0 = h_1\pi_1 + h_2\pi_2$ has two  smooth eigenvalues $h_1$ and $h_2$ with smooth eigenprojectors $\pi_1$ and $\pi_2$. We allow for a non-empty crossing set $\Upsilon$ as in Definition~\ref{def:smooth_cros}.
  By standard symbolic calculus with smooth symbols, we have for $\ell\in\{1,2\}$ the relations 
 \begin{equation}\label{riri3}
  \pi_\ell \circledast (i\eps\partial_t -H^\eps)= (i\eps \partial_t -h_\ell)\circledast \pi_\ell = O(\eps).
  \end{equation}
 We are going to see two manners to replace the projector $\pi_\ell$ and the Hamiltonian $h_\ell$ by asymptotic series so that the relation above holds at a better order. 
 \smallskip 
 
 We call   ``rough" the first diagonalization process that we propose. It  will hold everywhere, including~$\Upsilon$ and is comparable to the reduction  performed in~\cite{BGT} for avoided crossings. It is the subject of Section~\ref{sec:rough_diag}. 
 \smallskip
 
  The second one, more sophisticated, will require to work in a domain that does not meet $\Upsilon$. Based on the use of superadiabatic projectors, as developed in~\cite{bi,MS,ST,Te}. This strategy is implemented in Section~\ref{sec:superadiab}. The new element comparatively to the references  that we have mentioned, is that we keep a careful memory of the dependence of the constructed  elements  with respect to the distance of their support from $\Upsilon$ (which will be monitored by the size of the gap between the eigenfunctions of the Hamiltonian). For this reason, we will use a pseudodifferential setting that we precise in the next Section~\ref{sec:asympt_series}.

\section{The pseudodifferential setting}\label{sec:asympt_series}

 We consider formal semi-classical series 
 $$A^\eps =\sum_{j\geq 0}\eps^jA_j$$ where all the functions $A_j$ are smooth (matrix-valued) in an open set  $\mathcal D\subset \R\times \R^{2d}$, that is, $A_j\in \mathcal C^\infty(\mathcal D,\C^{m,m})$.
 \smallskip
 
 \noindent{\bf Notation.}
 If $A^\eps =\sum_{j\geq 0}\eps^jA_j$ is a formal series and $N\in\N$,  we denote by $A^{\eps,N}$ the function
 \begin{equation}\label{def:Nseries}
 A^{\eps, N}=\sum_{0\leq j\leq N}\eps^jA_j.
 \end{equation}

The formal series that we will consider in Section~\ref{sec:superadiab} will present two small parameters: the semi-classical parameter $\eps>0$ and another parameter $\delta>0$ that controls the growth of the symbol and of its derivatives. For our intended application, $\delta$ is related to the size of the gap 
 between the eigenvalues of the Hamiltonian's symbol.

 \begin{definition}[Symbol spaces]\label{def:symbol_Sdelta}
 Let $\mu\in\R$
 and $\delta\in (0,1]$.
 \begin{enumerate}
 \item [(i)]  
We denote by $\bdS^\mu_\delta(\mathcal D)$ the set of smooth (matrix-valued)  functions in $\mathcal D$ such 
 $$
 \vert\partial^\gamma_zA(t,z)\vert\leq C_{\gamma}\delta^{\mu-\vert\gamma\vert},\;\; \forall (t,z)\in \mathcal D.
 $$
 Notice that  the set $\bdS_\delta(\mathcal D):=\bdS^0_\delta(\mathcal D)$  has the algebraic structure of a ring.
 \item [(ii)]  
We shall say that a formal series  $\di{A^\eps =\sum_{j\geq 0}\eps^jA_j}$  is in  $\bdS_{\eps,\delta}^\mu(\mathcal D)$ if 
$A_j\in\bdS^{\mu-2j}_\delta(\mathcal D)$ for all $j\in\N$. 
We set $\bdS_{\eps,\delta}(\mathcal D):=\bdS^0_{\eps,\delta}(\mathcal D)$.
\end{enumerate}
 \end{definition}
 
 \begin{remark}\label{rem:Sdelta}
 \begin{enumerate}
 \item If $A\in \mathbf S^\mu_\delta(\mathcal D)$ and $B\in\mathbf S^\mu_{\delta}(\mathcal D)$, then $AB\in \mathbf S^{\mu+\mu'}_\delta(\mathcal D)$ while $\{A,B\}\in \mathbf S^{\mu+\mu'-2}_\delta(\mathcal D)$.  Besides, if $A\in \bdS^\mu_\delta(\mathcal D)$, then $\partial_z^\gamma A\in \bdS_\delta^{\mu-|\gamma|}(\mathcal D)$. 
 \item 
When $\delta=1$, as in the next Section~\ref{sec:rough_diag}, then for all $\mu\in\R$, $\mathbf S^\mu_1(\mathcal D)= \mathbf S^0_1(\mathcal D)$ coincides with the standard class of  Calder\'{o}n--Vaillancourt symbols, those smooth functions that are bounded together with their derivatives. Similarly, $\mathbf S^\mu_{\eps,1}(\mathcal D)=\mathbf S^0_\eps(\mathcal D)$ coincides with asymptotic series of symbols. 
\item The parameter $\delta$ can be understood as quantifying the loss that appears at each differentiation. However, in the asymptotic series, one looses $\delta^{-2}$ when passing from some $j$-th term of to the $(j+1)$-th one.  In Section~\ref{sec:superadiab}, $\delta$ will monitor the size of the gap function $f$ in the domain~$\mathcal D$.
 \end{enumerate}
 \end{remark}
 
 The Moyal bracket~\eqref{fifi2} satisfies the property stated in the next lemma.

 \begin{lemma}\label{thm:prodest}
 Let $\delta_{A},\,\delta_B\in]0, 1]$.
 If $A^\eps$ and $B^\eps$ are formal series of ${\bf S}_{\eps, \delta_A}(\mathcal D)$  and  ${\bf S}_{\eps,\delta_B}(\mathcal D)$,  respectively.  then $A^\eps\circledast B^\eps$  is a formal series  of  ${\bf S}_{\eps,\min( \delta_A,\delta_B)}(\mathcal D)$. Besides, for $N\in\N$,
\[
A^{\eps, N}\circledast B^{\eps, N} = \sum_{0\leq j\leq N}\eps^jC_j +\eps^{N+1}R_{A,B}^{\eps, N}
\]
where for all $\gamma\in\N^{2d}$, there exists $C_{N,\gamma}$ independent on $\delta_{A,B}$ and $\eps$ such that 
\[
\vert\partial_z^\gamma R_{A,B}^{\eps, N}(t,z) \vert\leq C_{N,\gamma}\,[\min(\delta_A,\delta_B)]^{-N-\kappa_0},\; \forall \eps\in]0, 1], \;\; \forall (t,z)\in\mathcal D,
\]
where $\kappa_0>0$ is a universal constant depending only on the dimension $d$. 
\end{lemma}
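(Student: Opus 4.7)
For the first claim, writing $A^\eps = \sum_k \eps^k A_k$ and $B^\eps = \sum_l \eps^l B_l$ and regrouping by total power of $\eps$ yields $C_j = \sum_{k+l+m=j}(A_k \sharp B_l)_m$. By formula~\eqref{fifi2}, the summand $(A_k\sharp B_l)_m$ carries exactly $m$ derivatives on each of $A_k \in \mathbf{S}^{-2k}_{\delta_A}$ and $B_l \in \mathbf{S}^{-2l}_{\delta_B}$, so by Remark~\ref{rem:Sdelta}(1) and the Leibniz rule it lies in $\mathbf{S}^{-2(k+l+m)}_{\min(\delta_A,\delta_B)}$. Summing the finitely many $(k,l,m)$ with $k+l+m=j$ yields $C_j\in\mathbf{S}^{-2j}_\delta$ where $\delta:=\min(\delta_A,\delta_B)$, whence $A^\eps\circledast B^\eps\in\mathbf{S}_{\eps,\delta}(\mathcal D)$.

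For the remainder I would combine the algebraic identity $A^{\eps,N}\circledast B^{\eps,N}=\sum_{k,l=0}^N\eps^{k+l}\,A_k\circledast B_l$ with an exact Taylor-type remainder formula for the Moyal product of two fixed smooth symbols. Using the Weyl--Moyal representation $a\circledast b=\exp\!\bigl(\tfrac{i\eps}{2}\sigma(D_{z_1},D_{z_2})\bigr)(a\otimes b)\big|_{z_1=z_2}$ and Taylor-expanding the exponential in $\eps$ gives, for any smooth $a,b$ and any $M\in\N$,
\[
a\circledast b=\sum_{m=0}^{M}\eps^m(a\sharp b)_m+\eps^{M+1}r_{M+1}[a,b],
\]
where $r_{M+1}[a,b]$ has an explicit integral representation and satisfies, uniformly in $\eps\in(0,1]$ and for every multi-index $\gamma$,
\[
\bigl|\partial_z^\gamma r_{M+1}[a,b](z)\bigr|\leq C_{M,\gamma,d}\sup_{|\alpha|,|\beta|\leq M+1+|\gamma|+\kappa_d}\|\partial^\alpha a\|_{L^\infty}\,\|\partial^\beta b\|_{L^\infty},
\]
with $\kappa_d$ a universal constant coming from Calder\'on--Vaillancourt-type estimates.

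With these tools, for each pair $(k,l)$ with $k+l\leq N$ I apply the Taylor expansion at order $M_{k,l}=N-k-l$; pairs with $N<k+l\leq 2N$ are left alone, the prefactor $\eps^{k+l}=\eps^{N+1}\cdot\eps^{k+l-N-1}$ extracting $\eps^{N+1}$ cleanly since $\eps\leq 1$. The leading terms assemble exactly into $\sum_{j=0}^{N}\eps^j C_j$, and the remainder takes the form
\[
R^{\eps,N}_{A,B}=\sum_{k+l\leq N}r_{N-k-l+1}[A_k,B_l]+\sum_{N<k+l\leq 2N}\eps^{k+l-N-1}\,A_k\circledast B_l.
\]
Substituting the symbol bounds $\|\partial^\alpha A_k\|_{L^\infty(\mathcal D)}\leq C_\alpha\delta_A^{-2k-|\alpha|}$ and $\|\partial^\beta B_l\|_{L^\infty(\mathcal D)}\leq C_\beta\delta_B^{-2l-|\beta|}$ into the displayed estimate for $r_{M+1}$, and handling the second sum by the same estimate at order $M=0$, each contribution is controlled by a negative power of $\delta$ uniform in $\eps$, giving the inequality after absorbing all universal quantities into $\kappa_0$.

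The main obstacle is the careful bookkeeping in this last step. The Taylor-remainder bound applied to $(A_k,B_l)$ produces derivative losses of the form $\delta_A^{-2k-(N-k-l+1+|\gamma|+\kappa_d)}\delta_B^{-2l-(N-k-l+1+|\gamma|+\kappa_d)}$, and one must arrange the telescoping between the symbol-class indices $(-2k,-2l)$ and the truncation order $M_{k,l}=N-k-l$ so that the net exponent of $\delta^{-1}$ remains bounded by $N+\kappa_0$ with $\kappa_0$ depending only on $d$ and $|\gamma|$; a parallel argument must be made for the second sum using the factor $\eps^{k+l-N-1}\leq 1$ to keep the $\eps$-dependence trivial. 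This is the delicate combinatorial heart of the lemma and the point where the structure of $\mathbf{S}_{\eps,\delta}$ (with the two-step gain $\delta^{-2j}$ per semi-classical order) is used decisively.
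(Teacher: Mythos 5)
Your proposal takes the same route as the paper's own one-line proof: reduce the truncated product $A^{\eps,N}\circledast B^{\eps,N}$ to a finite sum of Moyal products of individual symbols, invoke the uniform Moyal-remainder estimate from \cite{BR} (proved as Theorem~\ref{thm:moyalest} in Appendix~\ref{prodest}), and do the combinatorial bookkeeping. The bilinear decomposition $A^{\eps,N}\circledast B^{\eps,N}=\sum_{k,l\leq N}\eps^{k+l}A_k\circledast B_l$, the regrouping $C_j=\sum_{k+l+m=j}(A_k\sharp B_l)_m$ with the symbol-class count, and the split of the remainder into the Taylor tails (for $k+l\leq N$) and the overshoot (for $N<k+l\leq 2N$) are all correct and exactly what one must do.

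The one place where you should be more careful is precisely the bookkeeping that you flag as the delicate step. Plugging the bounds $\|\partial^\alpha A_k\|\leq C_\alpha\delta_A^{-2k-|\alpha|}$, $\|\partial^\beta B_l\|\leq C_\beta\delta_B^{-2l-|\beta|}$ into the estimate \eqref{remest1} at truncation order $M_{k,l}=N-k-l$, with $|\alpha|,|\beta|$ ranging up to $M_{k,l}+\kappa_0+|\gamma|$ (where $\kappa_0=4d+2$ from Theorem~\ref{thm:moyalest}), gives for each pair $(k,l)$ a worst-case power
$\delta_A^{-(N+k-l+\kappa_0+|\gamma|)}\,\delta_B^{-(N+l-k+\kappa_0+|\gamma|)}\leq\bigl[\min(\delta_A,\delta_B)\bigr]^{-2N-2\kappa_0-2|\gamma|}$.
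So what comes out is roughly $\delta^{-2N-2\kappa_0-2|\gamma|}$, \emph{not} $\delta^{-N-\kappa_0}$: the coefficient of $N$ is $2$, not $1$, and the exponent depends on $|\gamma|$ (you already acknowledge the $|\gamma|$-dependence, which is inconsistent with the lemma's claim that $\kappa_0$ depends only on $d$, but you still keep the prefactor $N$ rather than $2N$). Note that the paper's own downstream Corollary~\ref{cor:kappa0} records the loss as $\delta^{-2(N+1+k+\kappa_0)}$, i.e.\ with the factor $2N$, which is what your accounting should produce; the lemma's bare $\delta^{-N-\kappa_0}$ is then either an imprecision in the paper or uses an implicit redefinition of $\kappa_0$. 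You should complete the computation explicitly, record the exponent you actually get, and note that it matches the form used in Corollary~\ref{cor:kappa0} rather than the literal exponent in the lemma's display.
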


\begin{proof}
The estimate is a direct consequence of~\cite[Theorem~A1]{BR}. We give a detailed proof of this result in Appendix~\ref{prodest}, see Theorem~\ref{thm:moyalest}.
\end{proof}

When $\delta_A=1$ and $\delta_B=\delta\in (0,1]$, $\min(\delta_A,\delta_B)=\delta$. This shows that 
\[
{\bf S}_{\eps, 1}(\mathcal D)\circledast {\bf S}_{\eps, \delta}(\mathcal D)\subset  {\bf S}_{\eps, \delta}(\mathcal D).
\]
\medskip

{
Let us conclude this section by comments on the quantization of  symbols of the classes ${\bf S}_\delta^\mu(\mathcal D)$. Further results are discussed in Appendix~\ref{sec:app_fifi4}.  
The Calder\'{o}n-Vaillancourt Theorem for pseudo-differential operators (see~\cite{disj,Zwobook,Folland}) states that there exists a constant $C>0$ such that for all $a\in\mathcal C^\infty(\R^{2d})$, 
\begin{equation*}
\| {\rm op}_\eps (a) \|_{\mathcal L(L^2(\R^d))} \leq C \sup_{0\leq |\gamma|\leq 2d+1}\,\eps^{\frac{|\gamma|}2} \sup_{z\in\R^d}  | \partial _z^\gamma a(z)|.
\end{equation*}
Actually, the original article~\cite{CV} treats the case $\eps=1$ and the estimate in the semi-classical case comes from the observation that 
\[
{\rm op}_\eps(a)= \Lambda_\eps^*{\rm op}_1(a(\sqrt\eps\cdot)) \Lambda_\eps
\]
where $\Lambda_\eps$ is the  $L^2$-unitary scaling operator defined on function $f\in\mathcal S(\R^d)$ by 
\[
\Lambda_\eps f(x)=\eps^{-\frac d 4} f\left(\frac x{\sqrt\eps}\right),\;\; x\in\R^d.
\]
One can derive an estimate in the sets $\Sigma^k_\eps$ by observing 
\[
\left( x^\alpha(\eps\partial_x)^\beta\right)\circ  {\rm op}_\eps(a)
=  \sum_{|\gamma_1|+|\gamma_2|+|\gamma_3|\leq  k}\eps^{\frac {|\gamma_1|}2}
c_{\gamma_1,\gamma_2,\gamma_3}(\eps)\,  {\rm op}_\eps( \partial_z^{\gamma_1} a) \circ \left( {x^{\gamma_2}}(\eps\partial_x)^{\gamma_3}\right)
\]
for some  coefficients $c_{\gamma_1,\gamma_2,\gamma_3}(\eps)$, uniformly bounded with respect to $\eps\in[0,1]$. This  implies 
 the boundedness of ${\rm op}_\eps (a)$ in weighted Sobolev spaces: for all $k\in\N$, there exists a constant $C_k>0$ such that for all $a\in\mathcal S(\R^{2d})$, 
\begin{equation} \label{eq:norm_pseudo}
\| {\rm op}_\eps (a) \|_{\mathcal L(\Sigma^k_\eps)} \leq C_k 
\sum_{0\leq |\gamma|\leq 2d+ k+ 1} \eps^{\frac{|\gamma|}2}
\, \sup_{z\in\R^d}  | \partial _z^{\gamma} a(z)|.
\end{equation}
}
\begin{proposition}\label{prop:fifi3}
Let $ A\in \mathbf S^{\mu-2j}_\delta(\mathcal D)$ for  $\mu\in\R$, $j\in\N$. Then, for $k\in\N$ 
\[
 \| {\rm op}_\eps (A ) \|_{\mathcal L(\Sigma^k_\eps)}
\le c_{k} \  \delta^{\mu-k-2j} 
\sup_{0\leq |\gamma|\leq 2d+1} \left(\frac{\sqrt\eps}{\delta}\right)^{|\gamma|},
\]
where the constant $c_k>0$ depends on the symbol $A$ and $k$, but is independent of $\eps,\delta,j,\mu$. 
In particular, if  $\delta \geq \sqrt\eps$, then
\begin{equation}\label{est:CV}
 \| {\rm op}_\eps (A ) \|_{\mathcal L(\Sigma^k_\eps)}
\leq c_k \,  \delta^{\mu-k-2j } .
\end{equation}
\end{proposition}

\begin{proof}
For any $\gamma$, there exists $c_\gamma = c_\gamma(A) >0$ such that 
$\vert\partial^\gamma_z A(z)\vert\leq c_{\gamma}\,\delta^{\mu-2j-\vert\gamma\vert}$. 
Hence, 
\[
\| {\rm op}_\eps (A ) \|_{\mathcal L(\Sigma^k_\eps)} \le C_k 
\sum_{0\leq |\gamma|\leq 2d+ k+ 1} \eps^{\frac{|\gamma|}2} c_{\gamma}\, \delta^{\mu-2j-\vert\gamma\vert} 
\le c_{k} \  \delta^{\mu-2j-k} 
\sup_{0\leq |\gamma|\leq 2d+1} \left(\frac{\sqrt\eps}{\delta}\right)^{|\gamma|}
\]
for some symbol dependent constant $c_k>0$. 
\end{proof}

 \section{The `rough' reduction}\label{sec:rough_diag}

 The next result gives a reduction of the Hamiltonian in a block diagonalized form. We will use this reduction on small interval of times.

  \begin{theorem}\label{thm:rough_reduc}
  Assume a Hamiltonian $H^\eps=H_0+\eps H_1$ with values in the set of $m\times m$ matrices such that $H_0$ has smooth eigenprojectors and eigenvalues. 
  There exist matrix-valued asymptotic series 
  \begin{align*}
  \pi^{\eps}_1&= \pi_1+ \sum_{j\geq 1} \eps^j\pi_{1,j},\;\;
  h^\eps_\ell = h_\ell\,{\1_m}+ \sum_{j\geq 1}  \eps^j h_{\ell,j},\;\; 
  W^\eps= \sum_{j\geq 1} \eps^j W_j, \;\;\ell\in\{1,2\}
  \end{align*}
  such that for all $N\in\N$, $ \pi^{\eps,N}_1$ and $\pi_2^{\eps, N}=\1_m-\pi^{\eps,N}_1$ are approximate projectors
  \begin{equation} \label{eq:piellN}
   \pi^{\eps,N}_\ell \circledast   \pi^{\eps,N}_\ell=  \pi^{\eps,N}_\ell+\O(\eps^{N+1}),\;\;\ell\in\{1,2\}
  \end{equation}
  and $H^\eps=H_0+\eps H_1$ reduces according to 
 \begin{align}\label{eq:h1N}
  \pi^{\eps,N} _1  \circledast   (i\eps \partial_t - H^\eps)&= 
  (i\eps \partial_t -h^{\eps,N}_1)  \circledast  \pi_1^{\eps, N}
  + W^{\eps, N} \circledast\pi_2^{\eps,N}+\O(\eps^{N+1}), \\
  \label{eq:h2N}
    \pi^{\eps,N} _2  \circledast   (i\eps \partial_t - H^\eps)&=
 (i\eps \partial_t -h^{\eps,N}_2)  \circledast  \pi_2^{\eps, N} +(W^{\eps, N}) ^* \circledast \pi_1^{\eps,N}+\O(\eps^{N+1}).
  \end{align}
  Moreover, for all $\ell\in\{1,2\}$ and $j\geq 1$,  the symbols $\pi_{\ell,j}$ and $h_{\ell, j}$ are self-adjoint, the matrices $W_j$ are  off-diagonal, with $W_1$ given by~\eqref{def:W1}, and 
  \begin{align}
  \label{def:hell1}
  h_{\ell,1}& = \pi_\ell H_1\pi_\ell +(-1)^\ell \frac i2 (h_1-h_2) \pi_\ell \{\pi_1,\pi_1\} \pi_\ell.
  \end{align}
  If $H^\eps$ also satisfies Assumption~\ref{hyp:growthH} on the time interval $I$, then the  $2m\times 2m$ matrix-valued Hamiltonian 
\[
 \underline H^\eps := \begin{pmatrix} {h_1^\eps} & W^\eps \\ 
 (W^\eps)^* &  {h_2^\eps} \end{pmatrix}
 \]
 is subquadratic according to Definition~\ref{def:subquad}. 
    \end{theorem}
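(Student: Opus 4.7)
The natural approach is to build the three formal series $\pi_1^\eps$, $h_\ell^\eps$, $W^\eps$ by induction on the order $j\geq 1$, enforcing the projector relation $\pi_1^\eps\circledast\pi_1^\eps=\pi_1^\eps$ and the reduction identity~\eqref{eq:h1N} one order at a time; identity~\eqref{eq:h2N} then follows from the choice $\pi_2^\eps:=\1-\pi_1^\eps$ together with Hermitian symmetry. What distinguishes this ``rough'' construction from the classical super-adiabatic scheme is that $W^\eps$ is kept as a free off-diagonal unknown rather than eliminated: it absorbs exactly the algebraic obstruction which, in the classical construction, would require inverting $h_1-h_2$ and therefore fail on~$\Upsilon$.

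\textbf{Inductive step.} Fix $j\geq 1$, assume $\pi_{1,k}, h_{\ell,k}, W_k$ have been constructed for $k<j$, and collect the coefficient of $\eps^j$ in
\[E^\eps:=\pi_1^\eps\circledast\pi_1^\eps-\pi_1^\eps,\qquad F^\eps:=\pi_1^\eps\circledast(i\eps\partial_t-H^\eps)-(i\eps\partial_t-h_1^\eps)\circledast\pi_1^\eps-W^\eps\circledast\pi_2^\eps.\]
The new unknowns enter linearly: $\pi_{1,j}$ enters $E^\eps_j$ via the operator $L(\pi_{1,j}):=\pi_1\pi_{1,j}+\pi_{1,j}\pi_1-\pi_{1,j}$ and enters $F^\eps_j$ via $h_1\pi_{1,j}-\pi_{1,j}H_0=(h_1-h_2)\pi_{1,j}\pi_2$, while the terms $-h_{1,j}\pi_1+W_j\pi_2$ appear in $F^\eps_j$ at leading Moyal order. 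Decomposing matrices into the four blocks $A_{ab}:=\pi_a A\pi_b$, $a,b\in\{1,2\}$, one verifies that $L$ equals $+\mathrm{Id}$ on the $(1,1)$-block, $-\mathrm{Id}$ on the $(2,2)$-block, and vanishes on off-diagonal blocks, while the map $A\mapsto h_1A-AH_0$ acts oppositely---zero on diagonal blocks and multiplication by $\pm(h_1-h_2)$ on off-diagonal blocks. Using this complementarity I would: (i) solve $E^\eps_j=0$ on its diagonal blocks to fix the diagonal parts of $\pi_{1,j}$; (ii) check that the off-diagonal blocks of $E^\eps_j$ vanish automatically, a consequence of differentiating $\pi_1^2=\pi_1$ and $[\pi_1,H_0]=0$ together with the inductive hypothesis; (iii) read $h_{1,j}$ from the $(1,1)$-block and the $(1,2)$-block of $W_j$ from the $(1,2)$-block of $F^\eps_j=0$; and (iv) choose the off-diagonal parts of $\pi_{1,j}$, free from step~(i), together with the $(2,1)$-block of $W_j$ so that the $(2,1)$-block of $F^\eps_j=0$ holds with no division by $h_1-h_2$. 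Self-adjointness of $\pi_{1,j}, h_{\ell,j}$ and off-diagonality of $W_j$ are preserved throughout by Hermitian conjugation, using $H^\eps=(H^\eps)^*$.

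\textbf{First-order formulas and subquadratic property.} The explicit expressions~\eqref{def:hell1} and~\eqref{def:W1} follow by expanding $f\circledast g=fg+\tfrac{\eps}{2i}\{f,g\}+O(\eps^2)$, reading off the $\eps^1$-coefficient of $F^\eps$ on the $(\ell,\ell)$- and $(1,2)$-blocks respectively, and simplifying using $H_0-h_\ell\1=(h_{3-\ell}-h_\ell)\pi_{3-\ell}$, $\pi_1(\partial\pi_1)\pi_1=0$ and $\{\pi_1,\pi_2\}=-\{\pi_1,\pi_1\}$. For the subquadratic assertion, Assumption~\ref{hyp:growthH} provides: subquadraticity of $H_1$ (Definition~\ref{def:subquad}) and of $h_1,h_2$ (Eq.~\eqref{hyp:hj}); global boundedness of $f=\tfrac12(h_1-h_2)$ and of $\partial_z(H_0-v\1)$ (Eq.~\eqref{eq:growth_trace}); and polynomial or bounded control on $\pi_\ell$ and its derivatives (Lemma~\ref{lem:growth_eigen_bis} supplemented by the assumption in Point~(iii) when $n_0\neq 0$). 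Substituting into the first-order formulas shows that the diagonal entries of $\underline H^\eps$ are subquadratic and that the off-diagonal entries are bounded with bounded derivatives, which is precisely the subquadratic condition of Definition~\ref{def:subquad} for the $4\times 4$ system.

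\textbf{Main obstacle.} The delicate point is step~(iv): one must verify that routing the remaining off-diagonal mismatch into $W_j$ can be done consistently at every order without inflating its growth beyond boundedness, and while preserving self-adjointness of $\pi_{1,j}$ and $h_{\ell,j}$ and off-diagonality of $W_j$. This trade-off---accepting a non-vanishing $W^\eps$ in exchange for avoiding division by $h_1-h_2$---is the essence of the ``rough'' reduction and the main technical obstacle of the proof.
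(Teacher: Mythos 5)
Your high-level strategy --- iterate order by order, leave $W^\eps$ free to avoid dividing by $h_1-h_2$, and exploit the block-complementarity of the maps $L(\pi_{1,j})=\pi_1\pi_{1,j}+\pi_{1,j}\pi_1-\pi_{1,j}$ and $A\mapsto h_1A-AH_0$ --- is the right idea and matches the paper's. But there are two concrete gaps. First, \eqref{eq:h2N} does \emph{not} follow from $\pi_2^\eps=\1-\pi_1^\eps$ together with Hermitian symmetry: substituting $\pi_2^\eps=\1-\pi_1^\eps$ into \eqref{eq:h1N} does not produce the $h_2^\eps$ or $(W^\eps)^*$ terms. In fact \eqref{eq:h1N} and \eqref{eq:h2N} each independently constrain the same block $\pi_1 W_j\pi_2$ (the first through $W_j\pi_2$, the second through $W_j^*\pi_1=(\pi_1W_j)^*$), so there is a genuine compatibility condition between the two equations. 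Verifying it is the crux of the paper's proof: the paper computes $\pi_\ell^{\eps,N}\circledast(i\eps\partial_t-H^\eps)\circledast\pi_{\ell'}^{\eps,N}$ in two ways to derive the relation $\pi_2\Theta_{2,N}\pi_1=(\pi_1\Theta_{1,N}\pi_2)^*$ (and self-adjointness of the diagonal part of $\Theta_{\ell,N}$), and it is this identity that makes the two determinations of $\pi_1 W_{N+1}\pi_2$ coincide. Your proposal contains no analogue of this step, and without it the construction of $W_j$ is over-determined with no guarantee of consistency.

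Second, your step (iv) proposes to close the $(2,1)$-block of $F^\eps_j=0$ by choosing the off-diagonal parts of $\pi_{1,j}$ together with the $(2,1)$-block of $W_j$. This cannot work as stated: $W_j$ enters $F^\eps_j$ at leading Moyal order only through $W_j\pi_2$, and the $(2,1)$-block of $W_j\pi_2$ is $\pi_2 W_j\pi_2\,\pi_1=0$, so $W_j^{21}$ is invisible to \eqref{eq:h1N} and cannot absorb the mismatch. The paper actually takes the simpler route of keeping $\pi_{1,j}$ purely diagonal (it explicitly notes that no off-diagonal component of $\pi_{1,N+1}$ need be prescribed); the off-diagonal blocks of the reduction equations then close because of the compatibility identity above, supplemented by a carefully chosen additional piece $U_{N+1}\pi_1$ of $W_{N+1}$, not because of extra freedom in $\pi_{1,j}$ or $W_j^{21}$. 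You correctly flag step (iv) as the main obstacle, but the mechanism you offer to resolve it does not work; the actual resolution is an algebraic identity that must be proved, not a degree of freedom to be spent.
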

  
   Note that in $ \underline H^\eps$, the off-diagonal blocks are of lower order than the diagonal ones since the asymptotic series $W^\eps$ has no term of order~$0$. 
   \smallskip
  
  Theorem~\ref{thm:rough_reduc} allows to put the equation~\eqref{eq:sch} in a reduced form by setting 
\[
\underline \psi^{\eps} = \, ^t( \underline \psi^{\eps}_1,\underline \psi^{\eps}_2)\;\;\mbox{with}\;\;
  \underline{\psi^\eps_\ell} = \widehat{\pi_\ell^\eps} \psi^\eps.
  \]
    Indeed,  we then have 
 \begin{equation}\label{eq:underline_schro}
  i\eps \partial_t\underline \psi^{\eps}
 = \widehat{\underline H}^\eps 
\underline \psi^{\eps}+\O(\eps^\infty),\;\;\underline \psi^{\eps}_{|t=0}=\, ^t\left(\widehat{\pi_1^\eps}\, \psi^\eps_0, \widehat{\pi_2^\eps}\, \psi^\eps_0\right).
 \end{equation}
We deduce the following corollary. 
 
  \begin{corollary}
  Formally, we have for $t\in I$, 
  \[
  \mathcal U^\eps_H(t,t_0)\psi^\eps_0=   \underline{\psi^\eps_1}+  \underline{\psi^\eps_2},
  \]
  where $\underline \psi^{\eps} $ solves~\eqref{eq:underline_schro}. 
  \end{corollary}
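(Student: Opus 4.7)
The corollary splits into two assertions: the decomposition $\psi^\eps=\underline{\psi^\eps_1}+\underline{\psi^\eps_2}$ for $\psi^\eps:=\mathcal U^\eps_H(t,t_0)\psi^\eps_0$, and the fact that the pair $(\underline{\psi^\eps_1},\underline{\psi^\eps_2})$ satisfies the reduced system~\eqref{eq:underline_schro}. The first is immediate from Theorem~\ref{thm:rough_reduc}, which constructs $\pi_2^{\eps,N}$ as $\1-\pi_1^{\eps,N}$: the formal series satisfy $\pi_1^\eps+\pi_2^\eps=\1$, so $\widehat{\pi_1^\eps}+\widehat{\pi_2^\eps}=\1_{L^2}$, and the decomposition follows by applying this identity to $\psi^\eps$.

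For the second assertion, my plan is to apply $\widehat{\pi_\ell^{\eps,N}}$ to the Schr\"odinger identity $(i\eps\partial_t-\widehat H^\eps)\psi^\eps=0$ and to invoke the composition estimate of Lemma~\ref{thm:prodest} together with the intertwining relations~\eqref{eq:h1N}--\eqref{eq:h2N}. Composing Weyl operators produces the Weyl quantization of the Moyal product modulo a symbol of size $O(\eps^{N+1})$ with bounded derivatives, so one obtains
\[
i\eps\partial_t\underline{\psi^\eps_1}=\widehat{h_1^{\eps,N}}\,\underline{\psi^\eps_1}+\widehat{W^{\eps,N}}\,\underline{\psi^\eps_2}+O(\eps^{N+1}),
\]
and symmetrically for $\ell=2$ with $(W^{\eps,N})^*$ in place of $W^{\eps,N}$. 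Letting $N\to\infty$ in the formal sense gives~\eqref{eq:underline_schro}, and the initial conditions match because $\underline{\psi^\eps_\ell}_{|t=t_0}=\widehat{\pi_\ell^\eps}\psi^\eps_0$ by definition.

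The only real obstacle --- and the reason the statement is qualified as \emph{Formally} --- is the operator-theoretic control of the remainder. The Calder\'{o}n-Vaillancourt estimate~\eqref{est:CV} (with $\delta=1$) gives, for each fixed $N$ and each $k\in\N$, a bound of the form $C_{k,N}\eps^{N+1}$ in $\mathcal L(\Sigma^k_\eps)$, and the subquadratic character of $\underline H^\eps$ asserted at the end of Theorem~\ref{thm:rough_reduc} guarantees well-posedness of~\eqref{eq:underline_schro} in every $\Sigma^k_\eps$ (see~\cite{MaRo}), so this error propagates over the compact time interval $I$ without loss. A quantitative version of this identification, turning the $O(\eps^\infty)$ statement into an explicit remainder bound, belongs to the wave-packet analysis of Chapter~\ref{sec:prop}; at the present algebraic stage the above suffices.
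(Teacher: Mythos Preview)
Your proposal is correct and follows the same approach as the paper, which in fact offers no explicit proof beyond the sentence ``We deduce the corollary'' following the derivation of~\eqref{eq:underline_schro}. You have simply made explicit what the paper leaves implicit: the decomposition comes from $\pi_1^\eps+\pi_2^\eps=\1$, and the reduced system~\eqref{eq:underline_schro} is obtained by applying $\widehat{\pi_\ell^{\eps,N}}$ to the Schr\"odinger equation and invoking the intertwining relations~\eqref{eq:h1N}--\eqref{eq:h2N} together with the Moyal calculus of Lemma~\ref{thm:prodest}.
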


  \begin{proof}[Proof of Theorem~\ref{thm:rough_reduc}]
  The proof relies on a recursive argument.
  \smallskip 
  
  The case $N=0$  is equivalent to ~\eqref{riri3}

 The case $N=1$ has been proved in Lemma~B.2 in~\cite{FLR1}.
However, we revisit the proof in order to have a justification of the value of~$W_1$ in this text. 
We first compute $\pi_{\ell,\ell}$ by requiring 
$\pi^{\eps,(\ell)} \circledast \pi^{\eps,(\ell)}=\pi^{\eps,(\ell)}+\O(\eps^2)$, which admits the solution for $\ell=1$
$$\pi_{1,1}=-\pi_{2,1}= -\frac 1{2i}\pi_1 \{\pi_1,\pi_1\}\pi_1 +\frac 1{2i} \pi_2\{\pi_1,\pi_1\}\pi_2.$$  
We recall that $\{\pi_1,\pi_1\}$ is diagonal  and skew-symmetric (see Lemma~B.1 in~\cite{FLR1}).
Then, we observe 
  \[
   \pi _\ell  \circledast   (i\eps \partial_t - H^\eps)= 
  (i\eps \partial_t -h_\ell ) \circledast  \pi_\ell 
  + \eps \Theta_{\ell} +\O(\eps^{2})
  \]
  with
  \[
  \Theta_\ell= -\frac 1{2i}\{\pi_\ell,H_0\}- \pi_\ell H_1-i\partial_t \pi_\ell+\frac 1{2i}\{h_\ell,\pi_\ell\}
  \]
  or, equivalently 
  \begin{align*}
  \Theta_1 &=\frac 1{2i} (h_2-h_1)\{\pi_1,\pi_1\} -i\partial_t \pi_1 -\pi_1 H_1+\frac 1i \{h_1,\pi_1\}\pi_1  +\frac 1{2i} \{h_1+h_2,\pi_1\}\pi_2,\\
  \Theta_2 &= \frac 1{2i} (h_1-h_2)\{\pi_2,\pi_2\} -i\partial_t \pi_2 -\pi_2 H_1+\frac 1i \{h_2,\pi_2\}\pi_2+\frac 1{2i} \{h_1+h_2,\pi_2\}\pi_1 \\
& = -\frac 1{2i} (h_2-h_1)\{\pi_1,\pi_1\} +i\partial_t \pi_1 -\pi_2 H_1 -\frac 1i \{h_2,\pi_1\}\pi_2 -\frac 1{2i} \{h_1+h_2,\pi_2\}\pi_1
  \end{align*}
  We observe 
  \[
  \Theta_2^*= -\frac 1{2i} (h_2-h_1)\{\pi_1,\pi_1\} -i\partial_t \pi_1 -H_1\pi_2  +\frac 1i  \pi_2\{h_2,\pi_1\} +\frac 1{2i} \pi_1\{h_1+h_2,\pi_2\}
  \]
  and 
  \begin{equation}\label{riri5}
  \pi_1 \Theta_2^* \pi_2= \pi_1 \Theta_1 \pi_2.
  \end{equation}
  Thus, we have to solve 
  \begin{align*}
  -\pi_{1,1} H_0&=-h_{1,1} \pi_1 -h_1 \pi_{1,1} +i\partial_t \pi_1 + \Theta_1 +W_1 \pi_2,\\
    -\pi_{2,1} H_0&=-h_{2,1} \pi_2 -h_2 \pi_{2,1} +i\partial_t \pi_2 + \Theta_2 +W_1^* \pi_1.
    \end{align*}
    Multiplying on the right the first equation by $\pi_1$ and the second by $\pi_2$, we obtain that $h_{1,1}$ and~$h_{2,1}$ have to satisfy
\[
h_{1,1} \pi_1 = i\partial_t \pi_1 \pi_1 +\Theta_1 \pi_1\;\;\mbox{and}\;\;h_{2,1} \pi_2 = i\partial_t \pi_2 \pi_2 +\Theta_2 \pi_2
\]
which is solved by taking  the self-adjoint matrices
  \begin{align*}
h_{1,1}& = i\partial_t \pi_1 \pi_1 +\Theta_1 \pi_1 -i\pi_1 \partial_t \pi_1 \pi_2+\pi_1\Theta_1^*\pi_2,\\
h_{2,1}& = i\partial_t \pi_2 \pi_2 +\Theta_2 \pi_2 -i\pi_2 \partial_t \pi_2 \pi_1+\pi_2\Theta_2^*\pi_1.
\end{align*}
    Multiplying on the right the first equation by $\pi_2$ and the second by $\pi_1$, we obtain that $W_1$ has to verify
\[
    W_1\pi_2= -(h_2-h_1)\pi_{1,1} \pi_2 -\Theta_1\pi_2\;\;\mbox{and}\;\;
     W_1^*\pi_1= (h_2-h_1)\pi_{2,1} \pi_1 -\Theta_2\pi_1.
\]
Using the relations $\pi_{1,1}^*=\pi_{1,1}=-\pi_{2,1}$, $\pi_1 \partial_{t,z} \pi_1 =\partial_{t,z}\pi_1\pi_2$ and~\eqref{riri5}, we obtain
\[ 
W_1\pi_2=\pi_1W_1=\pi_1\left(H_1+i\partial_t\pi_1+\frac 12\{h_1+h_2,\pi_1\}\right)\pi_2,
\]
whence~\eqref{def:W1}.

\medskip 

One can now perform the recursive argument. Assume that we have obtained~\eqref{eq:piellN}, \eqref{eq:h1N} and~\eqref{eq:h2N} for some $N\geq 1$ and let us look for $\pi_{1,N+1}$,  $h_{1,N+1}$, $h_{2,N+1}$ and $W_{N+1}$ such that the relations for $N+1$ too.
\smallskip

{\it Construction of $\pi_{1,N+1}$}. We start with $\pi_{1,N+1}$. We write 
\[ \pi_{1}^{\eps,N} \circledast \pi_1^{\eps,N} =  \pi_1^{\eps,N}+\eps^{N+1} R^\eps,\]
where $R^\eps$ is an asymptotic series with first term $R_N$.
We first observe that $R_N$ is diagonal. Indeed, we have 
\[ 
(1- \pi_{1}^{\eps,N})\circledast \pi_{1}^{\eps,N}\circledast \pi_{1}^{\eps,N} = \pi_{1}^{\eps,N}\circledast \pi_{1}^{\eps,N}\circledast(1- \pi_{1}^{\eps,N})
\]
and 
\[ 
(1- \pi_{1}^{\eps,N})\circledast \pi_{1}^{\eps,N}\circledast \pi_{1}^{\eps,N} = -\eps^{N+1} R^\eps \circledast\pi_{1}^{\eps,N},\;\;
 \pi_{1}^{\eps,N}\circledast \pi_{1}^{\eps,N}\circledast(1- \pi_{1}^{\eps,N})= -\eps^{N+1} \pi_{1}^{\eps,N}\circledast R^\eps.
 \]
 This yields $ \pi_{1}^{\eps,N}\circledast R^\eps=R^\eps \circledast\pi_{1}^{\eps,N}$ and imply $ \pi_1 R_N =R_N\pi_1$. 
We now look to $\pi_{1,N+1}$ that must satisfy 
\[ \pi_{1,N+1}= R_N + \pi_{1,N+1}\pi_1 +\pi_1 \pi_{1,N+1}.\]
This relation fixes the diagonal part of $\pi_{1,N+1}$ according to
\[ 
\pi_1\pi_{1,N+1}\pi_1=-\pi_1 R_N\pi_1\;\;\mbox{and}\;\; \pi_2\pi_{1,N+1}\pi_2= \pi_2 R_N \pi_2,
\]
We will see later that we do not need to prescribe  
 off-diagonal components to $\pi_{1,N+1}$. 

\medskip

{\it Compatibility relations.} For constructing $h_{1,N+1}$, $h_{2,N+1}$ and $W_{N+1}$, we need information about the asymptotic series $\Theta^\eps_1 $ defined by 
\[
 \pi^{\eps,N} _1  \circledast   (i\eps \partial_t - H^\eps)= 
  (i\eps \partial_t -h^{\eps,N}_1)  \circledast  \pi_1^{\eps, N}
  + W^{\eps, N} \circledast \pi_2^{\eps,N}+\eps^{N+1}\Theta^\eps_1.
   \]
Let us denote by $\Theta_{1,N}$ the first term of the asymptotic series $\Theta^\eps_1$. 
  For obtaining information about $\Theta_{1,N}$, we compute 
$$\pi^{\eps,N}_\ell\circledast (i\eps\partial_t -H^\eps)\circledast \pi^{\eps,N}_{\ell'}$$
for different choices of $\ell,\ell'\in\{1,2\}$. 
\begin{itemize}
\item[(i)] Taking $\ell\not=\ell'$ gives two relations 
\begin{align*}
\pi^{\eps, N}_1&  \circledast (i\eps\partial_t -H^\eps)\circledast \pi^{\eps,N}_{2} \\
& = 
\pi^{\eps, N}_1  \circledast W^{\eps, N} \circledast \pi^{\eps,N}_{2}+  
\eps^{N+1} \pi^{\eps, N}_1 \circledast \Theta ^\eps_1 \circledast \pi^{\eps,N}_{2}+\O(\eps^{N+2})\\
\pi^{\eps, N}_2 &  \circledast (i\eps\partial_t -H^\eps)\circledast \pi^{\eps,N}_{1} \\
& = 
\pi^{\eps, N}_2 \circledast  (W^{\eps, N })^*\circledast \pi^{\eps,N}_{1}+ \eps^{N+1} 
\pi^{\eps, N}_2 \circledast \Theta ^\eps_2 \circledast \pi^{\eps,N}_{1}+\O(\eps^{N+2}),
\end{align*}
from which we deduce 
\begin{equation}\label{eq:compa_Theta}
\pi_2\Theta_{2,N} \pi_1=\left( \pi_1 \Theta_{1,N} \pi_2\right)^*=\pi_2\Theta_{1,N}^*\pi_1.
\end{equation}
\item[(ii)] Taking $\ell=\ell'$ gives the relations
\begin{align*}
\pi^{\eps, N}_\ell  & \circledast (i\eps\partial_t -H^\eps)\circledast \pi^{\eps,N}_{\ell} \\
& = 
\pi^{\eps, N}_\ell \circledast (i\eps\partial_t - h^{\eps,N}_\ell)\circledast \pi^{\eps,N}_{\ell}+
\eps^{N+1} \pi^{\eps, N}_\ell \circledast \Theta ^\eps_\ell \circledast \pi^{\eps,N}_{\ell}+\O(\eps^{N+2}),
\end{align*}
whence the self-adjointness of the diagonal part of $ \Theta ^\eps_\ell$. 
\end{itemize}
\medskip 

{\it Construction of  $h_{1,N+1}$ and  $h_{2,N+1}$, first properties of $W_{N+1}$.} 
  We write the asymptotic series
  \begin{align*}
& \pi^{\eps,N+1} _1  \circledast   (i\eps \partial_t - H^\eps)=    \pi^{\eps,N} _1  \circledast   (i\eps \partial_t - H^\eps) -\eps^{N+1} \pi_{1,N+1} H_0+\O(\eps^{N+2}),\\
  & (i\eps \partial_t -h^{\eps,N+1}_1)  \circledast  \pi_1^{\eps, N+1}
  + W^{\eps, N+1} \circledast \pi_2^{\eps,N+1}=  (i\eps \partial_t -h^{\eps,N}_1)  \circledast  \pi_1^{\eps, N}
  + W^{\eps, N} \circledast \pi_2^{\eps,N} \\
  &\qquad\qquad+\eps^{N+1} \left(i\partial_t\pi_{1,N}-h_{1,N+1}\pi_1-h_1\pi_{1,N+1}+W_{N+1} \pi_2\right) +\O(\eps^{N+2}).
  \end{align*}
  Therefore, the terms $h_{1,N+1}$ and $W_{N+1}$ have to satisfy
 \[
 - \pi_{1,N+1} H_0 =i\partial_t \pi_{1,N}- h_{1,N+1} \pi_1 - h_1 \pi_{1,N+1}+ W_{N+1} \pi_2 + \Theta_{1,N} 
\]
or equivalently 
\[
0 =i\partial_t \pi_{1,N}- h_{1,N+1} \pi_1 +(h_2- h_1) \pi_{1,N+1}\pi_2+ W_{N+1} \pi_2+ \Theta_{1,N} .
\]
By multiplying on the right by $\pi_1$, then $\pi_2$, we are left with the two equations
\begin{align}
\nonumber 
&h_{1,N+1} \pi_1= \Theta_{1,N}\pi_1+i\partial_t \pi_{1,N}\pi_1 ,\\
\label{riri1}
& W_{N+1}\pi_2=\Theta_{1,N} \pi_2+(h_2-h_1) \pi_{1,N+1} \pi_2+i\partial_t \pi_{1,N}\pi_2 .
\end{align}
Considering similarly the conditions for the mode $h_2$, we obtain that  $h_{2,N+1}$ and $W_{N+1}^*$ have to satisfy
\[ 
h_{2,N+1} \pi_2= \Theta_{2,N}\pi_2+i\partial_t \pi_{2,N}\pi_2 \;\;\mbox{and}\;\; W_{N+1}^*\pi_1=\Theta_{2,N} \pi_1-(h_2-h_1) \pi_{2,N+1} \pi_1+i\partial_t \pi_{2,N}\pi_1.
\] 
Since $\pi_{2,N}=-\pi_{1,N}$ for $N\geq 1$, we are left with the relation
\begin{align}
\nonumber
&h_{2,N+1} \pi_2= \Theta_{2,N}\pi_2-i\partial_t \pi_{1,N}\pi_2 ,\\
\label{riri2}
&W_{N+1}^*\pi_1=\Theta_{2,N} \pi_1+(h_2-h_1) \pi_{1,N+1} \pi_1-i\partial_t \pi_{1,N}\pi_1.
\end{align}
We set 
\begin{align*}
h_{1,N+1} &= \Theta_{1,N} \pi_1 + i\partial_t \pi_{1,N}\pi_1+\pi_1 \Theta_{1,N}^*\pi_2-i\pi_1\partial_t \pi_{1,N} \pi_2,\\
h_{2,N+1} &=\Theta_{2,N} \pi_2 + i\partial_t \pi_{2,N}\pi_2+\pi_2 \Theta_{2,N}^*\pi_1-i\pi_2\partial_t \pi_{2,N} \pi_1.
\end{align*}
Then, $h_{1,N+1} $ and $h_{2,N+1} $ are self-adjoint and satisfy the desired properties. 
\smallskip

{\it End of the construction of $W_{N+1}$.} The construction of $W_{N+1}$ requires to be more careful because there is a compatibility condition that has to be satisfied in order to have simultaneously~\eqref{riri1} and \eqref{riri2}.
Let us look  for $W_{N+1}$ of the form 
\[
W_{N+1}= \Theta_{1,N} \pi_2+(h_2-h_1) \pi_{1,N+1} \pi_2+i\partial_t \pi_{1,N}\pi_2+ U_{N+1}\pi_1.
\]
This guarantees~\eqref{riri1}.
Then, one has 
\[
W_{N+1}^* =\pi_2 \Theta_{1,N} ^*+(h_2-h_1) \pi_2\pi_{1,N+1} -i\pi_2 \partial_t \pi_{1,N}+\pi_1 U_{N+1}^*
\]
and 
\begin{align*}
W_{N+1}^*\pi_1& =\pi_2 \Theta_{1,N} ^*\pi_1
-i\pi_2 \partial_t \pi_{1,N}\pi_1+\pi_1 U_{N+1}^*\pi_1\\
&= \pi_2 \Theta_{2,N} \pi_1
-i\pi_2 \partial_t \pi_{1,N}\pi_1+\pi_1 U_{N+1}^*\pi_1
\end{align*}
where we have used the first property of the matrices $\Theta_{1,N}$ and $\Theta_{2,N}$ that we have exhibited (see~\eqref{eq:compa_Theta}), together with the fact that $\pi_{1,N+1}$ is diagonal, as noticed in its construction. 
It is then enough to choose 
\[
U_{N+1}= \pi_1 \left(\Theta_{2,N}^*+(h_2-h_1)\pi_{1,N+1} +i\partial_t \pi_{1,N} \right) \pi_1
\]
since it implies
\begin{align*}
W_{N+1}^*\pi_1& =
\pi_2 \Theta_{2,N} \pi_1-i\pi_2 \partial_t \pi_{1,N}\pi_1
+  \pi_1(\Theta_{2,N} +i\partial_t \pi_{1,N} +(h_2-h_1)\pi_{1,N+1}) \pi_1\\
&= \Theta_{2,N} \pi_1 +(h_2-h_1)\pi_{1,N+1}\pi_1-i\pi_2 \partial_t \pi_{1,N}\pi_1,
\end{align*}
where we have used that $\pi_{1,N+1}$ is diagonal (whence $\pi_{1,N+1}\pi_1=\pi_1 \pi_{1,N+1}\pi_1$). As a consequence, 
 the second part of~\eqref{riri2} is satisfied. 
This concludes the recursive argument and the proof of the Theorem~\ref{thm:rough_reduc} since the growth properties of the matrices that we have constructed  come with the recursive equations. 
  \end{proof}

\section{Superadiabatic projectors and diagonalization}\label{sec:superadiab}

We now want to get rid of the off-diagonal elements $\widehat W^\eps$ that appears in the reduction of Theorem~\ref{thm:rough_reduc}. This is  only possible outside $\Upsilon$. We are going to take into account how far from the crossing set we are by introducing a gap assumption.

 \begin{assumption}[Gap assumption]\label{hyp:NCdelta} 
 Let  $t_0<t_1$, $I$ an open  interval of $\R$ containing $[t_0,t_1]$ and~$\Omega$ an open subset of $\R^{2d}$.
  We say that the eigenvalue $h$ has a gap larger than $\delta\in (0,1]$ in  $\mathcal D :=I\times\Omega$  if one has 
  \begin{enumerate}
\item[$({NC}_\delta)$]\qquad 
 $\displaystyle{ d\left( h(t,z) , {\rm Sp} (H_0(t,z) )\right) \geq \delta,\;\;\forall (t, z)\in \mathcal D .}$
 \end{enumerate}
 \end{assumption}
 
 The construction of superadiabatic projectors dates to~\cite{bi}  which was inspired by the paper~\cite{emwe}. It has then been carefully developed in~\cite{MS} and~\cite{ST} (see also the book~\cite{Te}). 
 We revisit here the construction of superadiabatic projectors, in order to control their norms with respect to the parameter $\delta$. 
 \smallskip
 
 We follow the construction of the Section~14.4 of the latest edition of~\cite{corobook} (2021), that we adapt to our context. 
One proceeds in two steps: first by defining the formal series for the projectors and then for the Hamiltonians. 
In order to simplify the notations in the construction, we just consider an eigenvalue $h$ and we will then apply the result to the eigenvalues $h_1$ and~$h_2$.

\subsection{Formal superadiabatic projectors}

  \begin{theorem}[semiclassical projector evolution]\label{quantpro} Assume the eigenvalue $h$ of the Hamiltonian~$H_0$ satisfies Assumption~\ref{hyp:NCdelta} in $\mathcal D$. 
   Then, there exists a 
 unique formal series $\sum_{j\geq 1}\eps^{j} \Pi_j$ in  $  \bdS_{\eps,\delta}^{-1}(\mathcal D)$ such that  setting    $\Pi_0(t,z) = \pi(t,z)$, 
 the formal series 
 $$\di{\Pi^\eps(t,z)=\sum_{j\geq 0}\eps^j\Pi_j(t,z)}$$
 is a formal projection 
 and 
  \beq\label{project}
  i\eps\partial_t\Pi^\eps (t) = \{H^\eps(t),\Pi^\eps(t)\}_\circledast.
  \eeq
  Moreover the sub-principal term $\Pi_1(t)$ is an Hermitian matrix given by the following formulas:
 \begin{align}
 \label{cacPi1}
 &\pi(t)  \Pi_1(t) \pi(t) =  -\frac{1}{2i}\pi(t)\{\pi(t), \pi(t)\}\pi(t),\\
 \nonumber
 &\pi(t)^\perp  \Pi_1(t)\pi(t)^\perp =  \frac{1}{2i}\pi(t)^\perp\{\pi(t), \pi(t)\}\pi(t)^\perp,\\
\nonumber
&  \pi(t)^\perp  \Pi_1(t) \pi(t) = \pi(t)^\perp (H_0(t)- h(t))^{-1} \pi(t)^\perp R_1(t)\pi(t),   
   \end{align}
   where
   $$
   R_1(t) =    i\partial_t\pi(t) -\frac{1}{2i}\left(\{H_0(t), \pi(t)\}-\{\pi(t), H_0(t)\}\right)-[H_1(t),\pi(t)].
   $$
    \end{theorem}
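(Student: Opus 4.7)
The plan is to construct the series $\Pi^\eps = \sum_j \eps^j \Pi_j$ recursively in $j$, with $\Pi_0 = \pi$. At each step, the projection identity $\Pi^\eps \circledast \Pi^\eps = \Pi^\eps$ at order $\eps^{j+1}$ determines the diagonal blocks of $\Pi_{j+1}$ with respect to the splitting $\C^m = \pi\C^m \oplus \pi^\perp\C^m$, while the evolution identity $i\eps\partial_t \Pi^\eps = [H^\eps,\Pi^\eps]_\circledast$ at order $\eps^{j+1}$, combined with the gap assumption $(NC_\delta)$, determines the off-diagonal blocks.

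First I would extract at order $\eps^{j+1}$ the two identities
$$\pi \Pi_{j+1} + \Pi_{j+1}\pi - \Pi_{j+1} = -K_{j+1}, \qquad [H_0, \Pi_{j+1}] = R_{j+1},$$
where $K_{j+1}$ and $R_{j+1}$ are explicit polynomials in $\Pi_0, \ldots, \Pi_j$ and their derivatives arising from the Moyal corrections (in particular $K_1 = \tfrac{1}{2i}\{\pi,\pi\}$ and $R_1 = i\partial_t \pi - [H_1,\pi] - \tfrac{1}{2i}(\{H_0,\pi\} - \{\pi,H_0\})$). The linear map $A \mapsto \pi A + A\pi - A$ acts as $+\mathrm{Id}$ on $\pi A\pi$, as $-\mathrm{Id}$ on $\pi^\perp A \pi^\perp$, and vanishes on the two off-diagonal blocks, so the first identity fixes $\pi\Pi_{j+1}\pi$ and $\pi^\perp\Pi_{j+1}\pi^\perp$ uniquely. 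For the off-diagonal blocks, I would use the decomposition $H_0 = h\pi + \pi^\perp H_0 \pi^\perp$ to rewrite the second identity as
$$(\pi^\perp H_0\pi^\perp - h)\,\pi^\perp \Pi_{j+1}\pi = \pi^\perp R_{j+1}\pi, \qquad \pi\Pi_{j+1}\pi^\perp\,(h-\pi^\perp H_0\pi^\perp) = \pi R_{j+1}\pi^\perp;$$
the gap assumption then allows inversion via the reduced resolvent $\pi^\perp (H_0 - h)^{-1}\pi^\perp \in \bdS^{-1}_\delta(\mathcal D)$. At order $j = 0$ this recipe yields precisely the stated formulas for $\Pi_1$.

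The system imposes compatibility conditions: the off-diagonal blocks of $K_{j+1}$ must vanish, while the diagonal blocks of $R_{j+1}$ must satisfy $\pi R_{j+1}\pi = 0$ and $\pi^\perp R_{j+1}\pi^\perp$ must equal the commutator of $\pi^\perp H_0 \pi^\perp$ with the already-determined $\pi^\perp\Pi_{j+1}\pi^\perp$. Both conditions are automatic: they follow from the algebraic identity
$$\Pi^\eps \circledast [H^\eps, \Pi^\eps]_\circledast + [H^\eps, \Pi^\eps]_\circledast \circledast \Pi^\eps = [H^\eps, \Pi^\eps]_\circledast,$$
itself a consequence of $\Pi^\eps \circledast \Pi^\eps = \Pi^\eps$, combined with the inductive hypothesis at order $j$. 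Uniqueness of the formal series is then immediate from the recursion, since every block of $\Pi_{j+1}$ is forced by one of the two identities.

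The main technical point is the growth estimate $\Pi_j \in \bdS^{1-2j}_\delta(\mathcal D)$ for $j\geq 1$, equivalent to $\sum_{j\geq 1}\eps^{j-1}\Pi_j \in \bdS^{-1}_{\eps,\delta}(\mathcal D)$. By the gap assumption the reduced resolvent $\pi^\perp(H_0-h)^{-1}\pi^\perp$ sits in $\bdS^{-1}_\delta$, and its derivatives pick up an extra $\delta^{-1}$ through $\partial R = -R(\partial H_0)R$. By Remark \ref{rem:Sdelta} and Lemma \ref{thm:prodest}, products and Moyal corrections behave as expected, with the $m$-th correction $C_m$ decreasing the total degree by $2m$. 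The induction closes by tracking two $\delta^{-1}$ losses per step: one from the Moyal correction or from a resolvent derivative inherited from $\Pi_j$, and one from the inversion of the commutator against $(H_0 - h)$ on $\pi^\perp \C^m$. The careful bookkeeping of these losses is the main obstacle, though no conceptual difficulty arises beyond the symbolic estimates provided by Lemma \ref{thm:prodest}.
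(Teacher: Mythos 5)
Your construction follows essentially the same route as the paper's proof: build the series $\Pi^\eps$ term by term, use the map $A\mapsto\pi A+A\pi-A$ to fix the diagonal blocks of $\Pi_{j+1}$ from the projection identity, use $[H_0,\Pi_{j+1}]=R_{j+1}$ and the reduced resolvent $\pi^\perp(H_0-h)^{-1}\pi^\perp\in\bdS^{-1}_\delta$ to fix the off-diagonal blocks, and verify compatibility. Your bookkeeping of the $\delta$-losses and the use of Lemma~\ref{thm:prodest} matches the paper as well.

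One point needs correction: you attribute \emph{both} compatibility conditions to the single identity $\Pi^\eps\circledast[H^\eps,\Pi^\eps]_\circledast+[H^\eps,\Pi^\eps]_\circledast\circledast\Pi^\eps=[H^\eps,\Pi^\eps]_\circledast$. When applied to the truncated series $\Pi^{\eps,N}$, this identity (together with $\Pi^{\eps,N}\circledast\partial_t\Pi^{\eps,N}+\partial_t\Pi^{\eps,N}\circledast\Pi^{\eps,N}-\partial_t\Pi^{\eps,N}=\partial_t E$, where $E:=\Pi^{\eps,N}\circledast\Pi^{\eps,N}-\Pi^{\eps,N}$) does yield, at order $\eps^{N+1}$,
\begin{equation*}
\pi R_{N+1}+R_{N+1}\pi-R_{N+1}=-[H_0,S_{N+1}],
\end{equation*}
which gives $\pi R_{N+1}\pi=0$ and $\pi^\perp R_{N+1}\pi^\perp=[H_0,\pi^\perp S_{N+1}\pi^\perp]$. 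But it says nothing about the block structure of $K_{j+1}$ (the paper's $S_{j+1}$). To prove $\pi K_{j+1}\pi^\perp=\pi^\perp K_{j+1}\pi=0$ — which you need, since otherwise the linear map $A\mapsto\pi A+A\pi-A$ cannot be inverted against $-K_{j+1}$ — you must invoke a \emph{separate} algebraic identity coming from the projection relation alone, namely
\begin{equation*}
\Pi^{\eps,N}\circledast\bigl(\Pi^{\eps,N}\circledast\Pi^{\eps,N}-\Pi^{\eps,N}\bigr)\circledast\bigl(1-\Pi^{\eps,N}\bigr)=\bigl(\Pi^{\eps,N}\circledast\Pi^{\eps,N}-\Pi^{\eps,N}\bigr)\circledast\bigl(\Pi^{\eps,N}-\Pi^{\eps,N}\circledast\Pi^{\eps,N}\bigr)=O(\eps^{2N+2}),
\end{equation*}
whose leading-order coefficient on the left is $\pi K_{N+1}\pi^\perp$. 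This is precisely the $Z$-computation in the paper's proof, and it is logically independent of the commutator identity you wrote. With this additional identity added, your argument closes.
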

    
    \begin{proof}
    With Notations~\ref{def:Nseries}, and working in space time variable,
    \begin{align}\label{eq:N}
     \Pi^{\eps,N}\circledast\Pi^{\eps,N} - \Pi^{\eps,N}&= \eps^{N+1} S_{N+1} + \O(\eps^{N+2}),
    \\
    \label{eq:N'}
- \left\{\tau + H_0+\eps H_1,\Pi^{\eps,N}\right\}_{\circledast} &= \eps^{N+1}R_{N+1} + \O(\eps^{N+2}),
    \end{align}
    where the Moyal bracket has been defined in~\eqref{def:moyal_bracket} and $\left\{\tau ,\Pi^{\eps,N}\right\}_{\circledast}=\partial_t \Pi^{\eps,N}$.
    \medskip

   \noindent {\bf Step 1.} We start with $N=0$. We have $\Pi^{(0)} = \pi \in \bdS^{0}_\delta(\mathcal D)$. Since $\pi^2=\pi$ and $[H_0,\pi]=0$, we obtain 
\[
    S_1 = \tfrac{1}{2i}\{\pi,\pi\}\;\mbox{and}\;
    R_1 = i\partial_t\pi - \tfrac{1}{2i}(\{H_0,\pi\}-\{\pi,H_0\}) - [H_1,\pi],
\]
and we have $R_1,S_1\in   \bdS^{0}_\delta(\mathcal D)$. Before constructing $S_1$, 
    two structural observations are in order: 
    \begin{enumerate}
    \item The matrix $S_1$ is symmetric and satisfies $\pi S_1\pi^\perp = \pi^\perp S_1\pi =0$.
    \item The matrix $R_1$ is skew-symmetric. It satisfies 
    \[
    \pi R_1\pi = 0\quad\text{and}\quad
    \pi^\perp R_1\pi^\perp = [H_0,\pi^\perp S_1\pi^\perp].
    \]
    \end{enumerate}
    If $H_0$ has only two eigenvalues, $H_0$ expresses only in terms of $\pi$ and 
the expression  of~$R_1$ given above  shows that $R_1$ is off-diagonal. The situation is more complicated if $H_0$ has strictly more than two distinct eigenvalues. For verifying (2) in that case, one uses two times
   the Poisson bracket rule 
    \[
    \{A,BC\}-\{AB,C\}=\{A,B\}C-A\{B,C\}.
    \]
 We obtain
    \begin{align*}
    \{H_0,\pi\}-\{\pi,H_0\} &= \{H_0,\pi^2\}-\{\pi^2,H_0\}\\
     &= \{h\pi,\pi\} + \{H_0,\pi\}\pi - H_0\{\pi,\pi\} - \{\pi,h\pi\} + \{\pi,\pi\}H_0 - \pi\{\pi,H_0\}\\
     &= \pi\{h,\pi\} - \{\pi,h\}\pi + [\{\pi,\pi\},H_0] + \{H_0,\pi\}\pi - \pi\{\pi,H_0\},
    \end{align*}
    which implies
    \[
    \pi^\perp(\{H_0,\pi\}-\{\pi,H_0\})\pi^\perp = \pi^\perp [\{\pi,\pi\},H_0]\pi^\perp.
    \]
    For determining the $\pi$-diagonal component, we choose $A=\pi$, $B=H_0\pi^\perp$, and $C=\pi$ 
    to obtain
   \begin{align*}
   0 &= \{\pi,H_0\pi^\perp\}\pi - \pi\{H_0\pi^\perp,\pi\}\\
   &= \{\pi,H_0\}\pi - \{\pi,h\pi\}\pi-\pi\{H_0,\pi\} + \pi\{h\pi,\pi\}\\
   &= \{\pi,H_0\}\pi - \{\pi,h\}\pi-\pi\{H_0,\pi\} + \pi\{h,\pi\}.
   \end{align*}
   This relation implies
   \[
    \pi(\{H_0,\pi\}-\{\pi,H_0\})\pi = 0.
    \] 
    For constructing the matrix $\Pi_1$ that defines $\Pi^{(1)} = \pi+\eps\Pi_1$, we need to satisfy      
    \[
    \pi\Pi_1 + \Pi_1\pi - \Pi_1 =  - S_1  \;\;\mbox{and}\;\; -[H_0,\Pi_1] = -R_1.
    \]
    The first of these two equations uniquely determines the diagonal blocks of $\Pi_1$, while the second 
    equation uniquely determines the off-diagonal blocks. We obtain
    \begin{align*}
    &\pi\Pi_1\pi = -\pi S_1\pi \;\;\mbox{and}\;\;
    \pi^\perp\Pi_1\pi^\perp = \pi^\perp S_1\pi^\perp,\\
     &\pi\Pi_1\pi^\perp = -\pi R_1\pi^\perp (H_0-h)^{-1}
      \;\;\mbox{and}\;\;
\pi^\perp\Pi_1\pi = (H_0-h)^{-1}\pi^\perp R_1\pi .
    \end{align*}
    For concluding this first step, we deduce from $R_1,S_1\in   \bdS^{0}_\delta(\mathcal D)$ that
    $\Pi_1\in \bdS_\delta ^{-1}(\mathcal D)$.

    \medskip 
    
    \noindent{\bf Step $N\geq 1$.}
    Next we proceed by induction and assume that we have constructed the matrices $\Pi_j(t)\in  \bdS_\delta ^{1-2j}(\mathcal D)$ for $1\leq j\leq N$ such that~\eqref{eq:N} and~\eqref{eq:N'} hold. Note that by Lemma~\ref{thm:prodest}, this implies 
    $$ R_{N+1}(t)\in  \bdS_\delta ^{-2N}(\mathcal D)\;\;\mbox{and}\;\;S_{N+1} (t)\in  \bdS_\delta ^{-2N}(\mathcal D).$$
    Indeed, 
        $- \left\{\tau +H_0+\eps H_1,\Pi^{\eps,N}\right\}_{\circledast}$  is a formal series of $\eps\, \bdS^{-2}_{\eps,\delta}(\mathcal D)$ while $  \Pi^{N}\circledast\Pi^{\eps,N} - \Pi^{\eps,N}$ is a formal series of $\eps \bdS^{-1}_{\eps,\delta}(\mathcal D)$.  In order to go one step further, we see that $\Pi_{N+1}$ has to 
 be chosen so that first 
   \begin{align*}
   \O(\eps^{N+2})&= 
   - \left\{\tau +H_0+\eps H_1,\Pi^{\eps,N+1}\right\}_{\circledast}\\
   &=
   - \left\{\tau +H_0+\eps H_1,\Pi^{\eps,N}\right\}_{\circledast} -\eps^{N+1} [H_0,\Pi_{N+1}]+\O(\eps^{N+2})\\
   &= \eps^{N+1} R_{N+1}  -\eps^{N+1} [H_0,\Pi_{N+1}]+\O(\eps^{N+2}),
   \end{align*}
   and second
   \begin{align*}
     \O(\eps^{N+2})&= \Pi^{N+1}\circledast\Pi^{\eps,N+1} - \Pi^{\eps,N+1}\\
     &=  \Pi^{N}\circledast\Pi^{\eps,N} - \Pi^{\eps,N} +  \eps^{N+1}(  \pi\Pi_{N+1} + \Pi_{N+1}\pi - \Pi_{N+1}) +\O(\eps^{N+2})\\
     &= \eps^{N+1} S_{N+1} + \eps^{N+1}(  \pi\Pi_{N+1} - \Pi_{N+1}\pi ^\perp) +\O(\eps^{N+2})
   \end{align*}

   As a conclusion, we have to find $\Pi_{N+1}$ such that 

       \begin{equation}\label{eq:syst_to_solve}
    \pi\Pi_{N+1} - \Pi_{N+1}\pi^\perp =  - S_{N+1}  \;\;\mbox{and}\;\; [H_0,\Pi_{N+1}] = R_{N+1}.
    \end{equation}
    The first relation in~\eqref{eq:syst_to_solve} determines the diagonal part of $\Pi_{N+1}$:
    \[
     \pi\Pi_{N+1}\pi  =  - \pi S_{N+1}\pi\;\;\mbox{and}\;\; \pi^\perp \Pi_{N+1}\pi^\perp =  - \pi^\perp S_{N+1}  \pi^\perp
     \]
     provided a first compatibility relation is satisfied, namely 
     \begin{equation}\label{eq:compatibility1}
      \pi S_{N+1}  \pi^\perp= \pi^\perp S_{N+1}\pi =0.
     \end{equation}
     Then the second equation in~\eqref{eq:syst_to_solve} fixes the off diagonal part of $\Pi_{N+1}$, via
     \[
     \pi H_0 (\pi\Pi_{N+1}\pi^\perp )=\pi  R_{N+1}\pi^\perp \;\;\mbox{and}\;\;    \pi ^\perp H_0 (\pi ^\perp \Pi_{N+1}\pi)=\pi^\perp  R_{N+1}\pi,
     \]
     up to a second compatibility about the diagonal part of the commutators that are now fixed: in view of  $   [\pi H_0, \pi\Pi_{N+1}\pi  ]   = - [\pi H_0, \pi S_{N+1}  \pi]$ and $ [\pi^\perp H_0, \pi ^\perp \Pi_{N+1}  \pi^\perp]=  [\pi^\perp H_0, \pi ^\perp S_{N+1}  \pi^\perp]$, one must have 
    \begin{equation}\label{eq:compatibility2}
 [\pi H_0, \pi S_{N+1}  \pi]=- \pi R_{N+1} \pi \;\;\mbox{and}\;\;
 [\pi^\perp H_0, \pi ^\perp S_{N+1}  \pi^\perp]=\pi ^\perp R_{N+1} \pi^\perp.
              \end{equation}
    
   This process allows to  to construct $\Pi_{N+1}(t)\in  \bdS_\delta ^{-2N-1)}(\mathcal D)$ and we will also obtain  $ R_{N+2}(t)\in  \bdS_\delta ^{-2N-2}(\mathcal D)$, $S_{N+2} (t)\in  \bdS_\delta ^{-2N-2}(\mathcal D)$ because of equations~\eqref{eq:N},~\eqref{eq:N'} and Lemma~\ref{thm:prodest}.
    
  \smallskip

  It remains to prove the two compatibility relations~\eqref{eq:compatibility1} and~\eqref{eq:compatibility2}.
    For proving~\eqref{eq:compatibility1}, we take advantage of the fact that 
    \[
Z:=     \Pi^{\eps,N}\circledast\left((\Pi^{\eps,N})^{2\circledast} -\Pi^{\eps, N}\right)\circledast(\1 -\Pi^{\eps, N}) = \eps^{N+1}  \pi S_{N+1}\pi^\perp +\O(\eps ^{N+2}),
     \]
     while one also has by construction
     \[
     Z=    \left((\Pi^{\eps,N})^{2\circledast}-\Pi^{\eps,N}\right)\circledast\left(\Pi^{\eps,N}-(\Pi^{\eps,N})^{2\circledast}\right)={\mathcal O}(\eps^{2N+2}).
     \]
     This implies that 
     $\pi S_{N+1} \pi ^\perp =0$ and, using that $S_{N+1}$ is Hermitian, we deduce that it is diagonal, whence~\eqref{eq:compatibility1}.
     \smallskip 
 For proving the first relation in~\eqref{eq:compatibility2}, we argue similarly with
    \[
Z':=   -  \Pi^{\eps, N}\circledast 
\left(
 \left\{\tau + H_0+\eps H_1,\Pi^{\eps,N}\right\}_{\circledast} 
\right)\circledast \Pi^{\eps,N} = \eps^{N+1}  \pi R_{N+1}\pi +\O(\eps ^{N+2}),
     \]
which also satisfies 
\begin{align*}
Z'&=   
-\Pi^{\eps,N}\circledast   (\tau + H_0 +\eps H_1) \circledast  (\Pi^{\eps,N})^{2\circledast} +(\Pi^{\eps,N})^{2\circledast}\circledast   (\tau + H_0+\eps H_1)\circledast   \Pi^{\eps,N}\\
&=-  \Pi^{\eps,N}\circledast  (\tau + H_0+\eps H_1) \circledast  (\Pi^{\eps,N}+\eps^{N+1} S_{N+1})
\\
&\qquad + (\Pi^{\eps,N}+\eps^{N+1} S_{N+1})\circledast( \tau+  H_0 +\eps H_1)\circledast   \Pi^{\eps,N}+\O(\eps^{N+2})\\
&= \eps^{N+1}\left(- \Pi^{\eps,N}\circledast (\tau+ H_0) \circledast S_{N+1} + S_{N+1} \circledast (\tau+ H_0) \circledast  \Pi^{\eps,N}\right)+\O(\eps^{N+2}).
\end{align*}
This implies 
\[
\pi R_{N+1} \pi=-\pi H_0 S_{N+1}+ S_{N+1} H_0 \pi.
\]
Since $[S_{N+1},\pi]=0$, we deduce 
\[ 
\pi R_{N+1} \pi=-[\pi H_0 , \pi S_{N+1}\pi].
\]
One argues similarly with $\1-\Pi^{\eps,N}$ for obtaining  the other relation in~\eqref{eq:compatibility2}. This finishes the proof.  
             \end{proof}

 \subsection{Formal adiabatic decoupling}             
            
    The second (and decisive) part of the analysis  is a formal adiabatic decoupling using the superadiabatic projectors introduced before.

  \begin{theorem}[formal adiabatic decoupling]\label{adia1} 
  Assume the eigenvalue $h$ of the Hamiltonian $H_0$ satisfies Assumption~\ref{hyp:NCdelta} in $\mathcal D$. 
    There exists a formal time dependent Hermitian   Hamiltonian in $\mathcal D$, 
    $$\di{H^{{\rm adia},\eps}=\sum_{j\geq 0}\eps^jH^{\rm adia}_j}$$
    such that 
  \beq\label{adiaform}
 \Pi^\eps \circledast (i\eps \partial_t - H^\eps) =  (i\eps\partial_t - H^{{\rm adia},\eps} )\circledast \Pi ^\eps
   \eeq
    with the following properties:
  \begin{enumerate}
  \item The principal symbol is    $H^{\rm adia}_0 = h\,\1_{m} $.
  \item  The subprincipal term $H_1^{\rm adia}$ is a Hermitian matrix satisfying
   \begin{align*}
 \pi^\perp\, H_1^{\rm adia}\pi
 = \pi ^\perp\left( i\partial_t \pi +i \{ h, \pi\}\right)\pi 
 \;\;\mbox{and}\;\; \pi\, H_1^{\rm adia}\pi & = \pi H_1\pi  + \frac{1}{2i}\pi\{H_0,\pi\}\pi
  \end{align*}
(see~\eqref{eq:H1adiab}) and we can choose 
$\pi^\perp H^{{\rm adia},\eps}_1\pi^\perp =0.$
\item We have 
 $$\eps^{-2}\left(H^{{\rm adia},\eps}-h\1_{m}-\eps H^{\rm adia}_1\right)\in{\bf S}_{\eps,\delta}^{-1}(\mathcal D).$$ 
\item
Finally,  $\pi(t)$  satisfies   a transport   equation along the classical flow for $h(t)$.
   \beq\label{adiatransp}
\partial_t\pi +\{h, \pi\}=  \frac{1}{i}[H^{{\rm adia}}_1, \pi].
      \eeq      
         \end{enumerate}
    \end{theorem}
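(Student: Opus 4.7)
The plan is to recast \eqref{adiaform} as a purely symbolic algebraic condition, then construct $H^{{\rm adia},\eps}$ recursively in powers of $\eps$ using the block decomposition induced by $\pi$. Since $(i\eps\partial_t)\circledast\Pi^\eps - \Pi^\eps\circledast(i\eps\partial_t) = i\eps\partial_t\Pi^\eps$ as symbols, the identity \eqref{adiaform} rewrites as $H^{{\rm adia},\eps}\circledast\Pi^\eps = \Pi^\eps\circledast H^\eps + i\eps\partial_t\Pi^\eps$, and inserting the projector evolution \eqref{project} collapses this to the time-independent identity
\[
H^{{\rm adia},\eps}\circledast \Pi^\eps \,=\, H^\eps\circledast \Pi^\eps \pmod{O(\eps^\infty)}.
\]
Requiring $H^{{\rm adia},\eps}$ to be Hermitian and taking formal adjoints (which reverses the order of the Moyal product) gives the dual identity $\Pi^\eps\circledast H^{{\rm adia},\eps} = \Pi^\eps\circledast H^\eps$, so the constraints split as fixing $\Pi^\eps\circledast H^{{\rm adia},\eps}\circledast\Pi^\eps = \Pi^\eps\circledast H^\eps\circledast\Pi^\eps$ together with the Moyal commutator relation $[H^{{\rm adia},\eps},\Pi^\eps]_\circledast = i\eps\partial_t\Pi^\eps$, while the $\pi^\perp\!\cdot\pi^\perp$ block of $H^{{\rm adia},\eps}$ remains free.

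I will then solve this order by order by matching coefficients in $H^{{\rm adia},\eps}\circledast\Pi^\eps = H^\eps\circledast\Pi^\eps$. At order $\eps^0$, the condition $H^{\rm adia}_0\,\pi = h\pi$ is consistent with the choice $H^{\rm adia}_0 = h\,\1_{\C^m}$ (using the freedom on $\pi^\perp\!\cdot\pi^\perp$), yielding property~(1). At order $\eps^1$, substituting $\Pi_1$ from Theorem~\ref{quantpro} forces
\[
H^{\rm adia}_1\,\pi \,=\, (H_0-h)\Pi_1 + H_1\pi + \tfrac{1}{2i}\{H_0-h,\pi\};
\]
applying $\pi^\perp$ from the left and using $\pi^\perp\Pi_1\pi = \pi^\perp(H_0-h)^{-1}\pi^\perp R_1\pi$ produces $\pi^\perp H^{\rm adia}_1\pi = \pi^\perp(i\partial_t\pi + i\{h,\pi\})\pi$, while applying $\pi$ from the left gives $\pi H^{\rm adia}_1\pi = \pi H_1\pi + \tfrac{1}{2i}\pi\{H_0,\pi\}\pi$, matching (2); Hermiticity then determines $\pi H^{\rm adia}_1\pi^\perp$, and we exploit the remaining freedom to set $\pi^\perp H^{\rm adia}_1\pi^\perp = 0$. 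For $N\geq 2$ the recursion is identical in form: one first shows that the defect $H^{{\rm adia},\eps,N-1}\circledast\Pi^{\eps,N-1} - H^\eps\circledast\Pi^{\eps,N-1}$, viewed at order $\eps^N$, is off-diagonal with respect to $\pi$; this follows by sandwiching the defect between $\Pi^{\eps,N-1}$ and $\1-\Pi^{\eps,N-1}$ and using $(\Pi^{\eps,N-1})^{\circledast 2} = \Pi^{\eps,N-1} + O(\eps^N)$, in the same spirit as the structural proof of Theorem~\ref{quantpro}. Then the $\pi$-column of $H^{\rm adia}_N$ is chosen to cancel this defect, Hermiticity supplies the $\pi$-row, and one picks $\pi^\perp H^{\rm adia}_N\pi^\perp = 0$.

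Property (3) comes from tracking the gap parameter $\delta$ through the recursion. Since $\Pi_j\in\bdS_\delta^{1-2j}(\mathcal D)$ by Theorem~\ref{quantpro} and $(H_0-h)^{-1}\pi^\perp\in\bdS_\delta^{-1}(\mathcal D)$ under $(NC_\delta)$, Lemma~\ref{thm:prodest} combined with the closed form of $H^{\rm adia}_N\pi$ established in the recursion yields $H^{\rm adia}_N\in\bdS_\delta^{2-2N}(\mathcal D)$ for $N\geq 2$; the gain of one power of $\delta$ over a naive count is because the leading resolvent $(H_0-h)^{-1}$ has already been absorbed into $\Pi_{N-1}$. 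This is exactly the claim $\eps^{-2}(H^{{\rm adia},\eps}-h\,\1-\eps H^{\rm adia}_1)\in\bdS_{\eps,\delta}^{-1}(\mathcal D)$. The transport equation~\eqref{adiatransp} will then be obtained a posteriori: expanding \eqref{project} at order $\eps$ gives $i\partial_t\pi = [H_0,\Pi_1] + [H_1,\pi] + \tfrac{1}{2i}(\{H_0,\pi\}-\{\pi,H_0\})$, and combining this with the formula for $H^{\rm adia}_1\pi$ above and its adjoint $\pi H^{\rm adia}_1$ (where the identity $\{H_0-h,\pi\}^* = -\{\pi,H_0-h\}$, valid for self-adjoint symbols, is used), together with the scalar identity $\{h,\pi\}-\{\pi,h\} = 2\{h,\pi\}$, one obtains $\tfrac{1}{i}[H^{\rm adia}_1,\pi] = \partial_t\pi + \{h,\pi\}$, which is \eqref{adiatransp}.

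The main obstacle I anticipate is verifying at each recursive step that the defect produced by $H^{\rm adia}_0,\dots,H^{\rm adia}_{N-1}$ has precisely the block structure allowing it to be corrected by a Hermitian $H^{\rm adia}_N$. This is the analogue for the equation $H^{{\rm adia},\eps}\circledast\Pi^\eps = H^\eps\circledast\Pi^\eps$ of the structural properties proved for $R_j,S_j$ inside the proof of Theorem~\ref{quantpro}, and it will be established by the same adjoint-and-sandwich device applied to $\Pi^{\eps,N-1}$. Once this structural check is settled, the remaining $\delta$-bookkeeping reduces to straightforward applications of Lemma~\ref{thm:prodest}.
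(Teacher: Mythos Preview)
Your approach is essentially the same as the paper's: a recursive construction of $H^{\rm adia}_j$ by matching Moyal coefficients, with the compatibility condition at each step verified by exploiting the approximate projector property of $\Pi^{\eps,N}$, and the $\delta$-bookkeeping handled through Lemma~\ref{thm:prodest}. Your preliminary reduction of \eqref{adiaform} to $H^{{\rm adia},\eps}\circledast\Pi^\eps = H^\eps\circledast\Pi^\eps$ via \eqref{project} is a clean simplification that the paper does not make explicit; it removes the $i\partial_t\pi$ terms from the order-by-order computation and makes the formula for $H_1^{\rm adia}\pi$ shorter (the paper carries $i\partial_t\pi$ around and only implicitly uses \eqref{project} through the explicit form of $\Pi_1$).

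One imprecision to fix: the compatibility condition at step $N$ is not that the defect is ``off-diagonal'' but that $D_N\pi^\perp=0$ (i.e.\ the defect lies in the $\pi$-column). This is exactly what your sandwich argument gives, since $(H^{{\rm adia},\eps,N-1}-H^\eps)\circledast\Pi^\eps\circledast(\1-\Pi^\eps)=0$ formally, but you should use the full formal projector $\Pi^\eps$ (or $\Pi^{\eps,N}$) rather than $\Pi^{\eps,N-1}$ here, since $\Pi^{\eps,N-1}\circledast(\1-\Pi^{\eps,N-1})$ is only $O(\eps^N)$ and the leading contribution $(h-H_0)S_N$ would not cancel. With this correction your argument goes through exactly as in the paper.
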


    \begin{remark}
     Note that equations~\eqref{eq:H1adiab} imply
   that  $H^{\rm adia}_1(t, z)$ is smooth everywhere, also  on the crossing set.
    \end{remark}

    
The above construction singles out a smooth eigenvalue $h$ with smooth eigenprojector $\pi$. If it is applied to the Hamiltonian $H^\eps$ for two smooth eigenvalues~$(h_1,h_2)$ and two smooth eigenprojectors $(\pi_1,\pi_2)$, then we obtain two pairs of formal series
\begin{equation}\label{def:superadiab_ell}
\Pi^{\eps}_\ell =\sum_{j\geq0} \eps^j \Pi_{\ell, j}\;\;{\rm and}\;\; H^{{\rm adia},\eps}_\ell = \sum_{j\geq0} \eps^jH^{\rm adia}_{\ell,j}.
\end{equation}
We note, that if $\pi_1^\perp = \pi_2$, then $\Pi^\eps_2=\mathrm{Id}-\Pi^\eps_1$.

    \begin{corollary}
    At the level of the evolution operator, the result implies
\[
\mathcal U^\eps_H(t,t_0) = \widehat {\Pi_1^\eps}(t)  \mathcal U^{\rm adia,\eps}_1(t,t_0) \widehat {\Pi_1^\eps} (t_0)
+  \widehat {\Pi_2^\eps}(t)  \mathcal U^{\rm adia,\eps}_2(t,t_0) \widehat {\Pi_2^\eps} (t_0)
\]
where for $\ell\in\{1,2\}$, $\mathcal U^{\rm adia,\eps}_{\ell}(t,t_0) $ are the evolution operators associated with the Hamiltonian~$H^{{\rm adia},\eps}_\ell $
    \end{corollary}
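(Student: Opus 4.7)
The plan is to exploit the formal intertwining relation~\eqref{adiaform} from Theorem~\ref{adia1}, applied in parallel to both eigenvalues $h_1$ and $h_2$, combined with the fact that the two superadiabatic projectors are complementary. The latter is the first step: because $\1 - \Pi_1^\eps$ has principal symbol $\pi_2$ and satisfies
$$i\eps\partial_t(\1 - \Pi_1^\eps) = -i\eps\partial_t \Pi_1^\eps = -[H^\eps,\Pi_1^\eps]_\circledast = [H^\eps, \1-\Pi_1^\eps]_\circledast,$$
the uniqueness statement of Theorem~\ref{quantpro} forces $\Pi_2^\eps = \1-\Pi_1^\eps$ as formal series. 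In particular $\Pi_1^\eps + \Pi_2^\eps = \1$.

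Next, given $\psi^\eps(t) = \mathcal U_H^\eps(t,t_0)\psi^\eps_0$, I would define $\phi_\ell^\eps(t) := \widehat{\Pi_\ell^\eps}(t)\,\psi^\eps(t)$ for $\ell\in\{1,2\}$ and differentiate in time. Since the Weyl calculus gives $\widehat{\Pi_\ell^\eps}\,\widehat{H^\eps} = \widehat{\Pi_\ell^\eps \circledast H^\eps}$ (formally, i.e.\ on truncated symbols modulo $O(\eps^\infty)$), and since $\psi^\eps$ solves~\eqref{eq:sch}, one obtains
$$i\eps\partial_t \phi_\ell^\eps = \widehat{i\eps\partial_t \Pi_\ell^\eps}\,\psi^\eps + \widehat{\Pi_\ell^\eps\circledast H^\eps}\,\psi^\eps = \widehat{\,i\eps\partial_t \Pi_\ell^\eps + \Pi_\ell^\eps\circledast H^\eps\,}\,\psi^\eps.$$
The intertwining~\eqref{adiaform}, when unfolded with $i\eps\partial_t\circledast \Pi_\ell^\eps = i\eps(\partial_t\Pi_\ell^\eps) + \Pi_\ell^\eps\circledast(i\eps\partial_t)$, reads exactly
$$i\eps\partial_t \Pi_\ell^\eps + \Pi_\ell^\eps \circledast H^\eps = H_\ell^{{\rm adia},\eps}\circledast \Pi_\ell^\eps,$$
so $i\eps\partial_t \phi_\ell^\eps = \widehat{H_\ell^{{\rm adia},\eps}}\,\widehat{\Pi_\ell^\eps}\,\psi^\eps = \widehat{H_\ell^{{\rm adia},\eps}}\,\phi_\ell^\eps$. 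By uniqueness of the Cauchy problem for the adiabatic propagator, $\phi_\ell^\eps(t) = \mathcal U^{{\rm adia},\eps}_\ell(t,t_0)\,\widehat{\Pi_\ell^\eps}(t_0)\,\psi^\eps_0$. Combining with $\Pi_1^\eps + \Pi_2^\eps = \1$ yields
$$\psi^\eps(t) = \phi_1^\eps(t) + \phi_2^\eps(t) = \sum_{\ell=1}^2 \mathcal U^{{\rm adia},\eps}_\ell(t,t_0)\,\widehat{\Pi_\ell^\eps}(t_0)\,\psi_0^\eps,$$
and, since $\widehat{\Pi_\ell^\eps}(t)\phi_\ell^\eps(t) = \phi_\ell^\eps(t)$ formally (as $\Pi_\ell^\eps$ is a projector modulo $O(\eps^\infty)$), one may insert $\widehat{\Pi_\ell^\eps}(t)$ on the left of each summand to obtain the symmetric form in the statement.

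The main obstacle, and the reason for the word \emph{formally}, is that the symbols $\Pi_\ell^\eps$ and $H_\ell^{{\rm adia},\eps}$ are only asymptotic series, so the identity above must be read after truncation at some order $N$: the intertwining then holds with a symbol remainder of order $\eps^{N+1}$ in $\bdS_{\eps,\delta}(\mathcal D)$ by Lemma~\ref{thm:prodest}, and the subquadratic bounds ensured by Theorem~\ref{adia1}(3) make the truncated $H_\ell^{{\rm adia},\eps,N}$ generate a well-defined unitary propagator on each $\Sigma_\eps^k$. Propagating the symbol remainder through this flow via the Calderón-Vaillancourt-type estimate~\eqref{est:CV} yields an operator error of order $\eps^{N+1}\delta^{-\kappa}$ for some $\kappa = \kappa(N,k)$, which shrinks to zero in the region $\delta \geq \sqrt\eps$ by choosing $N$ large. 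This is the way the formal identity is realised quantitatively and is what will be used in Chapter~\ref{sec:prop} to propagate wave packets outside the crossing set.
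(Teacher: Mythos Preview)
Your derivation is correct and is precisely the natural reading of the formal identity~\eqref{adiaform}; the paper does not give a separate proof of this Corollary (the proof block following it is actually the proof of Theorem~\ref{adia1}), so your argument supplies exactly what the paper leaves implicit. Your closing paragraph on truncation and remainder control anticipates the quantitative version carried out in Proposition~\ref{far}.
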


    \begin{proof}[Proof of Theorem~\ref{adia1}]
   {
    This result is Theorem~80 of Chapter~14 in~\cite{corobook} combined with Lemma~\ref{thm:prodest}. We first observe that 
    equation~\eqref{adiaform} reduces to proving 
   \begin{equation}\label{adia:form_new}
 \Pi^\eps \circledast  (i\eps\partial_t -H^\eps )= (i\eps \partial_t - H^{{\rm adia},\eps} )\circledast  \Pi^\eps.
    \end{equation}
  For proving the latter relation, one first observes that if $H_0^{\rm adia}= h$, then we have 
  \begin{align*} 
  (H^{\rm adia,\eps}-H_0-\eps H_1)\circledast  \Pi^\eps&=\eps \left( 
(h-H_0) \Pi_1 +(H_1^{\rm adia}-H_1 )\pi +\frac 1{2i} \{ h-H_0,\pi \} \right)+i\partial_t\pi +\mathcal \O(\eps^2).
  \end{align*}
  Therefore, 
  $H_1$ has to be chosen so that 
 $$(H_1^{\rm adia}-H_1)\pi= (H_0-h) \Pi_1+\frac 1{2i} \{ H_0-h,\pi\}+i\partial_t\pi.$$
 In view of~\eqref{cacPi1}, this requires 
 $$\pi (H_1^{\rm adia}-H_1)\pi= \frac 1{2i} \pi\{H_0,\pi\}\pi,$$
 which is given by the second relation of~\eqref{eq:H1adiab}, and, using again~\eqref{cacPi1}
\[
 \pi^\perp (H_1^{\rm adia}-H_1)\pi = \pi^\perp( R_1+ \frac 1{2i} \{H_0-h,\pi\} +i\partial_t\pi)\pi= \pi ^\perp (i\partial_t\pi +\frac 1{2i} \{\pi,H_0\} -\frac 1{2i}\{h,\pi\}-H_1)\pi    
 \]
 which is also given by the first relation of~\eqref{eq:H1adiab} in view of 
the observation that 
  $$\pi^\perp \{H_0,\pi\}\pi=-\pi^\perp \{h,\pi\}\pi.$$
  For proving this relation, one uses   the Poisson bracket rule 
    \[
    \{A,BC\}-\{AB,C\}=\{A,B\}C-A\{B,C\}
    \]
     several times.  First, one gets 
    $$\pi \{\pi,\pi\}\pi^\perp = 0= \pi^\perp\{\pi,\pi\}\pi.$$    
    Then, taking $A=\pi^\perp$, $B=\pi$, $C=H_0$,
     one gets 
    \begin{align*}
\{\pi^\perp, h\pi\} - 0 & = \{\pi^\perp,\pi\}H_0 -\pi^\perp\{\pi,H_0\},
    \end{align*} 
  whence 
  $-\pi^\perp\{\pi,h\}\pi = -\pi^\perp \{\pi,H_0\}\pi.$
Finally, for concluding the construction of $H_1^{\rm adia}$, It remains to check that 
$$ \left( (H_0-h) \Pi_1+ \frac 1{2i} \{ H_0-h,\pi\} +i\partial_t\pi\right) \pi^\perp=0$$
which comes from the latter observation about $\{H_0,\pi\}$.
}

\medskip 

Now that $H_0^{\rm adia}$ and $H_1^{\rm adia}$ are constructed, one uses a recursive argument: assume that one has constructed $H_j^{\rm adia}$ for $0\leq j\leq N$ with $H_j^{\rm adia}\in \bdS ^{-2j}_\delta$ for $j\in\{2,\cdots ,N\} $ and such that    has~\eqref{adia:form_new} holds up to $\mathcal \O(\eps^{N+1})$. Let us construct~$H_{N+1}^{\rm adia}$. Setting as in Notation~\ref{def:Nseries}
$$H^{{\rm adia},\eps, N}=\sum_{j=1}^N \eps^j H_j^{\rm adia}$$
we write 
$$ (H^{{\rm adia},\eps,N} -H_0-\eps H_1)\circledast \Pi^{\eps,N} = \eps^{N+1}T_{N}+\O(\eps^{N+2})$$
with $T_N\in \bdS_{\delta}^{-2N-3}$ and we look for $H^{\rm adia}_{N+1}$ such that 
$\pi  H^{\rm adia}_{N+1}= T_{N}$.  This is doable as long as $\pi^\perp T_{N}=0$, which comes form the observation that 
$$ (H_0+\eps H_1- H^{{\rm adia},\eps,N})\circledast \Pi^{\eps,N} \circledast (1- \Pi^{\eps,N})= \eps^{N+1}  \pi^\perp T_{N}+ \O(\eps^{N+1})=\O(\eps^{N+1})$$
by the properties of superadiabatic projectors. Besides, $H^{\rm adia}_{N+1}\in \bdS_{\delta}^{-2N-3}$, which fits with~(ii) of Definition~\ref{def:symbol_Sdelta}

\medskip 

For concluding the proof, we observe that Point (3) comes from Lemma~\ref{thm:prodest}, and Point 
  (4)  from~\eqref{eq:H1adiab}.
   \end{proof}


 \chapter{{Propagation in adiabatic regimes with small gap}}\label{sec:prop}

  In this section, we prove Theorem~\ref{th:WPmain} outside the gap region, i.e. the following statement:

  \begin{theorem}\label{th:WPmain_hors_gap}
Let $(t_0,z_0)$ and $T>0$ be  such that for $\ell\in\{1,2\}$, we have $\Phi^{t,t_0}_{h_\ell}\in {|f(t,z)|> \delta}$ on the interval $[t_0,t_0+T]$. Let $\psi^\eps_0$ be a polarized wave packet as in~\eqref{def:initial_data} and
$\psi^\eps(t)$ be the solution of  \eqref{eq:sch} with initial datum $\psi^\eps_0$.
There exist $\kappa_0\in\N$ and  two  families of  differential operators  of degree $\leq 3j$  with time dependent smooth vector-valued coefficients,  $\left(\Vec B_{\ell,j}(t) \right)_{j\in\N} $, $\ell\in\{1,2\}$, satisfying~\eqref{B0} and~\eqref{B1},   such that setting 
for $\ell\in\{1,2\}$,
 \begin{align} \label{eq:psiepsell}
&\psi_\ell^{\eps,N}(t) =   {\rm e}^{\frac i\eps S_{\ell }(t,t_0,z_0)}
\wp^\eps_{z_\ell(t)}\left(f^\eps_\ell(t)\right), 
\end{align}
with 
\begin{align*}
f^\eps_\ell(t) & = 
{\mathcal R}_\ell (t,t_0) \, {\mathcal M}[F_\ell( t,t_0)]  \sum_{0 \leq j\leq N}\eps^{j/2}\,\Vec B_{\ell,j}(t) f_0 ,\;\;\ell\in\{1,2\},
\end{align*}
 one has the following property: 
 for all $k,N,M\in\N$, there exists   $C_{M,N,k}>0$ such  for all  $t\in I_\delta$
\[
\left\Vert\psi^\eps(t) - \left(\psi^{\eps,N}_1(t)+\psi^{\eps,N}_2(t) \right)\right \Vert_{\Sigma^k_\eps}
  \leq C_{M,N,k}\left(\frac{\sqrt\eps}{\delta}\right)^{N+1}\delta^{-2\kappa_0} .
  \]
\end{theorem}

In the next chapter, we will  prove Theorem~\ref{th:WPmain}, by applying this result on the intervals $[t_0,t_1^\flat(t_0,z_0)-\delta]$ and on $[t_1^\flat(t_0,z_0]+\delta,t_0+T]$, with different data at $t=t_1^\flat(t_0,z_0]+\delta$ that will be calculated in the next chapter. Assumption~\eqref{eq:times} guarantees that the hypothesis  of  Theorem~\ref{th:WPmain_hors_gap} are satisfied on these intervals.  The parameter $\delta$ will be taken afterwards as $\delta\approx\eps^\alpha$ with $\alpha<\frac 12$ as suggested by the statement.
\medskip

The proof relies on the  use of the superadiabatic projectors constructed in Section~\ref{sec:superadiab}.
In order to explain carefully each step of the proof, we start by proving the propagation
 faraway from the crossing area in Section~\ref{sec:faraway}. That allows us  to settle the arguments, before doing it  precisely close to $\Upsilon$ in Section~\ref{CCA}. 
 All along Section~\ref{sec:prop}, we will use that in the zone where the wave packets $(\psi^{\eps,N}_\ell(t))_{\eps>0}$ are microlocalized, the  Assumption~\ref{hyp:NCdelta} is satisfied. This comes from  the following dynamical Assumption~\ref{hyp:DA} that will be satisfied in different domains of interest.  

\begin{assumption}[Dynamical assumption]\label{hyp:DA}
  We say that $\Omega_1$ and $t_1$ satisfy the dynamical assumption $(DA)$  for the mode $h_\ell$ and the set $\Omega$ if we have 
 \begin{enumerate}
 \item[$(DA)$] 
   $\Phi^{t,t_0}_{h_\ell}(\Omega_1)\subset\Omega$ 
for all  $t\in[t_0, t_1]$.
\end{enumerate}
\end{assumption}

\section{Propagation far away from the crossing area}\label{sec:faraway}
In this section, we analyze the propagation of wave packets in a region where the gap is bounded from below. It gives the opportunity to introduce the method that we shall use in the next section for a small gap region. So, 
we fix $\delta=\delta_0$, $\delta_0>0$  small  but independent  on $\eps$ and we
work in the open set 
$$\Omega:=\{z\in\R^{2d},\; \vert h_2(t,z)-h_1(t,z)\vert>\delta_0,\;\;\forall t\in[t_0, t_1]\},\;\;\mathcal D=I\times \Omega$$
where $I$ is an open interval of $\R$ containing $[t_0,t_1]$ and where the gap condition is also satisfied.
We associate with~$H^\eps$ the formal series  of Theorems~\ref{adia1} and~\ref{quantpro} for each of the modes: 
$$\Pi^\eps_\ell=\sum_{j\geq0} \eps^j \Pi_{\ell, j}\;\;{\rm and}\;\; H^{{\rm adia},\eps}_\ell = \sum_{j\geq 0} \eps^jH^{\rm adia}_{\ell,j}$$
and we will use the notation introduced in~\eqref{def:Nseries}. 

With $z_0\in\Omega$, we associate the open sets $\Omega_0$, $\Omega_1$, $\Omega_2$ and $\Omega_3$ such that 
$$z_0\in \Omega_0\Subset\Omega_2\Subset\Omega_1\Subset\Omega_3\Subset\Omega $$
where $\Omega_0$ and $\Omega_2$ are constructed so that 
for any initial data  $z\in\Omega_0$  the flows
are staying in $\Omega_2$: 
$$\Phi_{h_\ell}^{t,t_0}(z)\in\Omega_2,\;\; \forall t\in[t_0, t_1],\;\;\forall z\in\Omega_0,\;\;\forall \ell\in\{1,2\}.$$

We associate cut-offs to these subsets. We take 
 $\chi_0\in C_0^\infty(\Omega_0)$  with $\chi_0=1$ near $z_0$. Then, we choose  $K_0$  a compact neighborhood  of $z_0$ in $\Omega_0$, which implies that  $\Omega_2$ is a neighborhood of 
\begin{equation}\label{def:tildeK0}
\widetilde K_{\ell,0}:= \{\Phi_{h_\ell}^{t,t_0}(K_0),\; t_0\leq t\leq t_1\},\;\;\ell\in\{1,2\}.
\end{equation}
So we can choose $\chi_2\in C_0^\infty(\Omega)$ with $\chi_2=1$ on $\widetilde K_0=\widetilde K_{1,0}\cup\widetilde K_{2,0}\subset\Omega_2$. 
Finally, we take $\chi_1\in C_0^\infty(\Omega_1)$ and $\chi_3\in C_0^\infty(\Omega_3)$ with $\chi_1=1$ on ${\rm supp}(\chi_2)\subset\Omega_2$  and  $\chi_3=1$ on ${\rm supp} (\chi_1)\subset \Omega_1$.

\smallskip 

For $\ell\in\{1,2\}$, we set 
\[
\widetilde H^{{\rm adia},\eps,N}_\ell(t)=\chi_3 \, H^{{\rm adia},\eps,N}_\ell(t),
\]
which is a smooth subquadratic Hamitonian, 
and we consider ${\mathcal U}^{{\rm adia},\eps,N }_{\ell}(t,s)$  the propagator associated  with the Hamiltonian $\chi_3\, H_\ell^{{\rm adia}, \eps, N}(t)$.

\smallskip

The next result is the usual adiabatic decoupling that results from the analysis of Chapter~\ref{chap:2}. We sometimes shorten ${\rm op}_\eps(\chi_j)$ in $\widehat \chi_j$, $j\in\{0,1,\cdots,3\}$.

\begin{proposition}[adiabatic decoupling - I]\label{far} 
Let $k, N\in\N$ and $t\in[t_0,t_1]$.
\begin{enumerate}
\item[(i)] For any $\ell\in\{1,2\}$, we have in $\mathcal L(\Sigma^k_\eps)$,
  \begin{align}\label{appadia}
   &  \left(i\eps\partial_t -\widehat H^\eps(t)\right){\rm op}_\eps\left(\chi_1\Pi^{\eps,N}_\ell (t)\right)\,{\rm op}_\eps(\chi_2) =\\
   \nonumber
&\qquad      {\rm op}_\eps\left(\chi_1\Pi^{\eps,N} _\ell (t)\right)\, 
\left(i\eps\partial_t -{\rm op}_\eps\left(\widetilde H^{{\rm adia},\eps,N}_\ell(t)\right)\right) \,{\rm op}_\eps(\chi_2) +{\mathcal O}(\eps^{N+1}).
   \end{align}
  \item[(ii)]  Let $\psi^\eps_0\in \Sigma^k_\eps$ such that 
  $\widehat\chi_0\psi^\eps_0 =\psi^\eps_0 +{\mathcal O}(\eps^{N+1})$ in $\Sigma^k_\eps$.
   Set
    $$\psi^{\eps,N}_\ell (t) = {\rm op}_\eps\left(\chi_2\Pi_\ell^{\eps,N}(t)\right)\, {\mathcal U}^{{\rm adia}, \eps,N}_\ell (t,t_0)
    \, {\rm op}_\eps\left(\chi_0\Pi_\ell^{\eps,N}(t_0)\right)\psi^\eps_0,\;\;\ell\in\{1, 2\}.$$ 
   Then we have in $\Sigma^k_\eps$,
 \beq\label{addec}
   {\mathcal U}^\eps_H(t,t_0)\psi^\eps_0 = \psi^{\eps,N}_1 (t) + \psi^{\eps,N}_2 (t) + {\mathcal O}(\eps^{N+1}),\; \forall t\in[t_0, t_1].
   \eeq
\end{enumerate}
 \end{proposition}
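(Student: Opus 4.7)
The plan is to deduce point~(i) from the symbolic identity provided by Theorem~\ref{adia1} truncated at order~$N$, and then to derive (ii) from (i) by a Duhamel comparison applied to the approximate solution $\Phi^{\eps,N}(t) := \psi^{\eps,N}_1(t) + \psi^{\eps,N}_2(t)$.

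For (i), I would start from the formal-series identity $\Pi^\eps_\ell \circledast (i\eps\partial_t - H^\eps) = (i\eps\partial_t - H^{{\rm adia},\eps}_\ell)\circledast\Pi^\eps_\ell$ furnished by Theorem~\ref{adia1}. Truncating at order~$N$ and applying Lemma~\ref{thm:prodest} with $\delta=\delta_0$ fixed, the defect is of the form $\eps^{N+1}r^\eps_\ell(t)$ with $r^\eps_\ell(t)\in\bdS_{\eps,\delta_0}$, i.e.\ bounded together with all derivatives uniformly in~$t\in[t_0,t_1]$. Moyal-multiplying on the left by~$\chi_1$ and on the right by~$\chi_2$ and then quantizing, the Calder\'on--Vaillancourt estimate~\eqref{est:CV} of Proposition~\ref{prop:fifi3} controls this defect by $O(\eps^{N+1})$ in $\mathcal{L}(\Sigma^k_\eps)$. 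The replacement of $H^{{\rm adia},N,\eps}_\ell$ by $\chi_3\,H^{{\rm adia},N,\eps}_\ell$ on the right-hand side is justified by the cut-off nesting: since $\chi_3\equiv 1$ on $\mathrm{supp}\,\chi_1$, the symbol $(1-\chi_3)H^{{\rm adia},N,\eps}_\ell$ vanishes on a neighborhood of $\mathrm{supp}\,\chi_1$, so the standard non-stationary phase principle for Moyal products with disjoint supports yields $\chi_1\Pi^{N,\eps}_\ell\circledast (1-\chi_3)H^{{\rm adia},N,\eps}_\ell = O(\eps^\infty)$ in every $\bdS_{\eps,\delta_0}$-seminorm.

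For (ii), the strategy is to check that $\Phi^{\eps,N}(t_0) = \psi^\eps_0 + O(\eps^{N+1})$ and $(i\eps\partial_t - \widehat H^\eps)\Phi^{\eps,N}(t) = O(\eps^{N+1})$ in~$\Sigma^k_\eps$. The initial condition follows from $\sum_{\ell=1,2}\Pi^{N,\eps}_\ell = \1 + O(\eps^{N+1})$, the relation $\chi_1\equiv 1$ on $\mathrm{supp}\,\chi_0$, the approximate-projector property $\Pi^{N,\eps}_\ell\circledast\Pi^{N,\eps}_\ell = \Pi^{N,\eps}_\ell + O(\eps^{N+1})$, and the hypothesis $\widehat\chi_0\psi^\eps_0 = \psi^\eps_0 + O(\eps^\infty)$. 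To treat the equation, I would first establish an Egorov-type microlocalization
\begin{equation*}
\mathcal U^{{\rm adia},\eps,N}_\ell(t,t_0)\op_\eps(\chi_0\Pi^{N,\eps}_\ell(t_0)) = \op_\eps(\chi_2)\,\mathcal U^{{\rm adia},\eps,N}_\ell(t,t_0)\op_\eps(\chi_0\Pi^{N,\eps}_\ell(t_0)) + O(\eps^\infty),
\end{equation*}
uniformly in $t\in[t_0,t_1]$. This allows inserting $\op_\eps(\chi_2)$ between $\op_\eps(\chi_1\Pi^{N,\eps}_\ell(t))$ and the adiabatic evolution in the definition of $\psi^{\eps,N}_\ell(t)$, bringing us into the form required by~(i). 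Combining~(i) with the adiabatic equation $i\eps\partial_t \mathcal U^{{\rm adia},\eps,N}_\ell = \op_\eps(\chi_1 H^{{\rm adia},N,\eps}_\ell)\mathcal U^{{\rm adia},\eps,N}_\ell$ and the disjoint-support identity $\op_\eps(\chi_3 H^{{\rm adia},N,\eps}_\ell)\op_\eps(\chi_2) = \op_\eps(\chi_1 H^{{\rm adia},N,\eps}_\ell)\op_\eps(\chi_2) + O(\eps^\infty)$ then produces the desired equation estimate. Duhamel's formula
\begin{equation*}
\mathcal U^\eps_H(t,t_0)\psi^\eps_0 - \Phi^{\eps,N}(t) = \mathcal U^\eps_H(t,t_0)[\psi^\eps_0 - \Phi^{\eps,N}(t_0)] - \frac{1}{i\eps}\int_{t_0}^t \mathcal U^\eps_H(t,s)(i\eps\partial_s - \widehat H^\eps(s))\Phi^{\eps,N}(s)\,ds,
\end{equation*}
combined with the uniform boundedness of $\mathcal U^\eps_H(t,t_0)$ on $\Sigma^k_\eps$ that follows from the subquadratic assumption on $H^\eps$, then yields the conclusion~\eqref{addec}.

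The main obstacle is the Egorov microlocalization step. Although $H^{{\rm adia},\eps,N}_\ell$ is matrix-valued, its principal symbol $\chi_1 h_\ell\,\1$ is scalar, so the underlying classical flow is that of $h_\ell$; by the geometric construction, this flow maps $\mathrm{supp}\,\chi_0\subset\Omega_0$ into $\widetilde K_0\subset\{\chi_2\equiv 1\}$ for every $t\in[t_0,t_1]$. The sub-principal matrix corrections produce only $O(\eps)$ commutators that do not broaden the micro-support at the semiclassical scale, and a standard iterated Egorov argument (as in \cite{corobook}, Chapter~14, or \cite{BR}) delivers the required $O(\eps^\infty)$ localization, the norms in $\mathcal{L}(\Sigma^k_\eps)$ being controlled by~\eqref{est:CV}.
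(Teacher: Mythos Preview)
Your approach is essentially that of the paper: part~(i) comes from truncating Theorem~\ref{adia1} and controlling the Moyal remainders via Lemma~\ref{thm:prodest}, and part~(ii) uses an Egorov localization to insert the cut-off and then compares with the exact evolution. The paper's proof of~(ii) is terser (it removes $\widehat\chi_1$ rather than inserting $\widehat\chi_2$, and does not spell out Duhamel), but the mechanism is the same.

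One point to flag: your Duhamel formula carries the factor $\tfrac{1}{i\eps}$, so a source term of size $O(\eps^{N+1})$ in the equation yields only $O(\eps^{N})$ in~\eqref{addec}, one power short of the claim. This is harmless in practice---$N$ is arbitrary, so you can run the superadiabatic construction at level $N+1$ and still state the conclusion at level $N$---but as written your argument delivers $O(\eps^N)$, not $O(\eps^{N+1})$. The paper's proof is sufficiently compressed that it is not clear it avoids this loss either; in any case it is the standard fix.
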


 \begin{remark}\label{expand}
 \begin{enumerate}
 \item   The assumption satisfied by $(\psi^\eps_0)_{\eps>0}$ in (ii) of  Proposition~\ref{far} is sometimes referred in the literature as  having  a compact  semi-classical wave front  set, or being mcrolocalized in $\Omega_0$.
 \item 
  In the proof below, the reader will notice that  we do not need to assume  that $H^\eps(t)$ is sub-quadratic,  we only need to know that $\widehat H^\eps(t)$  defines a unitary Schr\"odinger propagator in $L^2(\R^d,\C^m)$. However, we use the boundedness of the derivatives of the projectors.
  \end{enumerate}
    \end{remark}

  \begin{proof}   
     (i) We fix $\ell\in\{1,2\}$. Using Theorem~\ref{adia1} and the fact that $\chi_3\chi_1=\chi_1$, we obtain
   \begin{align*}
      (i\eps\partial_t - H^\eps (t))\circledast \left(\chi_1\Pi^{\eps,N }_\ell(t) \right)& = \chi_1     (i\eps\partial_t - H^\eps (t))\circledast \left(\chi_1\Pi^{\eps,N }_\ell(t) \right)
     +\eps^{N+1} \widehat R^\eps_N\\
     &= \chi_1  \left(\Pi^{\eps,N }_\ell(t) \right)\circledast \left(i\eps\partial_t -H^{{\rm adia},\eps,N}_\ell (t)\right)
      +\eps^{N+1}\widehat R^\eps_N\\
      & =   \left(\chi_1\Pi^{\eps,N }_\ell(t) \right)\circledast \left(i\eps\partial_t -\widetilde H^{{\rm adia},\eps,N}_\ell (t)\right)
  +\eps^{N+1}\widehat R^\eps_N
  \end{align*}
  where the operators $\widehat R_N^\eps$-s are rest terms that may change from one line to the other one. Their symbols  all satisfy $\chi_2 R_N^\eps=0$. When quantizing the symbols, the relation~\eqref{appadia} follows from Theorem~\ref{thm:moyalest} and Corollary~\ref{cor:kappa0} (with $\delta=1$).
      \medskip 
            
 (ii) We calculate for $\ell\in\{1,2\}$ the quantity $(i\eps\partial_t -\widehat H^\eps) \psi^{\eps,N}_\ell(t)$. By Point (i), and because $\chi_2\chi_1=\chi_2$, we have in $\Sigma^k_\eps$
\begin{align*}
    (i\eps\partial_t -\widehat H^\eps) \,\widehat{\chi_ 2\Pi^{\eps,N}_\ell}&=(i\eps\partial_t -\widehat H^\eps) \, \widehat{\chi_1\Pi^{\eps,N}_\ell}\,  \widehat \chi_2+{\mathcal O}(\eps^{N+1})\\
    &= \widehat{\chi_1\Pi^{\eps,N}_\ell} \,(i\eps\partial_t -\widetilde H_\ell^{{\rm adia},\eps,N})\, \widehat \chi_2+{\mathcal O}(\eps^{N+1}).
\end{align*} 
Therefore, 
\begin{align*}
(i\eps\partial_t -\widehat H^\eps) \psi^{\eps,N}_\ell(t)  = 
\widehat{\chi_1\Pi^{\eps,N}_\ell} \, (i\eps\partial_t -\widetilde H_\ell^{{\rm adia},\eps,N})\, \widehat \chi_2\, 
{\mathcal U}^{{\rm adia}, \eps,N}_{\ell}(t,t_0)\, \widehat{\chi_0\Pi_\ell^{\eps,N}} \,\psi_0^\eps.
\end{align*}
We now study $\widehat \chi_2\,
{\mathcal U}^{{\rm adia}, \eps,N}_{\ell}(t,t_0)\,\widehat{\chi_0\Pi_\ell^{\eps,N}}$. 
 We  recall that
      $\chi_2$ is identically equal to~$1$ on the compact set~$\widetilde K_0$, defined in~\eqref{def:tildeK0}, with  $K_0={\rm supp}(\chi_0)$. 
      Hence, using Egorov Theorem of Appendix \ref{app:egorov} and its notation, we have for $\ell\in\{1,2\}$
      \begin{align}
      \nonumber
      \widehat\chi_2\, {\mathcal U}^{{\rm adia}, \eps,N}_{\ell}(t,t_0) \, \widehat{\chi_0\Pi_\ell^{\eps,N}} & \, = 
        {\mathcal U}^{{\rm adia}, \eps,N}_\ell (t,t_0) \, {\mathcal U}^{{\rm adia}, \eps,N}_\ell(t_0,t)\, \widehat\chi_2
      \, {\mathcal U}^{{\rm adia}, N, \eps}_\ell(t,t_0) \, \widehat{\chi_0\Pi_\ell^{\eps,N}} \\
      \nonumber
      & \, = {\mathcal U}^{{\rm adia}, \eps,N}_\ell(t,t_0) \, 
      \, \widehat{\chi_1^{\eps, N}(t,t_0)}\,\widehat{\chi_0\Pi_\ell^{\eps,N}} +{\mathcal O}(\eps^{N+1})
    \\
    \label{uadia1}
    & \, = {\mathcal U}^{{\rm adia}, \eps,N}_\ell(t,t_0) \, 
      \, \widehat{\chi_0\Pi_\ell^{\eps,N}}  +{\mathcal O} (\eps^{N+1}).
\end{align}
We are left with 
\begin{align*}
(i\eps\partial_t -\widehat H^\eps) \psi^{\eps,N}_\ell(t) & = 
\widehat{\chi_1\Pi^{\eps,N}_\ell} (i\eps\partial_t -\widetilde H_\ell^{{\rm adia},\eps,N})\,
{\mathcal U}^{{\rm adia}, \eps,N}_{\ell}(t,t_0)\, \widehat{\chi_0\Pi_\ell^{\eps,N}} \,\psi_0^\eps +{\mathcal O} (\eps^{N+1})\\
&=
{\mathcal O} (\eps^{N+1}).
\end{align*}
Since $\psi^\eps(t)$ solves $(i\eps\partial_t -\widehat H^\eps) \psi^\eps(t)=0  $, we now compare the data at $t=t_0$. 
By Theorem~\ref{adia1} and because $\chi_0\chi_2=\chi_0$, we have
      $$
      \widehat\chi_0 ={\rm op}_\eps\left(\chi_2\Pi_1^{\eps,N}+\chi_2\Pi_2^{\eps,N}\right)\widehat\chi_0+{\mathcal O}(\eps^{N+1}).
      $$
      Therefore, 
      \[
      \psi^\eps(t_0)= \widehat \chi_0 \psi^\eps(t_0)+  {\mathcal O}(\eps^{N+1})
      = \psi_\ell^{\eps,N} (t_0)+\psi_\ell^{\eps,N} (t_0)+{\mathcal O}(\eps^{N+1}).
      \]
 We deduce 
\[
\psi^\eps(t)=  \psi^{\eps,N}_1(t)  +(i\eps\partial_t -\widehat H^\eps) \psi^{\eps,N}_2(t)  +{\mathcal O}(\eps^{N+1}),
\]
which concludes the proof.
 \end{proof}

The adiabatic decoupling of Proposition~\ref{far}  and Egorov Theorem (see Proposition~\ref{EgD}) allow to give an explicit description at any order of the solutions of equation~\eqref{eq:sch} for initial data that are focalized wave packets.

\subsection{Application to adiabatic propagation of wave packets}\label{sec54first}

We use  the techniques of Appendix~\ref{app:C} that are classic when $\delta=1$, see for example the recent edition of~\cite{corobook}. One constructs 
 two maps ${\mathcal R}_1 (t,t_0,z)$ and ${\mathcal R}_2(t,t_0,z)$ as introduced in~\eqref{def:Rell} and obtains 
the following result, see also Proposition~\ref{evadia}).

\begin{theorem}\label{WPexp1}
 Assume that $\psi^\eps_0$ is a polarized wave packet:  
$$\psi^\eps_0= \wp^\eps_{z_0}(f_0)\,\vec V_0,\;\;\mbox{with} \;\;f_0\in{\mathcal S}(\R^d)\;\;\mbox{and}\;\;\vec V_0\in\C^m.$$
Let $N\geq 1$ and $k\geq 0$. Then, there exists a constant $C_{N,k}>0$ such that  
 the solution $\psi^\eps(t)$ of  \eqref{eq:sch} satisfies for all $t\in [t_0,t_1]$, 
\[
\left\Vert\psi^\eps(t) - \left(\psi^{\eps,N}_1(t)+\psi^{\eps,N}_2(t) \right)\right\Vert_{\Sigma^k_\eps}
  \leq C_{N,k} \, \eps^{\frac{N+1}{2}},\;
\]
  with for $\ell\in\{1,2\}$ and for all $N\geq 0$, 
  \begin{equation}\label{wpapproxfar}
\psi_\ell ^{\eps,N}(t)  =   {\rm e}^{\frac i\eps S_\ell (t,t_0,z_0)}\, 
\wp^\eps_{z_\ell (t)}\left({\mathcal R}_\ell(t,t_0,z_0) {\mathcal M}[F_\ell (t,t_0)]\sum_{0 \leq j\leq N}\eps^{j/2}\Vec B_{\ell,j}(t) f_0\right),
\end{equation}
   where $\Vec B_{\ell,j}(t)$  are differential operators of degree $\leq 3j$ with vector-valued  time-dependent coefficients  satisfying~\eqref{B0} and~\eqref{B1}.
 \end{theorem}

 Theorem~\ref{WPexp1} is a simple version of Theorem~\ref{th:WPmain_hors_gap} in the case where we have no crossing in the time interval $[t_0, t_1]$. However, the main ideas of the proof of Theorem~\ref{th:WPmain_hors_gap} are already there. 

 \begin{proof}
       By Proposition~\ref{far}, we have 
       \[
       \psi^\eps(t)=\mathcal U_H^\eps (t,t_0)\psi^\eps_1=
       \sum_{\ell=1,2}
       {\rm op}_\eps\left(\chi_2\Pi_\ell ^{\eps,N }(t)\right)
{\mathcal U}^{{\rm adia}, \eps,N}_\ell (t,t_0)
\, {\rm op}_\eps\left(\chi_0\Pi_\ell^{\eps,N}(t_0)\right)
\psi^\eps_1+{\mathcal O}\left( \eps^{N+1}\right).       \]
It is thus enough to prove that for $\ell\in\{1,2\}$, $\psi_\ell^{\eps,N}(t)$ given by~\eqref{wpapproxfar} satisfies 
\[
\psi_\ell^{\eps,N}(t)={\rm op}_\eps\left(\chi_2\Pi_\ell ^{\eps,N }(t)\right)
{\mathcal U}^{{\rm adia}, \eps,N}_\ell (t,t_0)
\, {\rm op}_\eps\left(\chi_0\Pi_\ell^{\eps,N}(t_0)\right)
\psi^\eps_1+{\mathcal O}\left( \eps^{N+1}\right).
\]
The micro-localization of wave packets implies 
\[
{\rm op}_\eps\left(\chi_0\Pi_\ell^{\eps,N}(t_0)\right)
\psi^\eps_1=\wp^\eps_{z_0}(\vec f^\eps_\ell)
+{\mathcal O}\left( \eps^{N+\frac 12}\right)
\]
with 
\begin{align*}
    &
\vec f^\eps_\ell=\sum_{j=0}^{N} \eps^{\frac j2} \vec f_{\ell,j},\;\;\vec f_{\ell,j}\in\mathcal S(\R^{d},\C^m),\\
&\vec f_{\ell,0}=f_0\,\pi_\ell(t_0,z_0)\vec V_0\;\;\mbox{and}\;\;\vec f_{\ell,1}=\nabla\pi_\ell(t_0,z_0)\cdot\widehat z \,f_0 \,\vec V_0.
\end{align*}
Applying Theorem~\ref{evadia} with $k = h_\ell$ and $K_1 = H^{\rm adia}_{\ell,1}$, we obtain that
\begin{align*}
    {\mathcal U}^{{\rm adia}, \eps,N}_\ell (t,t_0)&
\, {\rm op}_\eps\left(\chi_0\Pi_\ell^{\eps,N}(t_0)\right)
\psi^\eps_1 = {\mathcal U}^{{\rm adia}, \eps,N}_\ell (t,t_0)
\, \wp^\eps_{z_0}(\vec f^\eps_\ell)
+{\mathcal O}\left( \eps^{N+\frac 12}\right)\\
&={\rm e}^{\frac i\eps S_\ell(t,t_0,z_0)}
\wp^\eps_{z_\ell(t)}\left(\sum_{j=0}^{N}
\eps^{\frac j2}{\mathcal R}_\ell(t,t_0)\,
         {\mathcal M}[F_\ell(t,t_0)]\vec U_{\ell, j}(t)\right)
+{\mathcal O}\left( \eps^{\frac{N+1}2}\right),
\end{align*}
   with
 \begin{align*}
\vec U_{\ell,0} (t) &=  \vec f_{\ell,0}
         =\pi_\ell(t_0,z_0)\vec V_0 f_0
         =\vec B_{\ell,0}f_0,
\end{align*}
using~\eqref{B0}, and 
\begin{align*}
    \vec U_{\ell,1} (t)   & =  \vec f_{\ell,1}+ 
         {\bf b}_1(t,t_0)\vec f_{\ell,0} = \nabla\pi_\ell(t_0,z_0)\cdot\widehat z \,f_0 \,\vec V_0+ 
         {\bf b}_1(t,t_0)\pi_\ell(t_0,z_0)\vec V_0 f_0
  \end{align*}
  where ${\bf b}_1(t,t_0)$ is given \begin{align*}
  {\bf b}_1(t,t_0)& = \frac 1i \sum_{\vert\alpha\vert=3}\frac{1}{\alpha!}\int_{t_0}^t\partial_z^\alpha h_\ell(s,z_\ell(s)){\rm op}_1^w((F_\ell(s, t_0)z)^\alpha )ds \, \1_{\C^m}\nonumber\\
 &\qquad + \frac 1i\int_{t_0}^t 
 \mathcal R(t_0,s)\, 
 \partial_z H_{\ell,1}^{\rm adia}(s,z_\ell(s)\, 
 {\rm op}_1^w(F_\ell(s, t_0)z ) \, 
 \mathcal R(s,t_0) \, ds.
\end{align*}
In view of~\eqref{B1}, we obtain 
\[
 \vec U_{\ell,1} (t) =\vec B_{\ell,1}(t) f_0,
\]
whence 
\[
 {\mathcal U}^{{\rm adia}, \eps,N}_\ell (t,t_0)
\, {\rm op}_\eps\left(\chi_0\Pi_\ell^{\eps,N}(t_0)\right)
\psi^\eps_1=
 \psi_\ell^{\eps,N}(t)
+{\mathcal O}\left( \eps^{N+\frac 12}\right)
\]
and we conclude using again the microlocalisation of wave packets and observing that
\[
{\rm op}_\eps\left(\chi_2\Pi_\ell ^{\eps,N }(t)\right)\psi_\ell^{\eps,N}(t)=\psi_\ell^{\eps,N}(t)+{\mathcal O}\left( \eps^{N+\frac 12}\right).
\]
   \end{proof}


\section{Propagation close to the crossing area}\label{CCA}
Our goal in this section is to extend the analysis of the preceding section
up to a time $t^\flat-c\delta$ for some $c>0$ and $\delta\ll 1$.
We follow the same strategy as in the preceding section and carefully check the dependence in~$\delta$ of the estimates. 
\smallskip

For proving Theorem~\ref{th:WPmain_hors_gap},  we consider a wave packet at initial time $t_0$ that is focalized along the mode $h_1$ at some point $z_0$. 
We let it evolve along that mode according to Theorem~\ref{WPexp1}, up to $(t_1, z_1)$ conveniently chosen and we consider $\psi^\eps(t_1)$  as a  new initial data, knowing that it is a  wave-packet, modulo ${\mathcal O}(\eps^\infty)$. 
We  choose a point $(t_1, z_1)$  close enough to $(t^\flat, \zeta^\flat)$ such that $\vert t_1-t^\flat\vert +\vert z_1-\zeta^\flat\vert\leq \eta_0$ where $\eta_0$ is defined in the next Lemma.

\begin{lemma} \label{tras_lem}
Assume $(t^\flat,\zeta^\flat)$ is a generic smooth crossing point as in Definition~\ref{def:smooth_cros} and consider $\delta\in (0,1]$.
\begin{enumerate}
\item 
There exist
     $\eta_0>0$ and $c_0>0$  such that we have
        $$
    \vert f(t, \Phi_{h_1}^{t,t_1}(z))\vert \geq c_0\vert t-t^\flat\vert\; \mbox { if}\; \;\vert t-t^\flat\vert +\vert z-\zeta^\flat\vert\leq \eta_0. 
    $$
    \item There exists $c, M>0$ such that for all $(t,z)$ satisfying $\left\vert z-\Phi_{h_1}^{t,t_1}(z_1)\right\vert\leq c\delta$, we have 
    $$\vert f(t,z)\vert \geq c_0\delta - Mc\delta\geq \frac{c_0}{2}\delta .$$
    \end{enumerate}
    \end{lemma}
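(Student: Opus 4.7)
\medskip\noindent\textbf{Proof plan.}

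The driving idea is that the genericity condition~\eqref{hyp:codim1} says exactly that $f$ has a non-vanishing time-derivative along the Hamiltonian flow of $h_1$ at the crossing point. So my first step would be to introduce the smooth function
\[
F(t,z) := f\bigl(t,\,\Phi^{t,t_1}_{h_1}(z)\bigr)
\]
defined on a neighbourhood of $(t^\flat,\zeta^\flat)$, and differentiate along the flow: since $\dot\Phi^{t,t_1}_{h_1}(z) = J\nabla_z h_1\bigl(t,\Phi^{t,t_1}_{h_1}(z)\bigr)$, one has $\partial_t F(t,z) = (\partial_t f + \{h_1,f\})\bigl(t,\Phi^{t,t_1}_{h_1}(z)\bigr)$. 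Using $\{f,f\}=0$ and $h_1 = v+f$, this Poisson bracket equals $\partial_t f + \{v,f\}$, which by~\eqref{def:mu} evaluates to $2\mu^\flat\neq 0$ at the crossing point. Continuity then gives $|\partial_t F|\geq |\mu^\flat|$ on a neighbourhood of $(t^\flat,\zeta^\flat)$, after shrinking a first $\eta_0>0$.

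For part (1), my plan is to invoke the implicit function theorem: since $F(t^\flat,\zeta^\flat)=0$ and $\partial_t F(t^\flat,\zeta^\flat)=2\mu^\flat\neq 0$, there is a smooth function $z\mapsto t^\flat(z)$ defined near $\zeta^\flat$, with $t^\flat(\zeta^\flat)=t^\flat$, such that $F(t^\flat(z),z)=0$. Combining this with Taylor's formula in $t$ and the uniform lower bound on $|\partial_t F|$, I get $|F(t,z)|\geq |\mu^\flat|\,|t-t^\flat(z)|$ on a neighbourhood of $(t^\flat,\zeta^\flat)$. To conclude the claim as stated — with $|t-t^\flat|$ rather than $|t-t^\flat(z)|$ — one shrinks $\eta_0$ so that $|t^\flat(z)-t^\flat|\leq \tfrac12|t-t^\flat|$ is enforced in the admissible regime (this is the natural reading, consistent with the geometry of the crossing along the trajectory issued from the point $z_1$ close to $\zeta^\flat$), yielding $|f(t,\Phi^{t,t_1}_{h_1}(z))|\geq c_0|t-t^\flat|$ for a suitable $c_0>0$.

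For part (2), I would argue by a simple perturbation from the point $\Phi^{t,t_1}_{h_1}(z_1)$ lying on the trajectory. Applying part (1) at $z=z_1$ gives
\[
\bigl|f\bigl(t,\Phi^{t,t_1}_{h_1}(z_1)\bigr)\bigr|\geq c_0\,\delta
\]
in the regime $|t-t^\flat|\geq \delta$ which is the one of interest in this chapter. Assumption~\ref{hyp:growthH}(i) (bounded first derivatives of $f$) produces a Lipschitz constant $M>0$ for $z\mapsto f(t,z)$, uniformly in $t\in I$, so that
\[
|f(t,z)|\geq \bigl|f\bigl(t,\Phi^{t,t_1}_{h_1}(z_1)\bigr)\bigr| - M\bigl|z-\Phi^{t,t_1}_{h_1}(z_1)\bigr| \geq c_0\delta - Mc\delta
\]
whenever $|z-\Phi^{t,t_1}_{h_1}(z_1)|\leq c\delta$. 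Choosing $c\leq c_0/(2M)$ gives $|f(t,z)|\geq \tfrac{c_0}{2}\delta$ as announced.

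The main technical point will really be the calibration of $\eta_0$ in part (1): one must absorb the discrepancy between the actual zero-time $t^\flat(z)$ of $F(\cdot,z)$ and the reference time $t^\flat$, using that $(t_1,z_1)$ — and hence the trajectory considered — is close to $(t^\flat,\zeta^\flat)$. Once this is done, part (2) is a direct Lipschitz perturbation argument and requires no extra work beyond invoking the bounded-derivative clause of Assumption~\ref{hyp:growthH}(i).
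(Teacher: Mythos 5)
Your approach---transversality of the trajectory to $\Upsilon$ encoded by $\partial_t f+\{v,f\}=2\mu^\flat\neq 0$, followed by a Lipschitz perturbation for part (2)---is exactly the route the paper takes; its proof is a single sentence invoking this transversality, and you are supplying the details it omits. You also correctly flag the implicit restriction $|t-t^\flat|\gtrsim\delta$ needed for part (2) to be consistent, which the paper leaves unstated.

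Two corrections are in order. First, a base-point slip: since $F(t,z)=f\bigl(t,\Phi^{t,t_1}_{h_1}(z)\bigr)$, the point where $F$ vanishes and $\partial_t F=2\mu^\flat$ is $(t^\flat,z_1)$, not $(t^\flat,\zeta^\flat)$, because $\Phi^{t^\flat,t_1}_{h_1}(z_1)=\zeta^\flat$ whereas $\Phi^{t^\flat,t_1}_{h_1}(\zeta^\flat)\neq\zeta^\flat$ when $t_1\neq t^\flat$. The implicit-function branch $t^\flat(\cdot)$ should therefore be anchored at $z_1$. Second, the calibration step cannot be realized literally: shrinking $\eta_0$ cannot enforce $|t^\flat(z)-t^\flat|\leq\tfrac12|t-t^\flat|$ throughout the admissible set, because for any fixed $z$ with $t^\flat(z)\neq t^\flat$, taking $t=t^\flat(z)$ makes the right side smaller than the left, while at this same $(t,z)$ one has $F(t,z)=0$---so no uniform bound of the form $|F|\geq c_0|t-t^\flat|$ can hold for that pair. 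This is an imprecision in the lemma's own statement: the correct reading of part (1) is either along the reference trajectory ($z=z_1$) or with $t^\flat(z)$ in place of $t^\flat$ when $z$ varies. Since you only invoke part (1) at $z=z_1$ to prove part (2), your argument is sound for what is needed; but the phrase ``shrink $\eta_0$ to absorb the discrepancy'' should be replaced by this restriction, rather than presented as something achievable for all nearby $z$.
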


\begin{proof}
The result comes readily from  the transversality of the curve $t\mapsto  \Phi_{h_1}^{t,t_1}(z)$ to the set $\Upsilon=\{f=0\}$. Recall that this transversality is due to Point (b) of Assumption~\ref{def:smooth_cros}.
\end{proof}

Thanks to Lemma~\ref{tras_lem}, we now work close to the point $(t^\flat, \zeta^\flat)$ and our goal  is  to prove accurate estimates  for the evolution of the solution $\psi^\eps(t)$ of the Schr\"odinger equation  with the initial data $\psi^\eps(t_1)$, 
for $t\in [t_1, t^\flat-c\delta]$.
\smallskip 

We follow the strategy  of the previous section. Therefore, in order to obtain the analogue of Proposition~\ref{far}, we need to be able to evaluate the rest term in the Egorov Theorem in terms of~$\delta$, and to construct adapted $\delta$-dependent cut-off functions.  For estimating the remainders  in the small parameter $\delta$, we use  the symbolic calculus in the classes~${\bf S}_{\eps,\delta}$, in particular 
  the Moyal product rules as stated in Lemma~\ref{thm:prodest} and  the  Egorov Theorem of Proposition~\ref{EgD}. Finally,  the construction of the cut-off functions relies on the fact that due to Point (b) of Assumption~\ref{def:smooth_cros}, we can apply a straightening theorem for the Hamiltonian vector fields associated with the modes.

\subsection{Localization up to the crossing region}
We construct the cut-off functions by using thin tubes along  the classical trajectories.
 We use a  straightening  theorem for non singular vector fields. We set  $D(z_1,\rho_1)=\{|z-z_1|\leq \rho_1\}$  and consider a branch of  trajectory
 \[
 {\mathcal T}_1:=\{\Phi_h^{t,t_1}(z_1),\; t\in[t_1, t_1^+]\},\;\;t_1^+>t^\flat.
 \]

  \begin{lemma}\label{SVF}\cite{arno}
  Let be $P_1$  a transverse hyperplane to the curve  ${\mathcal T}_1$ in $z_1$. 
  There exist $\rho_1>0$  and  $t_1^-<t_1<t^\flat<t_1^+$   such that the map 
  $$(t, z)\mapsto\Phi^{t,t_1}_h(z)$$
   is a diffeomorphism from
  $]t_1^-, t_1^+[\times D(z_1,\rho_1)$ onto a neighborhood ${\mathcal W}_1$ of  ${\mathcal T}_1$  in $ P_1$.
  \end{lemma}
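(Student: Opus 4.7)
The plan is to invoke the classical rectification (flow-box) theorem for the time-dependent Hamiltonian vector field $X_t(z) := J\nabla_z h(t,z)$, which is the content of the cited reference~\cite{arno}. Concretely, I would apply the inverse function theorem to the map
\[
\Psi \colon (t,z) \longmapsto \Phi^{t,t_1}_{h}(z),\qquad z\in P_1,
\]
first locally at $(t_1,z_1)$, and then propagate local invertibility along the compact trajectory segment $\mathcal T_1$ by a continuity and compactness argument.

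The key computation is the differential $d\Psi(t_1,z_1)\colon \R\times P_1\to \R^{2d}$. Its $t$-component is $\partial_t\Phi^{t,t_1}_{h}(z_1)|_{t=t_1}=X_{t_1}(z_1)$, the tangent vector to $\mathcal T_1$ at $z_1$, while its restriction to $P_1$ is $d_z\Phi^{t_1,t_1}_{h}(z_1)|_{P_1}=\mathrm{Id}_{P_1}$. The assumption that $P_1$ is transverse to $\mathcal T_1$ at $z_1$ is precisely $X_{t_1}(z_1)\notin P_1$, so $d\Psi(t_1,z_1)$ is a linear isomorphism. The inverse function theorem then produces $\eta,\rho>0$ and a local diffeomorphism on $]t_1-\eta,t_1+\eta[\times D(z_1,\rho)$.

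Next I would extend the construction to an interval $]t_1^-,t_1^+[$ with $t_1^-<t_1<t^\flat<t_1^+$. For any $s$ in a compact segment $[t_1^-,t_1^+]$, $d\Psi(s,z_1)$ has $t$-component $X_s(\Phi^{s,t_1}_{h}(z_1))$ and $z$-component the symplectic isomorphism $d_z\Phi^{s,t_1}_{h}(z_1)|_{P_1}$. Since $h$ is smooth and subquadratic, $X_s$ is smooth and non-vanishing along the trajectory, so the transversality of $d_z\Phi^{s,t_1}_{h}(z_1)(P_1)$ to $X_s(\Phi^{s,t_1}_{h}(z_1))$ holds at $s=t_1$ and, by continuity, on a neighborhood of $t_1$. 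Choosing $t_1^+$ close enough to $t^\flat$ (permitted because $|t_1-t^\flat|\leq \eta_0$ in the setup of the section), this transversality holds on the whole interval $[t_1^-,t_1^+]$, so $d\Psi(s,z_1)$ is an isomorphism for every such $s$. Applying the inverse function theorem pointwise and using compactness of $[t_1^-,t_1^+]$, I would extract a uniform radius $\rho_1>0$ on which $\Psi$ is a local diffeomorphism.

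The main obstacle I anticipate is upgrading these local diffeomorphisms to a global injectivity statement on $]t_1^-,t_1^+[\times D(z_1,\rho_1)$, because the time interval cannot be shrunk (it must contain $t^\flat$). The natural remedy is to shrink $\rho_1$ further so that, for each $s$, the pushed-forward slice $d_z\Phi^{s,t_1}_{h}(z_1)(P_1)$ is crossed at most once by every trajectory starting in $D(z_1,\rho_1)$; this again relies on the non-vanishing of $X_s$ and its transversality to those slices along $\mathcal T_1$, combined with compactness of $[t_1^-,t_1^+]$. Setting $\mathcal W_1:=\Psi(]t_1^-,t_1^+[\times D(z_1,\rho_1))$, which is open by full-rank of $d\Psi$ and contains $\mathcal T_1$, yields the announced diffeomorphism.
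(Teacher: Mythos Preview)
The paper does not prove this lemma: it simply cites Arnold's textbook~\cite{arno}, treating it as the classical straightening (flow-box) theorem for non-singular vector fields. Your sketch via the inverse function theorem at $(t_1,z_1)$, propagated along the compact trajectory segment by continuity and compactness, is exactly the standard argument one finds there, so your approach is correct and aligned with the intended reference.

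One small remark on your transversality step: in the autonomous case one has $X_s(\Phi^{s,t_1}_h(z_1))=d_z\Phi^{s,t_1}_h(z_1)\,X_{t_1}(z_1)$, so transversality at $s=t_1$ automatically persists for all $s$ without any smallness of the interval. Here the Hamiltonian is time-dependent, so your continuity argument is the right substitute; just be careful that the constraint is $t_1^+>t^\flat$, not $t_1^+$ close to $t^\flat$, and that the smallness comes from the ambient choice $|t_1-t^\flat|\le\eta_0$ already made in the section.
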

  
 Hence for any $z$ in the tube ${\mathcal W}_1$, we have 
 \[
 z=\Phi_h^{\tau(z), t_1}(Y(z))
 \]
  where  $\tau$ and $Y$ are smooth functions of $z$, $\tau(z)\in [t_1, t_1^+]$, $Y(z)\in D(z_1,\rho_1)$.
  \smallskip 
  
  We then define the cut-off functions as follows: consider 
  \begin{itemize}
  \item  $\zeta\in C_0^\infty(]-2,2[)$ equal to 1 in $[-1, 1]$, 
  \item  $\theta\in C^\infty(\R)$ with $\theta(t)=0$ if  $t\leq-1$ and  $\theta(t)=1$ if $t\geq1$,
  \end{itemize}
  we set for $\delta>0$,
 $$
 \chi^\delta(z) = \theta\left(\frac{\tau(z)-t_1^-}{\eta}\right)\left(1-\zeta\right)\left(\frac{\tau(z)-t^\flat}{c\delta}\right)\zeta\left(\frac{\vert z-\Phi_h^{\tau(z),t_1}(z_1)\vert^2}{(C\delta)^2}\right),
 $$
 where $c>0, C>0$,  and $\eta>0$ is  a small enough constant. 
 \smallskip 
 
 By choosing adapted constants $c_j$ and $C_j$} for $j\in\{1,2,3\}$, 
 conveniently, 
 we construct some functions 
 \[
 \chi^\delta_ j\in\mathcal C_0^\infty(\R^{2d},[0,1]), \;\;j\in\{1,2,3\},
 \]
  such that 
 \begin{enumerate}
 \item   $\chi^\delta_j=1$ on $\di{\bigcup_{t_0\leq t\leq t_1}B\left (\Phi_h^{t_0,t}(z_0),c_j\delta\right)}$
 and $\chi^\delta_j$ is supported in $\di{\bigcup_{t_0\leq t\leq t_1}B\left(\Phi_h^{t_0,t}(z_0),2c_j\delta\right)}$,
\item for all $\gamma\in\N^{2d}$, there exists $C_\gamma$ such that   for all $z\in\R^{2d}$ 
 $$\vert\partial_z^\gamma\chi_j^\delta(z)\vert \leq C_\gamma \, \delta^{-\vert\gamma\vert},$$
\item 
  $ \chi_3^\delta=1$ on supp$\chi_1^\delta$ and 
  $\chi_1^\delta=1$ on supp$\chi_2^\delta$.  
  \end{enumerate}
\smallskip
 
Finally, with  $\chi_0\in \mathcal C_0^\infty(\R^{2d},[0,1])$ satisfying $\chi_0=1$
  on  $B(0, 1)$ and $\chi_0(z) =0$ for $\vert z\vert\geq 2$, we associate 
$$\chi^\delta_0(z) = \chi_0\left(\frac{z_1-z}{\delta}\right).$$
And we consider $\chi_4$  a smooth fixed cut-off ($\delta$-independent).

     \subsection{Adiabatic decoupling close to the gap}
  Omitting the mode index, we set 
  \begin{equation}\label{def:tilde_adia}
  \widetilde H^{{\rm adia},\eps,N }_\ell(t) = \chi_4\left(h_\ell(t) +\eps H_{\ell,1}^{\rm adia}(t)\right)+\chi_3^\delta\left(\sum_{2\leq j \leq N}\eps^j H^{\rm adia}_{\ell,j}(t)\right).
  \end{equation}
  Notice that, because the  crossing is smooth, the eigenvalues $h_\ell$, $\pi_\ell$ and the first adiabatic correctors $H_{\ell,1}^{{\rm adia}}$ are  smooth, even  in a neighborhood of $(t^\flat, \zeta^\flat$). 
  \smallskip 
  
  Let
  ${\mathcal U}^{{\rm adia}, \eps,N }_\ell(t,s)$ be  the quantum propagator
 associated with the Hamiltonian  $\widetilde H^{{\rm adia},\eps,N}_\ell(t) $.
  The following result is a consequence of the sharp estimates given in \cite{BR} concerning propagation of quantum observables and proved in the Appendix, see (ii) of the Egorov Theorem~\ref{EgD}.

\begin{proposition}\label{prop:Ego1} Consider the cut-off functions $\chi_0^\delta$ and $\chi_2^\delta$ defined above and set for $t\in [t_1, t^\flat-c\delta]$
 $$
 {\rm op}_\eps(\chi_0^{\delta,\eps}(t,t_1)) :=  {\mathcal U}^{{\rm adia}, \eps,N}_\ell(t_1,t) \, {\rm op}_\eps(\chi^\delta_0)\; {\mathcal U}^{{\rm adia}, \eps ,N}_\ell(t,t_1).$$
Then,   for any $M\geq 1$, $z\in\R^{2d}$ and  $t\in[t_1, t^\flat-c\delta]$,   we have:
$$
(1-\chi_2^\delta)\circledast\chi_0^{\delta,\eps,M}(t,t_1) =  \left(\frac{\eps}{\delta^2}\right)^M\zeta_M(t)\;\;\mbox{with}\;\;  \zeta_M(t,\cdot )\in \bdS_{\delta^2}(\mathcal D).
$$
\end{proposition}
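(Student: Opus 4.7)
The plan is to combine an Egorov-type expansion for the transported symbol $\chi_0^\delta(t,t_1,z)$ in the scaled symbol classes $\bdS_{\eps,\delta^2}(\mathcal D)$ with a geometric support argument that exploits the way the cut-offs $\chi_0^\delta$ and $\chi_2^\delta$ are matched to the classical flow of $h$. Roughly speaking, the essential support of $\chi_0^\delta(t,t_1,\cdot)$ stays inside the thin tube on which $\chi_2^\delta\equiv 1$, so that all formal Moyal coefficients of the product vanish identically, and the only contribution is the Moyal remainder at order $M$, which carries the desired smallness $(\eps/\delta^2)^M$.

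First, I would invoke the refined Egorov theorem (Theorem~\ref{EgD}(ii) and the quantitative estimates of~\cite{BR}) for the propagator ${\mathcal U}^{{\rm adia},\eps,N}(t,t_1)$ associated with $\widetilde H^{{\rm adia},N,\eps}$. Since the principal Hamiltonian coincides with $\chi_4 h$ in a neighborhood of the tube (where $\chi_4 = 1$), and its higher-order corrections lie in $\bdS_{\eps,\delta^2}$ by~\eqref{def:tilde_adia}, this produces an asymptotic expansion
\[
\chi_0^\delta(t,t_1,z)\;\sim\;\sum_{j\geq 0}\eps^j\,a_j(t,z),\qquad a_0(t,z)=\chi_0^\delta\!\bigl(\Phi_h^{t_1,t}(z)\bigr),
\]
with $a_j\in\bdS_{\delta^2}^{-2j}(\mathcal D)$: each subsequent corrector costs a factor $\delta^{-2}$, coming jointly from one Poisson bracket with the Hamiltonian and from the derivatives of the flow, which are bounded by powers of $\delta^{-1}$ on the tube for $t\leq t^\flat - c\delta$.

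Next, the geometric step. The principal term $a_0(t,z)$ is supported in $\Phi_h^{t,t_1}(\mathrm{supp}\,\chi_0^\delta)\subset \Phi_h^{t,t_1}(B(z_1,2\delta))$, which by Lemma~\ref{SVF} and the straightening construction is contained in the $c_2\delta$-tube around $\{\Phi_h^{s,t_1}(z_1): s\in[t_1,t]\}$ for $t\in[t_1,t^\flat-c\delta]$, provided the constants $c,C$ in the cut-offs are adjusted so that $\chi_2^\delta \equiv 1$ on this tube. Since the Egorov transport produces the correctors $a_j$ as iterated Poisson brackets of $a_0$ against the Hamiltonian, and since $\mathrm{supp}(\partial^\gamma a_0) \subset \mathrm{supp}(a_0)$, every $a_j$ is supported in the same tube. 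Consequently $(1-\chi_2^\delta)\cdot a_j \equiv 0$ pointwise for all $j\geq 0$.

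Once pointwise disjointness of supports is established at every order, formula~\eqref{fifi2} shows that each Moyal coefficient $C_k$ in the expansion of $(1-\chi_2^\delta)\circledast a_j$ is a finite sum of products of derivatives of $(1-\chi_2^\delta)$ and of $a_j$, all of which vanish identically. Truncating the Moyal product at order $M$ and applying Lemma~\ref{thm:prodest} with $\delta_A=\delta$ (for $1-\chi_2^\delta\in\bdS_\delta$) and $\delta_B=\delta^2$ (for the $a_j$'s), only the remainder $\eps^{M}R_M^\eps$ survives, and its derivatives are bounded by $\delta^{-2M-\kappa_0}$ times constants independent of $\eps,\delta$; absorbing $\delta^{-\kappa_0}$ into $\zeta_M$ yields the claimed identity with $\zeta_M(t,\cdot)\in\bdS_{\delta^2}(\mathcal D)$. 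I expect the main obstacle to be the careful bookkeeping of the $\delta$-dependence in the Egorov expansion up to the critical time $t^\flat - c\delta$: one must verify, using the transversality in Definition~\ref{def:smooth_cros}(2)(b) and the gap lower bound $|f|\gtrsim \delta$ on the tube, that both the flow map and all its derivatives are controlled by $\delta^{-1}$ only, so that the formal Egorov correctors genuinely lie in $\bdS_{\delta^2}^{-2j}$ and Lemma~\ref{thm:prodest} applies uniformly in $\eps,\delta$ with $\eps/\delta^2\ll 1$.
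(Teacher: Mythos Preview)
Your approach is correct and is exactly what the paper intends: the text merely states that the result ``is a consequence of the sharp estimates given in \cite{BR} concerning propagation of quantum observables (see (ii) of the Egorov Theorem~\ref{EgD})'', and your outline spells this out --- Egorov expansion in the $\delta$-dependent classes, propagation of supports so that every corrector $a_j$ lives where $\chi_2^\delta\equiv 1$, and then the Moyal remainder estimate of Lemma~\ref{thm:prodest}. One minor sharpening: by Proposition~\ref{EgD} applied with $A=\chi_0^\delta\in\bdS_\delta$, the Egorov correctors actually lie in $\bdS_\delta^{-2j}$ (not merely $\bdS_{\delta^2}^{-2j}$), so you may take $\delta_A=\delta_B=\delta$ in Lemma~\ref{thm:prodest}; the stated conclusion $\zeta_M\in\bdS_{\delta^2}$ then follows a fortiori from the inclusion $\bdS_\delta\subset\bdS_{\delta^2}$.
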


  Revisiting  the proof of Proposition \ref{far}, using Lemma~\ref{thm:prodest} for the formal series  $\Pi^\eps_\ell \in \bdS_{\delta^2}^0(\mathcal D) $ and  using (3) of Theorem~\ref{adia1} about $H^{\eps,{\rm adia}}_\ell$, we obtain the following result.

 \begin{proposition} [adiabatic decoupling - II]\label{prop:close}
 With the previous notations, we have the following properties.
 
 (ii) For $t_1\leq t\leq t^\flat-\delta$, we have 
 \begin{align} \label{closenadia}
  &  \left(i\eps\partial_t -\widehat H^\eps(t) \right) \,   
    {\rm op}_\eps\left(\chi_1^\delta\, \Pi^{\eps,N }_\ell(t) \right) \, {\rm op}_\eps(\chi^\delta_2)  \\
    \nonumber 
    & \qquad ={\rm op}_\eps \left(\chi_1^\delta \, \Pi^{\eps,N}(t) \right)
     \left(i\eps\partial_t -{\rm op}_\eps(\widetilde H^{{\rm adia},\eps,N}_\ell(t) )\right)
      \, {\rm op}_\eps(\chi_2^\delta )
      +{\mathcal O}\left(\left(\frac{\eps}{\delta^2}\right)^{N+1}\delta^{-\kappa_0}\right)
   \end{align}
     where $\kappa_0\in \N$ is $N$-independent.
     
(ii)    Consider
$$\psi^{\eps,N}_\ell (t) = {\rm op}_\eps\left(\chi_2^\delta\Pi_\ell ^{\eps,N }(t)\right)
{\mathcal U}^{{\rm adia}, \eps,N}_\ell (t,t_1)
\, {\rm op}_\eps\left(\chi_0^\delta\Pi_\ell^{\eps,N}(t_1)\right)
\psi^\eps(t_1)$$
where   $\psi^\eps(t_1) = {\mathcal U}^\eps_H(t_1,t_0)\psi^\eps(t_0)$.
   Then we have, for $N\geq 2$ and for all $t\in[t_1, t^\flat-\delta]$,
   $$
   {\mathcal U}^\eps_H(t,t_0)\psi^\eps_0 = \psi^{\eps,N}_1 (t) + \psi^{\eps,N}_2 (t) + {\mathcal O}\left(\left(\frac{\eps}{\delta^2}\right)^{N+1}\delta^{-\kappa_0}\right).
   $$
where
   ${\mathcal U}^{{\rm adia}, \eps,N}_\ell $ is the propagator associated  with the Hamiltonian $\widetilde H^{{\rm adia},\eps,N}(t) $.
 \end{proposition}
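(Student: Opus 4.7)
\medskip
\noindent\textbf{Proof plan for Proposition~\ref{prop:close}.} The strategy mirrors that of Proposition~\ref{far}, but now every step must be quantified in the small parameter~$\delta$ using the refined symbol classes $\bdS^\mu_{\eps,\delta}$ of Chapter~\ref{chap:2}. The proof rests on three ingredients already in place: the formal adiabatic decoupling of Theorem~\ref{adia1}, the sharp Moyal-product estimate of Lemma~\ref{thm:prodest}, and the Calder\'on--Vaillancourt-type bound~\eqref{est:CV}. The cutoffs $\chi_j^\delta$ lie in $\bdS^0_\delta$, the superadiabatic projector $\Pi^{\eps}$ is a formal series in $\bdS^0_{\eps,\delta^2}$ by Theorem~\ref{quantpro}, and $H^{{\rm adia},\eps}-h\1-\eps H^{\rm adia}_1 \in \eps^2\bdS_{\eps,\delta}^{-1}$ by point~(3) of Theorem~\ref{adia1}.

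\medskip
\noindent\emph{Proof of~(i).} Start from the formal identity $\Pi^\eps\circledast(i\eps\partial_t-H^\eps)=(i\eps\partial_t-H^{{\rm adia},\eps})\circledast\Pi^\eps$ of Theorem~\ref{adia1}. Truncate each series at order $N$; Lemma~\ref{thm:prodest} applied to all Moyal products involved then produces remainders of the form $\eps^{N+1}r^\eps$ with $r^\eps$ smooth and $|\partial^\gamma r^\eps|\leq C_{N,\gamma}\delta^{-2(N+1)-\kappa_0-|\gamma|}$, i.e.\ in $\bdS^{-2(N+1)-\kappa_0}_{\delta}$. Multiply on the left by $\chi_1^\delta$ and on the right by $\chi_2^\delta$: the nesting property $\chi_3^\delta\chi_1^\delta=\chi_1^\delta$ and $\chi_1^\delta\chi_2^\delta=\chi_2^\delta$ lets us replace the full adiabatic Hamiltonian by the cut-off version $\widetilde H^{{\rm adia},N,\eps}$ defined in~\eqref{def:tilde_adia}, absorbing the discrepancy into new remainders of the same type. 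Quantizing via~\eqref{est:CV} with $k$ fixed yields the operator bound
\[
\eps^{N+1}\,\delta^{-2(N+1)-\kappa_0}=\Bigl(\frac{\eps}{\delta^2}\Bigr)^{N+1}\delta^{-\kappa_0},
\]
which is exactly~\eqref{closenadia}.

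\medskip
\noindent\emph{Proof of~(ii).} Apply the Duhamel formula to the operator identity~\eqref{closenadia} acting on $\psi^\eps(t_1)$: writing
\[
\partial_t\Bigl(\mathcal U^\eps_H(t,t_1)-{\rm op}_\eps(\chi_1^\delta\Pi^{\eps,N}_\ell(t))\,\mathcal U^{{\rm adia},N,\eps}_\ell(t,t_1)\,{\rm op}_\eps(\chi_0^\delta\Pi^{\eps,N}_\ell(t_1))\Bigr)\psi^\eps(t_1)
\]
and integrating in time converts the symbolic remainder into an $L^\infty_t$-estimate, at the cost of one factor $\eps^{-1}$ from the equation $i\eps\partial_t=\widehat H$, giving an overall bound of order $\eps(\eps/\delta^2)^{N+1}\delta^{-\kappa_0}$ (still absorbable by adjusting $N$). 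The role of $\chi_2^\delta$ on the right is removed via Proposition~\ref{prop:Ego1} (the refined Egorov statement): the propagated symbol ${\rm op}_\eps(\chi_0^\delta(t,t_1))$ remains essentially supported inside $\{\chi_2^\delta=1\}$, so inserting ${\rm op}_\eps(\chi_2^\delta)$ between $\mathcal U^{{\rm adia},N,\eps}_\ell$ and ${\rm op}_\eps(\chi_0^\delta\Pi^{\eps,N}_\ell(t_1))$ costs only $(\eps/\delta^2)^M$ for arbitrary $M$. Finally, that the initial datum $\psi^\eps(t_1)$ is, up to $\mathcal O(\eps^\infty)$, a polarized wave packet localized at $z_1$ within a ball of size $\delta$ is provided by Theorem~\ref{WPexp1} (applied on $[t_0,t_1]$, where $\delta$-dependence is absent), so that ${\rm op}_\eps(\chi_0^\delta\Pi^{\eps,N}_\ell(t_1))\psi^\eps(t_1)=\widehat{\pi_\ell(t_1)}\psi^\eps(t_1)+\mathcal O(\eps^\infty)$. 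Summing over $\ell=1,2$ gives the claimed asymptotic decoupling.

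\medskip
\noindent\emph{Main obstacle.} The delicate point is the uniform tracking of the $\delta^{-2}$ loss per Moyal-product iteration: at each symbolic composition or commutator we lose two powers of $\delta$, and we must verify that the universal integer $\kappa_0$ furnished by Lemma~\ref{thm:prodest} controls the cumulative loss after finitely many compositions. This is where Corollary~\ref{cor:kappa0} (or its analogue) of Chapter~\ref{chap:2} is essential, together with the fact that the cutoffs $\chi_j^\delta$ built from the straightening Lemma~\ref{SVF} genuinely satisfy $|\partial^\gamma\chi_j^\delta|\leq C_\gamma\delta^{-|\gamma|}$ uniformly in the trajectory variable. Once these $\delta$-uniform symbolic estimates are secured, the Duhamel/Egorov machinery of Proposition~\ref{far} transposes verbatim.
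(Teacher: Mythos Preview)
Your approach is essentially the same as the paper's: the authors' entire proof is the one-line instruction to ``revisit the proof of Proposition~\ref{far}, using Lemma~\ref{thm:prodest} for the formal series $\Pi^\eps_\ell \in \bdS_{\delta^2}^0(\mathcal D)$ and using (3) of Theorem~\ref{adia1},'' and your expansion of this into symbolic-calculus $\delta$-tracking for part~(i) followed by Egorov/Duhamel for part~(ii) is exactly what that instruction calls for. The only slip is the arithmetic in your Duhamel step (the $\eps^{-1}$ from $i\eps\partial_t$ turns the remainder into $\eps^{-1}(\eps/\delta^2)^{N+1}\delta^{-\kappa_0}$, not $\eps(\eps/\delta^2)^{N+1}\delta^{-\kappa_0}$), but this is harmless since the extra $\delta^{-2}$ is absorbed into the $N$-independent constant $\kappa_0$, consistent with the paper's stated bound.
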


              The Remark \ref{expand} is valid also for this Proposition.
       Note that the integer $\kappa_0$ stems from the symbolic calculus estimates of Theorem~\ref{thm:moyalest}.

      \subsection{Proof of Theorem~\ref{th:WPmain_hors_gap}}\label{sec54}
     The previous results  have consequences for wave packets propagation and allow to prove   Theorem~\ref{th:WPmain_hors_gap}, which provides 
     an asymptotic expansion modulo ${\mathcal O}(\eps^\infty)$ for any $\alpha<1/2$ if 
     $\delta\approx \eps^{\alpha}$. In  other words,  
     the super-adiabatic approximation is valid for   times $t$  such that $\vert t-t^\flat\vert \geq \eps^{1/2-\eta}$,
      for any $\eta>0$.

   \begin{proof}[Proof of Theorem~\ref{th:WPmain_hors_gap}]
   We shall use for $\ell\in\{1,2\}$ that since
$\Phi_{h_\ell}^{t_1,t_0}(z_0)=z_\ell(t_1)=z_1$, we have 
   \begin{align}\nonumber
&\Phi_{h_\ell}^{t,t_1}(z_1)= \Phi_{h_\ell}^{t,t_0}(z_0)=z_\ell(t)\qquad 
,\\ \nonumber
&S_\ell (t,t_1,z_1)=S_\ell (t,t_0,z_0),\\ \label{eq:Fgroup}
&F_\ell (t,t_1,z_1)F_\ell(t_1,t_0,z_0)=F_\ell(t,t_0,z_0).
\end{align}
We start from $\psi^\eps(t)= \mathcal U^\eps_H(t,t_1) \psi^\eps(t_1)$. By Theorem~\ref{WPexp1}, $\psi^\eps(t_1)= \psi^\eps_1+ \mathcal O(\eps^{N+\frac 12})$ with 
\begin{align*}
&\psi^\eps_1=  {\rm e}^{\frac i\eps S_\ell (t_1,t_0,z_0)}\, 
\wp^\eps_{z_\ell (t_1)}\left(\sum_{0 \leq j\leq 2N}\eps^{j/2}\Vec g_{\ell,j}(t_1) f\right), \\
&\vec g_{\ell,j}(t_1)= 
{\mathcal R}_\ell(t_1,t_0,z_0) {\mathcal M}[F_\ell (t_1,t_0,z_0)]\Vec B_{\ell,j}(t_1) f_0,
\end{align*}
   where $\Vec B_{\ell,0}(t_1)$ and $\Vec B_{\ell,1}(t_1)$ are  given by~\eqref{B0} and~\eqref{B1} for $t=t_1$.
Therefore, for $t\in[t_1, t^\flat -\delta)$,
\[
\psi^\eps(t)= \mathcal U^\eps_H(t,t_1) \psi^\eps_1+\mathcal O(\eps^{N+\frac 12})
\]
and
we  have to analyze  the propagation of the wave packet $\psi^\eps_1 $ from time $t_1$ to $t\in(t_1,t^\flat-\delta)$ by $\mathcal U^\eps_H(t,t_1)$.
\smallskip 

   We follow the arguments of the proof of Theorem~\ref{WPexp1}. The only difference is the size of the rest terms. We deduce 
 \begin{align*}
    \mathcal U^\eps_H(t,t_1) \psi^\eps_1&=\sum_{\ell=1,2}  {\rm op}_\eps\left(\chi_1^\delta\Pi_\ell ^{\eps,N }(t)\right) {\mathcal U}^{{\rm adia}, \eps,N}_\ell (t,t_1)
\, {\rm op}_\eps\left(\chi_0^\delta\Pi_\ell^{\eps,N}(t_1)\right)
\psi^\eps_1 +{\mathcal O}\left( \left(\frac{\eps}{\delta^2}\right)^{N+1}\delta^{-\kappa_0}\right)  \\
&= \widetilde \psi^{\eps,N}_1(t) + \widetilde \psi^{\eps,N}_2(t) 
+{\mathcal O}\left( \left(\frac{\eps}{\delta^2}\right)^{N+1}\delta^{-\kappa_0}\right)      
\end{align*}
with
   \begin{align*}
\widetilde \psi^{\eps,N}_\ell(t)& =  {\rm e}^{\frac i\eps S_\ell (t,t_1,z_1)}\, 
\wp^\eps_{z_\ell (t)}\left({\mathcal R}_\ell(t,t_1,z_1) {\mathcal M}[F_\ell (t,t_1,z_1)]\sum_{0 \leq j\leq 2N}\eps^{j/2}\Vec U_{\ell,j}(t) \right),
\end{align*}
where
 \begin{align}
 \nonumber
 & \vec U_{\ell,0} (t) = \pi_\ell(t_1,z_1)\vec g_{\ell, 0}(t_1),\\
 \label{alessandra}
 & \vec U_{\ell,1} (t) = (a)+(b)+(c)+(d),
\end{align}
with
 \begin{align}
 \nonumber
&\;\;(a)= \pi_\ell(t_1,z_1)\vec g_{\ell, 1}(t_1),\\
\nonumber
&\;\;(b)= 
 \sum_{\vert\alpha\vert=3} \frac{1}{\alpha!} \frac{1}{i} \int_{t_1}^{t} \partial_z^\alpha h_\ell(s,z_\ell(s))\,
\op_1^w[(F_\ell (s,t_1,z_1)z)^{\alpha}] \,ds\,\pi_\ell(t_1,z_1)\vec g_{\ell, 0}(t_1),\\
\nonumber
&\;\;(c)= \frac 1i\int_{t_1}^t
\mathcal R(t_1,s,z_1)\,
\nabla_z H^{\rm adia}_{\ell,1}(s,z_s)\cdot 
(F_\ell (s,t_1,z_1)\widehat z)\,
\mathcal R(s,t_1,z_1)\, ds\,  \pi_\ell(t_1,z_1)\vec g_{\ell, 0}(t_1),  \\
\nonumber
&\;\;(d)= \widehat z\cdot \nabla \pi_\ell(t_1,z_1)\,\vec g_{\ell, 0}(t_1),
\end{align}
and the $\vec U_{\ell,j} (t)$ for $j\geq 2$  are  linear combinations of differential operators of degree $\leq 3k$  with time dependent smooth vector-valued coefficients, applied to the $\vec g_{\ell,k'}(t_1)$, with $k+k'=j$. Thus they can be written in the adequate form after application of the (exact) Egorov Theorem in~\eqref{prop:metaplectic}. 
\smallskip

We now study more precisely $ \vec U_{\ell,0} (t)$. We have
\begin{align}\nonumber
\vec U_{\ell,0}(t)=\pi_\ell(t_1,z_1)\vec g_{\ell, 0}(t_1)&=\pi_\ell(t_1,z_1)
    {\mathcal R}_\ell(t_1,t_0,z_0) {\mathcal M}[F_\ell (t_1,t_0,z_0)]\Vec B_{\ell,0}(t_1) f_0\\ \label{eq:pig}
    &={\mathcal R}_\ell(t_1,t_0,z_0) {\mathcal M}[F_\ell (t_1,t_0,z_0)] \pi_{\ell}(t_0,z_0) \vec V_0f_0\\
    \nonumber
    &= {\mathcal R}_\ell(t_1,t_0,z_0) {\mathcal M}[F_\ell (t_1,t_0,z_0)] 
    B_{\ell,0}(t) f_0,
\end{align}
where we have used the commutation property of the matrix ${\mathcal R}_\ell(t_1,t_0,z_0)$ stated in Lemma~\ref{lem:trsp_par}, the value of $\vec B_{\ell,0}(t) = {\pi_\ell(t_0,z_0)}\Vec V_0$, which is independent of $t$, and the fact that the scalar operator ${\mathcal M}[F_\ell (t_1,t_0,z)]$ commutes with the constant matrix $ {\mathcal R}_\ell(t_1,t_0,z_0)$.  
\smallskip 

Let us then focus on 
 $ \vec U_{\ell,1} (t)$. We analyze each of the four terms in~\eqref{alessandra}. The first term is is
 \begin{align*}
(a)&= \pi_\ell(t_1,z_1)
 {\mathcal R}_\ell(t_1,t_0,z_0) {\mathcal M}[F_\ell (t_1,t_0,z_0)]\Vec B_{\ell,1}(t_1) f_0\\*[1ex]
 &= {\mathcal R}_\ell(t_1,t_0,z_0) {\mathcal M}[F_\ell (t_1,t_0,z_0)]\\
 &\left( \sum_{\vert\alpha\vert=3} \frac{1}{\alpha!} \frac{1}{i} \int_{t_0}^{t_1} \partial_z^\alpha h_\ell(s,z_\ell(s))\,
\op_1^w[(F_\ell (s,t_0,z_0)z)^{\alpha}] \,ds\,\pi_\ell(t_0,z_0)\vec V_0 f_0\right.\\
\nonumber
&\;+ \frac 1i\int_{t_0}^{t_1}
\mathcal R(t_0,s,z_0)\, 
\nabla_z H^{\rm adia}_{\ell,1}(s,z_s)\cdot 
{\rm op}^w_1(F_\ell (s,t_0,z_0)z)\,
\mathcal R(s,t_0,z_0)\, ds\,  \pi_\ell(t_0,z_0 )\vec V_0 f_0 \\
&\;+\left.\pi_\ell(t_0,z_0)\widehat z\cdot\nabla \pi_\ell(t_0,z_0) \,\vec V_0 f_0\right),
 \end{align*}
where we have used again Lemma~\ref{lem:trsp_par} and the precise form of $B_{\ell,1}(t_1)$ as given in \eqref{B1}. We apply ${\mathcal R}_\ell(t,t_1,z_1) {\mathcal M}[F_\ell (t,t_1,z_1)]$ to $(a)$. 
Using the group properties
\[
{\mathcal R}_\ell(t,t_1,z_1) {\mathcal R}_\ell(t_1,t_0,z_0) = {\mathcal R}_\ell(t,t_0,z_0)\;\;\mbox{and}\;\;
{\mathcal M}[F_\ell (t,t_1,z_1)]{\mathcal M}[F_\ell (t_1,t_0,z_0)] =  {\mathcal M}[F_\ell (t,t_0,z_0)],
\]
we obtain
\begin{align}\label{alex}
&{\mathcal R}_\ell(t,t_1,z_1) {\mathcal M}[F_\ell (t,t_1,z_1)](a)
=
 {\mathcal R}_\ell(t,t_0,z_0) {\mathcal M}[F_\ell (t,t_0,z_0)]\\ \nonumber
&
 \Bigr( \sum_{\vert\alpha\vert=3} \frac{1}{\alpha!} \frac{1}{i} \int_{t_0}^{t_1} \partial_z^\alpha h_\ell(s,z_\ell(s))\,
\op_1^w[(F_\ell (s,t_0,z_0)z)^{\alpha}] \,ds\,\pi_\ell(t_0,z_0)\vec V_0 f_0\\
\nonumber
& + \frac 1i\int_{t_0}^{t_1}
\mathcal R(t_0,s,z_0)\,
\nabla_z H^{\rm adia}_{\ell,1}(s,z_s)\cdot 
{\rm op}^w_1(F_\ell (s,t_0,z_0)z)\,
\mathcal R(s,t_0,z_0)\, ds\, \pi_\ell(t_0,z_0 )\vec V_0 f_0 \\\nonumber
&+ \pi_\ell(t_0,z_0)\widehat z\cdot \nabla \pi_\ell(t_0,z_0)\,\vec V_0 f_0\Bigr).
\end{align}
At this stage, we observe that \eqref{alex} involves two integrals that might combine with $(b)$ and $(c)$ in~\eqref{alessandra}, while the third term might match with $(d)$. We first analyze $(b)$ and write
\begin{align*}
    &{\mathcal R}_\ell(t,t_1,z_1) {\mathcal M}[F_\ell (t,t_1,z_1)](b) \\
    & = {\mathcal R}_\ell(t,t_1,z_1) {\mathcal M}[F_\ell (t,t_1,z_1)] \sum_{\vert\alpha\vert=3} \frac{1}{\alpha!} \frac{1}{i} \int_{t_1}^{t} \partial_z^\alpha h_\ell(s,z_\ell(s))\,
\op_1^w[(F_\ell (s,t_1,z_1)z)^{\alpha}] \,ds\,\pi_\ell(t_1,z_1)\vec g_{\ell, 0}(t_1)\\
&= {\mathcal R}_\ell(t,t_0,z_0) {\mathcal M}[F_\ell (t,t_0,z_0)] 
\sum_{\vert\alpha\vert=3} \frac{1}{\alpha!} \frac{1}{i} \int_{t_1}^{t} \partial_z^\alpha h_\ell(s,z_\ell(s))\,
\op_1^w[( F_\ell (s,t_0,z_0)z)^{\alpha}] \,ds\,
\pi_{\ell}(t_0,z_0) \vec V_0f_0,
\end{align*}
where we first have used \eqref{eq:pig} and then applied the exact Egorov theorem~\ref{prop:metaplectic} and the group property \eqref{eq:Fgroup}. We then observe that the first integral in \eqref{alex} and the above one combine to an integral from $t_0$ to $t$ as claimed. An analogous argument also 
brings the term (c) in the appropriate form, since
\begin{align*}
    &{\mathcal R}_\ell(t,t_1,z_1) {\mathcal M}[F_\ell (t,t_1,z_1)](c)\\
    &= 
    {\mathcal R}_\ell(t,t_1,z_1) {\mathcal M}[F_\ell (t,t_1,z_1)] \ \frac 1i\int_{t_1}^t
\mathcal R(t_1,s,z_1)\,
\nabla_z H^{\rm adia}_{\ell,1}(s,z_s)\cdot 
(F_\ell (s,t_1,z_1)\widehat z)\,\mathcal R(s,t_1,z_1)\, ds\\  
&\qquad{\mathcal R}_\ell(t_1,t_0,z_0) {\mathcal M}[F_\ell (t_1,t_0,z_0)] \pi_{\ell}(t_0,z_0) \vec V_0f_0\\
&= 
    {\mathcal R}_\ell(t,t_0,z_0) {\mathcal M}[F_\ell (t,t_0,z_0)] \ \frac 1i\int_{t_1}^t
\mathcal R(t_0,s,z_0)\,
\nabla_z H^{\rm adia}_{\ell,1}(s,z_s)\cdot 
(F_\ell (s,t_0,z_0)\widehat z)\,
\mathcal R(s,t_0,z_0)\, ds\\
&\qquad \pi_{\ell}(t_0,z_0) \vec V_0f_0.
\end{align*}
We finally consider $(d)$. By the exact Egorov Theorem \eqref{prop:metaplectic}, 
\begin{align*}
 &{\mathcal R}_\ell(t,t_1,z_1) {\mathcal M}[F_\ell (t,t_1,z_1)] (d) \\   
 &={\mathcal R}_\ell(t,t_1,z_1) {\mathcal M}[F_\ell (t,t_1,z_1)]
 \nabla \pi_\ell(t_1,z_1)\cdot \widehat z \,{\mathcal R}_\ell(t_1,t_0,z_0) {\mathcal M}[F_\ell (t_1,t_0,z_0)]\pi_\ell(t_0,z_0)\vec V_0 f_0\\
    &= 
    {\mathcal R}_\ell(t,t_1,z_1) {\mathcal M}[F_\ell (t,t_0,z_0)]
    \nabla \pi_\ell(t_1,z_1)\cdot(F_\ell(t_1,t_0,z_0) \widehat z) \,{\mathcal R}_\ell(t_1,t_0,z_0)\pi_\ell(t_0,z_0)\vec V_0 f_0.
\end{align*}
Next we apply \Cref{lem:trsp_par} to pull the matrix ${\mathcal R}_\ell(t_1,t_0,z_0)$ to the left. We differentiate  
\eqref{eq:trsp_para} and use the result for $t=t_1$ and $z_1 = \Phi^{t_1,t_0}_{h_\ell}(z_0)$. We thus obtain that for any $\omega\in \R^{2d}$,
\begin{align*}
    &\omega\cdot\nabla {\mathcal R}_\ell (t_1,t_0,z_0) \pi_\ell(t_0,z_0)  
    + {\mathcal R}_\ell (t_1,t_0,z_0)\omega\cdot \nabla \pi_\ell(t_0,z_0) \\
    &= 
    (F_\ell(t_1,t_0,z_0)\omega) \cdot \nabla \pi_{\ell}\left(t_1,z_1\right) {\mathcal R}_\ell (t_1,t_0,z_0) 
    + \pi_{\ell}\left(t,z_1\right) \omega\cdot \nabla {\mathcal R}_\ell (t_1,t_0,z_0).
\end{align*}
Using \eqref{eq:deriv_R}, we arrive at 
\begin{align*}
&(F_\ell(t_1,t_0,z_0)\omega)\cdot \nabla \pi_\ell(t_1,z_1)\,{\mathcal R}_\ell(t_1,t_0,z_0) - 
{\mathcal R}_\ell(t_1,t_0,z_0)\, \omega\cdot \nabla \pi_\ell(t_0,z_0)  \\*[1ex]
&= 
\omega\cdot \nabla {\mathcal R}_\ell(t_1,t_0,z_0) \pi_\ell(t_0,z_0) -
\pi_\ell(t_1,z_1) \omega\cdot \nabla {\mathcal R}_\ell(t_1,t_0,z_0)\\
&=
\frac1i \int_{t_0}^{t_1} {\mathcal R}_\ell(t_1,s,z_0) (F_\ell(s,t_0,z_0)\omega)\cdot
\nabla H_{\ell,1}^{\rm adia} (s,\Phi^{s,t_0}_{h_\ell} (z_0)) {\mathcal R}_\ell(s,t_0,z_1) \pi_\ell(t_0,z_0) 
\, ds\\
&- 
\frac1i \int_{t_0}^{t_1} \pi_\ell(t_1,z_1) {\mathcal R}_\ell(t_1,s,z_0) (F_\ell(s,t_0,z_0)\omega)\cdot
\nabla H_{\ell,1}^{\rm adia} (s,\Phi^{s,t_0}_{h_\ell} (z_0)) {\mathcal R}_\ell(s,t_0,z_1)\, ds.
\end{align*}
Therefore, after quantization, 
\begin{align*}
& {\mathcal R}_\ell(t,t_1,z_1) {\mathcal M}[F_\ell (t,t_1,z_1)](d) =    
    {\mathcal R}_\ell(t,t_0,z_0) {\mathcal M}[F_\ell (t,t_0,z_0)] \,\widehat z\cdot \nabla \pi_\ell(t_0,z_0) \pi_\ell(t_0,z_0) \vec V_0 f_0\\*[1ex]
    &+ {\mathcal R}_\ell(t,t_1,z_1) {\mathcal M}[F_\ell (t,t_0,z_0)] \left(1-\pi_\ell(t_1,z_1)\right)\\
    &\frac1i \int_{t_0}^{t_1} {\mathcal R}_\ell(t_1,s,z_0) (F_\ell(s,t_0,z_0)\omega)\cdot
\nabla H_{\ell,1}^{\rm adia} (s,\Phi^{s,t_0}_{h_\ell} (z_0)) {\mathcal R}_\ell(s,t_0,z_1) \pi_\ell(t_0,z_0)\vec V_0 f_0
\, ds
\end{align*}
Using that 
\[
\widehat z \cdot \nabla \pi_\ell(t_0,z_0) \pi_\ell(t_0,z_0)+ \pi_\ell(t_0,z_0)\widehat z\cdot \nabla \pi_\ell(t_0,z_0)=\widehat z \cdot\nabla \pi_\ell(t_0,z_0),
\]
we see that the last term in \eqref{alex} combines with the first one above to produce
\[
 {\mathcal R}_\ell(t,t_0,z_0) {\mathcal M}[F_\ell (t,t_0,z_0)] \,\widehat z\cdot \nabla \pi_\ell(t_0,z_0) \vec V_0 f_0,
\]
which matches the last term in the final formula \eqref{B1} defining $\Vec B_{\ell,1}(t)$. 

In view of  $S_\ell (t,t_1,z_1)=S_\ell (t,t_0,z_0)$,  we finally obtain $\widetilde \psi^{\eps,N}_\ell(t)=\psi^{\eps,N}_\ell(t)$, for all $t\in[t_0,t^\flat -\delta)$, which concludes the proof.
   \end{proof}


   \chapter{{Propagation of wave packets through the crossing set}}\label{sec:through}

   In this chapter, we aim at concluding the proof of Theorem~\ref{th:WPmain}. For this, it remains to analyze the propagation of a wave packet through the smooth crossing set~$\Upsilon$. We  consider an incoming wave packet at time $t^\flat -\delta$ along a trajectory intersecting $\Upsilon$ at time~$t^\flat$, and the aim is to describe the outgoing wave packet at time $t^\flat +\delta$.
   \smallskip 
   
    We use the rough reduction of Section~\ref{sec:rough_diag} to treat the zone around the crossing.  We fix the point $(t^\flat,\zeta^\flat)\in\Upsilon$ and consider trajectories $z_1(t)$ and $z_2(t)$ reaching simultaneously at time $t^\flat$ in the point~$\zeta^\flat$. 
    We fix $N\in\N$ large enough and we set, with the notations of Theorem~\ref{thm:rough_reduc},
   $$\underline \psi_\ell^{\eps,N}(t)=\widehat{\pi_\ell^{\eps, N}(t)} \psi^\eps(t),\;\;\ell\in\{1,2\}.$$
   By Theorem~\ref{thm:rough_reduc},  if $k\in\N$, the solution 
 $\psi^\eps(t)$ of the Schr\"odinger equation~\eqref{eq:sch} satisfies in $\Sigma^k_\eps$, 
   \begin{equation}\label{newpsi}
\psi^\eps(t) =\underline\psi^{\eps,N}_1(t)+\underline\psi^{\eps,N}_2(t) +\O(\eps^{N+1}).
\end{equation}
Our aim in this section is to determine $\psi^\eps(t^\flat+\delta)$ in terms of $\psi^\eps(t^\flat-\delta)$ by using the description~\eqref{newpsi} of $\psi^\eps(t)$.  
  \smallskip

The family 
$\underline \psi^{\eps,N} = \, ^t( \underline \psi^{\eps,N}_1,\underline \psi^{\eps,N}_2)$
satisfies 
\begin{equation}\label{newsch}
i\eps \partial_t \underline \psi^{\eps,N} = \widehat{\underline H^{\eps,N}}(t)\underline \psi ^{\eps,N}
\end{equation}
with 
$$ \underline H^{\eps,N}(t,z) := \begin{pmatrix}h_1^{\eps,N}(t,z) & 0\\0 & h_2^{\eps,N}(t,z)\end{pmatrix}
+ \begin{pmatrix} 0&   W^{\eps,N}(t,z) \\W^{\eps,N}(t,z)^*&0 \end{pmatrix},
$$
where 
\[
h_\ell^{\eps,N} = h_\ell {\1}_m + \sum_{j=1}^N \eps^j h_{\ell,j}\quad\text{and}\quad 
W^{\eps,N}= \sum_{j=1}^{N+1} \eps^{j} W_j
\]
with matrix-valued $h_{\ell,j}$ and $W_j$ for $j\ge 1$. Note that writing $W^{\eps,N}$, we are making a small abuse of notation comparatively with~\eqref{def:Nseries}.
According to Theorem~\ref{thm:rough_reduc},
the Hamiltonian $\underline H^{\eps,N}$ is subquadratic (see Definition~\ref{def:subquad}). Thus, there exists a constant $C_{k,N}>0$ such that for all $\eps>0$ and $t\in I$,
\begin{equation}\label{eq:growth_adiag}
\| \widehat W^{\eps,N}(t) \|_{\mathcal L(\Sigma^{k+1}_\eps,\Sigma^{k}_\eps)}\leq C_{k,N}.
\end{equation}
Here we have used that the asymptotic series $W^{\eps}$ starts with $\eps W_1$, and we recall that 
\[
W_1 =  \pi_1 H_1 \pi_2 +i\pi_1 \left( \partial_t \pi_1 +\frac 12 \{ h_1+h_2,\pi_1\} \right)\pi_2,
\]
see~\eqref{def:W1}. Thus, it is appropriate to write 
\[
\underline H^{\eps, N}= H_{\rm diag}^{\eps,N}+\eps H_{\rm adiag}^{\eps,N}
\]
with 
\begin{equation}\label{def:HadiagN}
H_{\rm diag}^{\eps,N}(t,z) := \begin{pmatrix}h_{1}^{\eps,N} (t,z) & 0\\0 & h_{2}^{\eps,N}(t,z)\end{pmatrix},\qquad
\eps H_{\rm adiag}^{\eps,N}(t,z) :=\begin{pmatrix} 0& W^{\eps,N}(t,z) \\W^{\eps,N}(t,z)^*&0 \end{pmatrix}
\end{equation}

\smallskip

Let us summarize the information about the data that comes from the preceding section. Let $\delta>0$, for all $s\in(t^\flat-\delta, t^\flat-\frac \delta 2)$,
 \begin{equation}\label{data_cros}
\underline \psi^{\eps,N} (s) 
=\, ^t\left( {\rm WP}^\eps_{z_1(s)} (\varphi^{\eps,N}_1(s)) , {\rm WP}^\eps_{z_2(s)} (\varphi^{\eps,N}_2(s))\right), 
\end{equation}
with for $\ell=1,2$, 
\[
\varphi^{\eps,N}_\ell(s)=\sum_{j=0}^N \eps^{\frac j 2} \varphi_{j,\ell}(s),\;\;\varphi_{j,\ell}(s)\in \mathcal S(\R^d).
\]
Our aim is to prove that the description of $\underline \psi^{\eps,N} (s)$ given in Equation~\eqref{data_cros} extends to $s=t^\flat+\delta$ and to derive precise formula for $ \varphi_{j,\ell}(t^\flat+\delta)$ when $j=\{0,1\}$ and $\ell\in\{1,2\}$. This entails a precise analysis on how the wave packets leave the crossing set, in particular with respect 
to the exchanges between modes $\varphi_{1,1}$ and $\varphi_{1,2}$ at the crossing.

\begin{theorem}\label{thm:through_cros}
Let $k,N,M\in\N$ with $M\leq N$.  
We denote by $\U^\eps_{\underline H}(t,s)$ and $\U^\eps_{\rm diag}(t,s)$ the propagators associated with 
$\underline H^{\eps,N}(t)$ and  $H^{\eps,N}_{\rm diag}(t)$, respectively.
Then, there exists $C>0$, $\ell\in\N$ and a universal constant $\kappa_0$, and an operator $\Theta^{\eps,\delta}_{M}$ such that 
for all $\eps\in(0,1)$ and $\delta>0$ with $\delta\geq \sqrt\eps$, 
\[
\Theta^{\eps,\delta}_{M} =\1 + \sum_{1\leq m\leq M} \, \Theta^{\eps,\delta}_{m,M}
\]
and
\begin{align}\label{eqdef:dyson}
&\left\| \U^\eps_{\underline H}(t^\flat +\delta,t^\flat -\delta ) 
- \U^\eps_{\rm diag}(t^\flat +\delta,t^\flat  )\; \Theta^{\eps,\delta}_{M} \; 
\U^\eps_{\rm diag}(t^\flat,t^\flat -\delta ) \right\|_{\mathcal L (\Sigma_\eps^{k+\ell},\Sigma^k_\eps )}\\
\nonumber 
&\qquad\qquad\qquad \qquad  \leq C\left(\delta^{M+1} + (\eps\delta^{-2})^{\frac{M+1}2}\delta^{-2\kappa_0-k}\right).
\end{align}
Moreover, there exists $\eps_0>0$, 
and a family of operators $(\mathcal T_{m,M}^{\eps,\delta})_{m\geq 1}$ and constants $c_{k,m,M}>0$
such that for all  $\vec \varphi\in \mathcal S(\R^d,\C^2)$, $m\geq 1$, $\eps\in(0,\eps_0)$,
\begin{equation}\label{eq:pluto1}
\Theta^{\eps,\delta}_{m,M} {\rm WP}^\eps_{\zeta^\flat} (\vec \varphi)=
 {\rm WP}^\eps_{\zeta^\flat} \left(
 \mathcal T^{\eps,\delta}_{m,M} \vec\varphi +\eps^{\frac{M+1}2} \vec{r^\eps}_{m,M}\right),
\end{equation}
with $\vec{r^\eps}_{m,M}\in\mathcal S(\R^d,\C^m)$ and 
\begin{equation}\label{norm_estimate}
\| \mathcal T^{\eps,\delta}_{m,M}\vec \varphi\|_{\Sigma^k} \leq c_{k,m,M} \, \eps^{\frac m 2} |\log \eps|^{\max(0,m-1)} \, \| \vec\varphi\|_{\Sigma^{k+2m+1}}.
\end{equation}
 Besides, the operator $\mathcal T^{\eps,\delta}_{1,M}$ maps $\mathcal S(\R^d,\C^m)$ into itself and   
\begin{equation}\label{eq:pluto3}
\mathcal T^{\eps,\delta}_{1,M}= \sqrt\eps \, 
\begin{pmatrix} 0 & W_1(t^\flat,\zeta^\flat) \mathcal T^\flat_{2\rightarrow 1}\\
W_1(t^\flat,\zeta^\flat) ^*\mathcal T^\flat_{1\rightarrow 2} & 0
\end{pmatrix} +\O(\delta\sqrt\eps),\end{equation}
where the transfer operators $\mathcal T^\flat_{1\rightarrow 2}$ and $\mathcal T^\flat_{2\rightarrow 1}$ are defined according to~\eqref{transf1}.
\end{theorem}

A few remarks are in order. First, we
 point out that some additional action effects will appear when applying $\U^\eps_{\underline H}(t^\flat +\delta,t^\flat -\delta )$ to a wave packet via the operators $\U^\eps_{\rm diag}(t^\flat +\delta,t^\flat  )$
 and $\U^\eps_{\rm diag}(t^\flat,t^\flat -\delta ) $.  When applied to a Gaussian wave packets, i.e. when  $\varphi_{\ell, 1}= g^\Gamma_\ell$ in~\eqref{data_cros}, the leading order correction term  at time $t^\flat +\delta$ due to the crossing is 
 \begin{align*}
 \sqrt\eps \begin{pmatrix} 
   {\rm e}^{\frac i\eps S_1(t^\flat+\delta, t^\flat ,\zeta^\flat)+\frac i\eps S_2(t^\flat, t_0,z_0)}  {\rm WP}^\eps_{z_1(t^\flat+\delta )} (\varphi_1)\\
    {\rm e}^{\frac i\eps S_2(t^\flat+\delta, t^\flat ,\zeta^\flat)+\frac i\eps S_1(t^\flat, t_0,z_0)}  {\rm WP}^\eps_{z_2(t^\flat+\delta )} (\varphi_2)
    \end{pmatrix}
  \end{align*}
  with 
  \begin{align*}
\varphi_1 &= 
\mathcal M[F_1(t^\flat+\delta,t^\flat, \zeta^\flat)] W_1(t^\flat, \zeta^\flat) \mathcal T^\flat_{2\rightarrow 1} \mathcal M[F_2(t^\flat, t_0,z_0)]g^\Gamma_2 ,\\
\varphi_2 &= 
\mathcal M[F_2(t^\flat+\delta,t^\flat, \zeta^\flat)] W_1(t^\flat, \zeta^\flat)^* \mathcal T^\flat_{1\rightarrow 2} \mathcal M[F_1(t^\flat, t_0,z_0)]g^\Gamma_1. 
 \end{align*}
Recall that $W_1$ is the off-diagonal matrix described in~\eqref{def:W1}.
 \smallskip

The remainder of this section is devoted to the proof of Theorem~\ref{thm:through_cros}. In Section~\ref{sec:dyson}, we use  Dyson series  to construct the interaction operators $\Theta^{\eps,\delta}_{m,M} $, obtain the decomposition~\eqref{eqdef:dyson}.  We prove that their action on wave packets obey to~\eqref{eq:pluto1}. Then,   in Section~\ref{sec:mtom+1}, we study the operators $\mathcal T^{\eps,\delta}_{m,M}$, proving the estimate~\eqref{norm_estimate} and calculating the quantity $ \mathcal T^{\eps,\delta}_{1,M}$, as in~\eqref{eq:pluto3} (which was already done in~\cite{FLR1}). 


\section{Construction of the interaction operators} 
\label{sec:dyson}

\subsection{Dyson expansions}
We perform a Dyson expansion via the Duhamel formula and uses~\eqref{def:HadiagN}. A first use of Duhamel formula gives for $t_1,t_2\in\R$,
\beq\label{duh}
\U^\eps_{\underline H}(t_2, t_1) = \U^\eps_{H_{\rm diag}}(t_2,t_1) +\frac 1i \int_{t_1}^{t_2} \U^\eps_{\underline H}(t_2,s_1)\widehat{H^{\eps, N}_{\rm adiag}}(s_1) \,\U^\eps_{H_{\rm diag}}(s_1, t_1)ds_1.
\eeq
With  one iteration of the Duhamel formula, we obtain
\begin{align*}
\U^\eps_{\underline H}(t_2, t_1)&  = \U^\eps_{H_{\rm diag}}(t_2,t_1) +\frac 1i \int_{t_1}^{t_2} \,\U^\eps_{H_{\rm diag}}(t_2,s_1)\widehat{H^{\eps,N}_{\rm adiag}}(s_1)\U^\eps_{H_{\rm diag}}(s_1, t_1)ds_1 \\
&\qquad 
 -\int_{t_1}^{t_2}\int_{s_1}^{t_2} \U^\eps_{\underline H}(t_2,s_2)\widehat{H^{\eps,N}_{\rm adiag}}(s_2)\,\U^\eps_{H_{\rm diag}}(s_2,s_1)\widehat{H^{\eps,N}_{\rm adiag}}(s_1)\,\U^\eps_{H_{\rm diag}}(s_1, t_1)ds_2ds_1.
\end{align*}
With two iterations, we have 
\begin{align*}
\U^\eps_{\underline H}(t_2,t_1)  =\;&
 \U^\eps_{H_{\rm diag}}(t_2,t_1) 
 +\frac 1i \int_{t_1}^{t_2} \,\U^\eps_{H_{\rm diag}}(t_2,s_1)\widehat{H^{\eps,N}_{\rm adiag}}(s_1)\U^\eps_{H_{\rm diag}}(s_1, t_1)ds_1 \\
&\; 
 -\int_{t_1}^{t_2}\int_{s_1}^{t_2} \U^\eps_{\rm diag }(t_2,s_2)\widehat{H^{\eps,N}_{\rm adiag}}(s_2)\,\U^\eps_{H_{\rm diag}}(s_2,s_1)\widehat{H^{\eps,N}_{\rm adiag}}(s_1)\,\U^\eps_{H_{\rm diag}}(s_1, t_1)ds_1ds_2\\
&\; -\frac 1 i 
 \int_{t_1}^{t_2}\int_{s_1}^{t_2} \int_{s_2}^{t_2}
 \,\U^\eps_{\underline H}(t_2,s_3)\widehat{H^{\eps,N}_{\rm adiag}}(s_3)
 \U^\eps_{\rm diag }(s_3,s_2)\widehat{H^{\eps,N}_{\rm adiag}}(s_2)\\
 &\qquad\qquad\qquad \times \U^\eps_{H_{\rm diag}}(s_2,s_1)\widehat{H^{\eps,N}_{\rm adiag}}(s_1)\,\U^\eps_{H_{\rm diag}}(s_1, t_1)ds_3ds_2ds_1
\end{align*}
and the last term of the right hand side contains three integrations in time, which implies that any of its (adequate) operator norms is of size $|t_1- t_2|^3$.
\smallskip

Our aim is to transform $\U^\eps_{\underline H}$ for $t_1=t^\flat-\delta $, $t_2=t^\flat +\delta$ by performing $M$ iterations. For simplifying the notation, we set 
\begin{equation}\label{def:E(s,t)}
 E(s) =  \U^\eps_{H_{\rm diag}}(t^\flat ,s)\widehat{H^{\eps}_{\rm adiag}}(s)\,\U^\eps_{H_{\rm diag}}(s, t^\flat).
 \end{equation}
In doing that, we choose the time $t^\flat$ as a reference time (an initial time in terms of Cauchy data) for the evolution operators. This choice will be present in all the arguments of the proof. 
In that manner, after $M$ iterations, $M\in\N$, we have the Dyson formula
 \begin{align*}
 &\U^\eps_{\underline H}(t^\flat+\delta, t^\flat-\delta) = \U^\eps_{H_{\rm diag}}(t^\flat+\delta , t^\flat)
  \Bigl(\1 \, +\\ \nonumber & 
  \sum_{1\leq m\leq M}(i)^{-m}
  \int _{t^\flat-\delta}^{t^\flat+\delta}ds_1\int_{s_1}^{t^\flat+\delta}ds_2\cdots\int_{s_{m-1}}^{t^\flat+\delta}ds_m
 E(s_m) E(s_{m-1})\cdots E(s_1)\Bigr)
 \U^\eps_{H_{\rm diag}}(t^\flat, t^\flat-\delta )\\
 &\qquad +
 R_M^{\eps,\delta}(t^\flat) 
 \end{align*}
and $R^{\eps,\delta}_M(t^\flat)$
satisfies
\[
\| R^{\eps,\delta}_M(t^\flat)\|_{\mathcal L (\Sigma_\eps^{k+M+1},\Sigma^k)}\leq C\delta^{M+1}.
\]
This gives 
\begin{equation}\label{eqdef:dyson_bis}
\U^\eps_{\underline H}(t^\flat +\delta,t^\flat -\delta )= \U^\eps_{H_{\rm diag}}(t^\flat +\delta,t^\flat  )\;\widetilde \Theta^{\eps,\delta}_{M} \; \U^\eps_{H_{\rm diag}}(t^\flat,t^\flat -\delta ) +R^{\eps,\delta}_M(t^\flat)
\end{equation}
with
\[
\widetilde\Theta^{\eps,\delta}_{M} =\1+\sum _{m=1}^M\widetilde\Theta_m (t^\flat+\delta)
\]
where the family $\widetilde\Theta_m(s)$ is given for $s\in [t^\flat-\delta, t^\flat+\delta]$, $m\geq 1$ by 
\begin{equation}\label{eq_de_Theta}
\widetilde\Theta_m(s) =(i)^{-m}
  \int _{t^\flat-\delta}^{s}ds_1\int_{s_1}^{t^\flat+\delta}ds_2\cdots\int_{s_{m-1}}^{t^\flat+\delta}ds_m
 E(s_m) E(s_{m-1})\cdots E(s_1)
\end{equation}
%
The family of operators $\widetilde\Theta_{m}(s) $ contains all the information about the interactions  between the modes~$h_1$ and~$h_2$ modulo $\O(\delta^\infty)$ when $M$ goes to $+\infty$.  We prove in the next section that by adequate linear combination of the 
$\widetilde\Theta_{m}(t^\flat + \delta) $, one can find operators  $\Theta_{m,M}^{\eps,\delta} $ satisfying~\eqref{eqdef:dyson} with~\eqref{eq:pluto1}, ~\eqref{norm_estimate} and~\eqref{eq:pluto3}. 

\subsection{Action of the interaction operator  on wave packets}

We focus now in understanding the action of $\widetilde\Theta^{\eps}_{m}(s) $ on wave packets of the form 
\[
{\rm WP}^\eps_{\zeta^\flat} (\vec \varphi) = \begin{pmatrix}
{\rm WP}^\eps_{\zeta^\flat}(\varphi_1)\\
{\rm WP}^\eps_{\zeta^\flat}(\varphi_2)
\end{pmatrix}.
\]
This will allow to define the operators $\Theta^{\eps,\delta}_{m,M}(s) $ with the expected properties. 
\smallskip 


 Let us start by analyzing the function $E(s)$  involved in the definition of $\Theta_{m} (s)$ and introduced in~\eqref{def:E(s,t)}. 
According to~\eqref{def:HadiagN}, we have
 $$
E(s)  =
  \begin{pmatrix} 0 &{ I}(s)\\{I}^*(s)
 & 0\end{pmatrix},\;\;s\in[t^\flat-\delta, t^\flat +\delta]
$$
with
\begin{equation}\label{def:pluto5}
{I}(s):=
\frac 1\eps \, \U^\eps_{h_1^{\eps,N}}(t^\flat, s)  W^{\eps,N}(s)\, \U^\eps_{h_2^{\eps,N}}(s,t^\flat).
\end{equation}
Note that $I(s)$, as $E(s)$,  depends on $\eps$, $\delta$ and $N$. 
\smallskip 

The operator $I(s)$ combines conjugation of the pseudo-differential operator $\chi_\delta (s) W^{\eps,N}(s)$  by the propagator  $\U^\eps_{h_2^{\eps,N}}(s,t^\flat)$ and composition by two different propagators $\U^\eps_{h_1^{\eps,N} }(t^\flat, s)$ and $\U^\eps_{h_2^{\eps,N} }(t^\flat, s)$.
Indeed, we can write, 
\[
{ I}(s)= \frac 1\eps\,\left(\U^\eps_{h_1^{\eps,N} }(t^\flat, s) W^{\eps,N}(s)\, \U^\eps_{h_1^{\eps,N}}(s,t^\flat)\right)\circ \left(\U^\eps_{h_1^{\eps,N} }(t^\flat, s) \U^\eps_{h_2^{\eps,N} }(s,t^\flat)\right).
\]
Similarly, for ${I}^*(s)$, one writes 
\begin{equation}\label{eq:I*(s)}
{I}^*(s)=\frac 1\eps\,\left(\U^\eps_{h_2^{\eps,N} }(t^\flat, s) W^{\eps,N}(s)^* \U^\eps_{h_2^{\eps,N}}(s,t^\flat)\right)\circ \left(\U^\eps_{h_2^{\eps,N} }(t^\flat, s) \U^\eps_{h_1^{\eps,N} }(s,t^\flat)\right).
\end{equation}
Both operations are well understood:
\begin{enumerate}
    \item First, 
the conjugation of a pseudo-differential operator by a propagator, that is generated by a subquadratic, matrix-valued Hamiltonian with scalar principal symbol, is  described by the Egorov Theorem of Appendix~\ref{app:C} (see Proposition~\ref{EgD}). Since $\widehat{W^{\eps,N}(s)}$ has a vanishing principal symbol, 
we have an asymptotic expansion of the form
\[
 \U^\eps_{h_\ell^{\eps,N} }(t^\flat, s) \widehat{W^{\eps,N}(s)}\, \U^\eps_{h_\ell^{\eps,N}}(s,t^\flat)=\eps \, \sum_{j\geq 0} \eps^{j}\widehat{\underline W_{j,\ell}(s)},\qquad \ell\in\{1,2\}.
\]
\item Second,
the action on wave packets of two different propagators acting one backwards and the other one forwards  has been studied in~\cite{FLR1}[Section~5.2]. The  
 precise computation of the asymptotic behavior of the action of the operator $\U^\eps_{h_1^{\eps,N} }(t^\flat, s) \,\U^\eps_{h_2^{\eps,N} }(s,t^\flat)$ on wave packets   is detailed in Appendix~\ref{forwards_backwards}. This involves the canonical transformation of the phase space $z\mapsto \Phi_1^{t^\flat,s}\circ\Phi_2^{s,t^\flat}(z)$) (see Lemma~\ref{lem:composed}). 
\end{enumerate}
As a consequence,
one obtains the 
analogue of Lemma~5.3 of~\cite{FLR1}, which writes in our context as follows.

\begin{lemma}\label{lem_FLR}
Let $k\in\N$.
Using notation~\eqref{def:mu},
\begin{itemize}
\setlength{\itemsep}{1ex}
\item[-]
A smooth real-valued map $s\mapsto \Lambda(s)$ with
$\Lambda(0) = 0$, $\dot\Lambda(0) = 0$, and satisfying the relation \[
\frac 12(\ddot\Lambda(0)-\dot p(0)\cdot \dot q(0)) = -\mu^\flat,
\]
\item[-] 
A smooth vector-valued map $s\mapsto z(s) = (q(s),p(s))$ with 
$z(0) = 0$, $\dot z(0)= (\alpha^\flat,\beta^\flat)$,
\item[-]
A smooth map $s\mapsto {Q}^\eps(s)$ of pseudo-differential operators, 
that maps Schwartz functions to Schwartz functions, 
\end{itemize}
such that for all $s\in[t^\flat-\delta,t^\flat+\delta]$ and all $\vec\varphi\in\mathcal S(\R^d,\C^m )$,
\[
 { I}^*(s){\rm WP}^\eps_{\zeta^\flat}(\vec \varphi)(y)=
{\rm WP}^\eps_{\zeta^\flat}\left(
Q^\eps(s)
\vec \varphi  
\right) +\O\left((\eps\delta^{-2})^{\frac{M+1}2}\delta^{-2\kappa_0-k}
\right).
\]
Moreover, $Q^\eps(s)=\sum_{j=0}^M\eps^{\frac j2} Q^\eps_j(s)$, 
and,  with   the scaling notation $z_\eps(s) = z(s)/\sqrt\eps:=(q_\eps(s),p_\eps(s))$, we have 
\begin{align}\label{def:Qj}
& Q^\eps_j(s)\vec\varphi(y)={\rm e}^{\frac{i}{\eps} \Lambda(s-t^\flat)} 
{ Q}_j(s-t^\flat) 
{\rm e}^{i p_\eps(s-t^\flat)\cdot (y-q_\eps(s-t^\flat))} \vec\varphi(y-q_\eps(s-t^\flat)),\\
\label{def:Q0}
&  Q_0(0)= W_1(t^\flat,\zeta^\flat)^*,
\end{align}
where the $\eps$-independent operators $Q_j(s)$ map $\mathcal S(\R^d,\C^m)$ into itself, uniformly in $s\in [t^\flat-\delta, t^\flat+\delta]$.
\end{lemma}  
 
\begin{proof}
We start from~\eqref{eq:I*(s)}. We first use the Egorov theorem of Proposition~\ref{EgD}  (see also point (1) above). In $\mathcal L(\Sigma ^ k_\eps)$, we have 
\[
\frac 1\eps\,
\U^\eps_{h_2^{\eps,N} }(t^\flat, s)\widehat  {W^{\eps,N}(s)}^*\,
\U^\eps_{h_2^{\eps,N}}(s,t^\flat)=\sum_{j= 0}^ M \eps^j\widehat{\underline W_{j,2}(s)}^*+\mathcal O((\eps\delta^{-2})^ {M+1}\delta^{-2\kappa_0-k}),
\]
with $\underline W_{1,2}(0)=W_1(t^\flat,\zeta^\flat)$. 
Moreover, by Lemma~\ref{lem:composed} and with its notation, we obtain 
\begin{align*}
\frac 1\eps\left(\U^\eps_{h_2^{\eps,N} }(t^\flat, s) \U^\eps_{h_1^{\eps,N} }(s,t^\flat)\right){\rm WP}^\eps_{\zeta^\flat}(\vec \varphi)&=\sum_{j=0}^M \eps^{\frac j 2}\wp^\eps_{\zeta^\flat} \left(
{\rm e}^{\frac{i}{\eps} \widetilde\Lambda( s)} 
{\rm e}^{i \widetilde p_\eps( s)\cdot (y-\widetilde q_\eps( s))}\vec \varphi_j(s,y-\widetilde q_\eps( s)) \right)\\
&\qquad +
\mathcal O((\eps\delta^{-2})^ {\frac{M+1}2}\delta^{-k-2\kappa_0}).
\end{align*}
Injecting these relations in~\eqref{eq:I*(s)}, we obtain 
\begin{align*}
    I^ *(s)\wp^ \eps_{\zeta^\flat}(\vec \varphi) & = 
    \sum_{j_1=0}^ M    \sum_{j_2=0}^ M \eps^{j_1}\eps^{\frac {j_2}2}\widehat{\underline W_{j_1,2}(s)}^*\wp^\eps_{\zeta^\flat} \left(
{\rm e}^{\frac{i}{\eps} \widetilde\Lambda( s)} 
{\rm e}^{i \widetilde p_\eps( s)\cdot (y-\widetilde q_\eps( s))}\vec  \varphi_{j_2}(s,y-\widetilde q_\eps( s)) \right)\\
&\qquad +\mathcal O((\eps\delta^{-2})^ {M+1}\delta^{-k-2\kappa_0})
\end{align*}
in $\Sigma ^ k_\eps$. 
We observe
\begin{align*}
&\widehat{\underline W_{j_1,2}(s)}\wp^\eps_{\zeta^\flat} \left(
{\rm e}^{\frac{i}{\eps} \widetilde\Lambda( s)} 
{\rm e}^{i \widetilde p_\eps( s)\cdot (y-\widetilde q_\eps( s))}\vec  \varphi_{j_2}(s,y-\widetilde q_\eps( s)) \right)\\
&\qquad = \wp^\eps_{\zeta^\flat} \left({\rm op}_1\left(\underline W_{j_1,2}(s,\zeta^\flat +\sqrt\eps z)^*\right)
{\rm e}^{\frac{i}{\eps} \widetilde\Lambda( s)} 
{\rm e}^{i \widetilde p_\eps( s)\cdot (y-\widetilde q_\eps( s))} \vec \varphi_{j_2}(s,y-\widetilde q_\eps( s)) \right)\\
&\qquad = \wp^\eps_{\zeta^\flat} \left(
{\rm e}^{\frac{i}{\eps} \widetilde\Lambda( s)} 
{\rm e}^{i \widetilde p_\eps( s)\cdot (y-\widetilde q_\eps( s))} \underline{\vec \varphi}_{j_1,j_2}(s,y-\widetilde q_\eps( s)) \right)
\end{align*}
with $\varphi_{j_1,j_2}(s)={\rm op}_1\left(\underline W_{j_1,2}(s,\tilde z_\eps(s)+\zeta^\flat +\sqrt\eps z)^*\right)\vec \varphi_{j_2}$. 
Reorganizing the terms and 
 Taylor expanding in $z$ the symbol of the preceding pseudodifferential operator, we obtain the asymptotics with the terms $Q^\eps_j(s)$, as in~\eqref{def:Qj}. 
We conclude the proof with the relation
\[
\varphi_{1,0}(0,y)={\rm op}_1\left(\underline W_{1,2}(0,\zeta^\flat +\sqrt\eps z)^*\right)\vec \varphi_{0}(0)= W_1(t^\flat, \zeta^\flat)^* \vec\varphi,
\]
whence~\eqref{def:Q0}
\end{proof}

We now go back to the action of the operators $\widetilde\Theta_{m}(s)$ (see~\eqref{eq_de_Theta}) on wave packets. By Lemma~\ref{lem_FLR},
 for all $\vec\varphi\in \mathcal S(\R^d,\C^m)$, we have in the Schwartz class,
\begin{align*}
\widetilde \Theta_1(s) {\rm WP}^\eps_{\zeta^\flat}(\vec \varphi)&=
- i \sum_{0\leq j\leq M} \eps^{j-1} {\rm WP}^\eps_{\zeta^\flat}\left(\int_{t^\flat-\delta}^s \begin{pmatrix} 0 & Q_j^\eps(s_1)^*\\
Q_j^\eps(s_1) & 0 
\end{pmatrix} \vec \varphi ds_1 \right)\\
&\qquad + \mathcal O((\eps\delta^{-2})^{\frac{M+1}2} \delta^{-2\kappa_0-k}
) .
\end{align*}
More generally, after reorganization of the terms, we can write for all $M\in\N^*$ and $\varphi\in \mathcal S(\R^d,\C^m)$,
\begin{equation}\label{resommation}
\sum_{1\leq m\leq M} \widetilde \Theta_m(s) =
\sum_{1\leq m\leq M}  \Theta_{m,M}^{\eps,\delta}(s) , \qquad s\in[t^\flat-\delta, t^\flat+\delta]
\end{equation}
with 
\begin{align}
\label{def:gros_theta}
& \Theta_{m',M}^{\eps,\delta}(s){\rm WP}^\eps_{\zeta^\flat}(\vec \varphi)
= \sum_{m=1}^M\; \sum_{j_1+\cdots+j_m +m=m'} 
\eps^{\frac{j_1+\cdots+j_m}{2}}\wp^\eps_{\zeta^\flat}\, ({\mathcal T}^{\eps,\delta}_{j_1,\cdots,j_m} \vec \varphi),\\
\label{def:tilde_transfer}
&{\mathcal T}^{\eps,\delta}_{j_1,\cdots,j_m}(s) \vec \varphi= (-i)^m 
\int_{t^\flat-\delta}^s ds_1 \int_{s_1}^{t^\flat+\delta} ds_2 \cdots \int_{s_{m-1}}^{t^\flat+\delta} ds_m T_{j_m}(s_m)\cdots T_{j_2}(s_2) T_{j_1}(s_1),\\
\nonumber
\mbox{and}\;\;&T_j(s)= \begin{pmatrix} 0 & Q_j^\eps(s)^*\\
Q_j^\eps(s) & 0 
\end{pmatrix}.
\end{align}
Note that when $m'=1$, there is only one term in the sum of~\eqref{def:gros_theta}, corresponding to $m=1$ and $j_1=0$.
The reorganization of the terms is made in view of the claim 
\begin{equation}\label{claim:T}
\|{\mathcal T}^{\eps,\delta}_{j_1,\cdots,j_m}(s) \vec \varphi\|_{\Sigma^k} \leq C_{j_1,\cdots,j_m,\varphi} \,\eps ^{\frac m2} \,|\log\eps|^{\max(0,m-1)}
, \;\;\forall \vec \phi\in\mathcal S(\R^{d},\C^m).
\end{equation}
Then, we define the transfer operators of~\eqref{eq:pluto1} by setting 
\begin{equation}\label{eq:pluto1bis}
\mathcal T^{\eps,\delta}_{m',M}=
\sum_{m=1}^M\; \sum_{j_1+\cdots+j_m +m=m'} 
 {\mathcal T}^{\eps,\delta}_{j_1,\cdots,j_m},\;\; 1\leq m'\leq M,
\end{equation}
and~\eqref{claim:T} implies
that~\eqref{norm_estimate} and~\eqref{eq:pluto3} hold.
Proving~\eqref{claim:T} (and thus~\eqref{norm_estimate} and~\eqref{eq:pluto3}) is the subject of the next section. 

\section{The transfer operators}
\label{sec:mtom+1}

In this section, we analyze the operators ${\mathcal T}^{\eps,\delta}_{j_1,\cdots , j_m} (s)$ introduced in~\eqref{def:tilde_transfer}. The key argument is the next Lemma.

\begin{lemma}\label{lem:asymptotics}
Let us fix $M\in\N$. \begin{enumerate}
    \item For all $1\leq j\leq M$ and  $k\in\N$,  there exists $c_{k,j}>0$ and $\ell\in\N$  such that for all time-dependent Schwartz function $\vec\varphi\in\mathcal C^\infty(\R,\mathcal S(\R^d,\C^m))$, $s\in [t^\flat-\delta,t^\flat+\delta]$, $\eps\in(0,\eps_0]$ and $k\in\N$,
    \begin{align*}
&\Bigl\|  \int_{t^\flat-\delta}^s  T_{j}(s') \vec\varphi(s')ds'\Bigr\|_{\Sigma^k}\\
&\qquad\leq     
       c_{k,j} \sqrt \eps \Bigl(
       \sup_{s'\in[t^\flat-\delta,t^\flat+\delta]}\| \vec\varphi(s')\|_{\Sigma^{k+\ell+1}} +|\log|\eps|| 
\sup_{s'\in[t^\flat -\delta,t^\flat+\delta]}   \| \partial_s \vec\varphi(s')  \|_{\Sigma^{k+\ell}} \Bigr) . 
    \end{align*}
    \item 
    There exists an operator 
$\tilde {T}_1^{\eps,\delta}
$ such that for all $k\in\N$, there are  constants $C_k$, $ c_{1,k}$ and $\ell\in\N$ such that for  all  $\vec\varphi\in \mathcal S(\R^d,\C^m)$, 
\begin{align*}
&\left\|\int_{t^\flat-\delta}^s T_1(s')\vec \varphi ds'- \sqrt\eps 
\begin{pmatrix} 0 & W_1(t^\flat,\zeta^\flat)  \mathcal T^\flat_{2\rightarrow 1} \\ 
W_1(t^\flat,\zeta^\flat) ^* \mathcal T^\flat_{1\rightarrow 2}
& 0 
\end{pmatrix} \vec \varphi -
 \sqrt\eps\, \tilde T^{\eps, \delta}_1\vec \varphi  \right\|_{\Sigma^{k}} \\
 &\qquad \leq C_{k} \sqrt\eps \left(\frac{\sqrt\eps}\delta\right)^{M+1}\|\vec \varphi\|_{\Sigma^{k+\ell}}.
\end{align*}
where  $\mathcal T^\flat_{1\rightarrow 2}$ is the transfer operator defined in~\eqref{transf1}, and with
 \[
  \|\tilde T^{\eps, \delta} _1 \vec \varphi \|_{\Sigma^k}\leq c_{1,k} \, \delta\,\|\vec \varphi\|_{\Sigma^{k+\ell}}.
 \]
\end{enumerate}
\end{lemma}

\begin{proof}[Proof of 
the claim~\eqref{claim:T}, and 
equations~\eqref{norm_estimate} and~\eqref{eq:pluto3}]
Note that in view of 
\[
 {\mathcal T} ^{\eps,\delta}_{j_1}(s)\,\vec \varphi
=-i  \int_{t^\flat-\delta}^s  T_{j}(s') \vec \varphi(s')ds',
\]
Lemma~\ref{lem:asymptotics}(2) applies directly to the first transfer operator. 
Besides, for $\vec\varphi$ independent on $s$, we have 
\[
\| {\mathcal T} ^{\eps,\delta}_{j_1}(s)\,\vec \varphi\|_{\Sigma^k}\leq c_{k,0} \,\sqrt\eps \| \vec \varphi\|_{\Sigma^{k+\ell(M,k)}}.
\]
We deduce that~\eqref{claim:T} holds for $m=1$.
We can now conclude the proof of the claim for all $m\geq 1$ by recursion. We observe 
\begin{align*}
{\mathcal T}^{\eps,\delta}_{j_1,\cdots,j_m}(s) \vec \varphi&= -i     \int_{t^\flat-\delta}^s  T_{j}(s_1) \vec\varphi(s_1)ds_1
\end{align*}
with 
\begin{align*}
&\vec\varphi(s)= {\mathcal T}^{\eps,\delta}_{j_2,\cdots,j_m} (t^\flat+\delta)\vec \varphi-
{\mathcal T}^{\eps,\delta}_{j_2,\cdots,j_m}(s) \vec \varphi,\\
&
\partial_{s}\vec\varphi(s)= -i {\mathcal T}^{\eps,\delta}_{j_3,\cdots,j_m} (t^\flat+\delta)\vec \varphi-
{\mathcal T}^{\eps,\delta}_{j_3,\cdots,j_m}(s) \vec \varphi,
\end{align*}
and Lemma~\ref{lem:asymptotics}(1) allows to conclude the recursion argument and to obtain~\eqref{claim:T}. 
\end{proof}

The proof of Lemma~\ref{lem:asymptotics} relies on the analysis of  the (scalar) integrals   defined  for $j\in\N$ and $\vec\varphi\in\mathcal C^\infty(\R,S(\R^d))$ scalar-valued by 
\begin{equation}\label{def:Ibazar}
I_{\varphi(.)}^{\eps,\delta} (s):= \int_{t^\flat-\delta }^{s} {\rm e}^{\frac{i}{\eps} \Lambda(s'-t^\flat)} 
{Q}_j(s'-t^\flat ) 
{\rm e}^{i p_\eps(s'-t^\flat)\cdot (y-q_\eps(s'-t^\flat))}  \vec\varphi(s',y-q_\eps(s'-t^\flat))  ds'.
\end{equation}
We will us this preparatory Lemma that simplifies the general form of the terms $I^{\eps,\delta}_{\varphi(\cdot)}(s)$.

\begin{lemma}\label{lem:prepa}
Set
$L= \beta^\flat\cdot y-\alpha ^\flat \cdot D_y$.
Then, in Schwartz space,
\[
I_{\varphi(.)}^{\eps,\delta}(s)= \sqrt\eps \int_{-\delta/\sqrt\eps}^{(s-t^\flat)/\sqrt\eps} {\rm e}^{i\phi^\eps(\sigma)} 
{ Q}_j(\sigma\sqrt\eps  ) 
{\rm e}^{i L\sigma } \vec\varphi(t^\flat +\sqrt\eps\sigma)
d\sigma  +\O(\sqrt\eps \delta)
\]
with
\begin{align*}
\phi^\eps(\sigma)& = \frac 1\eps \Lambda(\sqrt\eps\sigma)-\frac 1 2 p_\eps(\sigma)q_\eps(\sigma).
\end{align*}
\end{lemma}

In the following, we will use that by  Lemma~\ref{lem_FLR},
\begin{align*}
\phi^\eps(\sigma)
& =\frac 12 (\ddot \Lambda(0)-\dot p(0)\dot q(0))\sigma^2 + \sqrt\eps \sigma^3 \phi^\eps_1(\sqrt\eps \sigma)
\\
&=-\mu^\flat \sigma^2 + \sqrt\eps \sigma^3 \phi^\eps_1(\sqrt\eps \sigma)
\end{align*}
where $\sigma\mapsto \phi^\eps_1(\sigma)$  is a smooth family of operators bounded in the Schwartz class and with bounded derivatives on any compact of $\R$.
In particular, we have on one hand,
  $$
 \left\vert\frac{d}{d\sigma}\phi^\eps(\sigma) \right\vert \geq c_0\vert \sigma\vert,
  $$
  for some constant $c_0>0$; and on the other hand,  we have  
    $$
   {\rm e}^{i\phi^\eps(\sigma )}=  \frac{1}{\sigma} b^\eps(\sigma)
    \partial_\sigma{\rm e}^{i\phi^\eps(\sigma)} \;\;\mbox{with}\;\; b^\eps(\sigma) := \frac{\sigma}{i\partial_\sigma \phi^\eps(\sigma)}\;\;\mbox{and}\
    \left\vert\frac{d}{d\sigma}b^\eps(\sigma) \right\vert \leq \frac{c_1}{\vert \sigma \vert}
    $$
    for some constant $c_1>0$ as above. 
 Finally, we will also take advantage of the fact that the operator ${\rm e}^{i\sigma L}$ maps $\Sigma^k$ into itself continuously for all $s\in\R$ and $k\in\N$.
\smallskip 

Let us now prove Lemma~\ref{lem:prepa}.

\begin{proof}
    We first transform the integral by the change of variables $s'=t^\flat+\sigma\sqrt\eps $: 
\begin{align*}
I_{\varphi(.)}^{\eps,\delta}(s)=& \sqrt\eps \int_{-\delta/\sqrt\eps}^{(s-t^\flat)/\sqrt\eps} {\rm e}^{\frac{i}{\eps} \Lambda(\sigma \sqrt\eps )} 
Q_j(\sigma\sqrt\eps  ) 
{\rm e}^{i p_\eps(\sigma\sqrt\eps )\cdot (y-q_\eps(\sigma\sqrt\eps ))}\vec\varphi(t^\flat+\sigma\sqrt\eps,y-q_\eps(\sigma\sqrt\eps)  ) d\sigma. 
\end{align*}
One sees that the change of variable has exhibited a power of $\sqrt\eps$. 
 However,  even though the integrand is bounded, the size of the support of the integral is large: it is of size $2 \frac \delta{\sqrt\eps}$, which spoils that gain of $\sqrt\eps$. This integral will turn out to be  smaller  because of the oscillations of the phase outside $s=0$. The proof then consists in  integrations by parts. Before entering into the proof in the next sections, we make some preparatory work. 
 \smallskip 

 We start by studying the phase present in the integral close to $\sigma=0$.  
Following~\cite{FLR1},  
 we set 
 \[
 L^\eps(\sigma)= p_\eps(\sigma\sqrt\eps) \cdot y -q_\eps (\sigma\sqrt\eps ) \cdot D_y, 
 \]
 and we observe 
 \[
 {\rm e}^{i p_\eps(\sigma\sqrt\eps )\cdot (y-q_\eps(\sigma\sqrt\eps ))}\vec\varphi(t^\flat+\sigma\sqrt\eps,y-q_\eps(\sigma\sqrt\eps)  ) 
=  {\rm e}^{-\frac i2 p_\eps(\sigma\sqrt\eps )\cdot q_\eps(\sigma\sqrt\eps )} {\rm e}^{iL^\eps(\sigma) }\vec\varphi.
 \]
 Therefore, we have 
 \[
 {\rm e}^{\frac{i}{\eps} \Lambda(\sigma \sqrt\eps )} 
{\rm e}^{i p_\eps(\sigma\sqrt\eps )\cdot (y-q_\eps(\sigma\sqrt\eps ))}\vec\varphi(t^\flat+\sigma\sqrt\eps,y-q_\eps(\sigma\sqrt\eps)  ) =
{\rm e}^{\frac{i}{\eps} \Lambda(\sigma \sqrt\eps )-\frac i2 p_\eps(\sigma\sqrt\eps )\cdot q_\eps(\sigma\sqrt\eps )} {\rm e}^{iL^\eps(\sigma) }\vec\varphi.
 \]
 
 Using Taylor expansion in $\sigma=0$, we obtain 
$$
\frac 1\eps \Lambda(\sigma \sqrt \eps ) -\frac 1 2 q_\eps(\sigma\sqrt\eps)\cdot p_\eps(\sigma\sqrt\eps) = \mu^\flat \sigma ^2 + \sqrt \eps s^3 f_1(\sigma \sqrt\eps) $$
 with $s \mapsto f_1(s)$ bounded, together with its derivatives, for $s\in [t_0,t_0+T]$. In the following, the notation $f_j$ will denote functions that have the same property.
Then,  with 
$
L= \beta^\flat\cdot y-\alpha ^\flat \cdot D_y$,
 we have 
\[
L^\eps(\sigma) = \sigma L +\sqrt\eps \sigma ^2 L^\eps_1(\sigma\sqrt\eps ).
\]
The family of operator $s \mapsto L^\eps_1(s)$ maps $\mathcal S(\R^d)$ into itself, for $s\in [t_0,t_0+T]$.
Moreover,  
 the commutator $[ L, L_1(\sigma \sqrt\eps)]$ is a scalar so that we have 
 $$\frac 12[ L, L_1(\sigma \sqrt\eps)] =  f_2(\sigma \sqrt\eps) $$
  with the notation we have just introduced. 
  Therefore, by Baker-Campbell-Hausdorff formula
  $${\rm e}^{iL^\eps(\sigma )} = {\rm e}^{i\sigma L} {\rm e}^{i\sigma ^2\sqrt\eps L_1(\sigma \sqrt\eps)}{ \rm e} ^{i  \sqrt\eps \sigma ^3 f_2(\sigma \sqrt\eps)}.$$
  We will also use 
  $${\rm e}^{i\sqrt\eps s^2 L_1(\sigma \sqrt\eps)} ={\rm Id} + \sqrt\eps \sigma ^2 \Theta(\sigma \sqrt\eps)$$
  where the operator-valued map $ s\mapsto \Theta(s)$ is smooth and such that  for all $\sigma\in [t_0,t_0+T]$, the operator $\Theta(s)$ and its derivatives maps $\mathcal S(\R^d)$ into itself.
  Setting $f_3=f_1+f_2$, we deduce that $I^{\eps,\delta}_{\varphi(\cdot)}(s)$ writes 
   \begin{align*}
&I^{\eps,\delta}_{\varphi(\cdot)}(s)=\sqrt\eps  \int_{-\delta/\sqrt\eps }^{(s-t^\flat)/\sqrt\eps }
{\rm e}^{i\mu^\flat \sigma^2 + \sqrt\eps \sigma^3 f_3(\sigma\sqrt\eps)}  {Q}_j(\sigma\sqrt\eps ) 
{\rm e}^{i sL  }  ds + R^{\eps,\delta}(s)\\
\mbox{with}\;\;&R^{\eps,\delta}(s)= \eps  \int_{-\delta /\sqrt\eps }^{(s-t^\flat)/\sqrt\eps }
{\rm e}^{i\mu^\flat \sigma^2 + \sqrt\eps \sigma^3 f_3(\sigma \sqrt\eps)} 
{Q}_j (\sigma\sqrt\eps )
{\rm e}^{i \sigma L  }  
\sigma ^2 \Theta^\eps(\sigma \sqrt\eps)d\sigma.
\end{align*}
Let us analyze $R^{\eps,\delta}(s)$. For this, we perform an integration by parts. Indeed,  
 $$\partial_\sigma  ( \mu^\flat \sigma ^2 +\sqrt\eps \sigma ^3 f_3(\sigma \sqrt\eps))= 2 \mu^\flat \sigma ( 1+ \sigma\sqrt\eps f_4(\sigma\sqrt\eps))$$
 for some smooth bounded function $f_4$ with bounded derivatives. Moreover, since~$\delta$ is small, 
we have  $1+\sigma\sqrt\eps f_4(\sigma\sqrt\eps) > 1/2$
 for all $\sigma \in]-\delta/\sqrt\eps,+\delta/\sqrt\eps[$. 
 Therefore, we can write 
 \begin{align*}
 R^{\eps,\delta}(s) = &  \left[ \frac {\eps \sigma}{2i\mu^\flat ( 1+ \sigma\sqrt\eps f_4(\sigma\sqrt\eps))}
  {\rm e}^{i\mu^\flat \sigma^2 +i\sqrt\eps \sigma^3 f_3(\sigma\sqrt\eps)}  {Q}_j (\sigma\sqrt\eps )  {\rm e}^{i\sigma L}    \right]_{-\frac\delta{\sqrt\eps}}^{\frac{s-t^\flat}{\sqrt\eps}} \\
 &\qquad
- \frac{\eps}{2i\mu^\flat} \int_{-\frac{\delta}{\sqrt\eps}}^{\frac{s-t^\flat}{\sqrt\eps}} 
 {\rm e}^{i\mu^\flat \sigma^2+i\sqrt\eps \sigma^3 f_3(\sigma\sqrt\eps)}
 \frac{d}{d\sigma}\left(\frac{\sigma} {1+ \sigma\sqrt\eps f_4(\sigma\sqrt\eps)}  {Q}_j (\sigma\sqrt\eps ) \e ^{i\sigma L} \right) 
d\sigma.
 \end{align*}
 We deduce that for all  $\varphi\in {\mathcal S}(\R^d)$, we have in the Schwartz class
 $R^{\eps,\delta} (s)\varphi= O(\sqrt \eps\delta) + R^{\eps,\delta}_1(s)\varphi$
 with 
 $$R^{\eps,\delta}_1\varphi(s)=
 - \frac{\eps}{2i\mu^\flat} \int_{-\frac{\delta}{\sqrt\eps}}^{\frac{s-t^\flat}{\sqrt\eps}} 
 {\rm e}^{i\mu^\flat \sigma^2+i\sqrt\eps \sigma^3 f_3(\sigma\sqrt\eps)}
 \left(\frac{\sigma} {1+ \sigma\sqrt\eps f_4(\sigma\sqrt\eps)} {Q}_j (\sigma\sqrt\eps )  \e ^{i\sigma L} L\varphi \right) 
d\sigma.
$$
We then need another integration by parts to obtain that in Schwartz class 
$R^{\eps,\delta}_1 (s)\varphi= O(\sqrt \eps\delta) $.  Note that this additional integration by parts is required by the presence of a $\sigma$ without a coefficient $\sqrt\eps$ in  the integrand. We write 
\begin{align*}
R^{\eps,\delta}_1(s)\varphi&=
 - \frac{\eps}{(2i\mu^\flat)^2} \left[
 {\rm e}^{i\mu^\flat \sigma^2+i\sqrt\eps \sigma^3 f_3(\sigma\sqrt\eps)}
 \left(\frac{1} {(1+ \sigma\sqrt\eps f_4(\sigma\sqrt\eps))^2} { Q}_j (\sigma\sqrt\eps )  \e ^{i\sigma L} L\varphi \right) \right]_{-\frac{\delta}{\sqrt\eps}}^{\frac{s-t^\flat}{\sqrt\eps}} \\
&\;\;\;\; + 
\frac{\eps}{(2i\mu^\flat)^2} \int_{-\frac{\delta}{\sqrt\eps}}^{\frac{s-t^\flat}{\sqrt\eps}} 
 {\rm e}^{i\mu^\flat \sigma^2+i\sqrt\eps \sigma^3 f_3(\sigma\sqrt\eps)} \frac d {d\sigma}
 \left(\frac{1} {(1+ \sigma\sqrt\eps f_4(\sigma\sqrt\eps))^2} {\mathcal Q}^\eps(\sigma\sqrt\eps )  \e ^{i\sigma L} L\varphi \right) 
d\sigma \\
&= O(\delta\sqrt\eps)
\end{align*}
This terminates the proof of Lemma~\ref{lem:prepa}.
\end{proof}

 We devote the next three subsections to the proof of each part of Lemma~\ref{lem:asymptotics}, taking advantage of Lemma~\ref{lem:prepa} and focusing on the main part of $I^{\eps,\delta}_{\varphi(\cdot)}(s)$ exhibited in this statement. 
 
\subsection{Estimates on the transfer operators}

\begin{proof}[Proof of Lemma~\ref{lem:asymptotics}(1)]
    Because  the phase $\phi^\eps$ is oscillating far away from $0$, we 
use a smooth cut-off function $\chi$ identically equal to $1$ close to $0$ and to $1$ outside a  ball of radius $1$, and we write in Schwartz space
\[
I_{\varphi(.)}^{\eps,\delta}(s)=I^{1}(s)+I^{2}(s)+ \O(\sqrt\eps \delta)
\]
 with 
\begin{align*}
I^{1}(s)& = \sqrt\eps \int_{-\delta/\sqrt\eps}^{(s-t^\flat)/\sqrt\eps} {\rm e}^{i\phi^\eps(\sigma)} \chi(\sigma)
{Q}_j(\sigma\sqrt\eps  ) 
{\rm e}^{i L\sigma } \phi(t^\flat +\sigma\sqrt\eps) d\sigma, 
\\
I^{2}(s)& = \sqrt\eps \int_{-\delta/\sqrt\eps}^{(s-t^\flat)/\sqrt\eps} {\rm e}^{i\phi^\eps(\sigma)} (1-\chi)(\sigma)
{Q}_j(\sigma\sqrt\eps  ) 
{\rm e}^{i L\sigma } \phi(t^\flat +\sigma\sqrt\eps) d\sigma. 
\end{align*}
Of course, we have also used Lemma~\ref{lem:prepa}.
\smallskip 

\noindent {\it The compactly supported term}. 
The term $I^{1}$ is the easiest to deal with since it has compact support, independently of $\eps$ and $\delta$. Therefore, using that $Q_j(\sigma\sqrt\eps)$ maps $\mathcal S(\R^d,\C^m)$ into itself for each $1\leq j\leq M$, we obtain the existence of $\ell=\ell(M,k)$ such that 
\begin{align*}
\| I^{1}(s)\|_{\Sigma^k}\ & \leq C\,\sqrt\eps  \sup_{s'\in [t^\flat-\delta,t^\flat+\delta ]}\| \vec\varphi(s') \|_{\Sigma^{k+\ell(M,k)}}.
\end{align*}
\smallskip 

\noindent {\it The oscillating term}. 
For dealing with the term $I^2$, we take advantage of the oscillating phase for compensating the fact that the support is large and we perform integration by parts.
We write 
\begin{align*}
I^{2}(s) &= \sqrt\eps \int_{-\delta/\sqrt\eps}^{(s-t^\flat)/\sqrt\eps} \partial_\sigma \left( {\rm e}^{i\phi^\eps(\sigma)} \right)     \frac{1}{\sigma} b^\eps(\sigma)
(1-\chi)(\sigma)
{Q}_j(\sigma\sqrt\eps  ) 
{\rm e}^{i L\sigma } \vec\varphi(t^\flat + \sigma\sqrt\eps)d\sigma \\
& = - \sqrt\eps \int_{-\delta/\sqrt\eps}^{(s-t^\flat)/\sqrt\eps} {\rm e}^{i\phi^\eps(\sigma)}  \partial_\sigma \left(    \frac{1}{\sigma} b^\eps(\sigma) (1-\chi)(\sigma)
{Q}_j(\sigma\sqrt\eps  ) 
{\rm e}^{i L\sigma }  \vec\varphi(t^\flat+ \sigma\sqrt\eps)   \right)d\sigma\\
&\qquad \qquad + \sqrt\eps \left. \left( {\rm e}^{i\phi^\eps(\sigma)}   \frac{1}{\sigma} b^\eps(\sigma)
(1-\chi)(\sigma)
{Q}_j(\sigma\sqrt\eps  ) 
{\rm e}^{i L\sigma }\vec\varphi(t^\flat+\sqrt\eps \sigma)\right)\right|_{\sigma=-\frac\delta{\sqrt\eps}}^{\sigma= \frac{s-t^\flat}{\sqrt\eps}} \\
&= -\sqrt\eps \sum_{\varsigma=1}^3
\int_{-\delta/\sqrt\eps}^{(s-t^\flat)/\sqrt\eps} {\rm e}^{i\phi^\eps(\sigma)}  b^\eps_\varsigma (\sigma) 
 \vec\varphi(t^\flat+ \sigma\sqrt\eps)   d\sigma + \mathcal O(\delta^{-1}\eps \sup_{s'\in[t^\flat-\delta,t^\flat+\delta]}\| \vec\varphi(s')\|_{\Sigma_{k+\ell(M,k)}})
\end{align*}
with 
\begin{align*}
b^\eps_1(\sigma) & = \left( -\frac 1{\sigma^2}
b^\eps(\sigma)
(1-\chi)(\sigma) 
+ \frac 1\sigma
\partial_\sigma b^\eps(\sigma)
(1-\chi)(\sigma) 
-\frac 1\sigma b^\eps(\sigma)
\chi'(\sigma) \right)
{Q}_j(\sigma\sqrt\eps  ) {\rm e}^{i L\sigma } 
,\\
   b^\eps_2(\sigma)& =
   \frac {\sqrt\eps}{\sigma} b^\eps(\sigma)
(1-\chi)(\sigma)\left(\partial_s
{Q}_j(\sigma\sqrt\eps  ) {\rm e}^{i L\sigma } 
+ {Q}_j(\sigma\sqrt\eps  ) 
{\rm e}^{i L\sigma }\partial_s\right) ,\\
b^\eps_3(\sigma) &= 
\frac i\sigma b^\eps(\sigma)
(1-\chi)(\sigma)
{Q}_j(\sigma\sqrt\eps  ) {\rm e}^{i L\sigma } 
L .
\end{align*}
We deduce from the integrability of the coefficient of ${Q}_j(\sigma\sqrt\eps  ) 
$ in $b^\eps_1$
that 
\[
\int_{-\delta/\sqrt\eps}^{(s-t^\flat)/\sqrt\eps} {\rm e}^{i\phi^\eps(\sigma)}  b^\eps_1 (\sigma)  \vec\varphi(t^\flat+ \sigma\sqrt\eps)   d\sigma = \mathcal O(\sup_{s'\in[t^\flat-\delta,t^\flat+\delta]}\| \vec\varphi(s')\|_{\Sigma_{k+\ell(M,k)}}).
\]
Then, using the size of the support of the integral, we obtain 
\[
\int_{-\delta/\sqrt\eps}^{(s-t^\flat)/\sqrt\eps} {\rm e}^{i\phi^\eps(\sigma)}  b^\eps_2 (\sigma)   \vec\varphi(t^\flat+ \sigma\sqrt\eps)   d\sigma =  \mathcal O( \delta \sup_{s'\in[t^\flat-\delta,t^\flat+\delta]}(
\|  \vec\varphi(s')\|_{\Sigma_{k+\ell(M,k)}}+\| \partial_s \vec\varphi(s')\|_{\Sigma_{k+\ell(M,k)}}).
\]
Finally, the last term satisfies
\[
\int_{-\delta/\sqrt\eps}^{(s-t^\flat)/\sqrt\eps} {\rm e}^{i\phi^\eps(\sigma)}  b^\eps_3(\sigma) 
{\rm e}^{i L\sigma }  \vec\varphi(t^\flat+ \sigma\sqrt\eps)   d\sigma =  \mathcal O(|\log\eps|) \sup_{s'\in[t^\flat-\delta,t^\flat+\delta]}(
\|  \vec\varphi(s')\|_{\Sigma_{k+1+\ell(M,k)}},
\]
which terminates the proof of (1).
\end{proof}

\subsection{The leading order term of the transfers operator.}

 \begin{proof}[Proof of Lemma~\ref{lem:asymptotics}(2)]
We now study $I_{\varphi}^{\eps,\delta}(s)$ for some function $\vec\varphi$ independent of $s$ and 
 we  want to calculate the leading order term when $s=t^\flat +\delta$ and $j=1$.
We set 
\[
\mathfrak I_\varphi= \frac 1{\sqrt\eps} I_{\varphi}^{\eps,\delta}(t^\flat +\delta )- W_1(t^\flat,\zeta^\flat) \mathcal T^\flat_{1\rightarrow 2}.
\]
Following~\cite{FLR1} Section 5.3, we first transform the expression $ I_{\varphi}^{\eps,\delta}(t^\flat +\delta)$ by performing the change of variable
\[
z= \sigma(1+\sqrt\eps \sigma \tilde\phi(\sigma\sqrt\eps)/\mu^\flat)^{1/2}
\] 
and we observe that 
$\sigma=z(1+\sqrt\eps z g_1(z\sqrt\eps) ) \;\;\mbox{and} \;\; \partial_\sigma z = 1+\sqrt\eps z g_2( z\sqrt\eps)$
for some smooth bounded functions $g_1$ and $g_2$ with bounded derivatives. 
Note that
we have used that $\sigma\sqrt\eps$ is  of order $\delta$, thus small, in the domain of the integral. 
Besides, there exists a family of operators $\widetilde { Q}^\eps (z)$ such that 
${Q}^\eps (\sigma\sqrt\eps ) = \widetilde { Q}^\eps (z\sqrt\eps)$
with $\widetilde { Q}^\eps(0)= { Q}^\eps(0)$ and $ Q^0(0)=  W_1(t^\flat,\zeta^\flat)$ 
\begin{align*}
I_{\varphi}^{\eps,\delta}(t^\flat +\delta)= \sqrt\eps \int_{z_-}^{z_+}   {\rm e}^{-i\mu^\flat z^2 }
\widetilde {Q}^\eps (z\sqrt\eps ) {\rm e}^{iz(1+\sqrt\eps z g_1(z\sqrt\eps))\, L}\vec\varphi \frac{dz}{1+\sqrt\eps z g_2( z\sqrt\eps)} + \O(\sqrt\eps\delta)
\end{align*}
(the $\O(\sqrt\eps\delta)$ coming from Lemma~\ref{lem:prepa}).
Moreover $z_-=-\delta/\sqrt\eps+o(1)$ and $z_+=\delta/\sqrt\eps+o(1)$.
A Taylor expansion allows to write 
\begin{align*}
&\widetilde { Q}^\eps (z\sqrt\eps ) {\rm e}^{i\sqrt\eps z^2 g_1(z\sqrt\eps)\, L} \frac{1}{1+\sqrt\eps z g_2( z\sqrt\eps)}=
{ Q}_0(0)+  \sqrt\eps  z (\widetilde { Q}^\eps_1(z\sqrt\eps)+ z \widetilde { Q}^\eps_2(z\sqrt\eps))
\end{align*}
for some smooth operator-valued maps $z\mapsto \widetilde { Q}^\eps_j(z\sqrt\eps)$ mapping $\mathcal S(\R^d)$ into itself, such that for all $\varphi\in{\mathcal S}(\R^d)$ the family 
$\| {\widetilde {Q}^\eps_j(z\sqrt\eps)}\varphi\|_{\Sigma^k}\leq c_j\| \varphi\|_{\Sigma^{k+1+\ell(M,k)}}$ (because of the loss of regularity involved by $L$ and the $ Q^\eps_j$-s).
We obtain in Schwartz class 
\[
\mathfrak I_\varphi=\mathfrak I_\varphi^1+\mathfrak I_\varphi^2+\O(\delta)
\]
with
\begin{align*}
&\mathfrak I^1_\varphi=\sqrt \eps 
\int_{z_-}^{z_+} z \, {\rm e}^{-i\mu^\flat z^2 }  
(\widetilde { Q}^\eps_1 (z\sqrt\eps ) +z\widetilde {Q}^\eps_2(z\sqrt\eps) ){\rm e}^{izL}\,\vec\varphi dz,\\
&\mathfrak I_\varphi^ 2= Q_0(0) \int_{\R\setminus [z_-,z_+]}   {\rm e}^{-i\mu^\flat z^2 }  
\, 
{\rm e}^{izL}\ \vec\varphi dz.
\end{align*}
Let us study $\mathfrak I_\varphi^1$.
Arguing by integration by parts as previously, we obtain 
\begin{align*}
\mathfrak I^1_\varphi = &  \frac{\sqrt \eps}{2i\mu^\flat} 
\int_{z_-}^{z_+}   \, {\rm e}^{-i\mu^\flat z^2 }{\partial_z} \left( 
\widetilde {Q}^\eps_1 (z\sqrt\eps ) +z\widetilde{ Q}^\eps_2(z\sqrt\eps)  {\rm e}^{iz L} \right)\,\vec\varphi\,dz + \O(\delta\| \vec\varphi\|_{\Sigma^{k+\ell(M,k)}})\\
&=  \frac{ \eps}{2i\mu^\flat} 
\int_{z^-}^{z_+}   \, {\rm e}^{-i\mu^\flat z^2 }  
\partial_z \widetilde { Q}^\eps_1 (z\sqrt\eps )   {\rm e}^{iz L} \, \vec \varphi dz\\
&\qquad \qquad +  \frac{\sqrt \eps}{2i\mu^\flat} 
\int_{z_-}^{z_+}   
\, {\rm e}^{-i\mu^\flat z^2 }
 \partial_z \left(
 \widetilde{ Q}^\eps_2(z\sqrt\eps) +\sqrt\eps z\partial_z \widetilde{ Q}^\eps_2(z\sqrt\eps)  {\rm e}^{iz L}\right) \,\vec \varphi\, dz + \mathcal O(\delta\| \vec\varphi\|_{\Sigma^{k+\ell(M,k)}}).
\end{align*}
Since $|z\sqrt\eps|\leq \delta$ on the support of the integral, we obtain
\[
\mathfrak I^1_\varphi=
\mathcal O(\delta\| \vec\varphi\|_{\Sigma^{k+\ell(M,k)}}).
\]


It remains to analyze  $\mathfrak I_\varphi^2$. 
For this, we focus on 
$$
\mathfrak I_\varphi^{2,\infty}:=\int_{-\infty}^{z_-}
{\rm e}^{-i\mu^\flat s^2 } 
{\rm e}^{isL} \,\vec\varphi \, ds.
$$
We  claim that for 
any $k\geq 0$, and any $N\geq 1$, 
 there exists a constant $C_{k,N}>0$ and an integer $\ell(N,k)$ such that  
\beq\label{eq:ertransf}
 \Vert \mathfrak I_\varphi^{2,\infty}\Vert_{\Sigma^k} \leq C_{k, N} \|\varphi\|_{\Sigma^{\ell(N,k)}}\lambda^{-N},\;\; \lambda=\delta/\sqrt\eps.
\eeq
This estimate will conclude the proof, since a similar estimate holds when integrating from $z_+$ to~$+\infty$.
For proving~\eqref{eq:ertransf}, we 
 use a Littlewood-Paley  decomposition defined as follows (see \cite[p. 91]{AG}): 
Let $\chi_0\in C_0^\infty(\R)$ be such that $\chi_0(u) = 1$ if $\vert u\vert \leq 1/2$ 	and $\chi_0(u) =0$ if $\vert u\vert \geq 1$. Define $\chi_1(u) = \chi_0(u/2) - \chi_0(u)$ (supported in  
$\{1/2\leq \vert u\vert \leq 2\}$). Then we have
$$
1 = \chi_0(u) +\sum_{j\geq 0}\chi_1(2^{-j}u),\,\, \forall u\in\R.
$$
We then write 
 \begin{align}\label{pluto18}
\mathfrak I_\varphi^{2,\infty}
= &\int_{-\infty}^{z_-}\chi_0\left(\frac{s}{\lambda}\right){\rm e}^{-i\mu^\flat s^2 } 
{\rm e}^{isL}\,\vec \varphi  \,ds+ \sum_{j\geq 0}\int_{-\infty}^{z_-}\chi_1\left(\frac{s}{2^j\lambda}\right){\rm e}^{-i\mu^\flat s^2 } 
{\rm e}^{isL}\,\vec \varphi  \,ds.
\end{align}
We deduce that there exists $j_0$ such that for $j\geq j_0$, 
\begin{align*}
\int_{-\infty}^{z_-}\chi_1\left(\frac{s}{2^j\lambda}\right){\rm e}^{-i\mu^\flat s^2 } 
{\rm e}^{isL}\,\vec \varphi  \,ds
=&
\int_{-\infty}^{+\infty}\chi_1\left(\frac{s}{2^j\lambda}\right){\rm e}^{-i\mu^\flat s^2 } 
{\rm e}^{isL}\,\vec \varphi \, ds.
\end{align*}
The first $(j_0+1)$-th. terms in the right-hand side of~\eqref{pluto18}  they can be treated as $\mathfrak I ^1_\varphi$ via a unique integration by parts, generating boundary terms. We obtain 
\[
\mathfrak I_\varphi^{2,\infty}= 
\sum_{j\geq j_0+1}\int_{-\infty}^{+\infty}\chi_1\left(\frac{s}{2^j\lambda}\right){\rm e}^{-i\mu^\flat s^2 } 
{\rm e}^{isL}\,\vec \varphi \, ds+ 
\mathcal O(\delta\| \vec\varphi\|_{\Sigma^{k+\ell(M,k)}}).
\]
We then 
  integrate by parts using the differential operator $P:=\frac{i}{2\mu^\flat s}\frac{\partial}{\partial s}$. We  have for $j\geq 1$,
 $$
 \int_{-\infty}^{+\infty}\chi_1\left(\frac{s}{2^j\lambda}\right){\rm e}^{-i\mu^\flat s^2 } 
{\rm e}^{isL}\,\vec\varphi \, ds = \int_{-\infty}^{+\infty}{\rm e}^{-i\mu^\flat s^2 } 
(P^*)^M\left(\chi_1\left(\frac{s}{2^j\lambda}\right){\rm e}^{isL}\right)\,\vec\varphi \, ds.
$$
Besides, for any $N\in\R$, there exists a constant $C_N>0$ such that if $f\in\mathcal C^\infty_0(\R)$,
 $$
 | (P^*)^N f(s)| \leq C_N \langle s\rangle ^{-N}\sup_{1\leq p\leq N} |f^{(p)} (s)|.
 $$
 Therefore, 
   noticing that $\vert s\vert$ is of order $2^j\lambda$ on the support of the integrals,  we obtain
 \begin{align*}
\left\|  \int_{-\infty}^{+\infty}\chi_1\left(\frac{s}{2^j\lambda}\right){\rm e}^{-i\mu^\flat s^2 } 
{\rm e}^{isL}\,\vec\varphi\,  ds \right\|_{\Sigma^k} &\leq  C\|\vec\varphi \|_{\Sigma^{k+N}} \int _{ |s| \sim 2^j \lambda}
 |s| ^{-N}  ds\\
 &\leq C'\|\vec\varphi \|_{\Sigma^{k+N}} (2^j\lambda)^{-N+1} \int_{1/2}^2 \frac{ds}{s^2}.
\end{align*}
Therefore,  taking $N$ large enough, the  series  is convergent and we have~\eqref{eq:ertransf}.
\end{proof}

\subsection{Conclusion of the proof of Theorem~\ref{thm:through_cros}}

We can now conclude the proof of 
Theorem~\ref{thm:through_cros}. Via a Dyson series, we have constructed interaction operators in Section~\ref{sec:dyson}, leading to the relation~\eqref{eqdef:dyson_bis}. After resummation as in~\eqref{resommation}, we are left with operators $\Theta^{\eps,\delta}_{m,M}$ satisfying~\eqref{eqdef:dyson}. Moreover, their action on wave packets expresses as~\eqref{def:gros_theta}, which is studied in details in Section~\ref{sec:mtom+1}.
By Lemma~\ref{lem:asymptotics}, we obtain that the operators $\Theta^{\eps,\delta}_{m,M}$ satisfy~\eqref{eq:pluto1} with the properties~\eqref{norm_estimate}
and~\eqref{eq:pluto3}.

\section{Propagation of wave packets}
\label{sec:thm_main}

We prove Theorem~\ref{th:WPmain}. Let $k\in\N$ and let be $\psi^\eps_0$ a polarized wave packet as in~\eqref{def:initial_data}:
\[
\psi^\eps_0= \vec V_0\wp^\eps_{z_0} (f_0)\;\;\mbox{with} \;\;f_0\in{\mathcal S}(\R^d)\;\;\mbox{and}\;\;\vec V_0\in\C^m.
\]
Let $\delta>0$.
By Theorem~\ref{th:WPmain_hors_gap}, for $t\in[t_0,t^\flat-\delta]$, and in $\Sigma^k_\eps$, $\psi^\eps(t)$ is an asymptotic sum of wave packets and writes
\[
\psi^\eps(t) =\psi^{\eps,N}_1(t)+\psi^{\eps,N}_2(t) 
+\O\left(\left(\frac{\sqrt\eps}{\delta}\right)^{N+1}\delta^{-2\kappa_0}\right).
\]
with $\psi^{\eps,N}_\ell(t)$ given by~\eqref{eq:psiepsell}. 
\smallskip 

We now  take the vector 
\[
\underline \psi^{\eps,N}(t^\flat-\delta) = \, ^t\left( \underline \psi^{\eps,N}_1(t^\flat-\delta),\underline \psi^{\eps,N}_2(t^\flat-\delta)\right), \;\;
\underline \psi^{\eps,N}_\ell(t^\flat-\delta)=\psi^{\eps,N}_\ell(t^\flat -\delta),\;\ell=1,2,
\] 
as initial data in the system~\eqref{newsch}. It is a sum of $N$ wave packets.
By construction, in particular because of the linearity of the equation, we have  for all $t\in[t^\flat-\delta, t^\flat+\delta]$,
\[
\psi^\eps(t) = \underline \psi^{\eps,N}_1(t) + \underline \psi^{\eps,N}_2(t) +\O\left(\left(\frac{\sqrt\eps}{\delta}\right)^{N+1}\delta^{-2\kappa_0}\right)
\]
in $\Sigma^k_\eps$. When $t=t^\flat +\delta$, we deduce from Theorem~\ref{thm:through_cros}  that in $\Sigma^k_\eps$, 
\begin{align*}
\underline \psi^{\eps, N} (t^\flat +\delta)= &
\U^\eps_{\rm diag}(t^\flat +\delta,t^\flat  )\;\left(\1+\sum_{1\leq m\leq M}  \Theta^{\eps,\delta}_{m,M} \right)\; \U^\eps_{\rm diag}(t^\flat,t^\flat -\delta ) \underline \psi^{\eps, N}(t^\flat -\delta) \\
&\;\;+\O(\delta^{M+1}) +\O\left((\eps\delta^{-2})^{\frac{M+1}2}\delta^{-2\kappa_0-k}\right).
\end{align*}
By  Proposition~\ref{evadia} of the Appendix, $\underline \psi^{\eps, N} _\ell(t^\flat +\delta)$ is a sum of wave packets 
\[
\underline \psi^{\eps, N} _\ell(t^\flat +\delta)= \sum_{0\leq m\leq M} \eps^{\frac m 2} \underline \psi^{\eps, m,M,N} _\ell(t^\flat +\delta)
\]
where each term $\underline \psi^{\eps, m,M,N} _\ell(t^\flat +\delta)$ involves a term of action. When $m=0$ and $m=1$, these terms have been computed precisely:
\begin{itemize}
\item[(i)] If $m=0$, for $\ell=1,2$,
\begin{align*}
\underline \psi^{\eps, 0,M,N} _\ell(t^\flat +\delta) & 
= {\rm e}^{\frac i\eps S_\ell (t^\flat +\delta, t^\flat -\delta, \zeta^\flat)} 
{\rm WP}^\eps_{\Phi^{t^\flat+\delta,t_0} (z_0)} \left(
\mathcal M [ F_\ell (t^\flat +\delta, t_0,z_0)] \pi_\ell (t_0,z_0)  \vec V_0f_0
\right)
\end{align*}
where we have used the property of the scalar propagation of wave packets. 
\item[(ii)] If $m=1$, 
only the term with $\ell=2$ contributes and 
\begin{align*}
\underline \psi^{\eps, 1,M,N} _2(t^\flat +\delta) & = 
 {\rm e}^{\frac i\eps S_2(t^\flat+\delta, t^\flat ,\zeta^\flat)+\frac i\eps S_1(t^\flat, t_0,z_0)} 
  {\rm WP}^\eps_{\Phi^{t^\flat+\delta, t^\flat}_2(\zeta^\flat) } (\varphi_2)
\end{align*}
with 
\[
\varphi_2= \mathcal M[F_2(t^\flat+\delta,t^\flat, \zeta^\flat)] W_1(t^\flat, \zeta^\flat) ^*\mathcal T^\flat_{1\rightarrow 2} \mathcal M[F_1(t^\flat, t_0,z_0)]\pi_1(t_0,z_0) \vec V_0 f_0,
\] 
where $W_1$ is the off-diagonal matrix computed in~\eqref{def:W1}.
\end{itemize}
At that stage of the proof, we choose $M\ge N$ and have obtained that $\psi^\eps(t^\flat +\delta)$ is a sum of wave packets up to $\O\left(\left(\frac{\sqrt\eps}{\delta}\right)^{N+1}\delta^{-2\kappa_0-k}+\delta ^{M+1}\right)$ in $\Sigma^k_\eps$. Moreover, we know precisely the terms of order $\eps^0$ and $\eps^{\frac 12}$.
\smallskip 

For concluding, we take the vector 
\[
\psi^\eps_{\rm app}(t^\flat+\delta):=\underline \psi^{\eps,N}_1(t^\flat+\delta ) + \underline \psi^{\eps,N}_2(t^\flat +\delta )
\]
as initial data at time $t=t^\flat+\delta$  in the equation~\eqref{eq:sch}. The function $\psi^\eps_{\rm app}(t^\flat+\delta)$ is an approximation of $\psi^\eps (t^\flat+\delta)$ at order $\O\left(\left(\frac{\sqrt\eps}{\delta}\right)^{N+1}\delta^{-2\kappa_0-k}+\delta ^{M+1}\right)$ in $\Sigma^k_\eps$.
 By construction and because of the linearity of the equation, for all times $t\in[t^\flat+\delta, t_0+T]$, 
\[
\psi^\eps(t)=\mathcal U_H(t,t^\flat +\delta) \psi^\eps_{\rm app}(t^\flat+\delta)+ \O\left(\left(\frac{\sqrt\eps}{\delta}\right)^{N+1}\delta^{-2\kappa_0-k} +\delta^{M+1}\right).
\]
We then applies Theorem~\ref{th:WPmain_hors_gap} between times $t^\flat +\delta$ and $t$. Indeed, the classical trajectories involved in the construction do not meet $\Upsilon$ again and we are in an adiabatic regime, as in Theorem~\ref{th:WPmain_hors_gap}. This concludes the proof of  Theorem~\ref{th:WPmain}. 

\section{Transfer operator for Gaussian states}
\label{sec:trsf_Gauss}

In this section, we study the action of the transfer operator $\mathcal T_{1\to 2}^\flat$ defined in~\eqref{transf1} on Gaussian states. This will justify the transfer coefficient~\eqref{def:tau} used in the initial value representations. 
\smallskip 

Following~\cite{FLR1}, for $(\mu,\alpha,\beta)\in\R\times\R^{2d}$ and $\varphi\in\Sch(\R^d)$, we set 
  \beq\label{transf2}
{\mathcal T}_{\mu,\alpha, \beta}\varphi(y) =
 \left(\int_{-\infty}^{+\infty}{\rm e}^{i\mu s^2}{\rm e}^{is(\beta\cdot y-\alpha\cdot D_y)}ds\right)\varphi(y).
\eeq
The operator $\mathcal T_{1\to 2}^\flat$ then satisfies
\[
\mathcal T_{1\to 2}^\flat= \mathcal T_{-\mu^\flat, \alpha^\flat,\beta^\flat}.
\]
Indeed, by the Baker-Campbell-Hausdorff formula, we have 
$${\rm e}^{is\beta\cdot y}{\rm e}^{-is\alpha\cdot D_y}= {\rm e}^{is\beta\cdot y-is\alpha\cdot D_y+is^2\alpha\cdot\beta/2},$$ 
and we deduce the equivalent representation
\beq\label{transf1_bis}
 {\mathcal T}_{\mu,\alpha, \beta}\varphi(y) =
  \int_{-\infty}^{+\infty}{\rm e}^{i(\mu-\alpha\cdot\beta/2)s^2} {\rm e}^{is\beta\cdot y}\varphi(y-s\alpha)ds .
 \eeq
 An explicit computation also gives the  following  useful connection  with the Fourier transform
 \beq\label{FT}
 {\mathcal F}{\mathcal T}_{\mu,\alpha, \beta}= {\mathcal T}_{\mu+\alpha\cdot\beta, \beta, -\alpha}{\mathcal F}.
\eeq
The next proposition sums up the main information that we will use about these operators. 

\begin{proposition}\label{prop:T}
Let $(\mu,\alpha,\beta)\in \R^{2d+1}$.  
\begin{enumerate}
\item  The operator   $\mathcal T_{\mu,\alpha,\beta}$ maps ${\mathcal S}(\R^d)$ into itself if and only if $\mu\not=0$.
 \item Moreover, if $\mu\not=0$,   $\mathcal T_{\mu,\alpha,\beta}$ is  a metaplectic transformation in the Hilbert space
 $L^2(\R^d)$  multiplied by a complex number: 
 \begin{equation}\label{eq:T}
 {\mathcal T}_{\mu,\alpha, \beta} 
= \sqrt{\frac{i\pi}{\mu}}{\rm e}^{-\frac{i}{4\mu}(\beta\cdot y -\alpha\cdot D_y)^2}.
 \end{equation}
\item
If $\mu\not=0$, $\Gamma\in{\mathfrak S}^+(d)$ and $P\in{\mathcal C}^\infty(\R^{2d})$ is a polynomial function then there exists
$\Gamma_{\mu,\alpha,\beta,\Gamma}\in{\mathfrak S}^ +(d)$ such that
$$
{\mathcal T}_{\mu,\alpha,\beta}  ({\rm op}^w_1(P) g^\Gamma ) = 
\sqrt{\frac{i\pi}{\mu}} {\rm op}^w_1(P\circ \Phi_{\alpha,\beta} ((4\mu)^{-1} )g^{\Gamma_{\mu,\alpha,\beta,\Gamma}}
$$
where  $\Phi_{\alpha,\beta}$  is the symplectic $2d\times 2d$ matrix given by
\begin{align} \label{def:Phi}
 &\qquad \qquad \Phi_{\alpha,\beta}(t) =\begin{pmatrix}  \1 -2t\beta\otimes\alpha&2t\alpha\otimes\alpha, \\-2t\beta\otimes\beta & \1 +2t\alpha\otimes\beta\end{pmatrix}, \;\;t\in\R,
\end{align}
and 
\beq\label{Deq}
\Gamma_{\mu,\alpha,\beta,\Gamma} =    \Gamma  -\frac{(\beta-\Gamma\alpha)\otimes (\beta-\Gamma\alpha) }{2\mu-\alpha\cdot\beta+\alpha\cdot\Gamma\alpha}.
\eeq
\end{enumerate}
\end{proposition}

\begin{remark}
The matrix $\Gamma_{\mu,\alpha,\beta,\Gamma}$ is in $\mathfrak S^+(d)$ since $g^{\Gamma_{\mu,\alpha,\beta,\Gamma}}$ is proved to be Schwartz class. 
It is also important to notice that 
$2\mu-\alpha\cdot\beta+\alpha\cdot \Gamma\alpha$ is non zero because the imaginary part of $\Gamma$ is positive definite. 
We note that the normalisation of the Gaussian $g^{\Gamma_{\mu,\alpha,\beta,\Gamma}}$ follows \eqref{def:Gamma}.
\end{remark}

\begin{proof}
Point (1) is linked with Point (2) and comes from the formula~\eqref{transf1} and~\eqref{transf2}. Indeed, when $\mu\not=0$, 
equation~\eqref{eq:T} is an application of relation~\eqref{transf2} and of functional calculus  on the self-adjoint operator $(\beta\cdot y -\alpha\cdot D_y)^2$  and the Fourier-transform formula  of complex Gaussian functions:
\beq\label{FG}
\int_{-\infty}^{+\infty}{\rm e}^{is^2\mu}{\rm e}^{is\tau}ds  
=\sqrt{\frac{i\pi}{\mu}}{\rm e}^{\frac{\tau^2}{4i\mu}}
\;{\rm with}\;\arg(i\mu)\in]-\pi, \pi[.
\eeq
It remains to analyze the case where $\mu=0$. The computations are different whether $\alpha\cdot\beta=0$ or not. We assume $\alpha\not=0$ and we set
$$
\hat \alpha=\frac\alpha{|\alpha|},\;\; y= (y\cdot \hat\alpha)\,\hat\alpha +y_\perp.$$
Similar formulas can be obtained when $\beta\not=0$ using~\eqref{FT}. 
Let us first assume $\alpha\cdot\beta=0$.
\begin{align*}
\mathcal T_{0,\alpha,\beta} \varphi(y) & = \int \e^{is\beta \cdot y_\perp} \varphi( y\cdot \hat \alpha \,\hat \alpha -s\alpha +y_\perp) ds\\
&=|\alpha|^{-1} \int \e^{i|\alpha|^{-1} (y\cdot \hat \alpha-\sigma) (\beta\cdot y_\perp)} \varphi (\sigma \hat \alpha +y_\perp)d\sigma\\
&= |\alpha|^{-1} \e^{i |\alpha|^{-1} (y\cdot \hat \alpha) (\beta \cdot y_\perp)} \mathcal F_{\alpha} \varphi\left( \frac{\beta \cdot y_\perp}{|\alpha|} ,y_\perp\right)
\end{align*}
where $\varphi\in{\mathcal S}(\R^d)$, $y_\perp=y-{\hat\alpha\cdot y} \,\hat\alpha$ and $\mathcal F_\alpha$ is the partial Fourier transform in the direction~$\alpha$. \\
In the case where $\alpha\cdot\beta\not=0$, we write
\begin{align*}
\mathcal T_{0,\alpha,\beta} \varphi(y)& =
 (2\pi)^{-1} \int_{\R^2} \e^{-is^2 \frac{\alpha\cdot\beta}{2}+is\beta\cdot y+i\eta (y\cdot \hat\alpha -s|\alpha|) }\mathcal F_\alpha\varphi(\eta,y_\perp) d\eta ds\\
&
= \sqrt{\frac{1}{2 i\pi\beta\cdot  \alpha}} \int \e^{ i\frac {(\beta\cdot y -\eta|\alpha|)^2 } {2\alpha\cdot \beta} +i\eta y\cdot \hat \alpha} \mathcal F_\alpha\varphi(\eta,y_\perp) d\eta \\
&= \sqrt{\frac{1}{2 i\pi\beta\cdot  \alpha}} 
 \e^{i \frac{(\beta\cdot y)^2} { 2\beta\cdot \alpha}}
  \int 
  \e^{-i\eta \frac{\beta_\perp\cdot y_\perp} {\beta\cdot \hat \alpha}}
\e^{i \frac{\eta^2 |\alpha|^2} {2\beta\cdot\alpha}}
\mathcal F_\alpha\varphi(\eta,y_\perp) d\eta \\
& = \sqrt{\frac{1}{2i\pi\beta\cdot  \alpha}}  \e^{i \frac{(\beta\cdot y)^2} { 2\beta\cdot \alpha}} 
\int \e^{-i\eta \frac{\beta_\perp\cdot y_\perp} {\beta\cdot \hat \alpha} }
\mathcal F_\alpha\left(e^{i \frac{(D_{y}\cdot \alpha)^2} {2\beta\cdot\alpha} } \varphi\right)(\eta,y_\perp) d\eta\\
&= \sqrt{\frac{2\pi}{i\beta\cdot  \alpha}}  \e^{i \frac{(\beta\cdot y)^2} { 2\beta\cdot \alpha}}  
\left( e^{i \frac{(D_{y}\cdot\alpha)^2} {2\beta\cdot\alpha}}  \varphi\right)
\left(-\frac{\beta_\perp\cdot y_\perp} {\beta\cdot \hat \alpha}\hat\alpha+ y_\perp\right)
\end{align*}
This concludes the proof of Points~(1) and~(2). 

\medskip
 
Point (3) derives from  the formulation of~$\mathcal T_{\mu,\alpha,\beta}$ as a metaplectic transform,
\[
\mathcal T_{\mu,\alpha,\beta} = \sqrt{\frac{i\pi}{\mu}} \e^{-\frac{i}{4\mu} \hat K}
\]
associated with the quadratic Hamiltonian 
 \[
 K(y,\eta) = (\beta\cdot y-\alpha\cdot\eta)^2.
 \]
We use general  results concerning the action of a metaplectic transformation  on a Gaussian~$g^\Gamma$ (for details see \cite{corobook}[Chapter 3] and in particular \cite{corobook}[\S3.2, Theorem 16]).  With the quadratic Hamiltonian $\hat K$ one associates the   linear flow $\Phi_{\alpha,\beta}(t)=(\Phi_{ij}(t))_{1\leq i,j\leq 2}$ (in a $d\times d$ block form) given by~\eqref{def:Phi}. 
Besides, the Egorov theorem and the classical propagation of the Gaussian are both  exact: we have for~$P$ a smooth polynomial function 
 $$
{\rm e}^{-it\hat K}({\rm op}_1(P)g^\Gamma) = {\rm op}_1(P\circ \Phi_{\alpha,\beta}(t)){\rm e}^{-it\hat K} g^\Gamma=
({\rm op}_1(P\circ \Phi_{\alpha,\beta}(t)) g^{\Gamma_t},
$$
where, in view of~\eqref{def:Gamma}, the matrix $\Gamma_t\in \mathfrak S^+(d)$ is given by
\begin{equation}\label{eq:inconv_inv}
\Gamma_t = (\Phi_{21}(t) +\Phi_{22}(t)\Gamma)(\Phi_{11}(t)+\Phi_{12}(t)\Gamma)^{-1}
,\;\; c_{\Gamma_t} = 
c_\Gamma\, {\rm det}^{-1/2}(\Phi_{11}(t) +\Phi_{12}(t)\Gamma), 
\end{equation}
This induces the existence of the matrix $\Gamma_{\mu,\alpha,\beta,\Gamma}\in\mathfrak S^+(d)$ of 
Point (3) of the Proposition. Formula~\eqref{Deq} follows either by computing the inverse in \eqref{eq:inconv_inv} or by applying
\eqref{transf1_bis} to get
\[
   {\mathcal T}_{\mu,\alpha,\beta} g^\Gamma(y)
   = 
c_\Gamma \sqrt{\frac{2i\pi}{2\mu-\alpha\cdot\beta+\alpha\cdot\Gamma\alpha}}\;
   {\rm e}^{\frac{i}{2}\left(y\cdot\Gamma y -\frac{(y\cdot(\beta-\Gamma\alpha))^2}{2\mu-\alpha\cdot\beta+\alpha\cdot\Gamma\alpha}\right)}   .
\]
\end{proof}


\appendix

\chapter{Matrix-valued Hamiltonians}\label{app:A}

We explore here the implications of the technical assumptions on the Hamiltonian~$H^\eps$. We aim at motivating the set of Assumptions~\ref{hyp:growthH} and deriving their
 consequences. Roughly speaking, these assumptions ensure the existence of the propagators associated with the full matrix-valued Hamiltonian and with its eigenvalues. Secondly, they guarantee adequate properties of growth at infinity which are used in our analysis. 
 \smallskip 
 
 In this section, we work with $m\times m $ ($m\in\N$) matrix-valued  Hamiltonians $H$ that are  subquadratic:
 \begin{equation}\label{fifi1}
\forall \beta\in\N^{2d},\;\;\exists C_\beta >0,\;\;\forall (t,z)\in I\times \R^d,\;\; |\partial^\beta H(t,z)|_{\C^{m,m}}\leq C_\beta \langle z\rangle ^{(2-|\beta|)+}.
\end{equation}
For their eigenvalues we can deduce the following growth properties.

\begin{lemma}\label{lem:growth_1}
Assume that the matrix-valued function  $H\in\mathcal C^\infty(I\times \R^{2d}, \C^{m,m})$  satisfies~\eqref{fifi1}.
Assume that for all $(t,z)\in I\times\R^{2d}$, $H(t,z)$
has a smooth  eigenvalue $h(t,z)$ with a smooth eigenprojector $\pi(t,z)$ of constant rank for $|z|>R$, $R>0$. Then,
there exists a constant $C>0$ such that for all $(t,z)\in I\times\R^{2d}$ with $|z|>R$,
 \[
 \vert h(t,z)\vert \leq  C\langle z\rangle^2,\;\; \vert\nabla h(t,z)\vert\leq C\langle z\rangle.
 \]
\end{lemma}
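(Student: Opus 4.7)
The plan is to prove the two bounds essentially independently, the first being immediate and the second following from a Hellmann–Feynman type identity.

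For the bound on $|h|$, I would invoke the general fact that every eigenvalue $\lambda$ of a matrix $A\in\C^{m,m}$ satisfies $|\lambda|\le \|A\|_{\mathrm{op}}$ (apply $A$ to a unit eigenvector). Applied pointwise to $H(t,z)$ and its eigenvalue $h(t,z)$, and combined with~\eqref{fifi1} for $\beta=0$, this yields at once $|h(t,z)|\le \|H(t,z)\|_{\mathrm{op}}\le C\langle z\rangle^{2}$.

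For the gradient bound, the key idea is to eliminate the (possibly unbounded) factor $\partial_{z}\pi$ by a Hellmann--Feynman type cancellation. Since $\pi(t,z)$ is a (spectral) eigenprojector, $H\pi=\pi H=h\pi$, whence $h\pi=\pi H\pi$. Differentiating this last identity in $z_{j}$ and using $H\pi=\pi H=h\pi$ together with the consequence $\partial_{z_{j}}(\pi^{2})=\partial_{z_{j}}\pi$ of $\pi^{2}=\pi$, the terms involving $\partial_{z_{j}}\pi$ compensate and one is left with the pointwise identity
\begin{equation*}
\partial_{z_{j}}h(t,z)\cdot \pi(t,z)\;=\;\pi(t,z)\,\partial_{z_{j}}H(t,z)\,\pi(t,z).
\end{equation*}
Because the rank $r$ of $\pi$ is constant (for $|z|>m$), $\mathrm{Tr}\,\pi(t,z)=r$, and taking the trace of the above identity gives the Hellmann--Feynman formula
\begin{equation*}
\partial_{z_{j}}h(t,z)\;=\;\tfrac{1}{r}\,\mathrm{Tr}\bigl(\pi(t,z)\,\partial_{z_{j}}H(t,z)\,\pi(t,z)\bigr).
\end{equation*}

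To conclude, I would invoke the Hermitian setting of the paper: $H(t,z)=H(t,z)^{\ast}$, so $\pi(t,z)$ is an \emph{orthogonal} projector of rank $r$ and $\|\pi(t,z)\|_{\mathrm{op}}=1$. Consequently
$|\mathrm{Tr}(\pi\,\partial_{z_{j}}H\,\pi)|\le r\,\|\partial_{z_{j}}H\|_{\mathrm{op}}$, and~\eqref{fifi1} with $|\beta|=1$ gives $|\nabla h(t,z)|\le C\langle z\rangle$ as claimed. The only non-routine step is the algebraic cancellation yielding the Hellmann--Feynman identity; everything else is a direct norm estimate. The orthogonality of $\pi$ is what saves us here—without the Hermitian assumption the projector could grow polynomially (as already noted in Point~(ii) of the Remark following Assumption~\ref{hyp:growthH}) and the trace bound would be lost, so the algebraic identity above is essential precisely because it confines the growth of $\pi$ inside a trace controlled by the operator norm of an orthogonal projector.
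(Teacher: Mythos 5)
Your proposal is correct and follows essentially the same strategy as the paper's proof: both derive the key pointwise identity $\pi\,\partial_{z_j}H\,\pi = \partial_{z_j}h\cdot\pi$ and then take the trace, using the constant rank of $\pi$ to extract $\partial_{z_j}h$ and the Hermitian structure to bound the resulting trace. The only cosmetic difference is the algebraic route to that identity: the paper differentiates $H = h\pi + H(1-\pi) = h\pi + (1-\pi)H$ and projects both sides by $\pi$, whereas you differentiate $h\pi = \pi H\pi$ directly and observe that $h\,\partial_{z_j}\pi = h\bigl((\partial_{z_j}\pi)\pi + \pi(\partial_{z_j}\pi)\bigr) = (\partial_{z_j}\pi)H\pi + \pi H(\partial_{z_j}\pi)$ so the $\partial_{z_j}\pi$ terms cancel without any projection step. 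Your explicit remark that the trace bound $|\mathrm{Tr}(\pi\,\partial_{z_j}H\,\pi)|\le r\|\partial_{z_j}H\|_{\mathrm{op}}$ relies on $\pi$ being an \emph{orthogonal} projector (so $\|\pi\|_1=\mathrm{Tr}\,\pi=r$) is a useful clarification that the paper leaves implicit; and for the record, the paper's displayed formula $\partial_jh = c\,\mathrm{Tr}(\pi\partial_jH\pi)$ contains a typo — the factor should be $1/c$, as in your Hellmann--Feynman identity.
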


\begin{proof}
 The relation $H\pi = h\pi$ implies $|h(t,z)|\le  C\langle z\rangle^2$. 
Moreover, writing 
\[H=h\pi+H(1-\pi)= h\pi+(1-\pi) H\]
 and differentiating these two relations,  we obtain for all $j\in\{1,\cdots 2d\}$, denoting $\partial_{z_j}$ by $\partial_j$
\begin{equation}\label{eq:partialH}
\partial_{j} H = \partial_{j} h\, \pi+ h\partial_{j} \pi + \partial_{j} H(1-\pi) - H\partial _{j}\pi=
\partial_{j} h \, \pi + h\partial_{j}\pi + (1-\pi )\partial_{j} H-\partial _{j}\pi H.
\end{equation}
Multiplying from the left and the right with $\pi$ and using that $\partial\pi$ is off-diagonal, 
we obtain the relation $\pi \partial _{j}H\pi = \partial _{j}h\pi$, whence with $c={\rm Rank} (\pi)=cte$, 
\[ \partial _{j}h= c\, {\rm Tr} (\pi\partial _{j}H \pi).\]
This implies $|\partial_{j} h(t,z)|\le C\langle z\rangle$.  
\end{proof}

This proof shows that the study of higher derivatives of the eigenvalues requires a control on the derivatives of the eigenprojectors. The following example shows that the situation may become very intricate, and one can have smooth subquadratic eigenvalues while the derivatives of the projectors are unbounded.  

\begin{example}
Assume  $d=1$. Let $\rho, \theta \in\mathcal C^\infty(\R)$ such that $\rho$ has  bounded derivatives and with
\[
\rho(x) = \frac 1 {x^3} \;\;\mbox{and}\;\; \theta(x)= x\ln (x) -x\;\;\mbox{for}\;\; |x|>1.
\]
Define
\[
 H(x)= \rho(x) \begin{pmatrix} \cos( \theta(x)) &  \sin( \theta(x))\\  \sin ( \theta(x)) & - \cos ( \theta(x)) \end{pmatrix}.
 \]
The eigenvalues of $H$ are $\pm \rho$ and the eigenprojector associated with the eigenvalue $\rho$ is  
\[
\pi(x)= \frac 12 \left( 1+\begin{pmatrix} \cos( \theta(x)) &  \sin( \theta(x))\\  \sin ( \theta(x)) & - \cos ( \theta(x)) \end{pmatrix}\right).
\]
Its derivative
\[
\pi'(x)= \frac 12 \theta'(x) \begin{pmatrix}- \sin( \theta(x)) &  \cos( \theta(x))\\  \cos ( \theta(x)) &  \sin ( \theta(x)) \end{pmatrix}
\]
is not bounded. On the other side, $\rho$ and $H$ are sub-quadratic. Indeed, for $|x|>1$, the derivatives of the coefficients of $H$ are of the form 
$$\frac 1 {x^3} \left(p_1\left(\frac 1 x, \ln x\right) \cos \theta(x) + p_2 \left(\frac 1x, \ln x\right) \sin \theta(x) \right)$$
 for $p_1$ and $p_2$ two polynomial functions of two variables. Thus, they are  bounded. 
\end{example}

A manner of controlling the growth of the derivatives of the eigenprojectors consists in requiring a lower bound on the gap function $f$ at infinity. 

\begin{lemma}
\label{lem:growth_2}
Let $\ell\in\{1,2\}$.
Assume that the matrix-valued function  $H\in\mathcal C^\infty(I\times \R^{2d}, \C^{m,m})$ is bounded together with its derivatives and  
has a smooth bounded eigenvalue $h$  with smooth associated eigenprojectors $\pi(t,z)$ of constant rank for $|z|>R$, $R>0$.
Assume there exists $C,n_0>0$ such that for $(t,z)\in I\times \R^d$ with $|z|>R$,
\[
{\rm dist}\left(h(t,z), {\rm Sp}(H(t,z) )\setminus\{ h(t,z)\}\right)  \geq C\langle z\rangle ^{-n_0}.
\]
 Then, for all $\gamma\in\N^{2d}$ with $|\gamma|\geq 2$, there exists a constant $C_\gamma >0$ such that 
\[
\forall (t,z)\in I\times \R^{2d},\;\; |\partial_z^\gamma \pi(t,z)|\leq C_\gamma \langle z\rangle^{ |\gamma| n_0}.
\]
\end{lemma}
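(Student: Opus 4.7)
My plan is to proceed by induction on $|\gamma|$ using the Riesz projector (Cauchy contour) representation of $\pi$, together with the gap hypothesis. Fix $(t,z)$ with $|z|>m$ and set $\delta(z) = \tfrac{1}{2}C\langle z\rangle^{-n_0}$. Since $h(t,z)$ is isolated in the spectrum of $H(t,z)$ by distance $\geq 2\delta(z)$, the circle $\Gamma(z)$ of radius $\delta(z)$ centered at $h(t,z)$ encloses only the eigenvalue $h(t,z)$, and
\[
\pi(t,z) = \frac{1}{2\pi i}\oint_{\Gamma(z)}(\zeta - H(t,z))^{-1}\,d\zeta, \qquad \|(\zeta-H(t,z))^{-1}\|\leq C\langle z\rangle^{n_0}\ \text{on}\ \Gamma(z).
\]
By the smoothness of $h$ the contour $\Gamma(z)$ varies continuously with $z$; for pointwise estimates at a fixed $z$ I can freeze $\Gamma = \Gamma(z)$ on a small neighborhood (using that $h$ is uniformly continuous on compact sets), so that $\partial_z^\gamma \pi(t,z)$ is obtained by differentiating $(\zeta-H(t,z))^{-1}$ under the integral sign.

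The second step is the standard expansion of the iterated resolvent derivative: each $\partial_{z_j}$ produces an extra factor $(\zeta-H)^{-1}\partial_{z_j}H$, and a routine induction shows that $\partial_z^\gamma(\zeta-H)^{-1}$ is a finite linear combination of terms
\[
(\zeta-H)^{-1}\,\partial_z^{\gamma_1}H\,(\zeta-H)^{-1}\,\partial_z^{\gamma_2}H\,\cdots\,\partial_z^{\gamma_k}H\,(\zeta-H)^{-1},\qquad \gamma_1+\cdots+\gamma_k=\gamma,\ 1\leq k\leq|\gamma|.
\]
Using the subquadratic bound \eqref{fifi1} on each $\partial_z^{\gamma_j}H$, the uniform resolvent bound $\langle z\rangle^{n_0}$, the length $2\pi\delta(z)\leq C\langle z\rangle^{-n_0}$ of $\Gamma(z)$, and the elementary inequality $\sum_j(2-|\gamma_j|)_+\leq 2k-|\gamma|$, I obtain after collecting exponents a bound of order $\langle z\rangle^{|\gamma|n_0}$ times a correction polynomial of order depending on how many $\partial_z^{\gamma_j}H$ carry genuinely first-order (and thus linear-in-$z$) growth — this is exactly the source of the $\ell-1$ correction, $\ell\in\{1,2\}$, in the exponent.

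For the eigenvalue derivatives I would use the trace identity from the proof of Lemma~\ref{lem:growth_1}, namely $\partial_{z_j}h = \frac{1}{\mathrm{rank}(\pi)}\,{\rm Tr}(\pi\,\partial_{z_j}H\,\pi)$, and iterate the Leibniz rule. The crucial cancellation is that $\pi(\partial_z\pi)\pi = 0$, so each time a derivative falls on $\pi$ inside the trace one can rewrite the expression so that the off-diagonal block of $\partial_z\pi$ is multiplied either by a bounded derivative of $H$ or, through $(\zeta-H)^{-1}\pi^\perp = O(\langle z\rangle^{n_0})$, by a controlled resolvent factor. Combined with the bound on $\partial_z^{\gamma'}\pi$ already proved for $|\gamma'|<|\gamma|$, this yields
\[
|\partial_z^\gamma h(t,z)| \leq C\,\langle z\rangle^{(|\gamma|-1)n_0+2(\ell-1)},
\]
one full power of $\langle z\rangle^{n_0}$ cheaper than the projector bound thanks to one derivative being absorbed by $H$ before any resolvent factor appears.

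The main obstacle is book-keeping: without the cancellations $\pi(\partial\pi)\pi=0$ and the contour-length gain of $\langle z\rangle^{-n_0}$, a naive count of resolvents gives $|\gamma|n_0 + |\gamma|$ instead of $|\gamma|n_0 + O(1)$. Checking by induction that at each order the extra factor of $\langle z\rangle$ can always be paired either with a subquadratic derivative of $H$ or absorbed in the contour length (so that no unbounded $\langle z\rangle^{|\gamma|}$ accumulates) is the delicate part; once this accounting is carried out cleanly, both estimates follow in parallel from the same Riesz-projector inductive scheme.
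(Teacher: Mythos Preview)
Your Riesz-contour approach is genuinely different from the paper's, which is purely algebraic: the paper differentiates the relations $H\pi=h\pi$ and $\pi^2=\pi$ directly. From $H\pi=h\pi$ one obtains $\pi^\perp\partial_j\pi\,\pi=(H-h)^{-1}\pi^\perp\,\partial_j(H-h)\,\pi$ (and the adjoint block), so only the \emph{reduced} resolvent $(H-h)^{-1}\pi^\perp$ ever appears, costing exactly one factor $\langle z\rangle^{n_0}$ per derivative; the relation $\pi^2=\pi$ then supplies the diagonal blocks of $\partial^\gamma\pi$ recursively as products of lower-order $\partial^\alpha\pi$. Your eigenvalue argument via the trace identity is essentially the paper's.

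The contour computation as you wrote it, however, does not close. In the dominant term one has $k=|\gamma|$ with all $|\gamma_j|=1$, hence $|\gamma|$ factors $\partial_{z_j}H$ of size $\langle z\rangle$, $|\gamma|+1$ full resolvents each of size $\langle z\rangle^{n_0}$, and one contour-length gain $\langle z\rangle^{-n_0}$: the naive bound is $\langle z\rangle^{|\gamma|(n_0+1)}$, not $\langle z\rangle^{|\gamma|n_0+1}$. (Your ``elementary inequality'' $\sum_j(2-|\gamma_j|)_+\le 2k-|\gamma|$ is also in the wrong direction once some $|\gamma_j|>2$.) The cancellation you invoke is not mere book-keeping: it is the residue computation itself, which after integrating over $\Gamma$ replaces each $(\zeta-H)^{-1}$ either by $\pi$ or by the reduced resolvent $(H-h)^{-1}\pi^\perp$ and kills the purely $\pi$-diagonal pieces --- and that is precisely the paper's algebraic formula in disguise. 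For $\ell=1$ (the only case used later, applied to $H_0-v\1$) your naive count already gives $\langle z\rangle^{|\gamma|n_0}$ because every $\partial^{\gamma_j}H$ is then bounded, so there the two routes genuinely agree without further work.
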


\begin{proof}   
   We work for $|z|>R$ and set $\delta=\langle z\rangle^{-n_0}$ for this proof only. We use the formula from complex analysis 
   \[
   \pi(z)=-\frac1{2i\pi}\oint_{\mathcal C} (H(t,z)-\zeta)^{-1} d\zeta
   \]
   where ${\mathcal C}$ is a  closed contour in the complex plane with winding number one around~$h(t,z)$. More precisely, the contour ${\mathcal C}$ is constructed as follows.
   We set $a=\inf \,h(t,z)$, $b=\sup \,h(t,z)$ and for $\eta>0$, we consider the rectangles $\mathcal R_\eta$ based on the points $(a-\eta,-1-\eta)$, $(a-\eta,1+\eta)$, $(b+\eta,1+\eta)$ and $(b+\eta,-1-\eta)$.
We choose a smooth  curve ${\mathcal C}$, strictly included in $(\mathcal R_{\delta} \setminus \mathcal R_0)$ and consisting in four connected branches, ${\mathcal C}_\ell$, $1\leq \ell \leq 4$, that are two by two disjoint with 
\[
{\mathcal C}_1=\{ \zeta=a- \delta/2-it, \, -1\leq t\leq 1\}, \;\; {\mathcal C}_3=\{ \zeta=b+\delta/2 + it, \, -1\leq t\leq 1\}
\]
and ${\mathcal C}_2\subset \{1\leq  \Im(\zeta)\leq 1+\delta \}$,  ${\mathcal C}_4\subset \{-1-\delta \leq  \Im(\zeta)\leq -1\}$.
\smallskip

By resolvant estimates and Pythagoras' theorem, there exists a constant $c>0$ such that, 
\[
\forall \zeta \in{\mathcal C} _1\cup {\mathcal C}_3,\;\; 
| (H(t,z)-\zeta)^{-1}|_{\C^{m,m} }\leq c\frac 1{\sqrt{t^2+\delta^{2}}},
\]
while for all  $\zeta\in{\mathcal C}_2\cup{\mathcal C}_4$, this norm is uniformly bounded in $\delta$.
Similarly, 
\[
\forall \zeta \in{\mathcal C}_3,\;\; 
| (H(t,z)-\zeta)^{-1}|_{\C^{m,m}}\leq c\frac 1{\sqrt{t^2+\delta^{2}}},
\]
while for all  $\zeta\in{\mathcal C}_1\cup {\mathcal C}_2\cup{\mathcal C}_4$, this norm is uniformly bounded in $\delta$.

 We deduce from the boundedness of the derivatives of $H(t,z)$ that for all $\gamma\in\N^{2d}$, there exists $c_\gamma>0$ such that 
\begin{align*}
&\forall \zeta \in{\mathcal C} _1\cup {\mathcal C}_3,\;\; 
| \partial^\gamma_{z}\left( (H(t,z)-\zeta)^{-1}\right)|_{\C^{m,m}}\leq c_\gamma \frac 1{\left(\sqrt {t^2+\delta^2}\right)^{|\gamma|+1}},
\end{align*}
while these derivatives are uniformly bounded, independently of $\delta$ on the other branches of~${\mathcal C}$.
As a consequence, there exists  $C_\gamma>0$ associated with  $\gamma\in\N^{2d}$, and   such that 
\begin{align*}
| \partial^\gamma_{z} \pi|_{\C^{m,m}}
& \leq C_\gamma \int_{-1}^{+1} 
\frac {dt}{\left(\sqrt {t^2+\delta^2}\right)^{|\gamma|+1}}
C_\gamma  \delta^{-|\gamma|}\int_{0} ^{+\infty} \frac {du}{\left(\sqrt {u^2+1}\right)^{|\gamma|+1}}= \mathcal{O}(\delta^{-|\gamma|}),
\end{align*}
which concludes the proof.
\end{proof}

These two Lemmata allow to derive  the consequences of Assumption~\ref{hyp:growthH} for a Hamiltonian $H^\eps=H_0+\eps H_1$. 

\begin{proposition}\label{lem:growth_eigen_bis}
 Assume that $H^\eps=H_0+\eps H_1$ satisfies Assumption~\ref{hyp:growthH}. Then, for $\ell\in\{1,2\}$ we have the following properties:
  \begin{enumerate}
 \item 
  For all $\gamma\in\N^{2d}$ with $|\gamma|\geq 2$, there exists a constant $C_\gamma >0$ such that 
\[
\forall (t,z)\in I\times \R^{2d},\;\; |\partial_z^\gamma \pi_\ell (t,z)|\leq C_\gamma \langle z\rangle^{ |\gamma| n_0}.
\]
\item The Hamiltonian trajectories $\Phi^{t_0,t}_{h_\ell}(z)$ are globally defined  for all $z\in\R^{2d}$. Besides, there exists $C'>0$ such that  
 $$
 \vert  \Phi^{t_0,t}_{h_\ell}(z) \vert\leq C \vert z\vert{\rm e}^{C\vert t-t_0\vert}
 $$
 and the Jacobian matrices $F_\ell(t,z)=\partial_z \Phi^{t,t_0}_{h_\ell}(z)$ (see~\eqref{def:F}) satisfy
 \[
 \| F_\ell(t,z)\|_{\C^{2d,2d}}\leq C {\rm e}^{C\vert t-t_0\vert}.
 \]
\end{enumerate}
\end{proposition}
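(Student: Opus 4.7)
The plan is to derive Part (1) as a direct application of Lemma~\ref{lem:growth_2} to the principal symbol $H_0$, and then to obtain Part (2) as a straightforward corollary via Gronwall arguments applied to the Hamiltonian flow and its linearization.

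First, I would verify that the hypotheses of Lemma~\ref{lem:growth_2} are met by $H_0$. Subquadratic growth of $H_0$ is the $j=0$ case of \eqref{eq:decayHj} in Definition~\ref{def:subquad}. The smooth decomposition $H_0=h_1\pi_1+h_2\pi_2$ with smooth eigenvalues and smooth eigenprojectors is given by Definition~\ref{def:smooth_cros} combined with Assumption~\ref{hyp:smooth_cros}. The constant-rank requirement outside a large ball is immediate from \eqref{eq:growth_gap}: once $|z|>R$, the crossing set $\Upsilon$ is not reached, so $\pi_1,\pi_2$ have constant rank (equal to $1$ in the two-eigenvalue setting relevant here). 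Finally, the gap hypothesis of Lemma~\ref{lem:growth_2} is exactly \eqref{eq:growth_gap}, since for $|z|>R$ one has
\[
\mathrm{dist}\bigl(h_j(t,z),\mathrm{Sp}(H_0(t,z))\setminus\{h_j(t,z)\}\bigr)=|h_1(t,z)-h_2(t,z)|=2|f(t,z)|\geq 2C\langle z\rangle^{-n_0}.
\]
Applying the lemma with the rank parameter $\ell=1$ then produces exactly the two estimates in Part~(1).

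For Part (2), specializing Part (1) to $n_0=0$ immediately yields that $\partial_z^\gamma\pi_j$ and $\partial_z^\gamma h_j$ are bounded on $I\times\R^{2d}$ for every $|\gamma|\geq 2$. Combined with the linear-growth bound $|\nabla_z h_j|\leq C\langle z\rangle$ furnished by Lemma~\ref{lem:growth_1}, this shows that the Hamiltonian vector field $J\nabla_z h_j$ is globally Lipschitz, uniformly in $t\in I$. I would then invoke Cauchy--Lipschitz for the global existence of $\Phi^{t,t_0}_{h_j}$ on $\R^{2d}$, and derive the stated exponential-in-time bound on trajectories by a direct Gronwall argument on $\frac{d}{dt}\langle z(t)\rangle^2$. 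The bound on the Jacobian $F_j$ follows analogously from the linearized equation \eqref{eq:lin}, using the boundedness of $\mathrm{Hess}_z h_j$ to conclude $\|F_j(t,z)\|\leq \mathrm e^{C|t-t_0|}$ by Gronwall on a matrix ODE.

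The only genuine subtlety is bookkeeping: one must correctly match the parameter $\ell$ in Lemma~\ref{lem:growth_2} to the geometric setting here (two smooth modes with rank-one projectors at infinity) in order to extract the sharp exponents $|\gamma|n_0$ and $(|\gamma|-1)n_0$. Once the identification $\ell=1$ is made, the rest is routine ODE analysis and I do not foresee any further spectral or semi-classical obstacle beyond what is already encoded in Lemmata~\ref{lem:growth_1} and~\ref{lem:growth_2}.
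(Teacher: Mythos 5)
Your reduction to Lemma~\ref{lem:growth_2} is the right instinct, but you apply the lemma to the wrong matrix, and your reading of the parameter $\ell$ is not correct. You take $\ell=1$ because the eigenprojectors are rank one; however $\ell$ is not a rank parameter. In the proof of Lemma~\ref{lem:growth_2} the projector estimate comes from $(1-\pi)\partial_j\pi\,\pi = (H-h)^{-1}(1-\pi)\,\partial_j(H-h)$, so the resolvent bound $\langle z\rangle^{n_0}$ is multiplied by the size of $\partial_j(H-h)$; the parameter $\ell$ encodes the growth rate of $\partial_z H$ (bounded first derivatives correspond to $\ell=1$, linear growth in $\langle z\rangle$ to $\ell=2$). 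For $H_0$ itself one only has the subquadratic bound $|\partial_z H_0|\leq C\langle z\rangle$, i.e.\ $\ell=2$, and then the lemma gives $|\partial_z^\gamma \pi_j|\leq C\langle z\rangle^{|\gamma|n_0+1}$ and $|\partial_z^\gamma h_j|\leq C\langle z\rangle^{(|\gamma|-1)n_0+2}$, which are strictly weaker than what Part~(1) asserts. So this is a genuine gap, not bookkeeping.

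The correct route, which is what the paper does, is to apply Lemma~\ref{lem:growth_2} to the shifted matrix $H_0-v\,\1$. It has the same eigenprojectors $\pi_1,\pi_2$ but eigenvalues $\pm f$, and---this is exactly where condition~\eqref{eq:growth_trace} of Assumption~\ref{hyp:growthH}~(i) enters---its first derivatives are bounded: $|\partial_z(H_0-v\,\1)|\leq C$. Thus the lemma applies with $\ell=1$ and yields the sharp exponents for $\pi_j$ and for $f$. One then recovers the eigenvalue estimates via $h_j=v\pm f$, using that $v=\frac12(h_1+h_2)=\frac12{\rm Tr}(H_0)$ is subquadratic, so its derivatives of order $\geq 2$ are bounded. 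Without the shift by $v\,\1$ the hypothesis~\eqref{eq:growth_trace} is never used and the stated exponents cannot be reached. Your treatment of Part~(2)---Gronwall on the flow and on the linearized equation~\eqref{eq:lin} once the Hessian of $h_j$ is globally bounded---is correct and is exactly what the paper leaves to the reader.
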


\begin{remark}
Recall the representation $H_0=v\1+f(\pi_2-\pi_1)$ with gap $f$ and average $v$ defined \Cref{def:smooth_cros}. 
Under the assumptions of Proposition~\ref{lem:growth_eigen_bis}, for all $j\in\{1,\cdots, 2d\}$, 
the matrices 
\[
f \partial_{z_j} \pi_1=-f\partial_{z_j} \pi_2= \frac 12 \left(\partial _{z_j} (H_0-v) -\partial_{z_j}  f(\pi_2-\pi_1)\right)
\]
are bounded. 
\end{remark}

\begin{proof}[Proof of Proposition~\ref{lem:growth_eigen_bis}] 
The eigenprojectors $\pi_\ell$ are those of $H_0-v\1_{\C^m}$. By (i) and (ii) of Assumptions~\ref{hyp:growthH}, the Hamiltonian  $H_0-v\1_{\C^m}$ satisfies the properties required for applying Lemma~\ref{lem:growth_2}, whence Point 1 of Proposition~\ref{lem:growth_eigen_bis}. 
Moreover, Point 2 of Proposition~\ref{lem:growth_eigen_bis} comes from (iii) of Assumptions~\ref{hyp:growthH}.
\end{proof}

We close this Section with the proof of Lemma~\ref{lem:trsp_par}. 

\begin{proof}[Proof of Lemma~\ref{lem:trsp_par}]
The map $t\mapsto \mathcal R_\ell(t,t_0,z)$ is valued in the set of unitary maps because 
the matrix $H^{{\rm adia}}_{\ell,1} $ is self adjoint. 
Besides, 
 the  map 
  \[
  (t,z)\mapsto Z_\ell(t,z)=  \pi_\ell (t,\Phi^{t,t_0}_{h_\ell}(z) ){\mathcal R}_\ell (t,t_0,z)\pi_\ell^\perp(t_0,z)
  \]
   satisfies the ODE
  \[ 
  i\partial_t Z_\ell(t,z)= \left(-i\left(\pi_\ell (\partial_t \pi_\ell +\{h,\pi_\ell \})\right)\circ \Phi^{t,t_0}_{h_\ell}\right)\!(z)\, Z_\ell(t,z),\;\;\;\; Z_\ell(t_0,z)=0,
  \]
 and thus coincides with the solution $Z_\ell(t,z)=0$.
 \smallskip 

 For proving~\eqref{eq:deriv_R}, we observe that $\omega\cdot\nabla _z\mathcal R(t,t_0,z)$ satisfies the ODE
 \begin{align*}
 i\partial_t(\omega\cdot\nabla _z\mathcal R(t,t_0,z)= &  H_{\ell,1}^{\rm adiab} (
 t,\Phi^{t,t_0}_{h_\ell}(z) )(\omega\cdot\nabla_z{\mathcal R}_\ell (t,t_0,z))\\
&\;\; +  (F_\ell(t,t_0,z) \omega)\cdot\nabla_z H_{\ell,1}^{\rm adiab} (
 t,\Phi^{t,t_0}_{h_\ell}(z) ){\mathcal R}_\ell (t,t_0,z),
 \end{align*}
 with initial data $\omega\cdot\nabla _z\mathcal R(t,t_0,z)=0$. 
 The formula then comes from Duhamel formula and the definition of $\mathcal R(t,t_0,z)$. 
\end{proof}

\chapter{Elements of symbolic calculus : 
the Moyal product}\label{prodest}

In this section, we revisit results about symbolic calculus, in particular regarding composition formula and the remainder estimate for the Moyal product. We aim at their extension to the setting of the symbol classes $\bdS_\delta^\mu(\mathcal D)$ and $\mathbf S_{\eps,\delta}^\mu(\mathcal D)$ that we have introduced in Definition~\ref{def:symbol_Sdelta}.

\section{Formal expansion}
We first recall the formal product rule for quantum observables 
with Weyl quantization. Let $A, B \in {\mathcal S}(\R^{2d},\C^{m,m})$. The Moyal product $C:=A\circledast B$ is the  
semi-classical observable 
$C$ such that $\widehat{A}\circ \widehat{B} = \widehat{C}$. Some computations with 
the Fourier transform give the following well known formula \cite[Theorems 18.1.8]{ho} \cite[Theorem~4.11(i)]{Zwobook}
\beq\label{moy1}
C(x,\xi) = 
\exp\left(\frac{i\eps}{2}\sigma(D_q,D_p;D_{q^\prime},D_{p^\prime})\right)A(q,p) 
B(q^\prime,p^\prime)\vert_{(q,p)=(q^\prime,p^\prime)=(x,\xi)}, 
\eeq 
where $\sigma$ is the symplectic bilinear form $\sigma((q,p), (q',p'))=p\cdot q'-p'\cdot q$ and 
$D=i^{-1}\nabla$. 
By expanding the exponential term, we obtain
\beq\label{prod2} 
C(x,\xi) = 
\sum_{j\geq 
0}\frac{\eps^j}{j!}\left(\frac{i}{2}\sigma(D_q,D_p;D_{q^\prime},D_{p^\prime}) \right)^j 
A(q,p)B(q^\prime,p^\prime)\vert_{(q,p)=(q^\prime,p^\prime)=(x,\xi)}. 
\eeq 
So that $C = \sum_{j\ge 0}\eps^j C_j$ is a formal power series in $\eps$ with coefficients 
given by~\eqref{fifi2}.

\section{Symbols with derivative bounds} \label{sec:app_fifi4}

For $\mu\geq 0$ denote by ${\bf P}(\mu)$ the linear space of matrix-valued $\mathcal C^\infty$ symbols $A:\R^{2d}\rightarrow\C^{m,m}$ such that for any $\gamma\in \N^{2d}$ with $\vert\gamma\vert\geq \mu$, there exists $C_\gamma>0$ such that 
$$
\vert\partial_z^\gamma A(z)\vert \leq C_\gamma\;\;\forall z\in\R^{2d}.
$$
Assuming that $A\in {\bf P}(\mu_A)$, $B\in {\bf P}(\mu_B)$ it is known that, for $\eps$ fixed, $A\circledast B= C$ where $C\in{\bf P}(\mu_C)$ with $\mu_C\geq \max\{\mu_A, \mu_B\}$ 
(see e.g. the proof \cite[Theorem~18.1.8]{ho})
\medskip 

We aim at having   better estimates for  small $\eps>0$ and a control of the derivatives of $C$ in terms of those of $A$ and $B$. The following estimate and its proof are a particular case of \cite[Theorem~A.1]{BR}.

\begin{theorem}\label{thm:moyalest}
For every $N\in\N$ and $\gamma\in\N^{2d}$, there  exists a constant $K_{N,\gamma}$ such that for any 
$A\in {\bf P}(\mu_A)$, $B\in {\bf P}(\mu_B)$ the Moyal remainder 
\beq 
R_N(A,B;z;\eps):= (A\circledast B)(z) - \sum_{0\leq j \leq N}\eps^j C_j(z)
\eeq 
satisfies for every $z\in\R^{2d}$ and $\eps\in(0,1]$,
\beq\label{remest1} 
\left\vert \partial_z^\gamma R_N(A,B;z;\eps)
 \right\vert \leq \eps^{N+1}K_{N,\gamma}
\sum_{N+1\leq \vert\alpha\vert, \vert\beta\vert \leq N+ \kappa_0+\vert\gamma\vert} 
\Vert\partial_z^\alpha A\Vert_{L^\infty}\Vert\partial_z^\beta B\Vert_{L^\infty},
\eeq
with $\kappa_0 = 4d+2$.
\end{theorem}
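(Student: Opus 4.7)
My plan is to derive~\eqref{remest1} by applying Taylor's formula with integral remainder to the exponential symplectic operator in~\eqref{moy1}. Writing
\[
e^{\tfrac{i\eps}{2}\sigma(D_1;D_2)} = \sum_{j=0}^{N}\tfrac{1}{j!}\left(\tfrac{i\eps}{2}\sigma(D_1;D_2)\right)^j + \tfrac{\eps^{N+1}}{N!}\int_0^1 (1-s)^N \left(\tfrac{i}{2}\sigma(D_1;D_2)\right)^{N+1} e^{\tfrac{is\eps}{2}\sigma(D_1;D_2)}\,ds,
\]
and applying both sides to $A(z_1)B(z_2)$ restricted to $z_1=z_2=z$, the polynomial part reproduces $\sum_{0\le j\le N}\eps^j C_j(z)$ by~\eqref{prod2}. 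This identifies $R_N(A,B;z;\eps)$ with $\tfrac{\eps^{N+1}}{N!}$ times an $s$-integral of expressions of the form $\bigl[e^{\tfrac{is\eps}{2}\sigma(D_1;D_2)}(\partial_{z_1}^\alpha A\otimes\partial_{z_2}^\beta B)\bigr](z,z)$, with multi-indices $|\alpha|=|\beta|=N+1$.

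Next, I would represent each such diagonal term by the oscillatory integral kernel of the Weyl product at scale $s\eps$, and rescale $u=\sqrt{s\eps}\,\tilde u$, $v=\sqrt{s\eps}\,\tilde v$ so as to normalize the phase to $e^{-2i\sigma(\tilde u,\tilde v)}$, which eliminates the $\eps^{-2d}$ prefactor. Differentiating under the integral by $\partial_z^\gamma$ and distributing via Leibniz brings $|\gamma|$ further derivatives onto the two symbols.

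The main obstacle is that the integrand in $(\tilde u,\tilde v)$ is merely bounded, so the integral is not absolutely convergent. I would run the standard non-stationary phase integration by parts $2d+1$ times in each of $\tilde u$ and $\tilde v$, using that $\nabla_{\tilde v}\sigma$ and $\nabla_{\tilde u}\sigma$ are linear in $\tilde u$ and $\tilde v$ respectively, together with a cut-off separating the region $\{|\tilde u|\le 1\}\cup\{|\tilde v|\le 1\}$, where no integration by parts is needed, from its complement. Each such step transfers one extra derivative onto $A$ or $B$ and produces a compensating weight $\langle\tilde u\rangle^{-1}$ or $\langle\tilde v\rangle^{-1}$; after $2d+1$ steps on each side, the integral converges absolutely. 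This is exactly the source of the shift $\kappa_0=4d+2$: after collecting the $|\gamma|$ Leibniz derivatives, the $N+1$ Taylor derivatives, and the $2d+1$ derivatives absorbed on each side by integration by parts, the multi-indices $\alpha',\beta'$ finally acting on $A$ and $B$ satisfy $N+1\le|\alpha'|,|\beta'|\le N+|\gamma|+\kappa_0$, yielding the claimed bound after taking the $s$-integral.

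The only technical point requiring genuine care is the combinatorial bookkeeping: tracking the distribution of derivatives and checking that the $\sqrt{s\eps}$ factors produced by the chain rule and by the integration by parts neither spoil the clean $\eps^{N+1}$ prefactor nor break the uniformity in $s\in[0,1]$ and $\eps\in(0,1]$. Should this direct route become unwieldy, I would invoke~\cite[Theorem~A.1]{BR}, whose statement specializes to~\eqref{remest1} and whose proof proceeds along exactly these lines.
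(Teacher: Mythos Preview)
Your proposal is correct and follows essentially the same route as the paper: Taylor expansion of the exponential in~\eqref{moy1} to isolate the remainder as an oscillatory integral, then a cutoff near the stationary point combined with integration by parts away from it to obtain a uniform bound. The paper packages the oscillatory integral step as a separate lemma (Lemma~\ref{foi}), working with the parameter $\lambda=1/(2t\eps)$ rather than your rescaling and using a single operator $L=\tfrac{i}{|u|^2+|v|^2}(Ju\cdot\partial_v-Jv\cdot\partial_u)$ combining $\partial_u$ and $\partial_v$ instead of your separate $(2d{+}1)$ integrations by parts in each variable, but the arguments are equivalent and the derivative count leading to $\kappa_0=4d+2$ is the same.
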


The estimate~\eqref{eq:norm_pseudo}  allows to evaluate the norms of the operators involved in Theorem~\ref{thm:moyalest} when considering symbols of the classes 
$\bdS_\delta^\mu(\mathcal D)$ and $\mathbf S_{\eps,\delta}^\mu(\mathcal D)$ of Definition~\ref{def:symbol_Sdelta}.
\begin{corollary}\label{cor:kappa0}
If $A\in\mathbf S_{\eps,\delta}^\mu(\R^{2d})$ and $B\in \mathbf S_{\eps,\delta}^{\mu'}(\R^{2d})$, then $A\circledast B\in \mathbf S^{\mu+\mu'}_{\eps,\delta}$ (see Remark~\ref{rem:Sdelta}) and  for all $N,k\in\N$, there exists a constant $C_{N,k}>0$ such that 
\[ 
\| {\rm op}_\eps(R_N(A,B;z;\eps))\|_{\Sigma^k_\eps} \leq C _{N,k} \,\eps^{N+1}\,  \delta ^{ \mu+\mu'-2(N+1+k+\kappa_0)}.
\]
\end{corollary}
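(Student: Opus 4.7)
The two assertions of the corollary are essentially independent and I would treat them in turn.

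The inclusion $A\circledast B\in \mathbf S^{\mu+\mu'}_{\eps,\delta}$ is purely algebraic. Using the explicit formula~\eqref{fifi2}, the coefficient $C_j$ of $\eps^j$ in the formal Moyal product is a finite sum of products $(D^\beta_x\partial^\alpha_\xi A_k)(D^\alpha_x\partial^\beta_\xi B_\ell)$ with $k+\ell+m=j$ and $|\alpha+\beta|=m$. By Remark~\ref{rem:Sdelta}(1), differentiation decreases the order in $\mathbf S^\nu_\delta$ by the number of derivatives and multiplication adds orders, so the first factor belongs to $\mathbf S^{\mu-2k-m}_\delta$ and the second to $\mathbf S^{\mu'-2\ell-m}_\delta$. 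Their product therefore lies in $\mathbf S^{(\mu-2k-m)+(\mu'-2\ell-m)}_\delta=\mathbf S^{\mu+\mu'-2j}_\delta$; summing finitely many such terms gives $C_j\in\mathbf S^{\mu+\mu'-2j}_\delta$, which is precisely the condition $A\circledast B\in\mathbf S^{\mu+\mu'}_{\eps,\delta}$.

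For the operator-norm bound I would combine the pointwise symbol estimate of Theorem~\ref{thm:moyalest} with the semiclassical Calder\'on--Vaillancourt inequality~\eqref{eq:norm_pseudo}. Treating the formal series as truncations at sufficiently high order (so that the tail contributes a negligible error by the preceding step), Theorem~\ref{thm:moyalest} gives
\[
|\partial_z^\gamma R_N(A,B;z;\eps)|\le K_{N,\gamma}\,\eps^{N+1}\!\!\!\!\!\!\!\!\sum_{N+1\le|\alpha|,|\beta|\le N+\kappa_0+|\gamma|}\!\!\!\!\|\partial^\alpha A\|_\infty\,\|\partial^\beta B\|_\infty.
\]
Under the implicit working hypothesis $\delta^2\ge\eps$, the definition of $\mathbf S^\mu_{\eps,\delta}$ yields $\|\partial^\alpha A\|_\infty\le C\delta^{\mu-|\alpha|}$ and $\|\partial^\beta B\|_\infty\le C\delta^{\mu'-|\beta|}$ uniformly in $\eps$, and the worst derivative indices $|\alpha|=|\beta|=N+\kappa_0+|\gamma|$ produce the pointwise estimate $|\partial^\gamma R_N|\le C_{N,\gamma}\,\eps^{N+1}\,\delta^{\mu+\mu'-2(N+\kappa_0+|\gamma|)}$.

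Inserting this into~\eqref{eq:norm_pseudo} gives
\[
\|\mathrm{op}_\eps(R_N)\|_{\mathcal L(\Sigma^k_\eps)}\le C_{N,k}\,\eps^{N+1}\!\!\sum_{|\gamma|\le d+k+1}\!\!\eps^{|\gamma|/2}\,\delta^{\mu+\mu'-2(N+\kappa_0+|\gamma|)},
\]
and using $\delta^2\ge\eps$ once more to control the semiclassical factors $\eps^{|\gamma|/2}\delta^{-|\gamma|}\le 1$ then leads, at the worst case $|\gamma|=d+k+1$, to the announced inequality, after absorbing the dimensional losses into (a possibly larger universal) $\kappa_0$. The main obstacle in the argument is purely this combinatorial bookkeeping: one must align three distinct sources of $\delta$-loss — the differentiations in the expansion~\eqref{fifi2}, the extra derivatives in Theorem~\ref{thm:moyalest}, and the weighted $L^\infty$ norms in~\eqref{eq:norm_pseudo} — so that they combine into the exponent $\mu+\mu'-2(N+1+k+\kappa_0)$. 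Once this counting is set up, no new analytic ideas are needed.
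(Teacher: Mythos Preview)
Your approach is essentially the paper's intended one: the paper does not give a separate proof of the corollary but simply prefaces it with ``The estimate~\eqref{eq:norm_pseudo} allows to evaluate the norms of the operators involved in Theorem~\ref{thm:moyalest}'', and you carry out exactly this combination of the pointwise remainder bound with the weighted Calder\'on--Vaillancourt estimate, together with the straightforward derivative counting for the inclusion $A\circledast B\in\mathbf S^{\mu+\mu'}_{\eps,\delta}$. Your explicit use of the regime $\delta\ge\sqrt\eps$ and your remark about absorbing the residual dimensional loss into the universal constant $\kappa_0$ match how the paper handles these bookkeeping issues elsewhere (cf.\ Proposition~\ref{prop:fifi3}); the precise exponent is not derived in the paper either, so your honesty about the combinatorics is appropriate rather than a gap.
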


The proof of Theorem~\ref{thm:moyalest} relies on the technical next lemma, the proof of which we postpone at the end of this paragraph.

\begin{lemma}\label{foi} 
There exists a constant $C_d>0$ such that for any $F\in{\mathcal S}(\R^{2d}\times \R^{2d},\C^{m,m})$ 
the integral 
\beq
I(\lambda) = \lambda^{2d} \int_{\R^{2d}\times \R^{2d}}\exp[-i\lambda 
\sigma(u, v)]F(u,v)dudv. 
\eeq 
satisfies  
\beq
\vert I(\lambda)\vert \leq C_d\sup_{u, v\in\R^{2d} \atop \vert\alpha\vert+\vert\beta\vert\leq 4d+1}
\vert \partial^\alpha_u\partial^\beta_v F(u,v)\vert.  
\eeq 
\end{lemma}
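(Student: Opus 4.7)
My plan is to prove the estimate by first rescaling to absorb $\lambda$ and then integrating by parts with a first-order differential operator that leaves the oscillating phase invariant.

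By the bilinearity of $\sigma$, the change of variables $(u,v)\mapsto(u/\sqrt\lambda,v/\sqrt\lambda)$ rewrites $I(\lambda)$ as $\int e^{-i\sigma(u,v)}F_\lambda(u,v)\,du\,dv$, where $F_\lambda(u,v):=F(u/\sqrt\lambda,v/\sqrt\lambda)$; the prefactor $\lambda^{2d}$ is exactly cancelled by the Jacobian $\lambda^{-2d}$. Since $\|\partial_u^\alpha\partial_v^\beta F_\lambda\|_\infty=\lambda^{-(|\alpha|+|\beta|)/2}\|\partial_u^\alpha\partial_v^\beta F\|_\infty$, we have $\|\partial_u^\alpha\partial_v^\beta F_\lambda\|_\infty\le\|\partial_u^\alpha\partial_v^\beta F\|_\infty$ for $\lambda\ge 1$, which is precisely the regime relevant for Corollary~\ref{cor:kappa0} (where $\lambda=1/(2t\eps)$). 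It therefore suffices to prove the estimate for $\lambda=1$, with a universal constant.

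For $\lambda=1$, I exploit the identities $\nabla_v\sigma(u,v)=Ju$ and $\nabla_u\sigma(u,v)=-Jv$ and introduce the first-order operator
\[
P=\frac{1+i\bigl(Ju\cdot\nabla_v-Jv\cdot\nabla_u\bigr)}{1+|u|^2+|v|^2},
\]
which satisfies $P\,e^{-i\sigma(u,v)}=e^{-i\sigma(u,v)}$ by direct computation (since $iJu\cdot\nabla_v e^{-i\sigma}=|u|^2\,e^{-i\sigma}$ and $-iJv\cdot\nabla_u e^{-i\sigma}=|v|^2\,e^{-i\sigma}$). The differential part of $P$ has zero-divergence coefficients, so its formal adjoint decomposes $P^*$ as a multiplication operator with coefficient of size $O((1+|u|^2+|v|^2)^{-1})$ plus a first-order derivation with coefficients of size $O((1+|u|^2+|v|^2)^{-1/2})$. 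After $N=4d+1$ integrations by parts, $(P^*)^N G$ is a sum of terms of the form $p(u,v)(1+|u|^2+|v|^2)^{-N}\partial_u^\alpha\partial_v^\beta G$ with $\deg p+|\alpha|+|\beta|\le N$, each pointwise bounded by $C_N(1+|u|^2+|v|^2)^{-N/2}\sup_{|\gamma|\le N}|\partial^\gamma G|_\infty$. Since $N=4d+1>4d$, the weight $(1+|u|^2+|v|^2)^{-N/2}$ is integrable on $\R^{4d}$, which yields the desired bound.

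The only genuine technical task is the Leibniz bookkeeping for the iterate $(P^*)^N$: one must check that each application of $P^*$ raises the denominator power by exactly one while increasing the polynomial degree of the numerator by at most one (from the factors $Ju$, $Jv$ or from a derivative landing on an older denominator). This is conveniently handled at the symbol level in the spirit of the classes $\bdS_{\delta=1}^{-N}$ of Definition~\ref{def:symbol_Sdelta}: $P^*$ maps the class of operators $a(u,v)\partial^\gamma_{u,v}$ with $a\in\bdS_{\delta=1}^{-N}$ and $|\gamma|\le N$ into the analogous class with $N$ replaced by $N+1$ (cf.\ Remark~\ref{rem:Sdelta}). Once this induction is carried out, the lemma reduces to the elementary integrability threshold for $(1+|(u,v)|^2)^{-N/2}$ on $\R^{4d}$.
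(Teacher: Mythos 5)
Your proof is correct and takes a genuinely different, arguably cleaner route than the paper's. The paper splits $I(\lambda)$ into a near-origin piece $I_0$ (handled by the stationary phase theorem) and a far piece $I_1$ (handled by integrating by parts with the \emph{singular} operator $L = \frac{i}{|u|^2+|v|^2}(Ju\cdot\nabla_v - Jv\cdot\nabla_u)$, which satisfies $Le^{-i\lambda\sigma}=\lambda e^{-i\lambda\sigma}$); the split is forced by the singularity of $L$ at the origin. You instead rescale to $\lambda=1$ and then integrate by parts $4d+1$ times with the \emph{nonsingular} operator $P$, avoiding both the cutoff and the stationary phase theorem. The verification $Pe^{-i\sigma}=e^{-i\sigma}$ is correct, as is the integrability threshold $(1+|u|^2+|v|^2)^{-(4d+1)/2}\in L^1(\R^{4d})$. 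The trade-off is that your scaling step requires $\lambda$ bounded away from $0$; the same tacit restriction is present in the paper's proof (the $I_1$ argument yields $\lambda^{-2d-1}$, and stationary phase for $I_0$ is a large-$\lambda$ expansion), and in the application within Corollary~\ref{cor:kappa0} one has $\lambda=1/(2t\eps)\ge 1/2$ rather than $\lambda\ge 1$, which changes nothing but a constant.

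Two minor inaccuracies in your bookkeeping are worth correcting, though they do not affect the conclusion. First, a derivative in $P^*$ that lands on a denominator power produces the quadratic factor $\sigma(u,v)=Ju\cdot v$ in the numerator, so a single application can raise both the polynomial degree and the denominator power by $2$ (not by $1$ each); the cleaner invariant to track is the weight $w = M - (\deg p)/2$ of a coefficient $p(u,v)(1+|u|^2+|v|^2)^{-M}$, which increases by at least $1/2$ per application, giving $O((1+|u|^2+|v|^2)^{-N/2})$ decay after $N$ iterations, with derivative order $\le N$. Second, the invocation of $\bdS_{\delta=1}^{-N}$ does not encode decay at infinity (for $\delta=1$ all these classes coincide with bounded-derivative symbols, cf.\ Remark~\ref{rem:Sdelta}); the induction is better phrased directly in terms of the weight just described, or in the class of symbols satisfying $|\partial^\gamma c|\le C_\gamma(1+|u|^2+|v|^2)^{-w}$.
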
 

We are now in position of proving Theorem~\ref{thm:moyalest}.

\begin{proof}[Proof of Theorem~\ref{thm:moyalest}] 
The application of Taylor formula at order $N$ gives the following formula for the remainder
\begin{align*}
R_N(A,B;z;\eps) &= \frac{1}{N!}\left(\frac{i\eps}{2}\right)^{N+1} 
\left(\int_0^1(1-t)^N 
\exp\left(\frac{it \eps}{2}\sigma(D_u;D_v)\right) dt \right)\\
&\qquad\times\, \sigma^{N+1}(D_u;D_v) A(u)B(v) \vert_{u=v=(x,\xi)}.
\end{align*}
By Fourier transform computations (see~\cite[Theorem~4.8(iii) or Theorem~4.11(ii)]{Zwobook},  we obtain
\beq\label{reprem} 
R_N(A,B;z;\eps) = \frac{1}{N!}\left(\frac{i\eps}{2}\right)^{N+1} 
\int_0^1(1-t)^NR_{N,t}(z;\eps)dt,
\eeq
where $R_{N,t}(z;\eps)$ is given by the oscillating integral 
\begin{align*}
&R_{N,t}(z;\eps) =
(2\pi\eps t)^{-2d} 
\int\int_{\R^{2d}\times \R^{2d}} 
\exp\left(-\frac{i}{2t\eps}\sigma(u;v)\right)\sigma^{N+1}(D_u;D_v) 
A(u+z)B(v+z)dudv. 
\end{align*}
We now use Lemma~\ref{foi} for $A, B\in{\mathcal S}(\R^{2d})$ with the integrand 
\[
F_{N,\gamma}(z;u,v) = \pi^{-2d} \ \partial_z^\gamma\left(\sigma^{N+1}(D_u;D_v) A(u+z)B(v+z)\right)
\] 
and the parameter $\lambda=1/(2t\eps)$. We then have 
\[
|\partial_z^\gamma R_{N,t}(z;\eps)| \le C_d \sup_{u,v\in\R^{2d}\atop |\alpha|+|\beta|\le 4d+1}|\partial^\alpha_u\partial^\beta_v F_{N,\gamma}(z;u,v)|.
\]
Moreover, there holds the elementary estimate
\[
\vert \sigma^{N+1}(D_u,D_v)A(u)B(v)\vert \leq 
(2d)^{N+1}\sup_{\vert \alpha\vert+\vert \beta\vert=N+1} 
\vert\partial_x^\alpha\partial_\xi^\beta 
A(x,\xi)\partial_y^\beta\partial_\eta^\alpha B(y,\eta)\vert. 
\]
Together with the Leibniz formula, we then get the claimed results with universal constants.
For symbols $A\in{\bf P}(\mu_A)$ and $B\in{\bf P}(\mu_B)$ we argue by localisation. We use 
$A_\eta(u) = {\rm e}^{-\eta u^2}A(u)$ and 
$B_\eta(v) = {\rm e}^{-\eta v^2}B(v)$ for $\eta 
>0$ and 
pass to the limit $\eta \rightarrow 0$.
\end{proof}

It remains to prove  Lemma~\ref{foi}.

\begin{proof}[Proof of Lemma~\ref{foi}] The lemma is proved in a standard way, using integration  by parts and stationary phase argument.
For the sake of completeness, we give here a  proof.  
We introduce a cut-off $\chi_0\in 
\mathcal C_0^\infty(\R)$ such that  
$$\chi_0(x) = 1\;\;\mbox{ for}\;\; \vert x\vert\leq 1/2\;\;\mbox{ and}\;\; \chi_0(x) = 0\;\;\mbox{for }\;\;\vert x\vert\geq1.$$
 We split $I(\lambda)$ into 
too pieces and write 
$I(\lambda)=I_0(\lambda) + I_1(\lambda) $ with 
\begin{align*}
I_0(\lambda)& = \lambda^{2d}\int\!\!\int_{\R^{2d}\times \R^{2d}}\exp[-i\lambda 
\sigma(u, v)] 
\chi_0(u^2+v^2)F(u,v)dudv, \\ 
I_1(\lambda) &= \lambda^{2d}\int\!\!\int_{\R^{2d}\times \R^{2d}}\exp[-i\lambda 
\sigma(u, v)] 
(1-\chi_0)(u^2+v^2))F(u,v)dudv.
\end{align*}
We notice that $(u,v)\mapsto \sigma(u,u)$ is a quadratic  non-degenerate real  form on $\R^{4d}$.\\
Let us estimate  $I_1(\lambda)$. For $I_1(\lambda)$, the integrand is supported outside  the ball of radius $1/\sqrt2$ in $\R^{4d}$. Therefore, we can   integrate by parts with the 
differential operator 
\[
L = \frac{i}{|u|^2+|v|^2}\left(Ju\cdot\frac{\partial}{\partial 
v}-Jv\cdot\frac{\partial}{\partial u}\right),
\]
using that $L \e^{-i\lambda\sigma(u,v)} = L \e^{-i\lambda Ju\cdot v} = \lambda\e^{-i\lambda\sigma(u,v)}$. 
 Performing $4d+1$ integrations by parts for gaining enough decay to ensure integrability in $(u,v)\in\R^{4d}$, we get a constant $c_d$ such that 
\[
\vert I_1(\lambda)\vert \leq c_d
\sup_{u,v \in \R^{2d}\atop \vert \mu\vert+\vert \nu\vert\leq 4d+1} 
\vert\partial^\mu_u \partial^\nu_vF(u,v)\vert. 
\]
To estimate $I_0(\lambda)$ we apply the stationary phase. 
The symmetric matrix of the quadratic form $\sigma(u,v)$ is 
$$
A_\sigma = \begin{pmatrix}0 & -J\\
J & 0\end{pmatrix}.
$$
So, by the stationary  phase Theorem (\cite{ho}, Vol.I, section 7.7), 
we obtain the existence of two constants $c_1,c_2>0$ such that 
{
\beq
\vert I_0(\lambda)- \lambda^{-2d} c_1\vert \leq c_2
\sup_{u,v \in \R^{2d}\atop |\alpha| \leq 2} 
\vert\partial^\alpha (\chi_0(u^2+v^2) F(u,v)\vert. 
\eeq 
}
 \end{proof}


\chapter{Propagation of wave packets by perturbation of scalar systems}\label{app:C}

In this Appendix, we revisit several well-known results concerning a Hamiltonian $\widehat K(t)$ with its symbol $K(t)$
valued in the set $\C^{m,m}$ of self-adjoint $m\times m$ matrices ($m\in\N$), and which is a perturbation of a scalar function $k(t)$.  
We consider an interval $I_\delta\subset \R$ that may depend on $\delta>0$, which is small, and assume that $K(t)$ is  defined on  $I_\delta$ and  of the form 
\beq\label{eq:K}
K(t) = k(t){\1}_m +\eps K_1(t)+\cdots + \eps^N K_N(t)
\eeq
with $k$ scalar-valued and ~$ k(t){\1}_m +\eps K_1(t)$ subquadratic on the time interval $I_\delta$ according to Definition~\ref{def:subquad}. 
 \smallskip

 We will work on an adequate domain $\mathcal D\subseteq I_\delta\times\R^{2d}$ and use the symbol class $\bdS_{\eps,\delta} ^{-1} (\mathcal D)$, where the additional parameter $\delta>0$ controls the growth of derivatives, see \Cref{def:symbol_Sdelta}. We assume 
\begin{equation}\label{ass_K_delta}
\frac 1{\eps^2}\left( K(t,\cdot)-k(t,\cdot)\1_m -\eps K_1(t,\cdot)\right )\in \bdS_{\eps,\delta
} ^{-1} (\mathcal D).
\end{equation}
In particular, we assume $K_2(t)\in \bdS_{\delta}^{-1}$, $(\mathcal D)$, $K_3(t)\in \bdS_{\delta}^{-3}(\mathcal D)$ and more generally $K_j(t)\in \bdS_{\delta}^{3-2j} (\mathcal D)$ for $j\ge 2$. 
We revisit standard propagation results and take care of the loss in $\delta^{-1}$ and control all the classical estimates with respect to this parameter.

\smallskip 
These properties are satisfied by the Hamiltonians that we consider in the adiabatic region. Indeed, the formally constructed adiabatic symbol $H^{{\rm adia},\eps}(t)$ satisfies  \eqref{ass_K_delta}, see property (3) of of Theorem~\ref{adia1}. Truncating the series at $N$ and introducing adequate temporal cut-offs, 
the Hamiltonians $\widetilde{H}_\ell^{{\rm adia},N,\eps} (t)$ defined for $\ell\in\{1,2\}$ in~\eqref{def:tilde_adia}  
satisfy both assumptions  \eqref{eq:K} and \eqref{ass_K_delta} on the intervals $[t_0,t^\flat-\delta]$ and  
$[t^\flat+\delta, t_0+T]$. 

\section{Egorov Theorem}\label{app:egorov}

We denote by $ \U_K^\eps (t,t_{\rm in})$  the unitary propagator associated with $\widehat K(t)$. It satisfies 
\begin{equation}\label{eq:propagatorK}
i\eps\partial_t \U_K^\eps (t,t_{\rm in})= \widehat K(t) \, \U_K^\eps (t,t_{\rm in}),\;\; \U_K^\eps (t_{\rm in},t_{\rm in})=\1_{m}.
\end{equation}
Moreover 
\[ \U_K^\eps (t,t_{\rm in})^*= \U_K^\eps (t_{\rm in},t),\; t_{\rm in},t\in I_\delta.\]
It is also important that $ \U_K^\eps (t,t_{\rm in})$ maps $\Sigma^k_\eps$ into itself for all $k\in\N$. Indeed, for all 
$\psi^\eps_0\in\Sigma^1_\eps$ and all $1\leq j\leq d$, the function $\psi^\eps(t) =  \U_K^\eps (t,t_{\rm in}) \psi^\eps_0$ satisfies
\begin{align*}
&i\eps\partial_t (x_j\psi^\eps(t)) -\widehat K(t) (x_j\psi^\eps(t)) = f^\eps_j(t),\\
&
i\eps\partial_t (\eps D_{x_j}\psi^\eps(t)) -\widehat K(t) (\eps D_{x_j}\psi^\eps(t)) = g^\eps_j(t),
\end{align*}
where
\begin{align*}
& f^\eps_j(t) = [x_j, \widehat K(t)] \psi^\eps(t) = -\tfrac{\eps}{i} \mathrm{op}^w_\eps(\partial_{\xi_j} K(t))\psi^\eps(t),\\
& g^\eps_j(t)=[\eps D_{x_j} ,\widehat K(t)] \psi^\eps(t) = \tfrac{\eps}{i} \mathrm{op}^w_\eps(\partial_{x_j} K(t))\psi^\eps(t)
\end{align*}
are uniformly bounded in $L^2$. Therefore, $w^\eps(t) = {}^t(x_j\psi^\eps(t),\eps D_{x_j}\psi^\eps(t))$ satisfies
\[
\|w^\eps(t) \|_{L^2} \le \|w^\eps(t_{\rm in})\|_{L^2} + C \int_{t_{\rm in}}^t \|w^\eps(s)\|_{L^2} \, ds
\]
for some constant $C>0$. By the Gronwall Lemma, one then deduces the $L^2$-boundedness of the families $(x_j\psi^\eps(t))$ and $(\eps D_{x_j}\psi^\eps(t))$, whence the boundedness of $\psi^\eps(t)$ in $\Sigma^1_\eps$ for all $t\in I_\delta$. The corresponding recursive process will give the boundedness of $\psi^\eps(t)$ in  $\Sigma^k_\eps$ for any $k\in\N$.

\smallskip 
The Egorov Theorem describes the evolution of an observable when it is conjugated by the propagator $ \U_K^\eps (t,t_{\rm in})$.  
Our aim is to consider the evolution of $ \U_K^\eps (t_{\rm in},t)\widehat A \, \U^\eps_K(t,t_{\rm in})$ for matrix-valued observables $A\in \bdS_{\delta}(\mathcal D)$ and in spaces $\Sigma^k_\eps$, with a precise estimate of the remainders.

 \begin{proposition}\label{EgD} With the above assumptions on $K(t)$, for any matrix-valued symbol $A\in \bdS_{\delta}(\mathcal D)$, 
 there exists a formal series 
 \[
 (t,t_{\rm in})\mapsto \sum_{j\geq 0} \eps^j  A_j(t,t_{\rm in})
 \]
 defined on $I_\delta\times I_\delta$ such that for any $J\geq 1$,  we have for all $t,t_{\rm in}\in I_\delta$,
 $$
 \U_K^\eps (t,t_{\rm in})^*\,\widehat A \; \U^\eps_K(t,t_{\rm in}) = \sum_{0\leq j\leq J}\eps^j\widehat A_j(t,t_{\rm in}) + \eps^{J+1} \widehat { R}^\eps_J(t,t_{\rm in})
 $$
 with $A_j(t,t_{\rm in})\in\bdS^{-2j}_{\delta}(\mathcal D)$. 
   Besides, for all $k\in\N$, there exists a constant $C_{k,J}>0$ such that 
\[
 \|  \widehat {R^\eps_J}(t,t_{\rm in})\|_{\mathcal L(\Sigma^k_\eps)} \leq C_{k,J} \delta^{-2(J+1+\kappa_0)-k},
\]
and the matrix $A_{0}(t,t_{\rm in})$ is given by
   \beq\label{ps2}
    A_{0}(t,t_{\rm in}, z) = \left({\mathcal R}(t,t_{\rm in})^*A{\mathcal R}(t,t_{\rm in})\right)\circ (\Phi_{k}^{t,t_{\rm in}})(z),
   \eeq
 where the unitary matrices ${\mathcal R}(t,t_{\rm in},z)$  solve the transport equation
  \beq\label{ps3}
  i  \partial_t{\mathcal R} (t,t_{\rm in},z) = K_{1} \left(t,\Phi_{k}^{t,t_{\rm in}}(z)\right){\mathcal R}(t,t_{\rm in},z),\; {\mathcal R}(t_{\rm in},t_{\rm in},z)=\1_m.
\eeq
  \end{proposition}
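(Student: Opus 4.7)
The strategy is the standard iterative construction of the Egorov asymptotic, adapted to the $\delta$-dependent setting by careful bookkeeping of the classes $\bdS^\mu_\delta(\mathcal D)$. I would first set
$B(t):=\mathcal U^\eps_K(t_{\rm in},t)\,\widehat A\,\mathcal U^\eps_K(t,t_{\rm in})$,
differentiate in $t$ using the Schr\"odinger equation $i\eps\partial_t\mathcal U^\eps_K(t,t_{\rm in})=\widehat K(t)\mathcal U^\eps_K(t,t_{\rm in})$ (and its adjoint) to obtain
\begin{equation*}
i\eps\,\partial_t B(t) = \mathcal U^\eps_K(t_{\rm in},t)\,[\widehat A,\widehat K(t)]\,\mathcal U^\eps_K(t,t_{\rm in}),\qquad B(t_{\rm in})=\widehat A,
\end{equation*}
and look for $B(t)$ as a Weyl operator whose symbol admits an expansion $\mathfrak A^\eps(t)\sim\sum_{j\geq 0}\eps^j A_j(t,t_{\rm in})$. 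This translates to $i\eps\,\partial_t\mathfrak A^\eps=[\mathfrak A^\eps,K]_\circledast$. Expanding both the Moyal bracket (via Lemma~\ref{thm:prodest}) and the decomposition $K=k\1+\eps K_1+\eps^2 K_{\geq 2}$ in powers of $\eps$ and collecting orders produces a hierarchy of PDEs for the $A_j$.

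At principal order one obtains
\begin{equation*}
\partial_t A_0 = \{k,A_0\} + i[K_1,A_0],\qquad A_0(t_{\rm in},t_{\rm in},z)=A(z),
\end{equation*}
which I would solve by the ansatz~\eqref{ps2}: using symplecticity of $\Phi_k^{t,t_{\rm in}}$ and invariance of $k$ under its own flow (hence $\{k,f\circ\Phi_k\}=\{k,f\}\circ\Phi_k$), together with the transport equation~\eqref{ps3} defining~$\mathcal R$, one checks that $A_0(t,t_{\rm in},\cdot)=(\mathcal R^*A\mathcal R)\circ\Phi_k^{t,t_{\rm in}}$ solves the equation; the unitarity of $\mathcal R$ follows from the self-adjointness of $K_1$. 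For $j\geq 1$, the PDEs have the same transport operator on the left and an explicit source $S_j$ on the right, linear in $A_0,\dots,A_{j-1}$ through the higher Moyal coefficients; they are solved by Duhamel along $\Phi_k^{t,t_{\rm in}}$, conjugated by $\mathcal R$. The key class-propagation statement is that $A_j(t,t_{\rm in})\in\bdS^{-2j}_\delta(\mathcal D)$ uniformly in $\eps\in(0,1]$: inductively, the $n$-th Moyal coefficient in $[A_l,K]_\circledast$ loses at most $\delta^{-2n}$ (by Lemma~\ref{thm:prodest}), a loss compensated by the shift $j=l+n$; the subquadratic bounds on $k\1+\eps K_1$ ensure that $\Phi_k^{t,t_{\rm in}}$, $F_k$, and $\mathcal R$ and their derivatives are controlled uniformly in $\delta$ on $I_\delta$.

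For the remainder, I would form $P^\eps_J(t):=\sum_{0\leq j\leq J}\eps^j A_j(t,t_{\rm in})$ and verify from the construction that
\begin{equation*}
i\eps\,\partial_t\widehat{P^\eps_J}(t) - [\widehat{P^\eps_J}(t),\widehat K(t)] = \eps^{J+2}\,\widehat{r^\eps_J}(t),
\end{equation*}
where $r^\eps_J\in\bdS^{-2(J+1+\kappa_0)}_\delta(\mathcal D)$ by Theorem~\ref{thm:moyalest} and Corollary~\ref{cor:kappa0}. Setting $\widehat{R^\eps_J}(t,t_{\rm in}):=\eps^{-(J+1)}(B(t)-\widehat{P^\eps_J}(t))$, Duhamel's formula rewrites it as
$\frac{1}{i}\int_{t_{\rm in}}^t \mathcal U^\eps_K(t_{\rm in},s)\,\widehat{r^\eps_J}(s)\,\mathcal U^\eps_K(s,t_{\rm in})\,ds$,
and the announced bound $\|\widehat{R^\eps_J}(t,t_{\rm in})\|_{\mathcal L(\Sigma^k_\eps)}\leq c_k\,\delta^{-2(J+1+\kappa_0)}$ follows by combining the $\Sigma^k_\eps$-boundedness of $\mathcal U^\eps_K$ (subquadraticity of $k\1+\eps K_1$, $K_{\geq 2}\in\bdS_{\eps,\delta}^{-1}$) with the estimate~\eqref{est:CV} of Proposition~\ref{prop:fifi3} applied to $r^\eps_J$, valid in the regime $\delta\geq\sqrt\eps$ implicit to the adiabatic setting where this result is used.

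The main obstacle is precisely the $\delta$-bookkeeping in the middle step: one has to verify that solving the transport equations (and in particular integrating the source along $\Phi_k^{t,t_{\rm in}}$) does not leak extra powers of $\delta^{-1}$ beyond the $\delta^{-2}$ per Moyal-product step. This relies on the fact that the flow and parallel transport are generated by the $\delta$-independent part $k\1+\eps K_1$ of $K$, while the higher-order symbols $K_{\geq 2}$ only appear inside source terms that are already accompanied by compensating powers of $\eps$.
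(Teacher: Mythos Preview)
Your overall plan matches the paper's: construct the $A_j$ by solving a transport hierarchy driven by the Moyal expansion of $[\,\cdot\,,K]_\circledast$, then control the remainder by a Duhamel-type argument, tracking the $\delta$-losses via Lemma~\ref{thm:prodest}. Your leading-order computation and the key structural observation---that the flow and parallel transport are generated by the $\delta$-independent part $k\1+\eps K_1$, so all $\delta$-losses come from Moyal products---are correct and constitute the heart of the proof.

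There is, however, a genuine gap in your remainder step. The translation ``$i\eps\,\partial_t\mathfrak A^\eps=[\mathfrak A^\eps,K]_\circledast$'' is \emph{not} the evolution satisfied by $B(t)=\mathcal U^\eps_K(t_{\rm in},t)\,\widehat A\,\mathcal U^\eps_K(t,t_{\rm in})$ when $K$ is time-dependent: one has
\[
i\eps\,\partial_t B(t)=\bigl[B(t),\,\mathcal U^\eps_K(t_{\rm in},t)\widehat K(t)\mathcal U^\eps_K(t,t_{\rm in})\bigr],
\]
not $[B(t),\widehat K(t)]$. Consequently the Duhamel identity you write for $B(t)-\widehat{P^\eps_J}(t)$ fails: differentiate both sides in $t$ and observe that the left side produces an extra $O(1)$ term $[\widehat{P^\eps_J}(t),\,\mathcal U^\eps_K(t_{\rm in},t)\widehat K(t)\mathcal U^\eps_K(t,t_{\rm in})-\widehat K(t)]$ that does not vanish.

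The paper closes this gap by introducing a \emph{two-time} symbol $A(t,\tau)=(\mathcal R(\tau,t)\,A\,\mathcal R(\tau,t)^*)\circ\Phi_k^{t,\tau}$ solving the transport equation in the running variable~$\tau$ with terminal condition $A(t,t)=A$, and differentiating the interpolating operator
\[
Q(t,s,\tau)=\mathcal U^\eps_K(s,\tau)\,\widehat{A(t,\tau)}\,\mathcal U^\eps_K(\tau,s)
\]
in $\tau$. The point is that the two endpoints $Q|_{\tau=t}=\mathcal U^\eps_K(s,t)\widehat A\,\mathcal U^\eps_K(t,s)$ and $Q|_{\tau=s}=\widehat{A_0(t,s)}$ are exactly the objects one wants to compare; integrating $\partial_\tau Q$ from $s=t_{\rm in}$ to $t$ gives the remainder as a conjugated integral of a symbol in $\eps\,\bdS^{-2}_{\eps,\delta}$, and iterating produces the higher $A_j$'s. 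Your symbols $A_j(t,t_{\rm in})$ agree with the paper's at the endpoint (the transport equations are the same up to the direction of the time variable), so only the remainder argument needs this interpolation.
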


 \begin{remark} \begin{enumerate}\item Note that in the setting of Proposition~\ref{EgD}, 
 we have the propagation law of the supports:  
 $${\rm supp}(A_j(t,t_{\rm in})) = \Phi_{k}^{t_{\rm in},t}({\rm supp}(A))\;\;\mbox{  for any}\;\;j\geq 0.$$
\item
In the scalar time-independent case, that is for $K=k(z)$ and $K_j=0$ for $j\geq 1$, and with  $\delta=1$, the Egorov theorem
\[
 \U_K^\eps (t_{\rm in},t)\widehat A \, \U^\eps_K(t,t_{\rm in}) = \widehat {A\circ \Phi_k^{t,t_{\rm in}}}+\O(\eps^2)
 \]
is well-known (see~\cite{corobook,disj,Zwobook} for example) and has a vanishing sub-principal symbol $A_1(t,t_{\rm in}) = 0$.
\end{enumerate}
\end{remark}
 
 In the time-dependent  matrix-valued  case considered here, 
the dynamics on the observable is driven by the classical flow twisted by the precession ${\mathcal R}$ (see also Section~\ref{sec:parallel_trspt} where such terms appear).

 \begin{proof}
 We perform a recursive argument. The starting point comes from the analysis of an auxiliary map. 
 We consider the operator $\mathcal L^{t,\tau}$ defined by 
\[
\mathcal L^{t,\tau} a(z)= a(\Phi_k^{t,\tau}(z)),\;\; z\in\R^{2d},
\]
for any smooth function $a$. 
Then, the function $\tilde a(\tau,z)=\mathcal L^{t,\tau} a(z) $ satisfies $\tilde a(\tau,\Phi_k^{\tau,t}(z))=a(z)$ and does not depend on  $\tau$. Using that $\Phi^{t,\tau}_k$ is a diffeomorphism, we deduce that 
\[
\partial_\tau \mathcal L^{t,\tau} a = - \{k(\tau), \mathcal L^{t,\tau}a\},\;\; \mathcal L^{t,t}=a.
\]
 We now introduce 
for $\tau,t\in I_\delta$ the auxiliary map valued in $\mathbf S_\delta(\mathcal D)$ and defined  by 
 \[
 A\mapsto A(t,\tau):=\mathcal L^{t,\tau} (\mathcal R(\tau,t) A \mathcal R(\tau,t)^*)
 \]
 We deduce from the analysis of $\mathcal L^{t,\tau}$ the equation 
\[
\partial_\tau A(t,\tau) = -\{ k(\tau), A(t,\tau)\} +
\mathcal L^{t,\tau} (\mathcal \partial_\tau R(\tau,t) A \mathcal R(\tau,t)^*) +
\mathcal L^{t,\tau} (\mathcal R(\tau,t) A  \partial_\tau \mathcal R(\tau,t)^*)
\]
In view of~\eqref{ps3}
\begin{align*}
\mathcal L^{t,\tau} (\mathcal \partial_\tau R(\tau,t) A \mathcal R(\tau,t)^*)& =\frac 1i
\mathcal L^{t,\tau} (K_1(\tau, \Phi_k^{\tau,t}(z))R(\tau,t) A \mathcal R(\tau,t)^*) \\
&= \frac 1 i K_1(\tau, \Phi_k^{\tau,t}\circ \Phi_k^{t,\tau}(z))\mathcal L^{t,\tau} (R(\tau,t) A \mathcal R(\tau,t)^*)\\
&= \frac 1 i K_1(\tau,z) A(t,\tau).
\end{align*}
Arguing similarly for the last term, we obtain that $A(t,\tau)$ solves 
 \[
 \partial_\tau A(t,\tau,z) =- \{ k (\tau), A(t,\tau)\}(z) +  \frac 1i [K_1 (\tau,z), A(t,\tau,z)],\;\; A(t,t)=A.
 \]

Let us now start with the proof of the result for $J=0$. We choose $s,\tau, t\in I_\delta$ and consider the quantity 
\[
Q^\eps_A(t,s,\tau)= \mathcal U^\eps_K(\tau,s)^*\, \widehat A(t,\tau) \,\mathcal U^\eps_K(\tau,s).
\]
 The times $s,\tau,t$ can be understood as $s\leq \tau \leq t$ with $s$ an initial time (that will be taken as $s=t_{\rm in}$) and $t$ the time at which we want to prove the property. 
We then have the boundary properties 
\[
Q^\eps_A(t,s,t)=\mathcal U^\eps_K(t,s)^* \widehat A\, \mathcal U^\eps_K(t,s)\;\;\mbox{and}\;\;
Q^\eps_A(t,s,s)= \widehat A(t,s).
\]
Differentiating in $\tau$, we have 
\begin{align*}
\frac d{d\tau} Q^\eps_A(t,s,\tau) & =  \mathcal U^\eps_K(\tau,s)^* 
\left(\left[ -\frac 1{i\eps} \widehat K(\tau), 
\widehat A(t,\tau)\right] + \partial_\tau \widehat  A (t,\tau)
\right) \mathcal  U^\eps_K(\tau,s)\\
&=  \mathcal U^\eps_K(\tau,s)^* 
\left(\left[ -\frac 1{i\eps} \widehat K(\tau), 
\widehat A(t,\tau)\right] - \widehat{ \{k, A(t,\tau)\}} + \frac 1 i \widehat{[K_1(\tau), A (t,\tau)]}
\right) \mathcal  U^\eps_K(\tau,s)\\
&=\eps\,  \mathcal U^\eps_K(\tau,s)^*\,   \widehat {B_1^\eps}(t,\tau)\,  \mathcal  U^\eps_K(\tau,s)
\end{align*}
where the matrix $B^\eps_1\in\mathbf S^{-2-2\kappa_0}_{\eps,\delta}(\mathcal D)$ stems from the  Moyal product (see Corollary~\ref{cor:kappa0}). 
We deduce by integration between the times $s$ and $\tau$
\begin{equation}\label{egorov_step1}
Q^\eps_A(t,s,\tau)= Q^\eps_A(t,s,s) +\eps \int_s^\tau  \mathcal U^\eps_K(\tau',s)^*\,   \widehat {B_1^\eps}(t,\tau')\,  \mathcal  U^\eps_K(\tau',s) d\tau',
\end{equation}
which implies when $\tau=t$, 
\[
\mathcal U^\eps_K(t,s)^* \widehat A \,\mathcal U^\eps_K(t,s)= \widehat A(t,s) +\eps \int_s^t \mathcal U^\eps_K(\tau,s)^*\,   \widehat {B_1^\eps}(t,\tau)\,  \mathcal  U^\eps_K(\tau,s) d\tau.
\]
This gives the first step of the recursive argument.
\smallskip

We now assume that we have obtained for $J\geq 0$ 
\[
\mathcal U^\eps_K(t,s)^*\, \widehat A \,\mathcal U^\eps_K(t,s)= \sum_{j=0}^J \eps^j \widehat A_j(t,s) +\eps^{J+1} \int_s^t \mathcal U^\eps_K(\tau,s)^*\,   \widehat {B_{J+1}^\eps}(t,\tau)\,  \mathcal  U^\eps_K(\tau,s) d\tau
\]
with $B^\eps_{J+1}\in \mathbf S^{-2-2\kappa_0}_\delta(\mathcal D)$. 
We write 
\[
B^\eps_{J+1}= B_{J+1} + B^{1,\eps}_{J+2}
\]
 with $B_{J+1}\in \mathbf S_\delta^{-2(J+1)-2\kappa_0}(\mathcal D)$ and $B^{1,\eps}_{J+2}\in \mathbf S^{-2(J+2)-2\kappa_0}_{\eps,\delta}(\mathcal D)$. Then, the preceding equation writes 
\begin{align}\label{eq:UKst}
\mathcal U^\eps_K(t,s)^* \widehat A \,\mathcal U^\eps_K(t,s) & = \sum_{j=0}^J \eps^j \widehat A_j(t,s) \\
\nonumber 
& +\eps^{J+1} \int_s^t Q^\eps_{B_{J+1}}(t,s,\tau)  d\tau + \eps^{J+2} 
 \int_s^t \mathcal U^\eps_K(\tau,s)^*\,   \widehat {B_{J+2}^{1,\eps}}(t,\tau)\,  \mathcal  U^\eps_K(\tau,s) d\tau.
\end{align}
We focus on the term involving $Q^\eps_{B_{J+1}}(t,s,\tau) $ that we treat 
 as in the preceding step. We obtain by~\eqref{egorov_step1}
 \[
 Q^\eps_{B_{J+1}}(t,s,\tau)  = Q^\eps_{B_{J+1}}(t,s,s)
+\eps \int_s^\tau \mathcal U^\eps_K(\tau',s)^* \widehat B_{J+2}^{2,\eps} (\tau,\tau')\mathcal U^\eps_K(\tau',s) d\tau'
 \]
 with $B_{J+2}^{2,\eps} \in \mathbf S^{-2(J+2)-2\kappa_0}_{\eps,\delta}(\mathcal D)$.
 We set 
 \[
 A_{J+1}(s,t)= (t-s) Q^\eps_{B_{J+1}}(t,s,s)
 \]
 and 
 \[
 B^\eps_{J+2} (t,\tau)= B^{1,\eps}_{J+2} (t,\tau) +\int_s^{\tau} \mathcal U^\eps_K(\tau',\tau)^*\, B_{J+2}^{2,\eps} (\tau',\tau)\, \mathcal U^\eps_K(\tau',\tau) d\tau'\in\mathbf S^{-2(J+1)}_{\eps,\delta}(\mathcal D).
 \]
 The equation~\eqref{eq:UKst} then becomes 
 \[
  \mathcal U^\eps_K(t,s)^* \widehat A \,\mathcal U^\eps_K(t,s)  = \sum_{j=0}^{J+1} \eps^j \widehat A_j(t,s) \\
+\eps^{J+2} \int_s^t \mathcal U^\eps_K(\tau,s)^*\,   \widehat {B_{J+2}^{\eps}}(t,\tau)\,  \mathcal  U^\eps_K(\tau,s)  d\tau
\]
and we concludes the proof by Proposition~\ref{prop:fifi3}. 
    \end{proof}

 \section{Asymptotic behavior of the propagator}\label{app:wp}   
   In this section, we analyze the propagator $\mathcal U^\eps_K(t,t_{\rm in})$, and compare it with 
       $\U_{KS}^\eps(t,t_{\rm in})$, the propagator for 
       $KS(t) = k(t)\1_m +\eps K_1(t)$.

 \begin{lemma}\label{split} We assume \eqref{eq:K} and \eqref{ass_K_delta}. Then, 
 there exists a formal series
 \[
 \W^\eps(t,t_{\rm in}) = \sum_{j\ge 0}\eps^j\, \W_j(t,t_{\rm in})\in\bdS_{\eps,\delta}^{1}(\mathcal D)
 \]
 with $\W_0(t,t_{\rm in})=\1_m$ and $\W_j(t,t_{\rm in})\in \bdS_{\delta}^{1-2j}(\mathcal D)$ for $j\ge 1$, 
 such that for all $t\in I_\delta$, 
$$
     \U_K^\eps(t,t_{\rm in}) =  \, \widehat \W^\eps(t,t_{\rm in}) \,\U_{KS}^\eps(t,t_{\rm in}).
     $$  
 \end{lemma}

     \begin{proof}[Proof of Lemma~\ref{split}]
       If such a $\W^\eps(t,t_{\rm in})$ exists, it must satisfy   
       \[
       \widehat \W^\eps(t,t_{\rm in})= \U_K^\eps(t,t_{\rm in})\,  \U_{KS}^\eps(t,t_{\rm in})^*.
       \]
       As a consequence, the operator $\widehat \W^\eps(t,t_{\rm in})$ solves
\begin{align*}
       i\eps\partial_t\widehat \W^\eps(t,t_{\rm in}) &= \,\widehat K(t)\, \U_K^\eps(t,t_{\rm in})\,\U_{KS}^\eps(t,t_{\rm in})^*-  \U_K^\eps(t,t_{\rm in})\, \U_{KS}^\eps(t,t_{\rm in})^*\,\widehat{KS}(t)
       \\
       &= \widehat K(t)\, \widehat \W^\eps(t,t_{\rm in}) \,  -
       \, \widehat\W^\eps(t,t_{\rm in})\widehat{KS}(t) 
       \end{align*}
      with $\widehat \W^\eps(t_{\rm in},t_{\rm in})=\1_m$.
      We write 
      \begin{equation}\label{eq:Y}
      i\eps\partial_t\widehat \W^\eps(t,t_{\rm in})= \left[ \widehat k(t)\, \1_{m} +\eps \widehat{K_1}(t), \widehat \W^\eps(t,t_{\rm in})\right]+ 
      \sum_{j= 2}^N 
      \eps^j \widehat {K_j}(t) \widehat\W^\eps(t,t_{\rm in}),
      \end{equation}
      and we deduce a set of recursive equations. Looking to the leading order term, we obtain  
      \[
      \partial_t \W_0(t, t_{\rm in}) + \{k(t),\W_0(t,t_{\rm in})\} - \frac{1}{i}[K_1(t),\W_0(t,t_{\rm in})]=0,\;\; \W_0(t_{\rm in},t_{\rm in})=\1_{m}
      \]
      and therefore
      \[
      \W_{0}(t,t_{\rm in}) = \left({\mathcal R}(t,t_{\rm in})^*\1_m{\mathcal R}(t,t_{\rm in})\right)\circ (\Phi_{k}^{t,t_{\rm in}}) = \1_m.
      \]
      Then, the terms of order $\eps^{j+1}$ give  equations of the form 
      \begin{equation}\label{dY_jsymbol}
      \partial_t \W_{j+1}(t, t_{\rm in}) +\{k(t),\W_{j+1} (t,t_{\rm in})\} - \frac{1}{i}[K_1(t),\W_{j+1}(t,t_{\rm in})] = L_{j+1}(t, t_{\rm in}) ,\;\;\W_{j+1}(t_{\rm in},t_{\rm in})=0,
      \end{equation}
      where the source term 
      $L_{j+1}(t,t_{\rm in})$ depends on $\W_\ell(t, t_{\rm in}) $ for $0\leq \ell\leq j$. For example, for the first terms we have
      \begin{align*}
      L_1(t,t_{\rm in}) &= K_2(t) \W_0(t,t_{\rm in}) = K_2(t) \in\bdS^{-1}_\delta(\mathcal D)\quad\text{and thus}\quad
      \W_1(t,t_{\rm in})\in \bdS^{-1}_\delta(\mathcal D),\\
      L_2(t,t_{\rm in}) &= K_2(t)\W_1(t,t_{\rm in}) + K_3(t) \in\bdS^{-3}_\delta(\mathcal D)\quad\text{and thus}\quad
      \W_2(t,t_{\rm in})\in \bdS^{-3}_\delta(\mathcal D),
      \end{align*}
      where we have used the multiplication property of the symbol classes, see \Cref{rem:Sdelta}. More generally, we obtain
      \[
      L_{j+1}(t,t_{\rm in}) = \sum_{\ell=0}^{j} K_{2+\ell}(t) \W_{j-\ell}(t,t_{\rm in}) \quad\text{and thus}\quad \W_{j+1}(t,t_{\rm in})
      \in \bdS^{-1-2j}_\delta(\mathcal D).
      \]  
       \end{proof}

\begin{remark}
If we assume $\delta\ge\sqrt\eps$ and write 
\[
\W^\eps(t,t_{\rm in}) = \sum_{j=0}^J\eps^j\, \W_j(t,t_{\rm in})  + \eps^{J+1}R_J^\eps(t,t_{\rm in})
\]
with $J\in\N$, then  \Cref{prop:fifi3} implies that for all $j=1,\ldots,J$ and $k\in\N$, there exist constants 
$C_{j,k}>0$ and $C_{J,k}>0$ such that 
\begin{align}\label{eq:Wj}
\| {\rm op}_\eps ( \W_j(t,t_{\rm in}) )\|_{\mathcal L(\Sigma^k_\eps)} &\leq C_{j,k} \,\delta^{1-2j-k},\\
\label{eq:RJ}
\| {\rm op}_\eps ( R_J^\eps(t,t_{\rm in}))\|_{\mathcal L(\Sigma^k_\eps)} &\leq
C_{J,k} \,\delta^{-1-2J-k}
\end{align}
 \end{remark}

\begin{corollary}\label{coro:split}
Assume $\delta\ge\sqrt\eps$ and take  $J\in\N$. Denote $\W^{\eps, J}(t,t_{\rm in}) = \sum_{j=0}^J \eps^j Y_j(t,t_{\rm in})$. Let $k\in\N$. Then, there exists $C_{J,k}>0$ such that 
\[
\left\| \mathcal U^\eps_K(t,t_{\rm in})-
\widehat \W^{\eps, J}(t,t_{\rm in})\, \mathcal U^\eps_{KS}(t,t_{\rm in})\right\|_{\mathcal L(\Sigma^k_\eps)}  \leq C_{k,J}
|t-t_{\rm in}|\left(\frac \eps{\delta^2}\right)^{J+1} \delta ^{-\kappa_0-k}
\]
where $\kappa_0>0$ is the universal constant of Lemma~\ref{thm:prodest}.
 \end{corollary}

       \begin{proof}[Proof of Corollary~\ref{coro:split}]
       We use~\eqref{dY_jsymbol}, and we obtain $\partial_t Y_{j+1}\in S^{1-2(j+1)}_\delta(\mathcal D)$. Then, we estimate the remainder in~\eqref{eq:Y}. By Lemma~\ref{thm:prodest} and Proposition~\ref{prop:fifi3}, we obtain in $\mathcal L(\Sigma^k_\eps)$
           \begin{align*}
  &    i\eps\partial_t\widehat \W^{\eps,J}(t,t_{\rm in})- \left[ \widehat k(t)\, \1_{m} +\eps \widehat{K_1}(t), \widehat \W^{\eps,J}(t,t_{\rm in})\right]-\sum_{j= 2}^J 
      \eps^j \widehat {K_j}(t) \widehat\W^{\eps,J}(t,t_{\rm in})\\
      &\qquad = \O\left( \eps\left(\frac\eps{\delta^2}\right)^{J+1}\delta^{-\kappa_0-k}\right),
      \end{align*} 
 and we  deduce 
\[
i\eps\partial_t \left( \mathcal U^\eps_K(t,t_{\rm in})-
\widehat \W^{\eps, J}(t,t_{\rm in})\, \mathcal U^\eps_{KS}(t,t_{\rm in})\right)= \O\left( \eps\left(\frac\eps{\delta^2}\right)^{J+1}\delta^{-\kappa_0-k}\right),
\]
      which gives the result after integration from $t_{\rm in}$ to $t$. 
       \end{proof}

     \section{Propagation of wave packets}
  
       If $\psi^\eps_{\rm in}$ is a  wave packet and $K(t)$ a matrix-valued Hamiltonian of the form \eqref{eq:K} satisfying \eqref{ass_K_delta}, then the action of the unitary propagator $\mathcal U^\eps_{K} (t,t_{\rm in})$ on $\psi^\eps_{\rm in}$ can be described precisely. Following Section 14.2 of~\cite{corobook}, Theorem~77, we have the following result.

       \begin{theorem}\label{evadia}
       Assume $\delta\ge\sqrt\eps$ and  
       $\psi^\eps_{\rm in}=\wp_{z_0}^\eps(\vec f^\eps)$,
        $$\vec f^\eps=\sum_{0\leq j\leq N}\eps^{j/2}\vec f_j, \;\; \vec f_j\in {\mathcal S}(\R^d,\C^m) .
        $$
        There exists a family $(\vec U_j(t) )_{j\geq 0}$  defined on the interval $I_\delta$ such that 
        \begin{enumerate}
        \item[(i)] For all $j\in\N$ and $t\in I_\delta$,   $\vec U_j(t)\in\mathcal S(\R^d,\C^m)$,
        \item[(ii)] For all $k\in\N$ and $j\ge 2$, there exists a constant $C_{k,j}>0$ such that 
        \[
         \sup_{t\in I_\delta} \, 
         \sup_{|\alpha|+|\beta| \le k}
         \| x^\alpha \partial_x^\beta \vec U_j(t) \|_{L^\infty} \leq C_{k,j}\, \delta^{1-j-k}.
        \]
        \item [(iii)] For all  $k\in\N$ and $N\in\N$, there exists $C_{k,N}$ and $N_{k}$ such that for all  $t\in I_\delta$, we have 
\begin{align*}
&\left\|\mathcal U^\eps_K(t,t_{\rm in})\psi^\eps_{\rm in} -   {\rm e}^{\frac{i}{\eps}S(t,t_{\rm in},z_0)}\wp^\eps_{z_t} \left( \sum_{j=0}^{N}\eps^{\frac j 2}{\mathcal R}(t,t_{\rm in})\,
         {\mathcal M}[F(t,t_{\rm in})]  \vec U_j(t) \right) \right\| _{\Sigma^k_\eps} \\
         &\quad\leq C_{k,N}\  \left( \frac{\sqrt\eps}{\delta}\right)^{N+1} \delta^{-k-\kappa_0}.
\end{align*}
Here, $z_t= \Phi_k^{t,t_{\rm in}}(z_0)$, while $F(t,t_{\rm in}) = F(t,t_{\rm in},z_0)$ is the stability matrix  (see~\eqref{def:F}) and  ${\mathcal R}(t,t_{\rm in})={\mathcal R}(t,t_{\rm in},z_0)$ satisfies the equation~\eqref{ps3} for $z=z_0$.
\end{enumerate}
       Besides 
 \begin{align}\label{def:vecU0}
&\vec U_0 (t) = \vec f_0\;\;\mbox{and}\;\;
    \vec U_1 (t)    = \vec f_1+   {\bf b}_1(t,t_{\rm in})\vec f_0 
  \end{align}
  where
\begin{align}\label{b1'}
  {\bf b}_1(t,t_{\rm in})& = \frac 1i \sum_{\vert\alpha\vert=3}\frac{1}{\alpha!}\int_{t_{\rm in}}^t\partial_z^\alpha k(s,\Phi_k^{s,t_{\rm in}}(z_0)){\rm op}_1^w((F(s, t_{\rm in})z)^\alpha )ds \, \1_{m}\nonumber\\
 &\qquad + \frac 1i\int_{t_{\rm in}}^t {\mathcal R}(t_{\rm in},s) \partial_z K_1(s,\Phi_k^{s,t_{\rm in}}(z_0)) {\rm op}_1^w(F(s, t_{\rm in})z ) {\mathcal R}(s,t_{\rm in})\, ds.
\end{align}

 \end{theorem}

       Of course, the result holds for $K(t)=k(t)\1_{m}$, the scalar case, and for $K(t)=KS(t)=k(t)\1_m +\eps K_1(t)$, which corresponds to the simplest perturbation of the scalar case by adding a subprincipal term. 
       In order to emphasize the characteristics of each of these cases, we detail the proof along these three cases: scalar, scalar with subprincipal perturbation, scalar with general perturbation. The first two cases have been considered in~\cite{corobook}, the strategy of which we follow. We note, that only in the third case the $\delta$-dependence of the estimates arises.

       \subsection{The scalar case}\label{prop:scalar}

       In this section, we review the approximation for the standard scalar situation $K(t)=k(t)\1_{m}$ with $z\mapsto k(t,z)$ subquadratic for $t\in I_\delta$. 
       
       \begin{proof}[Proof of Theorem~\ref{evadia} in the scalar case]
       We 
       look for an approximation of the propagated wave packet $\mathcal U_{K}^\eps(t,t_{\rm in})\psi^\eps_{\rm in}$ by
 \begin{equation}\label{form_phi}
\varphi^\eps_N(t):=   {\rm e}^{\frac{i}{\eps}S(t,t_{\rm in},z_0)}\wp^\eps_{z_t} \left(\vec B^\eps_N(t) \right) 
 \end{equation}
 for some time-dependent Schwartz vector-valued and time dependent functions $z\mapsto \vec B^\eps_N(t,z)$.
 We have the two relations
 \begin{align*}
&  i\eps\partial_t \varphi^\eps_N(t) = {\rm e}^{\frac{i}{\eps} S(t,t_{\rm in},z_0)}\wp^\eps_{z_t} \left(
 -(\partial_t S(t,t_{\rm in},z_0) -p_t\cdot \dot q_t) \vec B_N^\eps(t)+\sqrt\eps\, \dot z_t \cdot (J\widehat z)\, \vec B_N^\eps(t) +i\eps \partial_t \vec B_N^\eps(t)
 \right)\\
& \widehat {k(t)} \varphi^\eps_N(t) =  {\rm e}^{\frac{i}{\eps}S(t,t_{\rm in},z_0) }\wp^\eps_{z_t} \left(
 \widehat {k(t,z_t+\sqrt\eps z)} \vec B_N^\eps(t).
 \right)
 \end{align*}
 We look for $\vec B^\eps_N(t)$ of the form
 \begin{equation}\label{form_Bj}
 \vec B^\eps_N(t)=\sum_{j=0}^N \eps^{\frac j2} \vec B_j(t).
 \end{equation}
 We perform a Taylor expansion of $z\mapsto k(t,z_t+\sqrt\eps z)$ 
 \begin{align}\label{eq:KSTaylor}
 k(t,z_t+\sqrt\eps z)&= k(t,z_t)\,  +\sqrt\eps \,\nabla k(t,z_t)\cdot  z \,  + \eps \frac 12 {\rm Hess}\, k(t,z_t)z\cdot z\,   \\
 \nonumber
 &\qquad +
 \sum _{j=3}^{N+2} \eps^{\frac j2}L_j(t) [z]^j + \eps^{\frac{N+3}{2}}R_N(t,z_t, \sqrt\eps z)[z]^{N+3}
 \end{align}
 for some coefficients $t\mapsto L_j(t)$ uniformly bounded on $I_\delta$  and a smooth function $z\mapsto R_N(t,z_t, z) $ satisfying uniform symbol estimates for $t\in I_\delta$.
 \smallskip

 We now construct the elements $\vec B_j(t)$, $0\leq j\leq N$, by solving recursive equations, that stem from  
 the classical flow and the action. We want to choose  the $\vec B_j(t)$ so that 
 \[
 i\eps\partial_t \varphi^\eps_N(t)= \widehat {k(t)} \varphi^\eps_N(t) + \O(\eps^{\frac{N+3}{2}}) .
 \]
Then, $\U^\eps_K(t,t_{\rm in})\psi^\eps_{\rm in} = \varphi^\eps_N(t) + \O(\eps^{\frac{N+1}{2}})$ in $\Sigma^k_\eps$ with an error constant independent of $\delta$. We now construct the approximation $\varphi^\eps_N(t)$ in several steps.
 \begin {enumerate}
 \item[(i)]
 The equation~\eqref{def:S} for the action writes $\partial_t S(t,t_{\rm in},z_0) =p_t\cdot \dot q_t- k(t,z_t)$ and allows to get rid of the terms of order $\eps^0$. \item[(ii)]
 The equation for the flow $\dot z_t = J \nabla k(t,z_t)$ removes the terms of order $\sqrt\eps $. 
 \item[(iii)] For the term of order $\eps$, we choose $\vec B_0(t)$ so that 
 \[
 i\partial_t \vec B_0(t)= \frac 12 {\rm Hess}\, k(t,z_t)\widehat z\cdot \widehat z\,   \vec B_0(t)
 \]
 with $\vec B_0(t_{\rm in})=\vec  f_0$, which is solved by
  \begin{equation*}
\vec B_0 (t)= 
         {\mathcal M}[F(t,t_{\rm in})] \vec f_0= {\mathcal M}[F(t,t_{\rm in})] \vec U_0,
  \end{equation*}
according to~\eqref{def:vecU0} since $\mathcal R(t,t_{\rm in})=\1_{m}$ in the scalar case. 
 \item [(iv)]
 Finally, we treat the terms in $\eps^{1+\frac {j}2}$ with $j\geq 1$
 by solving the equations  
  \[
 i\partial_t \vec B_j(t)= \frac 12 {\rm Hess}\, k(t,z_t)\widehat z\cdot \widehat z\,   \vec B_j(t) + \vec G_j(t)
 \]
with $\vec B_j(t_{\rm in})= \vec f_j$ and the source term $\vec G_j(t)$ depends on $\vec B_0(t),\ldots, \vec B_{j-1}(t)$. Thus,  by Duhamel formula:
\[
\vec B_j(t)=\mathcal M[F(t,t_{\rm in})]\vec f_j  + \frac 1i  \int_{t_{\rm in}} ^t 
\mathcal M[F(t,s)] \vec G_j(s) ds.
\]
 \end{enumerate}
 \smallskip 
 
In the case $j=1$, we have 
\[
\vec G_1(t)= \widehat {p(z)} \vec B_0(t) ,\;\; p(z)= \sum_{\vert\alpha\vert=3}\frac{1}{\alpha!}\partial_z^\alpha k(t,z_t)  [ z]^\alpha \1_{m}  .
\]
Moreover, by the (exact) Egorov theorem for quadratic Hamiltonians (see~\eqref{prop:metaplectic}), we have 
\begin{align*}
\mathcal M[F(t,s)] \, \widehat {p(z)}  \, \mathcal M[F(s,t_{\rm in})]
&= \mathcal M[F(t,t_{\rm in})]\,  \widehat {p(F(s,t_{\rm in})z)} 
\end{align*} 
This gives the expression of $\vec U_1(t)$ in~\eqref{def:vecU0}. 
\end{proof}

    \subsection{The scalar case with subprincipal perturbation}\label{prop_subprinc}
We now focus on the case $K(t)=KS(t)=k(t)\1_m +\eps K_1(t)$, a subquadratic symbol with scalar principal symbol and matrix-valued perturbation. 
 We claim 
   \begin{equation}\label{KSwp_prop}
 \U_{KS}^\eps(t,t_{\rm in}) \psi^\eps_{\rm in} =    
    {\rm e}^{\frac{i}{\eps}S(t,t_{\rm in},z_0)}\wp^\eps_{z_t} \left( \sum_{j=0}^{N}\eps^{\frac j 2} \vec B_j(t) \right)  +\O(\eps^{\frac{N+1}{2}})
    \end{equation}
    with 
 $\vec B_j(t)=\mathcal R(t,t_{\rm in})\mathcal M[F(t,t_{\rm in})]\vec U_j(t)$ as in~\eqref{def:vecU0} for $j=0,\ldots,N$, and  $\vec B_j(t)$ is determined by a recursive equation in terms of $\vec B_0(t),\ldots , \vec B_{j-1}(t)$. 
 \smallskip 

    \begin{proof}
        [Proof of Theorem~\ref{evadia} with simple subprincipal term.]
         We argue as in the scalar case and look for an approximate value of the form~\eqref{form_phi}.
 We now have the two relations
 \begin{align*}
&  i\eps\partial_t \varphi^\eps_N(t) = {\rm e}^{\frac{i}{\eps} S(t,t_{\rm in},z_0)}\wp^\eps_{z_t} \left(
 -(\partial_t S(t,t_{\rm in},z_0) -p_t\cdot \dot q_t) \vec B_j(t)+\sqrt\eps\, \dot z_t \cdot (J\widehat z)\, \vec B_j(t) +i\eps \partial_t \vec B_j(t)
 \right)\\
& \widehat {KS(t)} \varphi^\eps_N(t) =  {\rm e}^{\frac{i}{\eps}S(t,t_{\rm in},z_0) }\wp^\eps_{z_t} \left(
 \widehat {KS(t,z_t+\sqrt\eps z)} \vec B_j(t)
 \right)
 \end{align*}
and perform a Taylor expansion of $z\mapsto KS(t,z_t+\sqrt\eps z)$ 
 \begin{align*}
 KS(t,z_t+\sqrt\eps z)&= k(t,z_t)\, \1_{m} +\sqrt\eps\,\nabla k(t,z_t)\cdot  z \, \1_{m} + \eps \left(\frac 12 {\rm Hess}\, k(t,z_t)z\cdot z\, \1_{m}  +K_1(t,z_t)\right)\\
 &\qquad +
 \sum _{j=3}^{N+2} \eps^{\frac j2}L_j(t) [z]^j + \eps^{\frac{N+3}{2}} R_N(t,z_t, \sqrt\eps z)[z]^{N+3}.
 \end{align*}
  Here again, the function $z\mapsto R_N(t,z_t, z) $ is smooth and uniformly bounded fo $t\in I_\delta$  with bounded derivatives. 
  Similarly, the $L_j$-s are bounded on $I_\delta$.
 \smallskip

 We then verify that the properties of the classical flows and of the action allow to 
choose $B^\eps_N(t)$ of the form~\eqref{form_Bj}  so that 
 \[
 i\eps\partial_t \varphi^\eps_N(t)= \widehat {KS(t)} \varphi^\eps_N(t) +\O(\eps^{\frac{N+3}{2}}).
 \]
 The terms in $\eps^0$ and $\sqrt\eps$ are dealt with as in the scalar case. 
However, the terms in $\eps^{\frac j 2}$ with $j\geq 2$ yield modifications, due to the presence of the subprincipal term.  We thus have to revisit the steps (iii) and (iv) of the proof in the scalar case.

\begin{enumerate}
    \item[(iii)]
For the term of order $\eps$, the equation of $\vec B_0(t)$ now writes
 \[
 i\partial_t \vec B_0(t)= \left(\frac 12 {\rm Hess}\, k(t,z_t)\widehat z\cdot \widehat z\, \1_{m}  +K_1(t,z_t)\right) \vec B_0(t)
 \]
 with $\vec B_0(t_{\rm in})= \vec f_0$, which is solved by $\vec B_0(t)=\mathcal R(t,t_{\rm in})\mathcal M[F(t,t_{\rm in})]\vec U_0(t)$ given in~\eqref{def:vecU0}.
 \item[(iv)] The terms in $\eps^{1+\frac {j}2}$ with $j\geq 1$
yield the equations  
  \[
 i\partial_t \vec B_j(t)= \left(\frac 12 {\rm Hess}\, k(t,z_t)\widehat z\cdot \widehat z\, \1_{m}  +K_1(t,z_t)\right) \vec B_j(t) + \vec G_j(t)
 \]
with $\vec B_j(t_{\rm in})= \vec f_j$ and  source terms $\vec G_j(t)$ depending as before  on $\vec B_0(t),\cdots \vec B_{j-1}(t)$.  Duhamel formula now gives
\[
\vec B_j(t)=\mathcal R(t,t_{\rm in})\mathcal M[F(t,t_{\rm in})] \left(\vec f_j+ \frac 1i  \int_{t_{\rm in}} ^t 
\mathcal R(t_{\rm in},s)\mathcal M[F(t_{\rm in},s)] \vec G_j(s) ds\right).
\]
 Indeed, $\mathcal R(t,t_{\rm in}) =\mathcal R(t,t_{\rm in},z_0)$ is a matrix, independent of $z$, which commutes with the action of the scalar operator $\mathcal M[F(t,s)] = \mathcal M[F(t,s,z_0)]$ for all $s\in [t_{\rm in}, t]$. 
 \end{enumerate}
 \smallskip 
 
In the case $j=1$, we have 
\[
\vec G_1(t)= \widehat {p(z)} \vec B_0(t) ,\;\; p(z)= \sum_{\vert\alpha\vert=3}\frac{1}{\alpha!}\partial_z^\alpha k(t,z_t)  [ z]^\alpha \1_{m} +  \partial_z K_1(t,z_t)\cdot  z  .
\]
Moreover, by the exact Egorov theorem for quadratic Hamiltonians (see~\eqref{prop:metaplectic}), we have 
\begin{align*}
    \mathcal R(t_{\rm in},s) \mathcal M[F(t_{\rm in},s)] \widehat {p(z)}  \mathcal R(s,t_{\rm in}) 
    \mathcal M[F(s,t_{\rm in})] = \mathcal R(t_{\rm in},s) \widehat {p(F(s,t_{\rm in})z)} \mathcal R(s,t_{\rm in}), 
\end{align*}
where again we have used the commutation of the $z$-independent matrix $\mathcal R(t,t_{\rm in}) $ with the scalar operator $\mathcal M[F(t,t_{\rm in})]$. 
This gives the expression of $\vec B_1(t)$ in~\eqref{def:vecU0} and concludes the proof of the claim~\eqref{KSwp_prop}.
 \smallskip 
    \end{proof}

    \subsection{The scalar case with general perturbation}
    We can now prove Theorem~\ref{evadia} in full generality and we have to take into account the effects of the terms $K_j(t)$ with $j\geq 2$, in view of assumption~\eqref{ass_K_delta}.
    
\begin{proof}[Proof of Theorem~\ref{evadia}]
 We first reduce to the evolution of wave packets by the subprincipal propagator $\U_{KS}^\eps(t,t_{\rm in})$. Indeed, if $k,J\in \N$,

using Corollary~\ref{coro:split},
\begin{align*}
   \U_K^\eps(t,t_{\rm in}) \psi^\eps_{\rm in} & =  \, \widehat \W^{\eps,J}(t,t_{\rm in}) \,\U_{KS}^\eps(t,t_{\rm in}) \psi^\eps_{\rm in}
   + \O((\eps/\delta^2)^{J+1}\delta^{-\kappa_0-k}
   \end{align*}
      in $\Sigma^k_\eps$.
Combining this with the result of the preceding subsection, that is~\eqref{KSwp_prop}, 
we have to balance an approximation of the order $\eps^{J}$ with one of the order $\eps^{(N+1)/2}$. With a suitable choice of $J$, we  have
\[
\U_{K}^\eps(t,t_{\rm in}) \psi^\eps_{\rm in} =    
    {\rm e}^{\frac{i}{\eps}S(t,t_{\rm in},z_0)}
    \sum_{j=0}^J \sum_{n=0}^N \eps^{j+\frac{n}{2}} \,  \widehat \W_j(t,t_{\rm in}) \, 
    \wp^\eps_{z_t}(\vec B_n(t))  +
  \O\left(\left(\frac{\sqrt\eps}\delta\right)^{N+1} \delta^{-k-\kappa_0}\right).
\]
We now note that $\W_0(t,t_{\rm in})=\1_m$ and, for $j\ge1$, $n\ge 0$,
 \[
  \widehat \W_j(t,t_{\rm in}) \wp^\eps_{z_t} ( \vec B_n(t) )=
  \wp^\eps_{z_t} \left( {\rm op}_1\!\left(  \W_j(t,t_{\rm in},z_t+\sqrt\eps z) \right)\vec B_n(t) \right).
  \]
Sorting the double sum into powers of $\sqrt\eps$, we thus arrive at
\[
\U_{K}^\eps(t,t_{\rm in}) \psi^\eps_{\rm in} = {\rm e}^{\frac{i}{\eps}S(t,t_{\rm in},z_0)}\,
\wp^\eps_{z_t}\! \left( \vec B_0(t) +\sqrt\eps \vec B_1(t) + \sum_{n=2}^N \eps^{\frac{n}{2}} \vec C_n(t) \right) 
+   
\O\left(\left(\frac{\sqrt\eps}\delta\right)^{N+1} \delta^{-k-\kappa_0}\right)
\]
where each $\vec C_n(t)$, $n\ge 2$, is a correction of $\vec B_\ell(t)$ by terms involving $\W_j(t,t_{\rm in})$ with 
$2j\le n$. Therefore the estimate \eqref{eq:Wj} implies our claimed final result.
\end{proof}

   \section{Combined actions of  propagators on wave packets}\label{forwards_backwards}

We work here in the interval $I_\delta=(t^\flat-\delta, t^\flat +\delta)$ where $t^\flat$ plays the role of an initial time.
We focus on the action of $\U^\eps_{H_2}(t^\flat, t) \,\U^\eps_{H_1}(t,t^\flat)$ on a wave packet, where $H_1(t,z)$ and $H_2(t,z)$ are two symbols satisfying~\eqref{eq:K} and \eqref{ass_K_delta} on $I_\delta\times\R^{2d}$. We denote their scalar principal symbols by $h_1(t,z)$ and $h_2(t,z)$, respectively. 
\smallskip

Applying 
Theorem~\ref{evadia} to the operators $\widehat H_1(t), \widehat H_2(t)$, we obtain for all  $s\in[-\delta,\delta]$, in $\Sigma^ k_\eps$,
\begin{align}\label{proof_comp}
& \U_{H_2}^\eps(t^\flat, t^\flat+ s)\U_{H_1}^\eps(t^\flat+ s,t^\flat)
\wp_{\zeta^\flat}^\eps (\vec \varphi )\\\nonumber
&= 
{\rm e}^{\frac{i}{\eps}S( s)}
\sum_{j=1} ^ J \eps^ {\frac j2}
\wp_{\zeta( s)}^\eps \bigl({\mathcal M}( s)\vec \varphi_j(s)\bigr) 
+ \O\left(\left(\frac{\sqrt\eps}\delta\right)^{J+1} \delta^{-k-\kappa_0}\right)
\end{align}
where we  denoted the combined center, phase and metaplectic transform by
\begin{align}
\label{def:zeta_combined}
\zeta( s) &=\Phi_{h_2}^{t^\flat,t^\flat+  s}\big(\Phi_{h_1}^{t^\flat+ s,t^\flat}(\zeta^\flat)\big),\\
\label{def:S_combined}
S( s) &= 
S_1(t^\flat+ s,t^\flat, \zeta^\flat) +S_2(t^\flat,t^\flat+ s,\Phi_{h_1}^{t^\flat+ s, t^\flat}(\zeta^\flat)),\\
\label{def:M_combined}
{\mathcal M}( s) &= {\mathcal M}[F_2(t^\flat,t^\flat+ s,\Phi_{h_1}^{t^\flat+ s, t^\flat}(\zeta^\flat))] 
\,{\mathcal M}[F_1(t^\flat+ s,t^\flat,\zeta^\flat)].
\end{align}
Here, we have used the commutation of the scalar operators given by the metaplectic transforms with the fixed matrices of parallel transport, which are included in the $\vec \varphi_j$.  
Moreover, for all $s\in [\delta, \delta]$ and for $j\in\N$, $\vec \varphi_j(s)\in\mathcal S(\R^ d,\C^m)$ and 
\begin{equation}\label{def:vecphi0}
\vec \varphi_0(s)=\mathcal R_2 (t^\flat,t^\flat+ s,\Phi_{h_1}^{t^\flat+ s, t^\flat}(\zeta^\flat))
\mathcal R_1(t^ \flat +s, t^\flat, \zeta^\flat)\vec \varphi.
\end{equation}
Pushing forward the analysis as in~\cite{FLR1}[Section~5.2]; we obtain the following statement, of which we will give all the elements of proofs.

 \begin{lemma}\label{lem:composed}
    Assume $\delta\ge\sqrt\eps$. Let $k\in\N$, $\vec \varphi\in\mathcal S(\R^d,\C^m)$ and $(t^\flat, \zeta^\flat )\in I_\delta\times \R^{2d}$. Then, there exists a family $(\vec \varphi_j(s))_{j\in\N}$ of vector-valued Schwartz functions parametrized by $s\in [-\delta, \delta]$ and such that for all $s\in [-\delta,  +\delta]$ and $J\in\N$, we have in $\Sigma ^ k_\eps$,
\begin{align*}
\U_{H_2}^\eps(t^\flat, t^\flat+ s)\U_{H_1}^\eps(t^\flat+ s,t^\flat)
\wp_{\zeta^\flat}^\eps (\vec \varphi)
&=  \sum_{j=0}^ J \, \eps^{\frac j2} \, \wp^\eps_{\zeta^\flat} \left(
{\rm e}^{\frac{i}{\eps} \widetilde\Lambda( s)} 
{\rm e}^{i \widetilde p_\eps( s)\cdot (y-\widetilde q_\eps( s))} \vec \varphi_j(s,y-\widetilde q_\eps( s)) \right)\\
&\qquad  +
\O\left(\left(\frac{\sqrt\eps}\delta\right)^{N+1} \delta^{-k-\kappa_0}\right),
\end{align*}
with, using notation~\eqref{def:mu} and ~\eqref{def:alpha_beta}, and setting $\widetilde z_\eps(s)=(\tilde q_\eps(s),\tilde p_\eps(s))$,
\[
\widetilde z_\eps(0)  = 0,\quad \dot{\widetilde z}_\eps (0) = (\alpha^\flat,\beta^\flat),\;\;
\widetilde\Lambda(0)  =\dot{\widetilde\Lambda}(0)= 0\;\;\mbox{and}\;\; 
\frac 12\left(\ddot{\widetilde \Lambda} (0)- \dot p(0)\cdot \dot q(0)\right)= -\mu^\flat.
\]
 \end{lemma}

The proof of Lemma~\ref{lem:composed} relies on a careful analysis of the asymptotics of the functions $\zeta(s)$ and $S(s)$, and of the operator $\mathcal M(s)$. We will use the next Lemma, the proof of which is postponed to the end of the section.

 \begin{lemma}\label{lem:phasis}
 Let $\zeta$ and $S$ be defined as above, and write $\zeta( s) = \zeta^\flat + z( s)$ with $z(s)=(q(s),p(s))$. Consider  $\Lambda$ given for $s\in [-\delta,\delta]$ by
 \begin{align}\nonumber
\Lambda( s) &= S_1(t^\flat+ s,t^\flat, \zeta^\flat) +S_2(t^\flat,t^\flat+ s,\Phi_{h_1}^{t^\flat+ s, t^\flat}(\zeta^\flat)) - q( s)\cdot p^\flat.
\end{align}
 We have 
 \begin{align}
 \label{zeta(0)}
 \zeta(0)&
 =\zeta^\flat,\;\;\dot\zeta(0)= 
  (\alpha^\flat,\beta^\flat),\;\; S(0)=0,\;\; \dot S(0) = p^\flat \cdot\alpha^\flat,\\
 \label{lambda(0)}
 \Lambda(0)&=\dot\Lambda(0)= 0,\;\; 
\frac 12\left(\ddot\Lambda (0)- \dot p(0)\cdot \dot q(0)\right)= -\mu^\flat.
\end{align}
 \end{lemma}

 \begin{proof}[Proof of Lemma~\ref{lem:composed}]
We start with~\eqref{proof_comp}. 
 We then have by Lemma~\ref{lem:phasis}  $z(0) = 0$ and $\dot z(0) = (\alpha^\flat,\beta^\flat)$.
We use the properties of the wave packet transform 
\begin{align*}
&\wp^\eps_z={\rm e}^{-\frac i{2\eps}p\cdot q}\, \widehat {T}^\eps(z)\,  \Lambda^\eps, \\
\mbox{with}\;\;&\widehat {T}^\eps(z)=\e^{\frac i\eps (p\cdot \hat x-q\cdot \hat \xi)}\;\;\mbox{and} \;\;\Lambda^\eps\varphi (x)=\eps^{-\frac d4}\varphi(x/\sqrt\eps),\;\;x\in\R^d.
\end{align*}
We obtain
\begin{align*}
\wp^\eps_{\zeta( s)} &= {\rm e}^{-\frac{i}{\eps} p^\flat\cdot q( s)}
 \wp^\eps_{\zeta^\flat}\Lambda_\eps^{-1} \wp^\eps_{z( s)}\\*[1ex]
 &=  {\rm e}^{-\frac{i}{\eps} p^\flat\cdot q( s)}
 {\rm e}^{-\frac{i}{2\eps}p( s)\cdot q( s)} \wp^\eps_{\zeta^\flat} 
\Lambda_\eps^{-1} \widehat T^\eps(z( s)) \Lambda_\eps\\*[1ex]
 &=  {\rm e}^{-\frac{i}{\eps} p^\flat\cdot q( s)}
 {\rm e}^{-\frac{i}{2\eps}p( s)\cdot q( s)} \wp^\eps_{\zeta^\flat} 
\widehat T^1(z_\eps( s)), 
\end{align*}
with $z_\eps(s)= z(s)/\sqrt\eps$. By the translation properties 
of the metaplectic transform \cite[Section~3.3]{corobook}, we have
\[
\widehat T^1(z_\eps( s)) {\mathcal M}( s)=
 {\mathcal M}( s)\widehat T^1(\widetilde z_\eps( s)) 
\]
with the new center
\[
\widetilde z_\eps( s) =\widetilde z(s)/ \sqrt\eps,\;\; \widetilde z(s)=F_1(t^\flat+ s,t^\flat,\zeta^\flat)^{-1} 
F_2(t^\flat,t^\flat+ s,\Phi_{h_1}^{t^\flat+ s, t^\flat}(\zeta^\flat))^{-1} z( s)
\]
We observe that 
\[
\widetilde z(0) = z(0) = 0,\quad \dot{\widetilde z}(0) = \dot z(0) = (\alpha^\flat,\beta^\flat).
\]  
Since
\[
\widehat T^1(\widetilde z_\eps( s))\varphi(y) = 
{\rm e}^{\frac{i}{2}\widetilde q_\eps( s)\cdot\widetilde p_\eps( s)}
{\rm e}^{i \widetilde p_\eps( s)\cdot (y-\widetilde q_\eps( s))} \varphi(y-\widetilde q_\eps( s)),
\]
the phase $\widetilde\Lambda( s)$ 
\begin{align}\nonumber
\widetilde\Lambda( s) &= S( s) - p^\flat\cdot q( s) - \frac 12 \,p( s)\cdot q( s) + \frac 12\,
\widetilde p( s)\cdot \widetilde q( s),
\end{align}
yields the approximation of the Lemma. Indeed, 
we have $\widetilde\Lambda(0) = \dot{\widetilde\Lambda}(0) = 0$, 
$\ddot{\widetilde\Lambda}(0)=\ddot{\Lambda}(0)$ and we conclude 
by Lemma~\ref{lem:phasis}.
\end{proof}

It remains to prove the asymptotics of Lemma~\ref{lem:phasis}.

 \begin{proof}[Proof of Lemma~\ref{lem:phasis}]
 We begin with the function $\zeta$ and 
 we compute the Taylor expansion at the order~2 for $(q( s),p( s))=\zeta( s)-\zeta^\flat $ at $ s=0$.
Let  be $h=h_1, h_2$.  We have :
 \begin{align}\label {DTPhi}
 \Phi_h^{t,t_0}(z) = z &+(t-t_0)J\partial_z h(t_0,z) + \frac{(t-t_0)^2}{2}\left(J\partial^2_{t,z}h(t_0,z)+ J\partial^2_{z,z}h(t_0,z)J\partial_zh(t_0,z)\right)
 \\
 \nonumber
 & + \O(|t-t_0|^3).
\end{align} 
Applying this formula, we obtain (omitting the argument $(t^\flat, \zeta^\flat)$ in the functions $h_1$, $h_2$ and their derivatives)
\begin{align*} 
&\Phi_{h_1}^{t^\flat+ s,t^\flat}(\zeta^\flat)= \zeta^\flat+ s J\partial_z h_1 +\frac{ s^2}2 \left(J\partial^2_{t,z}h_1+ J\partial^2_{z,z}h_1J\partial_zh_1\right)
  + \O(| s|^3),\\
&  \zeta(t) = \Phi_{h_1}^{t^\flat+ s,t^\flat}(\zeta^\flat) - s J\partial_z h_2(t^\flat+ s,\Phi_{h_1}^{t^\flat+ s,t^\flat}(\zeta^\flat)) 
  +\frac{ s^2}2 \left(J\partial^2_{t,z}h_2+ J\partial^2_{z,z}h_2J\partial_zh_2\right)
  + \O(| s|^3).
  \end{align*}
  We deduce 
  \begin{align*} 
  \zeta(t) &= \zeta^\flat + s J\partial_z (h_1-h_2) + \O(| s|^3)\\
  &+\frac{ s^2}2 \left(J\partial^2_{t,z}(h_1-h_2)+ J\partial^2_{z,z}(h_1-h_2)J\partial_zh_1
  +J\partial^2_{z,z} h_2 J\partial_z (h_2-h_1)\right),
  \end{align*} 
and, for further use, the relation 
\begin{align}
\label{C5}
-p^\flat \dot q(0) =& -p^\flat \cdot\partial_q(h_1-h_2),\\
\label{C5''}
-p^\flat \cdot \ddot q(0)=&-p^\flat \cdot (\partial^2_{t,p}(h_1-h_2)+ \partial^2_{z,p}(h_1-h_2)J\partial_zh_1
  +\partial^2_{z,p} h_2 J\partial_z (h_2-h_1))
\end{align}

\medskip

 We continue with the function $\Lambda$  and we use Taylor expansion of the actions for general Hamiltonian $h$. In view of~\eqref{def:S} and~\eqref{DTPhi}, we have (omitting the argument $(t_0,z_0)$ in the terms of the form $\partial^\alpha h(t_0,z_0)$)
\begin{align*}
S(t,t_0,z_0) &=  \int_{t_0}^t (p_0-(s-t_0)\partial_qh)\cdot (\partial_p h+(s-t_0) (\partial^2_{t,p}h +\partial^2_{z,p}hJ\partial_z h) )ds\\
&\qquad
-\int_{t_0}^{t} (h+(s-t_0)\partial_t h ) ds  +\O((t-t_0)^3)\\
&= (p_0\cdot \partial_p h-h)(t-t_0) -\frac{(t-t_0)^2}{2} (\partial_t h+\partial_qh \cdot \partial_p h -p_0\cdot (\partial_{t,p}^2 h+ \partial^2_{z,p}hJ\partial_z h))\\
&\qquad
+\O((t-t_0) ^3).
\end{align*} 
We first apply the formula with $h=h_1$, $t=t^\flat+ s$, $t=t^\flat$ and $z=\zeta^\flat$, which gives (when the arguments of the functions are omitted, they are fixed to $(t^\flat,\zeta^\flat)$)
\begin{align*}
S_1(t^\flat+ s,t^\flat,\zeta^\flat) = &\,  s(p\cdot \partial_p h_1-h_1)\\
&-\frac{ s^2}{2} (\partial_t h_1+\partial_qh_1 \cdot \partial_p h_1 -p\cdot (\partial_{t,p}^2 h_1+ \partial^2_{z,p}h_1J\partial_z h_1))
+\O( s ^3).
\end{align*}
We now use the same formula with $h=h_2$, $t=t^\flat$, $t_0=t^\flat+ s$, $z_0=\Phi_{h_1}^{t^\flat+ s, t^\flat}(\zeta^\flat)$. We obtain
\begin{align*}
S_2(t^\flat,t^\flat+ s,\Phi_{h_1}^{t^\flat+ s,t^\flat}(\zeta^\flat)) &=\\
-  s &(p_1(t^\flat+ s,t^\flat,\zeta^\flat)\cdot \partial_p h_2(t^\flat+ s,\Phi_{h_1}^{t^\flat+ s,t^\flat}(\zeta^\flat)) -h_2(t^\flat+ s,\Phi_{h_1}^{t^\flat+ s,t^\flat}(\zeta^\flat)) )\\
& -\frac{ s^2}{2} (\partial_t h_2+\partial_qh_2 \cdot \partial_p h_2 -p\cdot (\partial_{t,p}^2 h_2+ \partial^2_{z,p}h_2J\partial_z h_2)) +\O( s^3)
\end{align*} 
The treatment of the term of order $ s$ has to be performed carefully for $S_2(t^\flat,t^\flat+ s,\Phi_{h_1}^{t^\flat+ s,t^\flat}(\zeta^\flat))$. 
We obtain 
\begin{align*}
&S_2(t^\flat,t^\flat+ s,\Phi_{h_1}^{t^\flat+ s,t^\flat}(\zeta^\flat)) =  - s (p\cdot \partial_p h_2 -h_2)
- s^2(-\partial_t h_2-\partial_q h_2\cdot \partial_p  h_1+p\cdot (\partial^2_{t,p} h_2 +\partial^2_{z,p} h_2 J\partial_z h_2))\\
&\qquad\qquad -\frac{ s^2}{2} (\partial_t h_2+\partial_qh_2 \cdot \partial_p h_2 -p\cdot (\partial_{t,p}^2 h_2+ \partial^2_{z,p}h_2J\partial_z h_1))
+\O( s^3)\\
&\;\;= \,  (p\cdot \partial_p h_2 -h_2) s 
+\frac{ s^2}{2} (\partial_t h_2+\partial_qh_2 \cdot \partial_p (2h_1-h_2) -p\cdot (\partial_{t,p}^2 h_2+ \partial^2_{z,p}h_2J\partial_z (2h_2-h_1)
+\O( s^3)
\end{align*}
As a consequence,
\begin{align*}
S_1(t^\flat+ s,t^\flat,\zeta^\flat) + & S_2(t^\flat,t^\flat+ s,\Phi_{h_1}^{t^\flat+ s,t^\flat}(\zeta^\flat))= s \, p\cdot \partial_p (h_1-h_2) 
 + \frac{ s^2} 2(\partial_t (h_2-h_1)\\
 &-\partial_q h_2\cdot \partial_p(h_2-h_1)+\partial_p h_1 \cdot \partial_q (h_2-h_1) \\
 &+p\cdot (\partial_{t,p}^2(h_1-h_2)+\partial_{z,p}^2 (h_1-h_2)J\partial_z h_1+\partial_{z,p}^2 h_2 J\partial_z(h_1-h_2)))+\O( s^3).
\end{align*}
Combining with~\eqref{C5''}, we obtain 
\begin{align*}
\Lambda( s) &=
\frac{ s^2} 2(\partial_t (h_2-h_1)-\partial_q h_2\cdot \partial_p(h_2-h_1)+\partial_p h_1 \cdot \partial_q (h_2-h_1)) +\O( s^3)\\
&=\frac{ s^2} 2(\partial_t (h_2-h_1)-\partial_q h_2\cdot \partial_p(h_2-h_1)+\partial_p h_1 \cdot \partial_q (h_2-h_1))+\O( s^3).
\end{align*}
Finally, we observe 
\begin{align*}
&\frac12(\ddot\Lambda(0) -{\dot p(0)\cdot \dot q(0)})\\
&= \frac{1}{2}\left( \partial_t(h_2-h_1) - \partial_q h_2\cdot \partial_p (h_2-h_1)+\partial_p h_1\cdot \partial_q(h_2-h_1) 
+\partial_p(h_2-h_1) \cdot\partial_q (h_2-h_1)\right)\\
&=  \frac{1}{2}\left( \partial_t(h_2-h_1) - \partial_q h_1\cdot \partial_p (h_2-h_1)+\partial_p h_1\cdot \partial_q(h_2-h_1)\right) \\
&=  \frac{1}{2}\left( \partial_t(h_2-h_1) +\left\{\frac{h_1+h_2}{2}, h_2-h_1\right\}\right)\\
&=- (\partial_t f +\{v,f\}).
\end{align*}
\end{proof}

\backmatter
\bibliographystyle{amsalpha}



\printindex

\end{document}